\documentclass[11pt,twoside,reqno]{amsart}
\linespread{1.05}
\usepackage[colorlinks=true,citecolor=blue]{hyperref}

\numberwithin{equation}{section}

\makeatletter
\makeatother

\usepackage{amsmath, amssymb, amsthm, amsfonts, cite, dsfont, enumerate, epsfig, float, geometry, doi,
	infwarerr, mathrsfs, mathtools, mathrsfs, mathtools, relsize, stmaryrd, tabularx, txfonts, nicefrac, subfig}
\usepackage[normalem]{ulem}

\newtheorem{theorem}{Theorem}[section]
\newtheorem{lemma}[theorem]{Lemma}
\newtheorem{proposition}[theorem]{Proposition}

\newtheorem{conjecture}[theorem]{Conjecture}
\newtheorem{corollary}[theorem]{Corollary}
\newtheorem{question}[theorem]{Question}

\theoremstyle{definition}
\newtheorem{definition}[theorem]{Definition}
\newtheorem{notation}[theorem]{Notation}

\newtheorem{example}[theorem]{Example}
\newtheorem{remark}[theorem]{Remark}
\numberwithin{equation}{section}

\newcommand{\doilink}[1]{\href{https://doi.org/#1}{#1}}





\newcommand{\naturals}{\ensuremath{\mathbb{N}}}

\newcommand{\Reals}{\ensuremath{\mathbb{R}}}

\newcommand{\set}{\ensuremath{\mathcal}}
\newcommand{\cset}[1]{\mathcal{#1}^{\textnormal{c}}} 

\newcommand{\OneTo}[1]{[#1]}
\newcommand{\card}[1]{|#1|}
\newcommand{\bigcard}[1]{\bigl|#1\bigr|}

\DeclareMathOperator{\Vertex}{\mathsf{V}}
\DeclareMathOperator{\Edge}{\mathsf{E}}
\DeclareMathOperator{\Adjacency}{\mathbf{A}}
\DeclareMathOperator{\Laplacian}{\mathbf{L}}
\DeclareMathOperator{\SignlessLaplacian}{\mathbf{Q}}
\DeclareMathOperator{\AllOne}{\mathbf{J}}
\DeclareMathOperator{\Identity}{\mathbf{I}}

\DeclareMathOperator{\Complete}{\mathsf{K}}
\DeclareMathOperator{\Friendship}{\mathsf{F}}

\DeclareMathOperator{\Path}{\mathsf{P}}
\DeclareMathOperator{\Cycle}{\mathsf{C}}
\DeclareMathOperator{\SRG}{\mathsf{srg}}

\DeclareMathOperator{\Star}{\mathsf{S}}
\DeclareMathOperator{\Clique}{\omega}
\DeclareMathOperator{\Chromatic}{\chi}

\newcommand{\Gr}[1]{\mathsf{#1}}                          
\newcommand{\CGr}[1]{\overline{\mathsf{#1}}}              
\newcommand{\V}[1]{\Vertex(#1)}                           
\newcommand{\E}[1]{\Edge(#1)}                             
\newcommand{\A}{\Adjacency}                               
\newcommand{\LM}{\Laplacian}                              
\newcommand{\Q}{\SignlessLaplacian}                       
\newcommand{\D}{\mathbf{D}}                               
\newcommand{\J}[1]{\AllOne_{#1}}                          
\newcommand{\I}[1]{\Identity_{#1}}                        

\newcommand{\indnum}[1]{\alpha(#1)}                       

\newcommand{\clnum}[1]{\Clique(#1)}                       

\newcommand{\chrnum}[1]{\Chromatic(#1)}                   




\newcommand{\Eigval}[2]{\lambda_{#1}(#2)}  
\newcommand{\CoG}[1]{\Complete_{#1}}       
\newcommand{\CoBG}[2]{\Complete_{#1,#2}}   
\newcommand{\FG}[1]{\Friendship_{#1}}      
\newcommand{\GFG}[2]{\Friendship_{#1,#2}}  
\newcommand{\PathG}[1]{\Path_{#1}}         
\newcommand{\CG}[1]{\Cycle_{#1}}           
\newcommand{\SG}[1]{\Star_{#1}}            
\newcommand{\srg}[4]{\SRG(#1,#2,#3,#4)}    

\newcommand{\DU}{\hspace{0.1em} \dot{\cup} \hspace{0.1em}}  
\newcommand{\NS}{\, \underline{\vee} \,}   
\newcommand{\NNS}{\, \uuline{\vee} \,}     
\newcommand{\DuplicationGraph}[1]{\mathrm{Du}(#1)}                                                                        
\newcommand{\Corona}[2]{{#1} \circ {#2}}                                                                                  
\newcommand{\EdgeCorona}[2]{{#1} \diamondsuit \hspace*{0.03cm} {#2}}                                                      
\newcommand{\DuplicationCorona}[2]{{#1} \boxminus {#2}}                                                                   
\newcommand{\DuplicationEdgeCorona}[2]{{#1} \boxplus {#2}}                                                                
\newcommand{\ClosedNeighborhoodCorona}[2]{{#1} \, \underline{\boxtimes} \, {#2}}                                          
\newcommand{\SubdivisionGraph}[1]{\mathrm{S}(#1)}                                                                         
\newcommand{\BipartiteIncidenceGraph}[1]{\mathrm{B}(#1)}                                                                  
\newcommand{\SVBVJ}[2]{\SubdivisionGraph{#1} \, \ddot{\vee} \, \BipartiteIncidenceGraph{#2}}                              
\newcommand{\SEBEJ}[2]{\SubdivisionGraph{#1} \, \overset{\raisebox{-0.1cm}{$=$}}{\vee} \, \BipartiteIncidenceGraph{#2}}   

\newcommand{\SEBVJ}[2]{\SubdivisionGraph{#1} \, {\overset{\raisebox{-0.25cm}{$\stackrel{\rule{0.25cm}{0.05mm}}{\cdot}$}}{\vee}} \hspace*{0.1cm} \BipartiteIncidenceGraph{#2}}

\newcommand{\SVBEJ}[2]{\SubdivisionGraph{#1} \, {\overset{\raisebox{0.0cm}{$\stackrel{\cdot}{\rule{0.25cm}{0.05mm}}$}}{\vee}} \hspace*{0.1cm} \BipartiteIncidenceGraph{#2}}

\newcommand{\trace}[1]{\text{Tr}{(#1)}}
\newcommand{\Gmats}{\{\A,\LM,\Q, {\bf{\mathcal{L}}}, \overline{\A}, \overline{\LM}, \overline{\Q}, \overline{{\bf{\mathcal{L}}}}\} }
\newcommand{\AM}[1]{\text{AM}{(#1)}}
\newcommand{\GM}[1]{\text{GM}{(#1)}}
\newcommand{\diag}[1]{\operatorname{diag}\bigl(#1\bigr)}

\MHInternalSyntaxOn
\renewcommand{\dcases}
{
	\MT_start_cases:nnnn
	{\quad}
	{$\m@th\displaystyle##$\hfil}
	{$\m@th\displaystyle##$\hfil}
	{\lbrace}
}
\MHInternalSyntaxOff


\geometry{left=1in, right=1in, top=1in, bottom=1in}
\makeatletter\c@MaxMatrixCols=15\makeatother

\begin{document}
\setlength{\baselineskip}{1.15\baselineskip}

\title{On Spectral Graph Determination}

\author{Igal Sason  \and Noam Krupnik \and Suleiman Hamud \and Abraham Berman}

\maketitle
\thispagestyle{empty}

\vspace*{-0.8cm}
\begin{center}
{\em Technion - Israel Institute of Technology, Technion City, Haifa 3200003, Israel}
\end{center}

\vskip 4mm {\noindent {\bf Abstract.}
The study of spectral graph determination is a fascinating area of research in spectral graph
theory and algebraic combinatorics. This field focuses on examining the spectral characterization
of various classes of graphs, developing methods to construct or distinguish cospectral nonisomorphic
graphs, and analyzing the conditions under which a graph's spectrum uniquely determines its structure.
This paper presents an overview of both classical and recent advancements in these topics, along
with newly obtained proofs of some existing results, which offer additional insights.

\vspace*{0.2cm}
\noindent {\bf Keywords.}
Spectral graph theory, spectral graph determination, cospectral nonisomorphic graphs,
Haemers' conjecture, Tur\'{a}n graphs, graph operations.

\vspace*{0.2cm}
\noindent {\bf 2020 Mathematics Subject Classification.} 05C50, 05C75, 05C76.

\vspace*{0.2cm}
\noindent {\bf Correspondence}: Igal Sason, Technion - Israel Institute of Technology, Technion City,
Haifa 3200003, Israel. Email: eeigal@technion.ac.il; Tel: +97248294699.

\tableofcontents{}

\section{Introduction}
\label{section: Introduction}

Spectral graph theory lies at the intersection of combinatorics and matrix theory, exploring the structural
and combinatorial properties of graphs through the analysis of the eigenvalues and eigenvectors of matrices
associated with these graphs \cite{BrouwerH2011,Chung1997,CvetkovicDS1995,CvetkovicRS2010,GodsilR2001}.
Spectral properties of graphs offer powerful insights into a variety of useful graph characteristics, enabling
the determination or estimation of features such as the independence number, clique number, chromatic number,
and the Shannon capacity of graphs, which are notoriously NP-hard to compute.

A particularly intriguing topic in spectral graph theory is the study of cospectral graphs, i.e., graphs that
share identical multisets of eigenvalues with respect to one or more matrix representations. While isomorphic
graphs are always cospectral, non-isomorphic graphs may also share spectra, leading to the study of non-isomorphic
cospectral (NICS) graphs. This phenomenon raises profound questions about the extent to which a graph’s spectrum
encodes its structural properties. Conversely, graphs determined by their spectrum (DS graphs) are uniquely
identifiable, up to isomorphism, by their eigenvalues. In other words, a graph is DS if and only if no other
non-isomorphic graph shares the same spectrum.

The problem of spectral graph determination and the characterization of DS graphs dates back to the pioneering
1956 paper by G\"{u}nthard and Primas \cite{GunthardP56}, which explored the interplay between graph theory and
chemistry. This paper posed the question of whether graphs can be uniquely determined by their spectra with respect
to their adjacency matrix $\A$.

While every graph can be determined by its adjacency matrix, which enables the determination
of every graph by its eigenvalues and a basis of corresponding eigenvectors, the characterization of graphs
for which eigenvalues alone suffice for identification forms a fertile area of research in spectral graph theory.
This research holds both theoretical interest and practical implications.

Subsequent studies have broadened the scope of this question to include determination by the spectra of other
significant matrices, such as the Laplacian matrix ($\LM$), signless Laplacian matrix ($\Q$), and normalized
Laplacian matrix (${\bf{\mathcal{L}}}$), among many other matrices associated with graphs.
The study of cospectral and DS graphs with respect to these matrices has become a cornerstone of spectral graph
theory. This line of research has far-reaching applications in diverse fields, including chemistry and molecular
structure analysis, physics and quantum computing, network communication theory, machine learning, and data science.

One of the most prominent conjectures in this area is Haemers' conjecture \cite{Haemers2016,Haemers2024}, which
posits that most graphs are determined by the spectrum of their adjacency matrices ($\A$-DS). Despite many efforts
in proving this open conjecture, some theoretical and experimental progress on the theme of this conjecture has been
recently presented in \cite{KovalK2024,WangW2024}, while also graphs or graph families that are not DS continue to be
discovered. Haemers’ conjecture has spurred significant interest in classifying DS graphs and understanding the factors
that influence spectral determination, particularly among special families of graphs such as regular graphs, strongly
regular graphs, trees, graphs of pyramids, as well as the construction of NICS graphs by a variety of graph operations.
Studies in these directions of research have been covered in the seminal works by Schwenk \cite{Schwenk1973}, and by van
Dam and Haemers \cite{vanDamH03,vanDamH09}, as well as in more recent studies (in part by the authors) such as
\cite{AbdianBTKO21,AbiadH2012,AbiadBBCGV2022,Butler2010,ButlerJ2011,BermanCCLZ2018,Butler2016,ButlerH2016,BuZ2012,BuZ2012b,
CamaraH14,DasP2013,DuttaA20,GodsilM1982,HamidzadeK2010,HamudB24,JinZ2014,KannanPW22,KoolenHI2016,KoolenHI2016b,KovalK2024,KrupnikB2024,
LinLX2019,LiuZG2008,MaRen2010,OboudiAAB2021,OmidiT2007,OmidiV2010,Sason2024,YeLS2025,ZhangLY09,ZhangLZY09,ZhouBu2012}, and references therein.
Specific contributions of these papers to the problem of the spectral determination of graphs are addressed in the continuation of this article.

This paper surveys both classical and recent results on spectral graph determination, also presenting newly obtained
proofs of some existing results, which offer additional insights.

The paper emphasizes the significance of adjacency spectra ($\A$-spectra), and it provides conditions for $\A$-cospectrality, $\A$-NICS, and
$\A$-DS graphs, offering examples that support or refute Haemers’ conjecture.
We furthermore address the cospectrality of graphs with respect to the Laplacian, signless Laplacian, and normalized Laplacian matrices. For regular
graphs, cospectrality with respect to any one of these matrices (or the adjacency matrix) implies cospectrality with respect to all the others,
enabling a unified framework for studying DS and NICS graphs across different matrix representations. However, for irregular graphs, cospectrality
with respect to one matrix does not necessarily imply cospectrality with respect to another. This distinction underscores the complexity of
analyzing spectral properties in irregular graphs, where the interplay among different matrix representations becomes more intricate and often
necessitates distinct techniques for characterization and comparison.

The structure of the paper is as follows: Section~\ref{section: preliminaries} provides preliminary material in matrix theory, graph theory,
and graph-associated matrices. Section~\ref{section: DS graphs} focuses on graphs determined by their spectra (with respect to one or
multiple matrices). Section~\ref{section: special families of graphs} examines special families of graphs and their determination by
adjacency spectra. Section~\ref{section: graph operations} analyzes unitary and binary graph operations, emphasizing their impact on
spectral determination and construction of NICS graphs. Finally, Section~\ref{section: summary and outlook} concludes the paper with
open questions and an outlook on spectral graph determination, highlighting areas for further research.

\section{Preliminaries}
\label{section: preliminaries}
The present section provides preliminary material and notation in matrix theory, graph theory,
and graph-associated matrices, which serves for the presentation of this paper.

\subsection{Matrix Theory Preliminaries}
\label{subsection: Matrix Theory Preliminaries}

The following standard notation in matrix theory is used in this paper:
\begin{itemize}
	\item $\Reals^{n\times m}$ denotes the set of all $n \times m$ matrices with real entries,
	\item $\Reals^{n} \triangleq \Reals^{n\times 1}$ denotes the set of all $n$-dimensional column vectors with real entries,
	\item $\I{n}\in\Reals^{n\times n}$ denotes the $n \times n$ identity matrix,
    \item $\mathbf{0}_{k,m} \in\Reals^{k\times m}$ denotes the $k \times m$ all-zero matrix,
    \item $\J{k,m}\in\Reals^{k\times m}$ denotes the $k \times m$ all-ones matrix,
	\item $\mathbf{1}_n \triangleq \J{n,1} \in \Reals^n$ denotes the $n$-dimensional column vector of ones.
\end{itemize}

Throughout this paper, we deal with real matrices.

The concepts of \emph{Schur complement} and \emph{interlacing of eigenvalues} are useful in papers on
spectral graph determination and cospectral graphs, and are also addressed in this paper.

\begin{definition}
\label{definition: Schur complement}
Let $\mathbf{M}$ be a block matrix
\begin{align}
\mathbf{M}=
\begin{pmatrix}
\mathbf{A} & \mathbf{B}\\
\mathbf{C} & \mathbf{D}
\end{pmatrix},
\end{align}
where the block $\mathbf{D}$ is invertible. The \emph{Schur complement of $D$ in $M$} is
\begin{align}
\label{definition: eq - Schur complement}
\mathbf{M/D}= \mathbf{A}-\mathbf{BD}^{-1}\mathbf{C}.
\end{align}
\end{definition}

Schur proved the following remarkable theorem:
\begin{theorem}[Theorem on the Schur complement \cite{Schur1917}]
\label{theorem: Schur complement}
If $D$ is invertible, then
\begin{align}
\label{eq: Schur's formula}
\det{\mathbf{M}} & =\det(\mathbf{M/D}) \, \det{\mathbf{D}}.
\end{align}
\end{theorem}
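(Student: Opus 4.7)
The plan is to reduce $\mathbf{M}$ to block-triangular form by multiplying on the left by a unit-determinant block matrix, and then to invoke the fact that the determinant of a block-triangular matrix factorizes as the product of the determinants of its diagonal blocks. Concretely, I would verify by direct block multiplication the identity
\begin{align*}
\begin{pmatrix} \mathbf{I} & -\mathbf{B}\mathbf{D}^{-1} \\ \mathbf{0} & \mathbf{I} \end{pmatrix} \begin{pmatrix} \mathbf{A} & \mathbf{B} \\ \mathbf{C} & \mathbf{D} \end{pmatrix} = \begin{pmatrix} \mathbf{A} - \mathbf{B}\mathbf{D}^{-1}\mathbf{C} & \mathbf{0} \\ \mathbf{C} & \mathbf{D} \end{pmatrix},
\end{align*}
which uses the hypothesis that $\mathbf{D}$ is invertible so that $\mathbf{D}^{-1}$ makes sense; the right-hand block in the top row collapses because $\mathbf{B}-\mathbf{B}\mathbf{D}^{-1}\mathbf{D}=\mathbf{0}$.

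Next I would take determinants on both sides. The first factor on the left is block upper triangular with identity diagonal blocks, so its determinant equals $1$. The right-hand side is block lower triangular, and its determinant equals the product $\det(\mathbf{A}-\mathbf{B}\mathbf{D}^{-1}\mathbf{C})\,\det\mathbf{D}=\det(\mathbf{M}/\mathbf{D})\,\det\mathbf{D}$ by Definition~\ref{definition: Schur complement}. Combining these two observations yields Schur's formula \eqref{eq: Schur's formula}.

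The main non-trivial ingredient, which I would either cite or include as a short lemma, is the determinant formula for block-triangular matrices. A clean way to justify it is to further factorize the block-triangular matrix on the right as
\begin{align*}
\begin{pmatrix} \mathbf{A} - \mathbf{B}\mathbf{D}^{-1}\mathbf{C} & \mathbf{0} \\ \mathbf{C} & \mathbf{D} \end{pmatrix} = \begin{pmatrix} \mathbf{A} - \mathbf{B}\mathbf{D}^{-1}\mathbf{C} & \mathbf{0} \\ \mathbf{0} & \mathbf{D} \end{pmatrix} \begin{pmatrix} \mathbf{I} & \mathbf{0} \\ \mathbf{D}^{-1}\mathbf{C} & \mathbf{I} \end{pmatrix},
\end{align*}
and then invoke multiplicativity of determinants together with the obvious determinant of a block-diagonal matrix (via the Leibniz expansion, since any surviving permutation must map each block to itself). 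Apart from this bookkeeping, the argument is purely mechanical; no genuine obstacle arises once the invertibility of $\mathbf{D}$ is available.
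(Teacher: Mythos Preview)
Your argument is correct and is the standard proof of Schur's determinant formula. Note, however, that the paper does not actually supply a proof of this theorem: it is stated as a classical result with a citation to Schur's 1917 paper, and the exposition moves on immediately to the Cauchy Interlacing Theorem. So there is no ``paper's own proof'' to compare against; your block-triangularization argument is exactly the kind of self-contained justification one would add if a proof were desired.
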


\begin{theorem}[Cauchy Interlacing Theorem \cite{ParlettB1998}]
\label{thm:interlacing}
Let $\lambda_{1} \ge \ldots \ge \lambda_{n}$ be the eigenvalues of a symmetric matrix $\mathbf{M}$
and let $\mu_{1}\ge\ldots\ge\mu_{m}$ be the eigenvalues of a \emph{principal $m \times m$ submatrix
of $\mathbf{M}$} (i.e., a submatrix that is obtained by deleting the same set of rows and columns
from $M$) then $\lambda_{i}\ge\mu_{i}\ge\lambda_{n-m+i}$ for $i=1,\ldots,m$.
\end{theorem}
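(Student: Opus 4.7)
The plan is to derive both inequalities from the Courant--Fischer min-max characterization of the eigenvalues of a real symmetric matrix. First I would recall (or briefly justify via the spectral decomposition) that for every symmetric $\mathbf{M}\in\Reals^{n\times n}$ with eigenvalues $\lambda_{1}\ge\cdots\ge\lambda_{n}$ and every $i\in\{1,\ldots,n\}$,
\begin{align*}
\lambda_{i} \;=\; \max_{\substack{S\subseteq\Reals^{n}\\ \dim S = i}} \;\; \min_{\substack{x\in S\\ x\ne 0}} \frac{x^{T}\mathbf{M}x}{x^{T}x},
\end{align*}
together with the dual identity $\lambda_{i}=\min_{\dim S=n-i+1}\max_{0\ne x\in S}\tfrac{x^{T}\mathbf{M}x}{x^{T}x}$.

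Next I would set up the coordinate embedding. The principal $m\times m$ submatrix $\mathbf{N}$ is obtained by deleting the rows and columns indexed by some $J\subseteq\{1,\ldots,n\}$ with $|J|=n-m$. Let $\iota:\Reals^{m}\hookrightarrow\Reals^{n}$ be the linear inclusion that pads a vector with zeros in the positions indexed by $J$. The key identity is that for every $y\in\Reals^{m}$,
\begin{align*}
(\iota y)^{T}\mathbf{M}(\iota y) \;=\; y^{T}\mathbf{N}y, \qquad (\iota y)^{T}(\iota y) \;=\; y^{T}y,
\end{align*}
so Rayleigh quotients transfer verbatim from $\mathbf{N}$ to $\mathbf{M}$ along $\iota$; moreover $\iota$ is injective and hence preserves dimensions of subspaces.

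For the upper bound $\mu_{i}\le\lambda_{i}$, I would pick an $i$-dimensional subspace $S\subseteq\Reals^{m}$ realising the maximum in Courant--Fischer for $\mathbf{N}$ at index $i$. Then $\iota(S)\subseteq\Reals^{n}$ is an $i$-dimensional test subspace for $\mathbf{M}$, and the embedding identity gives
\begin{align*}
\lambda_{i} \;\ge\; \min_{\substack{x\in\iota(S)\\ x\ne 0}} \frac{x^{T}\mathbf{M}x}{x^{T}x} \;=\; \min_{\substack{y\in S\\ y\ne 0}} \frac{y^{T}\mathbf{N}y}{y^{T}y} \;=\; \mu_{i}.
\end{align*}
For the lower bound $\mu_{i}\ge\lambda_{n-m+i}$, I would replay the same argument on $-\mathbf{M}$, whose decreasing eigenvalues are $-\lambda_{n},\ldots,-\lambda_{1}$ and whose corresponding principal submatrix is $-\mathbf{N}$ with decreasing eigenvalues $-\mu_{m},\ldots,-\mu_{1}$. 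Applying the already-proved upper bound at index $m-i+1$ and reindexing converts an inequality of type $\mu\le\lambda$ into the required tail inequality $\mu_{i}\ge\lambda_{n-m+i}$.

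The argument is essentially forced once the min-max formula and the zero-padding embedding are available, so there is no deep obstacle. The only real care is index bookkeeping: in the lower-bound step one must track how $i$, $m$, and $n$ interact when passing from $\mathbf{M}$ to $-\mathbf{M}$ (or, equivalently, when invoking the dual min-max form) so as to land cleanly on $\lambda_{n-m+i}$ rather than an off-by-one shift. No multiplicity or diagonalisability issues arise because $\mathbf{M}$ is symmetric.
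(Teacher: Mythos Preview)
Your proof is correct and follows the standard route via the Courant--Fischer min--max principle; the index bookkeeping in the negation step checks out (applying the upper bound to $-\mathbf{M}$ at index $m-i+1$ gives exactly $\mu_i\ge\lambda_{n-m+i}$). Note, however, that the paper does not supply its own proof of this theorem: it is stated with a citation to Parlett's book and then used as a tool elsewhere, so there is no in-paper argument to compare against.
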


\begin{definition}[Completely Positive Matrices]
\label{definition: completely positive matrix}
A matrix $\A \in \Reals^{n \times n}$ is called {\em completely positive} if there exists a matrix
${\mathbf{B}} \in \Reals^{n \times m}$ whose all entries are nonnegative such that $\A = {\mathbf{B}} {\mathbf{B}}^\mathrm{T}$.
\end{definition}
A completely positive matrix is therefore symmetric and all its entries are nonnegative. The interested reader
is referred to the textbook \cite{ShakedBbook19} on completely positive matrices, also addressing their connections
to graph theory.

\begin{definition}[Positive Semidefinite Matrices]
\label{definition: positive semidefinite matrix}
A matrix $\A \in \Reals^{n \times n}$ is called {\em positive semidefinite} if $\A$ is symmetric, and
the inequality $\underline{x}^{\mathrm{T}} \A \underline{x} \geq 0$ holds for every column vector $\underline{x} \in \Reals^n$.
\end{definition}
\begin{proposition}
\label{proposition: positive semidefinite matrix}
A symmetric matrix is positive semidefinite if and only if one of the following conditions hold:
\begin{enumerate}
\item All its eigenvalues are nonnegative (real) numbers.
\item There exists a matrix ${\mathbf{B}} \in \Reals^{n \times m}$ such that $\A = {\mathbf{B}} {\mathbf{B}}^\mathrm{T}$.
\end{enumerate}
\end{proposition}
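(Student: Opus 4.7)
The plan is to show the chain of implications PSD $\Rightarrow$ (1) $\Rightarrow$ (2) $\Rightarrow$ PSD, relying on the spectral theorem for real symmetric matrices, which guarantees an orthogonal diagonalization $\A = \mathbf{U}\mathbf{\Lambda}\mathbf{U}^{\mathrm{T}}$ with $\mathbf{\Lambda}=\diag{\lambda_1,\ldots,\lambda_n}$ a diagonal matrix of (real) eigenvalues and $\mathbf{U}$ an orthogonal matrix whose columns are the corresponding orthonormal eigenvectors.

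For PSD $\Rightarrow$ (1), I would take any eigenpair $(\lambda,\underline{x})$ of $\A$ with $\|\underline{x}\|=1$ and compute $0\le \underline{x}^{\mathrm{T}}\A\underline{x}=\lambda\underline{x}^{\mathrm{T}}\underline{x}=\lambda$, so every eigenvalue is nonnegative. For (1) $\Rightarrow$ (2), since the $\lambda_i$ are nonnegative one can form the real diagonal matrix $\mathbf{\Lambda}^{1/2}=\diag{\sqrt{\lambda_1},\ldots,\sqrt{\lambda_n}}$ and set $\mathbf{B}=\mathbf{U}\mathbf{\Lambda}^{1/2}\in\Reals^{n\times n}$; then
\begin{align}
\mathbf{B}\mathbf{B}^{\mathrm{T}}=\mathbf{U}\mathbf{\Lambda}^{1/2}(\mathbf{\Lambda}^{1/2})^{\mathrm{T}}\mathbf{U}^{\mathrm{T}}=\mathbf{U}\mathbf{\Lambda}\mathbf{U}^{\mathrm{T}}=\A,
\end{align}
which gives (2) with $m=n$. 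Finally, for (2) $\Rightarrow$ PSD, I observe that $\A=\mathbf{B}\mathbf{B}^{\mathrm{T}}$ is automatically symmetric, and for every $\underline{x}\in\Reals^n$,
\begin{align}
\underline{x}^{\mathrm{T}}\A\underline{x}=\underline{x}^{\mathrm{T}}\mathbf{B}\mathbf{B}^{\mathrm{T}}\underline{x}=(\mathbf{B}^{\mathrm{T}}\underline{x})^{\mathrm{T}}(\mathbf{B}^{\mathrm{T}}\underline{x})=\|\mathbf{B}^{\mathrm{T}}\underline{x}\|_2^2\ge 0,
\end{align}
establishing positive semidefiniteness by Definition~\ref{definition: positive semidefinite matrix}.

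There is no real obstacle in this proof; the only substantive input is the spectral theorem for real symmetric matrices, which underwrites both the reality of the eigenvalues and the existence of a real orthonormal eigenbasis needed to define $\mathbf{\Lambda}^{1/2}$. One minor stylistic care point is that the factorization in (2) is \emph{not} unique (for any orthogonal $\mathbf{Q}$ of compatible size, $\mathbf{B}\mathbf{Q}$ works equally well), so the proof should present a canonical choice rather than claiming uniqueness, and should also note that $m$ may be taken as small as the rank of $\A$ by dropping zero columns of $\mathbf{U}\mathbf{\Lambda}^{1/2}$.
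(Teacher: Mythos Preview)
Your proof is correct and is the standard cycle-of-implications argument via the spectral theorem. The paper itself states Proposition~\ref{proposition: positive semidefinite matrix} without proof, treating it as a well-known linear algebra fact in the preliminaries section, so there is no paper proof to compare against; your argument would serve as a complete justification.
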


The next result readily follows.
\begin{corollary}
\label{corollary: c.p. yields p.s.}
A completely positive matrix is positive semidefinite.
\end{corollary}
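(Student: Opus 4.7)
The proof is essentially an immediate consequence of comparing Definition~\ref{definition: completely positive matrix} with the factorization criterion in Proposition~\ref{proposition: positive semidefinite matrix}, so the plan is mostly about organizing the one-line observation into a clean argument.

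The plan is to start from the definition: let $\mathbf{A} \in \Reals^{n\times n}$ be completely positive. By Definition~\ref{definition: completely positive matrix}, there exists a matrix $\mathbf{B} \in \Reals^{n\times m}$, whose entries happen to be nonnegative, such that $\mathbf{A} = \mathbf{B}\mathbf{B}^{\mathrm{T}}$. I would then simply observe that this factorization, in isolation, satisfies the hypothesis of condition~(2) in Proposition~\ref{proposition: positive semidefinite matrix}, because that condition places no sign restriction on the entries of $\mathbf{B}$.

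Next I would invoke Proposition~\ref{proposition: positive semidefinite matrix} to conclude that $\mathbf{A}$ is positive semidefinite. If a more self-contained argument is preferred, one can alternatively verify the defining inequality directly: for every $\underline{x} \in \Reals^{n}$,
\begin{align}
\underline{x}^{\mathrm{T}} \mathbf{A} \underline{x}
= \underline{x}^{\mathrm{T}} \mathbf{B} \mathbf{B}^{\mathrm{T}} \underline{x}
= \bigl\| \mathbf{B}^{\mathrm{T}} \underline{x} \bigr\|_{2}^{2} \geq 0,
\end{align}
together with the symmetry of $\mathbf{A}$ (which follows from $\mathbf{A}^{\mathrm{T}} = (\mathbf{B}\mathbf{B}^{\mathrm{T}})^{\mathrm{T}} = \mathbf{B}\mathbf{B}^{\mathrm{T}} = \mathbf{A}$), thereby matching Definition~\ref{definition: positive semidefinite matrix}.

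There is really no obstacle here; the subtlety, if any, is merely to emphasize that complete positivity is \emph{strictly stronger} than positive semidefiniteness, since the converse fails (many positive semidefinite matrices admit no factorization $\mathbf{B}\mathbf{B}^{\mathrm{T}}$ with $\mathbf{B}$ entrywise nonnegative, e.g.\ positive semidefinite matrices with negative off-diagonal entries). I would mention this one-sided nature of the implication in a short concluding remark to make the logical direction of the corollary unambiguous.
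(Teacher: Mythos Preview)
Your proof is correct and matches the paper's approach: the paper states that the corollary ``readily follows'' without giving an explicit proof, and your argument via Definition~\ref{definition: completely positive matrix} and condition~(2) of Proposition~\ref{proposition: positive semidefinite matrix} is precisely the intended one-line justification.
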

\begin{remark}
\label{remark: matrix of order 5}
Regarding Corollary~\ref{corollary: c.p. yields p.s.}, it is natural to ask whether, under certain conditions, a positive
semidefinite matrix whose all entries are nonnegative is also completely positive. By \cite[Theorem~3.35]{ShakedBbook19}, this
holds for all square matrices of order $n \leq 4$. Moreover, \cite[Example~3.45]{ShakedBbook19} also presents an explicit example
of a matrix of order~5 that is positive semidefinite with all nonnegative entries but is not completely positive.
\end{remark}

\subsection{Graph Theory Preliminaries}
\label{subsection: Graph Theory Preliminaries}

A graph $\Gr{G} = (\V{\Gr{G}}, \E{\Gr{G}})$ forms a pair
where $\V{\Gr{G}}$ is a set of vertices and $\E{\Gr{G}}\subseteq \V{\Gr{G}} \times \V{\Gr{G}}$
is a set of edges.

In this paper all the graphs are assumed to be
\begin{itemize}
	\item {\em finite} - $\bigcard{\V{\Gr{G}}}<\infty$,
	\item {\em simple} - $\Gr{G}$ has no parallel edges and no self loops,
	\item {\em undirected} - the edges in $\Gr{G}$ are undirected.
\end{itemize}

We use the following terminology:
\begin{itemize}
\item The {\em degree}, $d(v)$, of a vertex $v\in \V{\Gr{G}}$ is the number of vertices in $\Gr{G}$ that are adjacent to $v$.
\item A {\em walk} in a graph $\Gr{G}$ is a sequence of vertices in $\Gr{G}$, where every two consecutive vertices in the sequence
are adjacent in $\Gr{G}$.
\item A {\em path} in a graph is a walk with no repeated vertices.
\item A {\em cycle} $\Cycle$ is a closed walk, obtained by adding an edge to a path in $\Gr{G}$.
\item The {\em length of a path or a cycle} is equal to its number of edges. A {\em triangle} is a cycle of length~3.
\item A {\em connected graph} is a graph in which every pair of distinct vertices is connected by a path.
\item The {\em distance} between two vertices in a connected graph is the length of a shortest path that connects them.
\item The {\em diameter} of a connected graph is the maximum distance between any two vertices in the graph, and the diameter
of a disconnected graph is set to be infinity.
\item The {\em connected component} of a vertex $v \in \V{\Gr{G}}$ is the subgraph whose vertex set $\set{U} \subseteq \V{\Gr{G}}$
consists of all the vertices that are connected to $v$ by any path (including the vertex $v$ itself), and its edge set consists of
all the edges in $\E{\Gr{G}}$ whose two endpoints are contained in the vertex set $\set{U}$.
\item A {\em tree} is a connected graph that has no cycles (i.e., it is a connected and {\em acyclic} graph).
\item A {\em spanning tree} of a connected graph $\Gr{G}$ is a tree with the vertex set $\V{\Gr{G}}$ and some of the edges of~$\Gr{G}$.
\item A graph is {\em regular} if all its vertices have the same degree.
\item A {\em $d$-regular} graph is a regular graph whose all vertices have degree $d$.
\item A {\em bipartite graph} is a graph $\Gr{G}$ whose vertex set is a disjoint union of two subsets such that no two vertices in
the same subset are adjacent.
\item A {\em complete bipartite graph} is a bipartite graph where every vertex in each of the two partite sets is adjacent to all
the vertices in the other partite set.
\end{itemize}

\begin{definition}[Complement of a graph]
The \emph{complement} of a graph $\Gr{G}$, denoted by $\CGr{G}$, is a graph whose vertex set is $\V{\Gr{G}}$, and its edge set
is the complement set $\CGr{\E{\Gr{G}}}$.
Every vertex in $\V{\Gr{G}}$ is nonadjacent to itself in $\Gr{G}$ and $\CGr{G}$, so $\{i,j\} \in \E{\CGr{G}}$ if and only if
$\{i, j\} \notin \E{\Gr{G}}$ with $i \neq j$.
\end{definition}

\begin{definition}[Disjoint union of graphs]
\label{def:disjoint_union_graphs}
Let $\Gr{G}_1, \ldots, \Gr{G}_k$ be graphs.
If the vertex sets in these graphs are not pairwise disjoint,
let $\Gr{G}'_2, \ldots, \Gr{G}'_k$ be isomorphic copies of
$\Gr{G}_2, \ldots, \Gr{G}_k$, respectively, such that none
of the graphs $\Gr{G}_1, \Gr{G}'_2, \ldots \Gr{G}'_k$ have
a vertex in common. The disjoint union of these graphs,
denoted by $\Gr{G} = \Gr{G}_1 \DU \ldots \DU \Gr{G}_k$,
is a graph whose vertex and edge sets are equal to the
disjoint unions of the vertex and edge sets of
$\Gr{G}_1, \Gr{G}'_2, \ldots, \Gr{G}'_k$
($\Gr{G}$ is defined up to an isomorphism).
\end{definition}

\begin{definition}
Let $k\in \naturals$ and let $\Gr{G}$ be a graph. Define
$k \Gr{G} = \Gr{G} \DU \Gr{G} \DU \ldots \DU \Gr{G}$
to be the disjoint union of $k$ copies of $\Gr{G}$.
\end{definition}

\begin{definition}[Join of graphs]
\label{definition: join of graphs}
Let $\Gr{G}$ and $\Gr{H}$ be two graphs with disjoint vertex sets.
The join of $\Gr{G}$ and $\Gr{H}$ is defined to be their disjoint union,
together with all the edges that connect the vertices in $\Gr{G}$ with
the vertices in $\Gr{H}$. It is denoted by $\Gr{G} \vee \Gr{H}$.
\end{definition}

\begin{definition}[Induced subgraphs]
\label{definition: Induced subgraphs}
Let $\Gr{G}=(\Vertex,\Edge)$ be a graph, and let $\set{U} \subseteq \Vertex$. The \emph{subgraph of $\Gr{G}$ induced
by $\set{U}$} is the graph obtained by the vertices in $\set{U}$ and the edges in $\Gr{G}$ that has both ends on $\set{U}$.
We say that $\Gr{H}$ is an \emph{induced subgraph of $\Gr{G}$}, if it is induced by some $\set{U} \subseteq \Vertex$.
\end{definition}

\begin{definition}[Strongly regular graphs]
\label{definition: strongly regular graphs}
A regular graph $\Gr{G}$ that is neither complete nor empty is called a
{\em strongly regular} graph with parameters $(n,d,\lambda,\mu)$,
where $\lambda$ and $\mu$ are nonnegative integers, if the following conditions hold:
\begin{enumerate}[(1)]
\item \label{Item 1 - definition of SRG}
$\Gr{G}$ is a $d$-regular graph on $n$ vertices.
\item \label{Item 2 - definition of SRG}
Every two adjacent vertices in $\Gr{G}$ have exactly $\lambda$ common neighbors.
\item \label{Item 3 - definition of SRG}
Every two distinct and nonadjacent vertices in $\Gr{G}$ have exactly $\mu$ common neighbors.
\end{enumerate}
The family of strongly regular graphs with these four specified parameters is denoted by $\srg{n}{d}{\lambda}{\mu}$.
It is important to note that a family of the form $\srg{n}{d}{\lambda}{\mu}$ may contain multiple nonisomorphic strongly
regular graphs. Throughout this work, we refer to a strongly regular graph as $\srg{n}{d}{\lambda}{\mu}$ if it belongs to
this family.
\end{definition}

\begin{proposition}[Feasible parameter vectors of strongly regular graphs]
\label{proposition: necessary condition for the parameter vector of SRGs}
The four parameters of a strongly regular graph $\srg{n}{d}{\lambda}{\mu}$ satisfy the equality
\begin{align}
\label{eq: necessary condition for the parameter vector of SRGs}
(n-d-1)\mu = d(d-\lambda-1).
\end{align}
\end{proposition}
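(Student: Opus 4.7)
The plan is to prove the identity by a double counting argument. I will fix an arbitrary vertex $v \in \V{\Gr{G}}$ and count, in two different ways, the number of ordered pairs $(u,w)$ of distinct vertices such that $u$ is adjacent to $v$, $w$ is not adjacent to $v$ (and $w \ne v$), and $u$ is adjacent to $w$. Equivalently, I am counting the edges between the neighborhood $\Ngb{v}$ and its complement $\V{\Gr{G}} \setminus (\Ngb{v} \cup \{v\})$, with the endpoints ordered so that the first lies in $\Ngb{v}$.

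First I would count by summing over the neighbors $u$ of $v$. Since $\Gr{G}$ is $d$-regular, there are $d$ such neighbors. For a fixed $u \in \Ngb{v}$, the vertex $u$ has exactly $d$ neighbors in total. Among these neighbors, the vertex $v$ itself accounts for one, and by condition~\eqref{Item 2 - definition of SRG} of Definition~\ref{definition: strongly regular graphs}, exactly $\lambda$ of the remaining neighbors of $u$ are also neighbors of $v$ (these contribute to $\Ngb{v}$, not to its complement). Therefore $u$ has exactly $d - \lambda - 1$ neighbors in $\V{\Gr{G}} \setminus (\Ngb{v} \cup \{v\})$. Summing over all $d$ choices of $u$, the total count is $d(d-\lambda-1)$.

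Next I would count the same quantity by summing over the non-neighbors $w$ of $v$ (with $w \ne v$). There are $n - d - 1$ such vertices, and for each of them, condition~\eqref{Item 3 - definition of SRG} of Definition~\ref{definition: strongly regular graphs} gives exactly $\mu$ common neighbors of $w$ and $v$, i.e., exactly $\mu$ vertices $u \in \Ngb{v}$ with $u$ adjacent to $w$. Summing yields $(n-d-1)\mu$, and equating the two counts gives \eqref{eq: necessary condition for the parameter vector of SRGs}.

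There is no real obstacle here, as the argument is a clean double count relying only on the three defining properties of a strongly regular graph; the only point requiring mild care is the bookkeeping of $v$ itself and of the $\lambda$ common neighbors when enumerating the neighbors of $u$, so that the $d - \lambda - 1$ count is derived without off-by-one errors.
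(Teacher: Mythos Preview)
Your double-counting argument is correct and is the standard proof of this identity. The paper itself does not supply a proof of this proposition (it is stated without proof and followed directly by Remark~\ref{remark: necessary condition for the parameter vector of SRGs}), so there is nothing to compare against; your argument is exactly the classical one found in standard references such as \cite{BrouwerH2011} or \cite{GodsilR2001}.
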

\begin{remark}
\label{remark: necessary condition for the parameter vector of SRGs}
Equality~\eqref{eq: necessary condition for the parameter vector of SRGs} provides a necessary, but
not sufficient, condition for the existence of a strongly regular graph $\srg{n}{d}{\lambda}{\mu}$.
For example, as shown in \cite{Haemers93}, no $(76,21,2,7)$ strongly regular graph exists, even though
the condition $(n-d-1)\mu = 378 = d(d-\lambda-1)$ is satisfied in this case.
\end{remark}

\begin{notation}[Classes of graphs]
\noindent

\begin{itemize}
\item $\CoG{n}$ is the complete graph on $n$ vertices.
\item $\PathG{n}$ is the path graph on $n$ vertices.
\item $\CoBG{\ell}{r}$ is the complete bipartite graph whose degrees of partite sets are $\ell$ and $r$ (with possible equality between $\ell$ and $r$).
\item $\SG{n}$ is the star graph on $n$ vertices $\SG{n} = \CoBG{1}{n-1}$.
\end{itemize}
\end{notation}

\begin{definition}[Integer-valued functions of a graph]
\noindent
\begin{itemize}
\item Let $k \in \naturals $. A \emph{proper} $k$-\emph{coloring} of a graph $\Gr{G}$
is a function $c \colon \V{\Gr{G}} \to \{1,2,...,k\}$, where $c(v) \ne c(u)$ for every
$\{u,v\}\in \E{\Gr{G}}$. The \emph{chromatic number} of $\Gr{G}$, denoted by $\chrnum{\Gr{G}}$,
is the smallest $k$ for which there exists a proper $k$-coloring of $\Gr{G}$.
\item A \emph{clique} in a graph $\Gr{G}$ is a subset of vertices $U\subseteq \V{\Gr{G}}$
where the subgraph induced by $U$ is a complete graph. The \emph{clique number} of $\Gr{G}$,
denoted by $\omega(\Gr{G})$, is the largest size of a clique in $\Gr{G}$; i.e., it is the
largest order of an induced complete subgraph in $\Gr{G}$.
\item An \emph{independent set} in a graph $\Gr{G}$ is a subset of vertices $U\subseteq \V{\Gr{G}}$,
where $\{u,v\} \notin \E{\Gr{G}}$ for every $u,v \in U$. The \emph{independence number} of $\Gr{G}$,
denoted by $\indnum{\Gr{G}}$, is the largest size of an independent set in $\Gr{G}$.
\end{itemize}
\end{definition}

\begin{definition}[Orthogonal and orthonormal representations of a graph]
\label{def: orthogonal representation}
Let $\Gr{G}$ be a finite, simple, and undirected graph, and let $d \in \naturals$.
\begin{itemize}
\item
An {\em orthogonal representation} of the graph $\Gr{G}$ in the $d$-dimensional
Euclidean space $\Reals^d$ assigns to each vertex $i \in \V{\Gr{G}}$
a nonzero vector ${\bf{u}}_i \in \Reals^d$ such that
${\bf{u}}_i^{\mathrm{T}} {\bf{u}}_j = 0$ for every $\{i, j\} \notin \E{\Gr{G}}$
with $i \neq j$. In other words, for every two distinct and nonadjacent vertices in the graph,
their assigned nonzero vectors should be orthogonal in $\Reals^d$.
\item
An {\em orthonormal representation} of $\Gr{G}$ is additionally represented by unit vectors,
i.e., $\| {\bf{u}}_i \| = 1$ for all $i \in \V{\Gr{G}}$.
\item
In an orthogonal (orthonormal) representation of $\Gr{G}$, every two nonadjacent vertices in $\Gr{G}$
are mapped (by definition) into orthogonal (orthonormal) vectors, but adjacent vertices may not necessarily
be mapped into nonorthogonal vectors. If ${\bf{u}}_i^{\mathrm{T}} {\bf{u}}_j \neq 0$ for all
$\{i, j\} \in \E{\Gr{G}}$, then such a representation of $\Gr{G}$ is called {\em faithful}.
\end{itemize}
\end{definition}

\begin{definition}[Lov\'{a}sz $\vartheta$-function \cite{Lovasz79_IT}]
\label{definition: Lovasz theta function}
Let $\Gr{G}$ be a finite, simple, and undirected graph. Then, the {\em Lov\'{a}sz
$\vartheta$-function of $\Gr{G}$} is defined as
\begin{eqnarray}
\label{eq: Lovasz theta function}
\vartheta(\Gr{G}) \triangleq \min_{{\bf{c}}, \{{\bf{u}}_i\}} \, \max_{i \in \V{\Gr{G}}} \,
\frac1{\bigl( {\bf{c}}^{\mathrm{T}} {\bf{u}}_i \bigr)^2} \, ,
\end{eqnarray}
where the minimum on the right-hand side of \eqref{eq: Lovasz theta function} is taken over all
unit vectors ${\bf{c}}$ and all orthonormal representations $\{{\bf{u}}_i: i \in \V{\Gr{G}} \}$
of $\Gr{G}$. In \eqref{eq: Lovasz theta function}, it suffices to consider orthonormal representations
in a space of dimension at most $n = \card{\V{\Gr{G}}}$.
\end{definition}

The Lov\'{a}sz $\vartheta$-function of a graph $\Gr{G}$ can be calculated by solving (numerically) a convex optimization problem.
Let ${\bf{A}} = (A_{i,j})$ be the $n \times n$ adjacency matrix of $\Gr{G}$ with
$n \triangleq \card{\V{\Gr{G}}}$. The Lov\'{a}sz
$\vartheta$-function $\vartheta(\Gr{G})$ can be expressed as the solution of the following semidefinite programming (SDP) problem:
\vspace*{0.2cm}
\begin{eqnarray}
\label{eq: SDP problem - Lovasz theta-function}
\mbox{\fbox{$
\begin{array}{l}
\text{maximize} \; \; \mathrm{Tr}({\bf{B}} \J{n})  \\
\text{subject to} \\
\begin{cases}
{\bf{B}} \succeq 0, \\
\mathrm{Tr}({\bf{B}}) = 1, \\
A_{i,j} = 1  \; \Rightarrow \;  B_{i,j} = 0, \quad i,j \in \OneTo{n}.
\end{cases}
\end{array}$}}
\end{eqnarray}

\vspace*{0.1cm}
There exist efficient convex optimization algorithms (e.g., interior-point methods) to compute
$\vartheta(\Gr{G})$, for every graph $\Gr{G}$, with a precision of $r$ decimal digits, and a
computational complexity that is polynomial in $n$ and $r$. The reader is referred to
Section~2.5 of \cite{Sason2024} for an account of the various interesting properties of
the Lov\'{a}sz $\vartheta$-function. Among these properties, the sandwich theorem states that
for every graph $\Gr{G}$, the following inequalities hold:
\begin{align}
\label{eq1: sandwich}
\indnum{\Gr{G}} \leq \vartheta(\Gr{G}) \leq \chrnum{\CGr{G}}, \\
\label{eq2: sandwich}
\clnum{\Gr{G}} \leq \vartheta(\CGr{G}) \leq \chrnum{\Gr{G}}.
\end{align}
The usefulness of \eqref{eq1: sandwich} and \eqref{eq2: sandwich} lies in the fact that while the
independence, clique, and chromatic numbers of a graph are NP-hard to compute, the Lov\'{a}sz
$\vartheta$-function can be efficiently computed as a bound in these inequalities by solving the
convex optimization problem in \eqref{eq: SDP problem - Lovasz theta-function}.

\bigskip
\subsection{Matrices associated with a graph}
\label{subsection: Matrices associated with a graph}

\subsubsection{Four matrices associated with a graph}

\noindent

\vspace*{0.1cm}
Let $\Gr{G}=(\Vertex,\Edge)$ be a graph with vertices $\left\{ v_{1},...,v_{n}\right\} $.
There are several matrices associated with $\Gr{G}$. In this survey, we consider four of them,
all are symmetric matrices in $\mathbb{R}^{n\times n}$: the \emph{adjacency matrix} ($\A$),
\emph{Laplacian matrix} ($LM$), \emph{signless Laplacian matrix} ($\Q$), and the
\emph{normialized Laplacian matrix} (${\bf{\mathcal{L}}}$).

\begin{enumerate}
\item The adjacency matrix of a graph $\Gr{G}$, denoted by $\A = \A(\Gr{G})$, has the binary-valued entries
\begin{align}
\label{eq: adjacency matrix}
(\A(\Gr{G}))_{i,j}=
\begin{cases}
1 & \mbox{if} \, \{v_i,v_j\} \in \E{\Gr{G}}, \\
0 & \mbox{if} \, \{v_i,v_j\} \notin \E{\Gr{G}}.
\end{cases}
\end{align}
\item The Laplacian matrix of a graph $\Gr{G}$, denoted by $\LM = \LM(\Gr{G})$, is given by
\begin{align}
\LM(\Gr{G}) = \D(\Gr{G})-\A(\Gr{G}),
\end{align}
where
\begin{align}
\D(\Gr{G}) = \diag{d(v_1), d(v_2), \ldots ,d(v_n)}
\end{align}
is the {\em diagonal matrix} whose entries in the principal diagonal are the degrees of the $n$ vertices of $\Gr{G}$.
\item The signless Laplacian martix of a graph $\Gr{G}$, denoted by $\Q = \Q(\Gr{G})$, is given by
\begin{align}
\label{eq: signless Laplacian martix}
\Q(\Gr{G}) = \D(\Gr{G})+\A(\Gr{G}).
\end{align}
\item The normalized Laplacian matrix of a graph $\Gr{G}$, denoted by $\mathcal{L}(\Gr{G})$, is given by
\begin{align}
\label{eq: normalized Laplacian matrix}
\mathcal{L}(\Gr{G}) = \D^{-\frac12}(\Gr{G}) \, \LM(\Gr{G}) \,  \D^{-\frac12}(\Gr{G}),
\end{align}
where
\begin{align}
\D^{-\frac12}(\Gr{G}) = \diag{d^{-\frac12}(v_1), d^{-\frac12}(v_2), \ldots, d^{-\frac12}(v_n)},
\end{align}
with the convention that if $v \in \V{\Gr{G}}$ is an isolated vertex in $\Gr{G}$ (i.e., $d(v)=0$), then
$d^{-\frac12}(v) = 0$.	
The entries of ${\bf{\mathcal{L}}} = (\mathcal{L}_{i,j})$ are given by
\begin{align}
\mathcal{L}_{i,j} =
\begin{dcases}
\begin{array}{cl}
1, \quad & \mbox{if $i=j$ and $d(v_i) \neq 0$,} \\[0.2cm]
-\dfrac{1}{\sqrt{d(v_i) \, d(v_j)}}, \quad & \mbox{if $i \neq j$ and $\{v_i,v_j\} \in \E{\Gr{G}}$}, \\[0.5cm]
0, \quad & \mbox{otherwise}.
\end{array}
\end{dcases}
\end{align}	
\end{enumerate}

In the continuation of this section, we also occasionally refer to two other matrices that are associated with undirected graphs.
\begin{definition}
\label{definition: incidence matrix}
Let $\Gr{G}$ be a graph with $n$ vertices and $m$ edges. The {\em incidence matrix} of $\Gr{G}$, denoted by ${\mathbf{B}} = {\mathbf{B}}(\Gr{G})$
is an $n \times m$  matrix with binary entries, defined as follows:
\begin{align}
B_{i,j} =
\begin{cases}
1 & \text{if vertex \(v_i \in \V{\Gr{G}}\) is incident to edge \(e_j \in \E{\Gr{G}}\)}, \\
0 & \text{if vertex \(v_i \in \V{\Gr{G}}\) is not incident to edge \(e_j \in \E{\Gr{G}}\)}.
\end{cases}
\end{align}
For an undirected graph, each edge $e_j$ connects two vertices $v_i$ and $v_k$, and the corresponding column in
$\mathbf{B}$ has exactly two $1$'s, one for each vertex.
\end{definition}

\begin{definition}
\label{definition: oriented incidence matrix}
Let $\Gr{G}$ be a graph with $n$ vertices and $m$ edges. An {\em oriented incidence matrix} of $\Gr{G}$, denoted by ${\mathbf{N}} = {\mathbf{N}}(\Gr{G})$
is an $n \times m$  matrix with ternary entries from $\{-1, 0, 1\}$, defined as follows. One first selects an arbitrary orientation to each edge in $\Gr{G}$,
and then define
\begin{align}
N_{i,j} =
\begin{cases}
-1 & \text{if vertex \(v_i \in \V{\Gr{G}}\) is the tail (starting vertex) of edge \(e_j \in \E{\Gr{G}}\)}, \\
+1 & \text{if vertex \(v_i \in \V{\Gr{G}}\) is the head (ending vertex) of edge \(e_j \in \E{\Gr{G}}\)}, \\
\hspace*{0.2cm} 0 & \text{if vertex \(v_i \in \V{\Gr{G}}\) is not incident to edge \(e_j \in \E{\Gr{G}}\)}.
\end{cases}
\end{align}
Consequently, each column of $\mathbf{N}$ contains exactly one entry equal to 1 and one entry equal to $-1$, representing the head and tail of the corresponding
oriented edge in the graph, respectively, with all other entries in the column being zeros.
\end{definition}

For $X\in \left\{ A,L,Q,\mathcal{L} \right\}$, the \emph{$X$-spectrum} of a graph $\Gr{G}$, $\sigma_X(G)$, is the multiset of the eigenvalues
of $X(G)$. We denote the elements of the multiset of eigenvalues of $\{\A, \LM, \Q, \mathcal{L}\}$, respectively, by
\begin{align}
\label{eq2:26.09.23}
& \Eigval{1}{\Gr{G}} \geq \Eigval{2}{\Gr{G}} \geq \ldots \geq \Eigval{n}{\Gr{G}}, \\
\label{eq3:26.09.23}
& \mu_1(\Gr{G}) \leq \mu_2(\Gr{G}) \leq \ldots \leq \mu_n(\Gr{G}), \\
\label{eq4:26.09.23}
& \nu_1(\Gr{G}) \geq \nu_2(\Gr{G}) \geq \ldots \geq \nu_n(\Gr{G}), \\
\label{eq5:26.09.23}
& \delta_1(\Gr{G}) \leq \delta_2(\Gr{G}) \leq \ldots \leq \delta_n(\Gr{G}).
\end{align}

\begin{example}
Consider the complete bipartite graph $\Gr{G} = \CoBG{2}{3}$ with the adjacency matrix
$$\A(\Gr{G})=
\begin{pmatrix}
       {\bf{0}}_{2,2} & \J{2,3} \\
       \J{3,2} & {\bf{0}}_{3,3}
\end{pmatrix}.$$
The spectra of $\Gr{G}$ can be verified to be given as follows:
\begin{enumerate}
\item
The $\A$-spectrum of $\Gr{G}$ is
\begin{align}
\sigma_{\A}(\Gr{G})=\Bigl\{ -\sqrt{6}, [0]^{3}, \sqrt{6}\Bigr\},
\end{align}
with the notation that $[\lambda]^m$ means that $\lambda$ is an eigenvalue with multiplicity $m$.

\item The $\LM$-spectrum of $\Gr{G}$ is
\begin{align}
\sigma_{\LM}(\Gr{G})=\Bigl\{ 0, [2]^{2}, 3, 5\Bigr\} .
\end{align}
		
\item The $\Q$-spectrum of $\Gr{G}$ is
\begin{align}
\sigma_{\Q}(\Gr{G})=\Bigl\{ 0, [2]^{2}, 3, 5\Bigr\} .
\end{align}

\item The ${\bf{\mathcal{L}}}$-spectrum of $\Gr{G}$ is
\begin{align}
\sigma_{{\bf{\mathcal{L}}}}(\Gr{G})=\Bigl\{ 0, [1]^{3}, 2 \Bigr\} .
\end{align}
\end{enumerate}
\end{example}

\begin{remark}
If $\Gr{H}$ is an induced subgraph of a graph $\Gr{G}$, then $\A(\Gr{H})$ is a principal submatrix of $A(\Gr{G})$.
However, since the degrees of the remaining vertices are affected by the removal of vertices when forming the induced
subgraph $\Gr{H}$ from the graph $\Gr{G}$, this property does not hold for the other three associated matrices discussed
in this paper (namely, the Laplacian, signless Laplacian, and normalized Laplacian matrices).
\end{remark}

\begin{definition}
Let $\Gr{G}$ be a graph, and let $\CGr{G}$ be the complement graph of $\Gr{G}$. Define the following matrices:
\begin{enumerate}
\item $\overline{\A}(\Gr{G}) = \A(\overline{\Gr{G}})$.
\item $\overline{\LM}(\Gr{G}) = \LM(\overline{\Gr{G}})$.
\item $\overline{\Q}(\Gr{G}) = \Q(\overline{\Gr{G}})$.
\item $\overline{{\bf{\mathcal{L}}}}(\Gr{G}) = {\bf{\mathcal{L}}}(\overline{\Gr{G}})$.
\end{enumerate}
\end{definition}

\begin{definition}
Let $\mathcal{X} \subseteq \Gmats$. The $\mathcal{X}$-spectrum of a graph $\Gr{G}$ is a list with $\sigma_X(\Gr{G})$
for every $X\in \mathcal{X}$.
\end{definition}

Observe that if $\mathcal{X} = \{ X \}$ is a singleton, then the $\mathcal{X}$ spectrum is equal to the $X$-spectrum.

We now describe some important applications of the four matrices.

\subsubsection{Properties of the adjacency matrix}
\begin{theorem}[Number of walks of a given length between two fixed vertices]
\label{thm: number of walks of a given length}
Let $\Gr{G} = (\Vertex, \Edge)$ be a graph with a vertex set $\Vertex = \V{\Gr{G}} = \{ v_1, \ldots, v_n\}$, and
let $\A = \A(\Gr{G})$ be the adjacency matrix of $\Gr{G}$.
Then, the number of walks of length $\ell$, with the fixed endpoints $v_i$ and $v_j$, is equal to $(\A^\ell)_{i,j}$.
\end{theorem}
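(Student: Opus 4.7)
The plan is to prove this by induction on the walk length $\ell$, exploiting the fact that matrix multiplication naturally encodes the concatenation of walks.

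For the base case, I would take $\ell = 1$. A walk of length~1 between $v_i$ and $v_j$ is simply an edge joining them, so the number of such walks equals $1$ if $\{v_i, v_j\} \in \E{\Gr{G}}$ and $0$ otherwise. This matches $\A_{i,j} = (\A^1)_{i,j}$ by the definition of the adjacency matrix in \eqref{eq: adjacency matrix}. (Alternatively, one could start with $\ell = 0$, where the only walk of length $0$ from $v_i$ to $v_j$ exists when $i = j$ and is the trivial walk, matching $(\A^0)_{i,j} = (\I{n})_{i,j}$.)

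For the inductive step, assume the statement holds for $\ell$. Any walk of length $\ell+1$ from $v_i$ to $v_j$ decomposes uniquely as a walk of length $\ell$ from $v_i$ to some intermediate vertex $v_k$, followed by an edge from $v_k$ to $v_j$. Summing over all possible intermediate vertices $v_k$, the total number of walks of length $\ell+1$ from $v_i$ to $v_j$ equals
\begin{align}
\sum_{k=1}^{n} (\A^\ell)_{i,k} \, \A_{k,j} = (\A^{\ell} \A)_{i,j} = (\A^{\ell+1})_{i,j},
\end{align}
where the first equality uses the inductive hypothesis together with the fact that $\A_{k,j} \in \{0,1\}$ acts as an indicator for whether $v_k$ is adjacent to $v_j$, and the second equality is the definition of matrix multiplication. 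This completes the induction.

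There is no real obstacle here — the result is essentially a restatement of how matrix products accumulate contributions along paths. The only subtle point worth flagging is the uniqueness of the decomposition of a walk into its prefix of length $\ell$ and its final edge, which is what ensures we are counting without double-counting or omission and justifies the clean application of the inductive hypothesis.
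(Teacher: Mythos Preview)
Your proof is correct and is the standard inductive argument for this classical result. The paper itself states Theorem~\ref{thm: number of walks of a given length} without proof, treating it as a well-known fact from spectral graph theory, so there is no paper proof to compare against; your argument is exactly the textbook one.
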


\begin{corollary}[Number of closed walks of a given length]
\label{corollary: Number of Closed Walks of a Given Length}
Let $\Gr{G} = (\Vertex, \Edge)$ be a simple undirected graph on $n$ vertices with an adjacency matrix
$\A = \A(\Gr{G})$, and let its spectrum (with respect to $\A$) be given by $\{\lambda_j\}_{j=1}^n$.
Then, for all $\ell \in \naturals$, the number of closed walks of length $\ell$ in $\Gr{G}$ is equal
to $\sum_{j=1}^n \lambda_j^{\ell}$.
\end{corollary}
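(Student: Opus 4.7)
The plan is to reduce the counting problem to a trace computation and then evaluate the trace spectrally. First, I would invoke Theorem~\ref{thm: number of walks of a given length}, which identifies the $(i,j)$-entry of $\A^\ell$ with the number of walks of length $\ell$ from $v_i$ to $v_j$. Specializing to $j=i$, the diagonal entry $(\A^\ell)_{i,i}$ counts the number of closed walks of length $\ell$ rooted at $v_i$.

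Next, I would sum these diagonal entries over all vertices. By the convention that a closed walk of length $\ell$ is distinguished by its starting vertex (as is implicit in using the walk-counting formula vertex-by-vertex), the total number of closed walks of length $\ell$ in $\Gr{G}$ is
\begin{align}
\sum_{i=1}^n (\A^\ell)_{i,i} = \mathrm{Tr}(\A^\ell).
\end{align}

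Finally, I would compute this trace using the spectral decomposition of $\A$. Because $\Gr{G}$ is simple and undirected, $\A$ is a real symmetric matrix, hence orthogonally diagonalizable with real eigenvalues $\lambda_1, \ldots, \lambda_n$. Writing $\A = \mathbf{U} \mathbf{\Lambda} \mathbf{U}^{\mathrm{T}}$ with $\mathbf{\Lambda} = \diag{\lambda_1, \ldots, \lambda_n}$ and $\mathbf{U}$ orthogonal, we get $\A^\ell = \mathbf{U} \mathbf{\Lambda}^\ell \mathbf{U}^{\mathrm{T}}$, and by the cyclic invariance of the trace
\begin{align}
\mathrm{Tr}(\A^\ell) = \mathrm{Tr}(\mathbf{\Lambda}^\ell) = \sum_{j=1}^n \lambda_j^\ell,
\end{align}
which completes the argument.

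There is no real obstacle here; the proof is essentially a one-line chain once Theorem~\ref{thm: number of walks of a given length} is available. The only point worth explicit care is the convention that closed walks of length $\ell$ are counted with a distinguished starting vertex (so that summing $(\A^\ell)_{i,i}$ over $i$ yields the correct total without overcounting), but this is already consistent with the way Theorem~\ref{thm: number of walks of a given length} formulates its count.
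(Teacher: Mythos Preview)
Your proof is correct and follows exactly the natural argument that the paper implies by stating this as a corollary of Theorem~\ref{thm: number of walks of a given length} (the paper does not spell out a proof): specialize the walk-counting formula to diagonal entries, sum to obtain $\mathrm{Tr}(\A^\ell)$, and evaluate the trace via the eigenvalues of the symmetric matrix $\A$.
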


\begin{corollary}[Number of edges and triangles in a graph]
\label{corollary: number of edges and triangles in a graph}
Let $\Gr{G}$ be a simple undirected graph with $n = \card{\V{\Gr{G}}}$ vertices,
$e = \card{\E{\Gr{G}}}$ edges, and $t$ triangles. Let $\A = \A(\Gr{G})$ be the
adjacency matrix of $\Gr{G}$, and let $\{\lambda_j\}_{j=1}^n$ be its adjacency
spectrum. Then,
\begin{align}
& \sum_{j=1}^n \lambda_j = \mathrm{tr}(\A) = 0, \label{eq: trace of A is zero} \\
& \sum_{j=1}^n \lambda_j^2 = \mathrm{tr}(\A^2) = 2 e, \label{eq: number of edges from A} \\
& \sum_{j=1}^n \lambda_j^3 = \mathrm{tr}(\A^3) = 6 t. \label{eq: number of triangles from A}
\end{align}
\end{corollary}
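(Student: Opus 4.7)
The plan is to derive all three identities from Corollary~\ref{corollary: Number of Closed Walks of a Given Length}, which is already established, combined with a direct combinatorial count of closed walks of length $\ell \in \{1,2,3\}$ in a simple undirected graph.

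For the first identity, I would observe that since $\Gr{G}$ is simple (no self-loops), every diagonal entry of $\A$ is zero by \eqref{eq: adjacency matrix}, so $\mathrm{tr}(\A) = 0$. Equivalently, one may invoke Corollary~\ref{corollary: Number of Closed Walks of a Given Length} with $\ell = 1$: a closed walk of length~1 would require a loop at some vertex, and the absence of loops forces the count to be zero, so $\sum_j \lambda_j = 0$.

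For the second identity, I would apply Corollary~\ref{corollary: Number of Closed Walks of a Given Length} with $\ell = 2$ and count closed walks of length~2 directly. Such a walk has the form $v \to u \to v$ where $\{v,u\} \in \E{\Gr{G}}$; for each vertex $v$, the number of such walks starting and ending at $v$ equals $d(v)$, so the total number of closed walks of length~2 equals $\sum_{v \in \V{\Gr{G}}} d(v) = 2e$ by the handshake lemma. Combining this count with Corollary~\ref{corollary: Number of Closed Walks of a Given Length} gives \eqref{eq: number of edges from A}.

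For the third identity, I would apply Corollary~\ref{corollary: Number of Closed Walks of a Given Length} with $\ell = 3$. A closed walk of length~3 from $v$ has the form $v \to u \to w \to v$, where each consecutive pair is an edge; since $\Gr{G}$ is simple, the vertices $v, u, w$ must be pairwise distinct and induce a triangle. Conversely, each triangle $\{v, u, w\}$ produces exactly $6$ such closed walks, corresponding to the $3$ possible starting vertices and the $2$ possible orientations. Hence the number of closed walks of length~3 is $6t$, which together with Corollary~\ref{corollary: Number of Closed Walks of a Given Length} yields \eqref{eq: number of triangles from A}. There is no genuine obstacle here; the only point that warrants care is ruling out degenerate closed walks of length~3 using repeated vertices, which is precisely where the simplicity of $\Gr{G}$ (no loops, no multiple edges) is used.
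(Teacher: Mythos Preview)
Your proof is correct and follows precisely the approach implied by the paper, which presents this result as an immediate corollary of Corollary~\ref{corollary: Number of Closed Walks of a Given Length} without giving an explicit argument. Your combinatorial counts of closed walks of lengths $1$, $2$, and $3$ are exactly the standard justifications, and your care in ruling out degenerate length-$3$ walks via simplicity is the only subtle point, which you handle correctly.
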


For a $d$-regular graph, the largest eigenvalue of its adjacency matrix is equal to~$d$.
Consequently, by Eq.~\eqref{eq: number of edges from A}, for $d$-regular graphs,
$\sum_j \lambda_j^2 = 2e = nd = n \lambda_1$. Interestingly, this turns to be
a necessary and sufficient condition for the regularity of a graph, which means
that the adjacency spectrum enables to identify whether a graph is regular.
\begin{theorem}  \cite[Corollary~3.2.2]{CvetkovicRS2010}
\label{theorem: graph regularity from A-spectrum}
A graph $\Gr{G}$ on $n$ vertices is regular if and only if
\begin{align}
\sum_{i=1}^n \lambda_i^2 = n \lambda_1,
\end{align}
where $\lambda_1$ is the largest eigenvalue of the adjacency matrix of $\Gr{G}$.
\end{theorem}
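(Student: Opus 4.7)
The plan is to combine two ingredients already developed in the paper: the trace identity $\sum_{i=1}^n \lambda_i^2 = \mathrm{tr}(\A^2) = 2e$ from Corollary~\ref{corollary: number of edges and triangles in a graph}, and the Rayleigh-quotient characterization $\lambda_1 = \max_{x \neq 0} \frac{x^{\mathrm{T}} \A x}{x^{\mathrm{T}} x}$ of the largest eigenvalue of a real symmetric matrix, evaluated on the test vector $\mathbf{1}_n$.

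For the forward direction, I would start by noting that $d$-regularity gives $\A \mathbf{1}_n = d \mathbf{1}_n$, so $d$ is an eigenvalue of $\A$; the standard bound $|\lambda| \leq d$ for any eigenvalue of $\A$ (obtained by selecting a coordinate of maximum absolute value in an eigenvector and using the nonnegativity of $\A$) then identifies $\lambda_1 = d$. Counting degrees yields $2e = nd$, and substituting into the trace identity produces $\sum_i \lambda_i^2 = 2e = nd = n\lambda_1$ at once.

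For the converse, I would first compute $\mathbf{1}_n^{\mathrm{T}} \A \mathbf{1}_n = \sum_{v} d(v) = 2e = \sum_i \lambda_i^2$ and compare it with the Rayleigh-quotient bound $\mathbf{1}_n^{\mathrm{T}} \A \mathbf{1}_n \leq \lambda_1 \cdot \mathbf{1}_n^{\mathrm{T}} \mathbf{1}_n = n\lambda_1$. The hypothesis $\sum_i \lambda_i^2 = n\lambda_1$ saturates this inequality, so $\mathbf{1}_n$ attains the maximum of the Rayleigh quotient. The crux is then that equality forces $\mathbf{1}_n$ to lie in the top eigenspace of $\A$: expanding $\mathbf{1}_n = \sum_j c_j u_j$ in an orthonormal eigenbasis $\{u_j\}$ of $\A$, the identity $\sum_j c_j^2(\lambda_1 - \lambda_j) = 0$ kills every coefficient $c_j$ associated to an eigenvalue strictly below $\lambda_1$. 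Hence $\A \mathbf{1}_n = \lambda_1 \mathbf{1}_n$, and reading off the $i$-th coordinate gives $d(v_i) = \lambda_1$ for every vertex $v_i$, so $\Gr{G}$ is $\lambda_1$-regular.

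The argument is elementary and I do not expect a serious obstacle. The most delicate point — and the one I would write out carefully — is the equality case of the Rayleigh-quotient bound, which must be resolved via the spectral decomposition of $\A$: the variational inequality alone says only that $\mathbf{1}_n$ is a maximizer, and one needs the eigen-expansion argument to promote it to an actual eigenvector of $\lambda_1$, from which the pointwise equality of all vertex degrees follows immediately.
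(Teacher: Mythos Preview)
Your proof is correct. The paper itself only spells out the forward direction in the paragraph immediately preceding the theorem (regularity $\Rightarrow$ $\lambda_1=d$ and $2e=nd$, hence $\sum_i\lambda_i^2=2e=nd=n\lambda_1$), and then cites \cite[Corollary~3.2.2]{CvetkovicRS2010} for the full statement without reproducing the converse argument. Your Rayleigh-quotient approach to the converse --- testing on $\mathbf{1}_n$, obtaining $\mathbf{1}_n^{\mathrm{T}}\A\,\mathbf{1}_n=2e=\sum_i\lambda_i^2=n\lambda_1$, and using the equality case of the variational principle via spectral decomposition to conclude that $\mathbf{1}_n$ is a $\lambda_1$-eigenvector --- is precisely the standard argument underlying that reference (there phrased as $\lambda_1\geq \bar d$ with equality iff $\Gr{G}$ is regular), so your approach is essentially the same as the intended one.
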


\begin{theorem}[The eigenvalues of strongly regular graphs]
\label{theorem: eigenvalues of srg}
The following spectral properties are satisfied by the family of strongly regular graphs:
\begin{enumerate}[(1)]
\item \label{Item 1: eigenvalues of srg}
A strongly regular graph has at most three distinct eigenvalues.
\item \label{Item 2: eigenvalues of srg}
Let $\Gr{G}$ be a connected strongly regular graph, and let its parameters be $\SRG(n,d,\lambda,\mu)$.
Then, the largest eigenvalue of its adjacency matrix is $\Eigval{1}{\Gr{G}} = d$ with multiplicity~1,
and the other two distinct eigenvalues of its adjacency matrix are given by
\begin{align}
\label{eigs-SRG}
p_{1,2} = \tfrac12 \, \Biggl( \lambda - \mu \pm \sqrt{ (\lambda-\mu)^2 + 4(d-\mu) } \, \Biggr),
\end{align}
with the respective multiplicities
\begin{align}
\label{eig-multiplicities-SRG}
m_{1,2} = \tfrac12 \, \Biggl( n-1 \mp \frac{2d+(n-1)(\lambda-\mu)}{\sqrt{(\lambda-\mu)^2+4(d-\mu)}} \, \Biggr).
\end{align}
\item \label{Item 3: eigenvalues of srg}
A connected regular graph with exactly three distinct eigenvalues is strongly regular.
\item \label{Item 4: eigenvalues of srg}
Strongly regular graphs for which $2d+(n-1)(\lambda-\mu) \neq 0$ have integral eigenvalues and the multiplicities
of $p_{1,2}$ are distinct.
\item \label{Item 5: eigenvalues of srg}
A connected regular graph is strongly regular if and only if it has three distinct eigenvalues, where the largest
eigenvalue is of multiplicity~1.
\item \label{Item 6: eigenvalues of srg}
A disconnected strongly regular graph is a disjoint union of $m$ identical complete graphs $\CoG{r}$, where $m \geq 2$ and $r \in \naturals$.
It belongs to the family $\srg{mr}{r-1}{r-2}{0}$, and its adjacency spectrum is $\{ (r-1)^{[m]}, (-1)^{[m(r-1)]} \}$, where superscripts
indicate the multiplicities of the eigenvalues, thus having two distinct eigenvalues.
\end{enumerate}
\end{theorem}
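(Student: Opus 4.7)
The plan is to base everything on the characteristic quadratic matrix identity of a strongly regular graph, derive the spectral structure from it, then handle the converse direction and the disconnected case separately. For an $\SRG(n,d,\lambda,\mu)$ graph $\Gr{G}$, computing the $(i,j)$-entry of $\A^2$ as a walk count of length~$2$ gives
\begin{align}
\A^2 = d\,\I{n} + \lambda \A + \mu\bigl(\J{n} - \I{n} - \A\bigr), \notag
\end{align}
which rearranges to $\A^2 - (\lambda-\mu)\A - (d-\mu)\I{n} = \mu \J{n}$. Since $\Gr{G}$ is $d$-regular, $\A$ and $\J{n}$ commute and share an eigenbasis; on the span of $\mathbf{1}_n$ the matrix $\A$ acts as $d$, while on its orthogonal complement $\J{n}$ vanishes, so there $\A$ satisfies the scalar quadratic $x^2 - (\lambda-\mu)x - (d-\mu) = 0$.

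From this, parts~(1) and~(2) follow immediately: the eigenvalues distinct from $d$ are the two roots $p_{1,2}$ of~\eqref{eigs-SRG}, and there are at most three distinct eigenvalues overall. The multiplicities~\eqref{eig-multiplicities-SRG} then follow by solving the two linear equations $m_1 + m_2 = n-1$ (using that $d$ is a simple eigenvalue of a connected $d$-regular graph, by Perron--Frobenius) and $d + m_1 p_1 + m_2 p_2 = \tr(\A) = 0$. For parts~(3) and~(5), I would reverse the reasoning: if $\Gr{G}$ is connected and $d$-regular with exactly three distinct adjacency eigenvalues $d, p_1, p_2$, then $(\A - p_1\I{n})(\A - p_2\I{n})$ annihilates the orthogonal complement of $\mathbf{1}_n$ and must equal $c\,\J{n}$ for some constant $c$; expanding as a quadratic in $\A$ and comparing the $(i,j)$-entries in the three categories $i=j$, $i\sim j$, and $i \ne j$ with $i \not\sim j$ recovers the strongly regular identity, with $\lambda$ and $\mu$ read off the resulting constants.

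For part~(4), note that~\eqref{eig-multiplicities-SRG} gives $m_1 - m_2 = -\bigl(2d+(n-1)(\lambda-\mu)\bigr)\big/\sqrt{(\lambda-\mu)^2+4(d-\mu)}$. Since $m_1, m_2$ are nonnegative integers, the left side is an integer; if the numerator on the right is nonzero then the square root in the denominator must be rational, hence an integer (being the square root of an integer), which by~\eqref{eigs-SRG} makes $p_{1,2}$ integers and forces $m_1 \ne m_2$. Finally, for part~(6), any two vertices in distinct components of a disconnected SRG are nonadjacent with no common neighbors, so $\mu = 0$; the feasibility identity~\eqref{eq: necessary condition for the parameter vector of SRGs} then yields $d(d-\lambda-1)=0$, and since the graph is not empty we have $d \ge 1$ and $\lambda = d-1$. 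Within any single component, two nonadjacent vertices would likewise need zero common neighbors, so every component has diameter~$1$ and, being $d$-regular, must be $\CoG{d+1}$. Hence $\Gr{G} = m\,\CoG{r}$ with $r = d+1$, and the classical two-point adjacency spectrum of $\CoG{r}$ yields the stated multiset.

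The main obstacle will be the careful case analysis in part~(4): the dichotomy between the ``integral'' SRGs and the exceptional conference graphs (where $2d+(n-1)(\lambda-\mu) = 0$ and the two multiplicities coincide) must be argued without circularity, and similar care is needed to check that the discriminant $(\lambda-\mu)^2 + 4(d-\mu)$ is strictly positive so that the multiplicity formulas in~\eqref{eig-multiplicities-SRG} are well-defined.
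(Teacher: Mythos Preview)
The paper states this theorem as a known background result and does not supply a proof, so there is no ``paper's own proof'' to compare against. Your sketch is the standard textbook argument (as in Brouwer--Haemers or Godsil--Royle) and is correct in all six parts.

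One small point worth tightening in part~(4): you argue that if $2d+(n-1)(\lambda-\mu)\neq 0$ then $\sqrt{(\lambda-\mu)^2+4(d-\mu)}$ must be an integer, and conclude that $p_{1,2}$ are integers. Strictly speaking, integrality of the square root only gives that $p_{1,2}$ are rational; to finish, either observe that $(\lambda-\mu)$ and $\sqrt{(\lambda-\mu)^2+4(d-\mu)}$ have the same parity (since the radicand is congruent to $(\lambda-\mu)^2$ modulo~$4$), or invoke the cleaner fact that eigenvalues of an integer matrix are algebraic integers, hence rational ones are integers. Your remark about the discriminant being strictly positive is easily dispatched: if it vanished the graph would have only two distinct eigenvalues, forcing it (being connected and regular) to be complete, which is excluded by the definition of strongly regular.
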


The following result follows readily from Theorem~\ref{theorem: eigenvalues of srg}.
\begin{corollary}
\label{corollary: cospectral SRGs}
Strongly regular graphs with identical parameters $(n,d,\lambda,\mu)$ are cospectral.
\end{corollary}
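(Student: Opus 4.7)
The plan is to apply Theorem~\ref{theorem: eigenvalues of srg} directly, splitting into the connected and disconnected cases, since that theorem already expresses both the eigenvalues and their multiplicities as explicit functions of the parameters $(n,d,\lambda,\mu)$.

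First I would handle the connected case: if $\Gr{G}$ is a connected strongly regular graph with parameters $\srg{n}{d}{\lambda}{\mu}$, then by item~\eqref{Item 2: eigenvalues of srg} of Theorem~\ref{theorem: eigenvalues of srg} its adjacency spectrum consists of the eigenvalue $d$ with multiplicity~$1$ together with the two eigenvalues $p_{1,2}$ given by \eqref{eigs-SRG} with multiplicities $m_{1,2}$ given by \eqref{eig-multiplicities-SRG}. Since the right-hand sides of both \eqref{eigs-SRG} and \eqref{eig-multiplicities-SRG} are functions of $(n,d,\lambda,\mu)$ alone, any two connected strongly regular graphs sharing these parameters have identical multisets of adjacency eigenvalues, hence are $\A$-cospectral.

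Next I would handle the disconnected case using item~\eqref{Item 6: eigenvalues of srg} of Theorem~\ref{theorem: eigenvalues of srg}: a disconnected strongly regular graph is a disjoint union $m \CoG{r}$ belonging to the family $\srg{mr}{r-1}{r-2}{0}$, with spectrum $\bigl\{ (r-1)^{[m]},\, (-1)^{[m(r-1)]} \bigr\}$. From the parameter tuple one reads off $r = d+1$ and $m = n/(d+1)$, so the spectrum is again completely determined by $(n,d,\lambda,\mu)$.

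The only point that requires a small additional check, and which I expect to be the main (very minor) obstacle, is to rule out that two strongly regular graphs with the same parameter vector could straddle the two cases above, one being connected and the other disconnected. If a disconnected SRG with parameters $(n,d,\lambda,\mu)$ exists, then $\mu = 0$, and Proposition~\ref{proposition: necessary condition for the parameter vector of SRGs} forces $\lambda = d-1$; the condition $\lambda = d-1$ in turn forces the neighborhood of every vertex together with the vertex itself to be a clique of size $d+1$, and $\mu = 0$ then forces these cliques to be the connected components. Hence no connected SRG can share these parameters (a single $\CoG{d+1}$ is excluded from the definition of strongly regular graph), so the two cases are mutually exclusive per parameter tuple. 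Combining the three observations yields the corollary.
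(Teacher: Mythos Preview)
Your proof is correct and follows the same approach as the paper, which simply notes that the corollary ``follows readily from Theorem~\ref{theorem: eigenvalues of srg}'' without giving further detail. Your case split and the additional check that the connected/disconnected dichotomy is itself determined by the parameter tuple (via $\mu=0$ and $\lambda=d-1$) make the argument more explicit than the paper's one-line justification, but the underlying idea is identical.
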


\begin{remark}
\label{remark: NICS SRGs}
Strongly regular graphs having identical parameters $(n, d, \lambda, \mu)$ are
cospectral but may not be isomorphic. For instance, Chang graphs form a set
of three nonisomorphic strongly regular graphs with identical parameters
$\srg{28}{12}{6}{4}$ \cite[Section~10.11]{BrouwerM22}. Consequently, the three
Chang graphs are strongly regular NICS graphs.
\end{remark}

An important class of strongly regular graphs, for which $2d+(n-1)(\lambda-\mu)=0$, is given by the family of conference graphs.
\begin{definition}[Conference graphs]
\label{definition: conference graphs}
A conference graph on $n$ vertices is a strongly regular graph with the parameters
$\srg{n}{\tfrac12(n-1)}{\tfrac14(n-5)}{\tfrac14(n-1)}$, where $n$ must satisfy $n=4k+1$
with $k \in \naturals$.
\end{definition}
If $\Gr{G}$ is a conference graph on $n$ vertices, then so is its complement $\CGr{G}$; it is, however,
not necessarily self-complementary. By Theorem~\ref{theorem: eigenvalues of srg}, the distinct eigenvalues of
the adjacency matrix of $\Gr{G}$ are given by $\tfrac12 (n-1)$, $\tfrac12 (\hspace*{-0.1cm}
\sqrt{n}-1)$, and $-\tfrac12 (\hspace*{-0.1cm} \sqrt{n}+1)$ with multiplicities
$1, \tfrac12 (n-1)$, and $\tfrac12 (n-1)$, respectively. In contrast to Item~\ref{Item 4: eigenvalues of srg}
of Theorem~\ref{theorem: eigenvalues of srg}, the eigenvalues $\pm \tfrac12 (\hspace*{-0.1cm} \sqrt{n}+1)$ are
not necessarily integers. For instance, the cycle graph $\CG{5}$, which is a conference graph, has an adjacency
spectrum $\bigl\{2, \bigl[\tfrac12 (\hspace*{-0.1cm} \sqrt{5}-1) \bigr]^{(2)},
\bigl[-\tfrac12 (\hspace*{-0.1cm} \sqrt{5}+1) \bigr]^{(2)} \}$. Thus, apart from the largest eigenvalue, the
other eigenvalues are irrational numbers.

\subsubsection{Properties of the Laplacian matrix}
\begin{theorem}
\label{theorem: On the Laplacian matrix of a graph}
Let $\Gr{G}$ be a finite, simple, and undirected graph, and let $\LM$ be the Laplacian matrix of $\Gr{G}$. Then,
\begin{enumerate}
\item \label{Item 1: Laplacian matrix of a graph}
The Laplacian matrix $\LM = {\mathbf{N}} {\mathbf{N}}^{\mathrm{T}}$ is positive semidefinite,
where ${\mathbf{N}}$ is the oriented incidence matrix of $\Gr{G}$ (see Definition~\ref{definition: oriented incidence matrix} and \cite[p.~185]{CvetkovicRS2010}).
\item \label{Item 2: Laplacian matrix of a graph}
The smallest eigenvalue of $\, \LM$ is zero, with a multiplicity equal to the number of components in $\Gr{G}$ (see \cite[Theorem~7.1.2]{CvetkovicRS2010}).
\item \label{Item 3: Laplacian matrix of a graph}
The size of the graph, $\bigcard{\E{\Gr{G}}}$, equals one-half of the sum of the eigenvalues of $\, \LM$, counted with multiplicities (see
\cite[Eq.~(7.4)]{CvetkovicRS2010}).
\end{enumerate}
\end{theorem}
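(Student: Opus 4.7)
For Item~\ref{Item 1: Laplacian matrix of a graph}, the plan is to verify directly the factorization $\LM = {\mathbf{N}} {\mathbf{N}}^{\mathrm{T}}$ for any fixed orientation of $\Gr{G}$. Computing $({\mathbf{N}} {\mathbf{N}}^{\mathrm{T}})_{i,j} = \sum_{k=1}^{m} N_{i,k} N_{j,k}$: on the diagonal ($i=j$) each edge $e_k$ incident to $v_i$ contributes $N_{i,k}^2 = 1$ and the non-incident edges contribute $0$, yielding $d(v_i)$; off-diagonal ($i \neq j$) only an edge $e_k = \{v_i,v_j\}$ produces a nonzero term, and since $\{N_{i,k}, N_{j,k}\} = \{+1, -1\}$, it contributes $-1$, and there is at most one such edge because $\Gr{G}$ is simple. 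Hence ${\mathbf{N}} {\mathbf{N}}^{\mathrm{T}} = \D - \A = \LM$. Positive semidefiniteness then follows immediately from Condition~(2) of Proposition~\ref{proposition: positive semidefinite matrix}.

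For Item~\ref{Item 2: Laplacian matrix of a graph}, Item~\ref{Item 1: Laplacian matrix of a graph} gives $\underline{x}^{\mathrm{T}} \LM \underline{x} = \| {\mathbf{N}}^{\mathrm{T}} \underline{x} \|^2 \geq 0$, so every eigenvalue of $\LM$ is nonnegative. The next step is to describe $\ker(\LM)$: since $\LM = {\mathbf{N}} {\mathbf{N}}^{\mathrm{T}}$, one has $\LM \underline{x} = \underline{0}$ iff ${\mathbf{N}}^{\mathrm{T}} \underline{x} = \underline{0}$, iff for every edge $e_k = \{v_i, v_j\}$ of $\Gr{G}$ one has $x_i - x_j = 0$ (the sign from the arbitrary orientation is irrelevant). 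This forces $\underline{x}$ to be constant on each connected component, and conversely every such vector lies in the kernel. Therefore $\ker(\LM)$ is spanned by the indicator vectors of the connected components, and its dimension, namely the multiplicity of the eigenvalue~$0$, equals the number of connected components of $\Gr{G}$.

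For Item~\ref{Item 3: Laplacian matrix of a graph}, the sum of the eigenvalues of $\LM$, counted with multiplicity, equals $\mathrm{tr}(\LM) = \sum_{i=1}^{n} d(v_i) = 2 \bigcard{\E{\Gr{G}}}$ by the handshake lemma, from which the claim follows.

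The main obstacle will be the careful argument for Item~\ref{Item 2: Laplacian matrix of a graph}, in particular the identification of $\ker({\mathbf{N}}^{\mathrm{T}})$ with vectors that are constant on each connected component and the resulting dimension count; Items~\ref{Item 1: Laplacian matrix of a graph} and~\ref{Item 3: Laplacian matrix of a graph} are short consequences of the factorization and of the trace identity.
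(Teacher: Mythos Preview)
Your proof is correct and complete for all three items. The paper itself does not supply a proof of this theorem; it merely states the three assertions as known facts and cites \cite{CvetkovicRS2010} for each of them, so there is no ``paper's own proof'' to compare against. Your argument---the entrywise verification of ${\mathbf{N}}{\mathbf{N}}^{\mathrm{T}} = \D - \A$, the identification of $\ker(\LM)$ with vectors constant on components via ${\mathbf{N}}^{\mathrm{T}}\underline{x}=\underline{0}$, and the trace/handshake computation---is exactly the standard textbook proof one would find in the cited reference.
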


The following celebrated theorem provides an operational meaning of the $\LM$-spectrum of graphs in counting their number of
spanning subgraphs.
\begin{theorem}[Kirchhoff's Matrix-Tree Theorem \cite{Kirchhoff1958}]
\label{theorem: number of spanning trees}
The number of spanning trees in a connected and simple graph $\Gr{G}$ on $n$ vertices is determined by the $n-1$ nonzero
eigenvalues of the Laplacian matrix, and it is equal to $\frac{1}{n} \overset{n}{\underset{\ell=2}{\prod}} \, \mu_\ell(\Gr{G})$.
\end{theorem}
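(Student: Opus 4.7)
The plan is to prove the theorem in two stages: first, show that every principal $(n-1) \times (n-1)$ cofactor of the Laplacian matrix $\LM$ equals the number $\tau(\Gr{G})$ of spanning trees of $\Gr{G}$; second, extract the product formula from the characteristic polynomial of $\LM$, using the fact that $\mu_1(\Gr{G}) = 0$.

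For the first stage, I would start from the factorization $\LM = \mathbf{N} \mathbf{N}^{\mathrm{T}}$ given in Item~\ref{Item 1: Laplacian matrix of a graph} of Theorem~\ref{theorem: On the Laplacian matrix of a graph}, where $\mathbf{N}$ is an oriented incidence matrix of $\Gr{G}$ (Definition~\ref{definition: oriented incidence matrix}) for an arbitrary orientation of the $m = \bigcard{\E{\Gr{G}}}$ edges. Fix a vertex $v_i$ and let $\mathbf{N}_i$ denote the $(n-1) \times m$ matrix obtained by deleting the $i$-th row of $\mathbf{N}$. Then the $(i,i)$-cofactor of $\LM$ equals $\det(\mathbf{N}_i \mathbf{N}_i^{\mathrm{T}})$, and by the Cauchy--Binet formula,
\begin{align*}
\det(\mathbf{N}_i \mathbf{N}_i^{\mathrm{T}}) = \sum_{S} \det(\mathbf{N}_{i,S})^{2},
\end{align*}
where the sum ranges over all $(n-1)$-subsets $S$ of $\E{\Gr{G}}$, and $\mathbf{N}_{i,S}$ is the $(n-1) \times (n-1)$ submatrix with columns indexed by $S$.

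The core combinatorial step, which I expect to be the main obstacle, is the claim that $\det(\mathbf{N}_{i,S}) \in \{-1, 0, +1\}$, with nonzero value exactly when the edges in $S$ form a spanning tree of $\Gr{G}$. I would argue as follows: if $S$ contains a cycle, the signed column sums taken around that cycle vanish in every row, so the columns are linearly dependent and the determinant is $0$; if $S$ is a spanning tree, it has a leaf $u \neq v_i$ whose unique incident edge contributes the only nonzero entry ($\pm 1$) in the row of $u$, and expanding along that row reduces the problem to the spanning tree of the same type on $n-1$ vertices, so induction on $n$ yields $\det(\mathbf{N}_{i,S}) = \pm 1$. Consequently, every principal $(n-1) \times (n-1)$ cofactor of $\LM$ equals $\tau(\Gr{G})$, independently of the choice of $v_i$ and of the orientation.

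For the second stage, expand $\det(x \Identity_n - \LM) = \prod_{\ell=1}^{n} (x - \mu_\ell(\Gr{G}))$. Since $\Gr{G}$ is connected, Item~\ref{Item 2: Laplacian matrix of a graph} of Theorem~\ref{theorem: On the Laplacian matrix of a graph} gives $\mu_1(\Gr{G}) = 0$ with multiplicity one, so the coefficient of $x$ in this polynomial equals $(-1)^{n-1} \prod_{\ell=2}^{n} \mu_\ell(\Gr{G})$. By the standard relation between coefficients of the characteristic polynomial and principal minors, this same coefficient equals $(-1)^{n-1}$ times the sum of all $n$ principal $(n-1) \times (n-1)$ minors of $\LM$. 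Each such minor equals $\tau(\Gr{G})$ by the first stage, so the sum is $n \, \tau(\Gr{G})$. Equating the two expressions gives $\tau(\Gr{G}) = \tfrac{1}{n} \prod_{\ell=2}^{n} \mu_\ell(\Gr{G})$, which is the asserted formula.
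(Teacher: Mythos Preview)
Your proof is correct and follows the classical route via the Cauchy--Binet formula and the totally unimodular structure of the oriented incidence matrix. However, the paper does not supply its own proof of this theorem: it is stated as a classical result with a citation to \cite{Kirchhoff1958}, and is then immediately applied (without proof) to derive Cayley's formula in Corollary~\ref{corollary: number of spanning trees}. So there is no in-paper argument to compare against; your write-up stands on its own as a standard and complete proof.
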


\begin{corollary}[Cayley's Formula \cite{Cayley1889}]
\label{corollary: number of spanning trees}
The number of spanning trees of $\CoG{n}$ is $n^{n-2}$.
\end{corollary}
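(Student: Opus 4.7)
The plan is to derive Cayley's formula as a direct corollary of Kirchhoff's Matrix-Tree Theorem (Theorem~\ref{theorem: number of spanning trees}), so the task reduces to computing the Laplacian spectrum of $\CoG{n}$ explicitly.

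First, I would write down $\LPG{\CoG{n}}$. Since $\CoG{n}$ is $(n-1)$-regular and $\AG{\CoG{n}} = \J{n,n} - \I{n}$, we have
\begin{align}
\LPG{\CoG{n}} = \D(\CoG{n}) - \AG{\CoG{n}} = (n-1)\I{n} - \bigl(\J{n,n} - \I{n}\bigr) = n\I{n} - \J{n,n}.
\end{align}

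Next, I would identify the eigenvalues of $\LPG{\CoG{n}}$. The all-ones matrix $\J{n,n}$ has rank~$1$ with eigenvalue $n$ (of multiplicity~$1$, with eigenvector $\mathbf{1}_n$) and eigenvalue $0$ (of multiplicity $n-1$, corresponding to the hyperplane orthogonal to $\mathbf{1}_n$). Consequently $\LPG{\CoG{n}} = n\I{n} - \J{n,n}$ has eigenvalues
\begin{align}
\mu_1(\CoG{n}) = 0, \qquad \mu_2(\CoG{n}) = \mu_3(\CoG{n}) = \ldots = \mu_n(\CoG{n}) = n.
\end{align}
This is consistent with Item~\ref{Item 2: Laplacian matrix of a graph} of Theorem~\ref{theorem: On the Laplacian matrix of a graph}, since $\CoG{n}$ is connected (so zero appears with multiplicity~$1$).

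Finally, by Theorem~\ref{theorem: number of spanning trees}, the number of spanning trees of $\CoG{n}$ equals
\begin{align}
\frac{1}{n} \prod_{\ell=2}^{n} \mu_\ell(\CoG{n}) = \frac{1}{n} \cdot n^{n-1} = n^{n-2},
\end{align}
which is Cayley's formula. There is no real obstacle here; the only subtlety is ensuring that the eigenvalue computation uses the correct ordering convention \eqref{eq3:26.09.23}, so that $\mu_1 = 0$ is the smallest eigenvalue and the product is over the $n-1$ positive eigenvalues, matching the statement of Kirchhoff's theorem.
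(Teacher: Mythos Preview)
Your proof is correct and follows exactly the paper's approach: state the Laplacian spectrum $\{0, [n]^{n-1}\}$ of $\CoG{n}$ and apply Kirchhoff's Matrix-Tree Theorem. You simply supply more detail in deriving that spectrum via $\LPG{\CoG{n}} = n\I{n} - \J{n,n}$, whereas the paper just asserts it.
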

\begin{proof}
The $\LM$-spectrum of $\CoG{n}$ is given by $\{0, [n]^{n-1}\}$, and the result readily follows from Theorem~\ref{theorem: number of spanning trees}.
\end{proof}
	
\subsubsection{Properties of the signless Laplacian matrix}
\begin{theorem}
\label{theorem: On the signless Laplacian matrix of a graph}
Let $\Gr{G}$ be a finite, simple, and undirected graph, and let $\Q$ be
the signless Laplacian matrix of $\Gr{G}$. Then,
\begin{enumerate}
\item \label{Item 1: signless Laplacian matrix of a graph}
The matrix $\Q$ is positive semidefinite. Moreover, it is a completely positive matrix, expressed as $\Q = {\mathbf{B}} {\mathbf{B}}^{\mathrm{T}}$, where
${\mathbf{B}}$ is the incidence matrix of $\Gr{G}$ (see Definition~\ref{definition: incidence matrix} and \cite[Section~2.4]{CvetkovicRS2010}).
\item \label{Item 2: signless Laplacian matrix of a graph}
If $\Gr{G}$ is a connected graph, then it is bipartite if and only if the least eigenvalue of $\Q$ is equal to zero. In this case,
$0$ is a simple $\Q$-eigenvalue (see \cite[Theorem~7.8.1]{CvetkovicRS2010}).
\item \label{Item 3: signless Laplacian matrix of a graph}
The multiplicity of 0 as an eigenvalue of $\Q$ is equal to the number of bipartite components in $\Gr{G}$ (see \cite[Corollary~7.8.2]{CvetkovicRS2010}).
\item \label{Item 4: signless Laplacian matrix of a graph}
The size of the graph $\bigl| E(\Gr{G}) \bigr| $ is equal to one-half the sum of the eigenvalues of~$\Q$, counted with multiplicities
(see \cite[Corollary~7.8.9]{CvetkovicRS2010}).
\end{enumerate}
\end{theorem}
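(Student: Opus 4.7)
The plan is to establish the four items in order, using the factorization of $\Q$ through the incidence matrix as the main technical tool.

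For Item~\ref{Item 1: signless Laplacian matrix of a graph}, I would compute the $(i,j)$-entry of ${\mathbf{B}} {\mathbf{B}}^{\mathrm{T}}$ directly. Since $({\mathbf{B}} {\mathbf{B}}^{\mathrm{T}})_{i,j} = \sum_{e \in \E{\Gr{G}}} B_{i,e} B_{j,e}$, the diagonal entries count the edges incident to $v_i$, giving $d(v_i)$, while the off-diagonal entries count edges incident to both $v_i$ and $v_j$, giving $1$ if $\{v_i,v_j\} \in \E{\Gr{G}}$ and $0$ otherwise. Hence ${\mathbf{B}} {\mathbf{B}}^{\mathrm{T}} = \D + \A = \Q$. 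Because ${\mathbf{B}}$ has $\{0,1\}$-entries, $\Q$ is completely positive by Definition~\ref{definition: completely positive matrix}, and then positive semidefinite by Corollary~\ref{corollary: c.p. yields p.s.}.

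For Items~\ref{Item 2: signless Laplacian matrix of a graph} and~\ref{Item 3: signless Laplacian matrix of a graph}, the idea is to exploit the quadratic form $\underline{x}^{\mathrm{T}} \Q \underline{x} = \|{\mathbf{B}}^{\mathrm{T}} \underline{x}\|^2 = \sum_{\{v_i,v_j\} \in \E{\Gr{G}}} (x_i + x_j)^2$. Hence $\Q \underline{x} = 0$ if and only if $x_i + x_j = 0$ for every edge. For a connected bipartite graph with parts $\set{V}_1, \set{V}_2$, the $\pm 1$-valued vector assigning $+1$ on $\set{V}_1$ and $-1$ on $\set{V}_2$ lies in the kernel, so $0$ is an eigenvalue; conversely, if such a nonzero $\underline{x}$ exists, I would fix a vertex $v$ and, using connectivity, propagate the sign-flipping rule along walks to show $|x_u|$ is constant and that the sign depends only on the parity of the walk length from $v$ to $u$. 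Since $\Gr{G}$ is connected, this parity is well defined precisely when $\Gr{G}$ has no odd cycle, i.e. when $\Gr{G}$ is bipartite; the same argument shows that any kernel vector is a scalar multiple of the $\pm 1$ vector, which gives simplicity. Item~\ref{Item 3: signless Laplacian matrix of a graph} then follows by applying the connected case to each component, since $\Q(\Gr{G})$ is block-diagonal with one block per component and the null spaces add directly.

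For Item~\ref{Item 4: signless Laplacian matrix of a graph}, I would just take the trace of $\Q$: since the diagonal of $\Q$ lists the degrees, $\mathrm{tr}(\Q) = \sum_{i=1}^n d(v_i) = 2 \bigcard{\E{\Gr{G}}}$ by the handshake lemma, and the trace equals the sum of the eigenvalues counted with multiplicity. The main obstacle is the careful bookkeeping inside Item~\ref{Item 2: signless Laplacian matrix of a graph}, namely proving that the sign-propagation is consistent if and only if there is no odd cycle, and turning that consistency into a one-dimensional kernel; the other items are essentially bookkeeping around the identity $\Q = {\mathbf{B}} {\mathbf{B}}^{\mathrm{T}}$.
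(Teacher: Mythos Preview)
Your proof is correct and self-contained. Note, however, that the paper does not actually prove this theorem: each item is simply stated with a citation to the monograph of Cvetkovi\'c, Rowlinson, and Simi\'c, so there is no ``paper's own proof'' to compare against. Your argument is essentially the standard one found in that reference: the factorization $\Q = {\mathbf{B}}{\mathbf{B}}^{\mathrm{T}}$ is proved entrywise, the kernel analysis goes through the quadratic form $\underline{x}^{\mathrm{T}}\Q\underline{x} = \sum_{\{v_i,v_j\}\in\E{\Gr{G}}}(x_i+x_j)^2$ and sign-propagation along walks, and the edge count comes from the trace. One small presentational remark: in Item~\ref{Item 2: signless Laplacian matrix of a graph} you should make explicit that $\Q\underline{x}=0$ is equivalent to $\underline{x}^{\mathrm{T}}\Q\underline{x}=0$ because $\Q$ is positive semidefinite (you use this implicitly when passing from the eigenvalue equation to the edge constraints), and in the converse direction it is cleanest to argue by contradiction---if $\Gr{G}$ has an odd cycle, then traversing it forces $x_v = -x_v$ for a vertex on the cycle, whence $x_v=0$ and, by connectivity, $\underline{x}=0$.
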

The interested reader is referred to \cite{OliveiraLAK2010} for bounds on the $\Q$-spread (i.e., the difference between the largest and smallest
eigenvalues of the signless Laplacian matrix), expressed as a function of the number of vertices in the graph. In regard to
Item~\ref{Item 2: signless Laplacian matrix of a graph} of Theorem~\ref{theorem: On the signless Laplacian matrix of a graph}, the interested
reader is referred to \cite{Cardoso2008} for a lower bound on the least eigenvalue of signless Laplacian matrix for connected non-bipartite graphs,
and to \cite{ChenH2018} for a lower bound on the least eigenvalue of signless Laplacian matrix for a general simple graph with a fixed number of
vertices and edges.

\subsubsection{Properties of the normalized Laplacian matrix}
The normalized Laplacian matrix of a graph, defined in \eqref{eq: normalized Laplacian matrix}, exhibits several
interesting spectral properties, which are introduced below.
\begin{theorem} \cite{CvetkovicRS2010,CvetkovicRS2007}
\label{theorem: On the normalized Laplacian matrix of a graph}
Let $\Gr{G}$ be a finite, simple, and undirected graph, and let ${\bf{\mathcal{L}}}$ be
the normalized Laplacian matrix of $\Gr{G}$. Then,
\begin{enumerate}
\item \label{Item 1: normalized Laplacian matrix of a graph}
The eigenvalues of ${\bf{\mathcal{L}}}$ lie in the interval $[0,2]$ (see \cite[Section~7.7]{CvetkovicRS2010}).
\item \label{Item 2: normalized Laplacian matrix of a graph}
The number of components in $\Gr{G}$ is equal to the multiplicity of~0 as an eigenvalue of ${\bf{\mathcal{L}}}$ (see \cite[Theorem~7.7.3]{CvetkovicRS2010}).
\item \label{Item 3: normalized Laplacian matrix of a graph}
The largest eigenvalue of ${\bf{\mathcal{L}}}$ is equal to~2 if and only if the graph has a bipartite component (see \cite[Theorem~7.7.2(v)]{CvetkovicRS2010}).
Furthermore, the number of the bipartite components of $\Gr{G}$ is equal to the multiplicity of~2 as an eigenvalue of~${\bf{\mathcal{L}}}$.
\item \label{Item 4: normalized Laplacian matrix of a graph}
The sum of its eigenvalues (including multiplicities) is less than or equal to the graph order $(n)$, with equality if and only
if the graph has no isolated vertices (see \cite[Theorem~7.7.2(i)]{CvetkovicRS2010}).
\end{enumerate}
\end{theorem}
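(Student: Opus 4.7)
The plan is to derive all four items by reducing the normalized Laplacian to the Laplacian and the signless Laplacian through the symmetric conjugation by $\D^{-1/2}$, and then invoking the spectral properties of $\LM$ and $\Q$ already established in Theorems~\ref{theorem: On the Laplacian matrix of a graph} and~\ref{theorem: On the signless Laplacian matrix of a graph}.

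For Item~\ref{Item 1: normalized Laplacian matrix of a graph}, I would show that both $\mathcal{L}$ and $2\I{n}-\mathcal{L}$ are positive semidefinite. Using the factorization $\LM = {\mathbf{N}}{\mathbf{N}}^{\mathrm{T}}$ from Item~\ref{Item 1: Laplacian matrix of a graph} of Theorem~\ref{theorem: On the Laplacian matrix of a graph}, one writes $\mathcal{L} = \bigl(\D^{-1/2}{\mathbf{N}}\bigr)\bigl(\D^{-1/2}{\mathbf{N}}\bigr)^{\mathrm{T}}$ on the non-isolated block (while the isolated block of $\mathcal{L}$ is zero), so $\mathcal{L}\succeq 0$ by Proposition~\ref{proposition: positive semidefinite matrix}. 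For the upper bound, a direct computation on the non-isolated block gives $2\I{n}-\mathcal{L} = \D^{-1/2}(\D+\A)\D^{-1/2} = \D^{-1/2}\Q\D^{-1/2}$, which is again positive semidefinite via the completely positive factorization $\Q = {\mathbf{B}}{\mathbf{B}}^{\mathrm{T}}$ from Item~\ref{Item 1: signless Laplacian matrix of a graph} of Theorem~\ref{theorem: On the signless Laplacian matrix of a graph}; hence $\delta_i(\Gr{G})\in[0,2]$ for all $i$.

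For Item~\ref{Item 2: normalized Laplacian matrix of a graph}, I would determine the kernel of $\mathcal{L}$ by splitting $\V{\Gr{G}}$ into isolated and non-isolated vertices. Each isolated vertex contributes a zero row and column to $\mathcal{L}$, hence a standard basis vector to the kernel. On the non-isolated block, the change of variables $y=\D^{-1/2}x$ reduces $\mathcal{L}x=0$ to $\LM y=0$, so by Item~\ref{Item 2: Laplacian matrix of a graph} of Theorem~\ref{theorem: On the Laplacian matrix of a graph} the corresponding nullity equals the number of components in the subgraph induced by the non-isolated vertices. Aggregating, the multiplicity of $0$ as an eigenvalue of $\mathcal{L}$ equals the total number of components of $\Gr{G}$. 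For Item~\ref{Item 3: normalized Laplacian matrix of a graph}, I would apply exactly the same argument to $2\I{n}-\mathcal{L} = \D^{-1/2}\Q\D^{-1/2}$: the multiplicity of the eigenvalue $2$ of $\mathcal{L}$ equals the nullity of $\Q$ on the non-isolated block, which by Items~\ref{Item 2: signless Laplacian matrix of a graph} and~\ref{Item 3: signless Laplacian matrix of a graph} of Theorem~\ref{theorem: On the signless Laplacian matrix of a graph} counts the bipartite components of $\Gr{G}$. Finally, Item~\ref{Item 4: normalized Laplacian matrix of a graph} follows by computing $\sum_i \delta_i(\Gr{G}) = \mathrm{tr}(\mathcal{L})$ directly from the definition: each diagonal entry is $1$ when $d(v_i)\neq 0$ and $0$ otherwise, so the trace equals $n$ minus the number of isolated vertices.

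The main technical obstacle is handling the degenerate convention $d^{-1/2}(v)=0$ at isolated vertices, which makes the conjugation by $\D^{-1/2}$ formally singular there. The clean workaround, used throughout the plan, is to block-decompose $\mathcal{L}$ relative to isolated and non-isolated vertices, carry out the reductions to $\LM$ and $\Q$ only on the non-isolated block where $\D^{-1/2}$ is invertible, and treat the isolated block (which vanishes) separately as contributing only the eigenvalue $0$ with multiplicity equal to the number of isolated vertices; this keeps all three reductions consistent with the previously established spectral results for $\LM$ and $\Q$.
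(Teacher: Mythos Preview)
Your proposal is correct and well-structured. Note, however, that the paper does not supply its own proof of this theorem: it states the four items with pointers to \cite{CvetkovicRS2010,CvetkovicRS2007} and moves on. So there is no ``paper's approach'' to compare against; your reduction of $\mathcal{L}$ and $2\I{n}-\mathcal{L}$ to $\LM$ and $\Q$ via conjugation by $\D^{-1/2}$, combined with the block split at isolated vertices, is a clean and standard way to derive all four items from Theorems~\ref{theorem: On the Laplacian matrix of a graph} and~\ref{theorem: On the signless Laplacian matrix of a graph}.

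One small point worth tightening in Item~\ref{Item 3: normalized Laplacian matrix of a graph}: after your block split, the nullity of $\Q$ on the non-isolated block counts precisely the \emph{non-trivial} bipartite components (those with at least one edge), while isolated vertices---which are themselves trivially bipartite components---contribute only to the eigenvalue $0$ of $\mathcal{L}$, not to $2$. The cited source \cite[Theorem~7.7.2(v)]{CvetkovicRS2010} in fact states the result for non-trivial bipartite components, so your argument matches the intended statement; just make this qualifier explicit when you write it out.
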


\subsubsection{More on the spectral properties of the four associated matrices}

\noindent

The following theorem considers equivalent spectral properties of bipartite graphs.
\begin{theorem}
\label{theorem: equivalences for bipartite graphs}
Let $\Gr{G}$ be a graph. The following are equivalent:
\begin{enumerate}
\item \label{Item 1: TFAE bipartite graphs}
$\Gr{G}$ is a bipartite graph.
\item \label{Item 2: TFAE bipartite graphs}
$\Gr{G}$ does not have cycles of odd length.
\item \label{Item 3: TFAE bipartite graphs}
The $\A$-spectrum of $\Gr{G}$ is symmetric around zero, and for every eigenvalue $\lambda$ of $\A(G)$,
the eigenvalue $-\lambda$ is of the same multiplicity \cite[Theorem~3.2.3]{CvetkovicRS2010}.
\item \label{Item 4: TFAE bipartite graphs}
The $\LM$-spectrum and $\Q$-spectrum are identical (see \cite[Proposition~7.8.4]{CvetkovicRS2010}).
\item \label{Item 5: TFAE bipartite graphs}
The ${\bf{\mathcal{L}}}$-spectrum has the same multiplicity of $0$'s and $2$'s as eigenvalues (see
\cite[Corollary~7.7.4]{CvetkovicRS2010}).
\end{enumerate}
\end{theorem}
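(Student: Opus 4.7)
The plan is to prove the equivalences by establishing the cyclic chain \ref{Item 1: TFAE bipartite graphs} $\Leftrightarrow$ \ref{Item 2: TFAE bipartite graphs}, then \ref{Item 1: TFAE bipartite graphs} $\Leftrightarrow$ \ref{Item 3: TFAE bipartite graphs}, and finally treating \ref{Item 4: TFAE bipartite graphs} and \ref{Item 5: TFAE bipartite graphs} in turn. The equivalence \ref{Item 1: TFAE bipartite graphs} $\Leftrightarrow$ \ref{Item 2: TFAE bipartite graphs} is a classical observation that I would invoke directly (two-coloring fails iff one can extract an odd closed walk, which reduces to an odd cycle).

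For \ref{Item 1: TFAE bipartite graphs} $\Rightarrow$ \ref{Item 3: TFAE bipartite graphs}, the key idea is the block structure induced by the bipartition: after reordering the vertices by the two parts, $\A(\Gr{G}) = \bigl(\begin{smallmatrix} \mathbf{0} & \mathbf{B} \\ \mathbf{B}^{\mathrm{T}} & \mathbf{0} \end{smallmatrix}\bigr)$. If $(x,y)^{\mathrm{T}}$ is an eigenvector with eigenvalue $\lambda$, then $(x,-y)^{\mathrm{T}}$ is an eigenvector with eigenvalue $-\lambda$; this involution is an isomorphism between the $\lambda$- and $(-\lambda)$-eigenspaces, giving symmetry with matched multiplicities. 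For the reverse direction \ref{Item 3: TFAE bipartite graphs} $\Rightarrow$ \ref{Item 2: TFAE bipartite graphs}, I would use Corollary~\ref{corollary: Number of Closed Walks of a Given Length}: if the $\A$-spectrum is symmetric around zero with matched multiplicities, then $\sum_{j=1}^n \lambda_j^{\ell} = 0$ for every odd $\ell$, so $\Gr{G}$ has no closed walks of odd length, and in particular no odd cycles.

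For \ref{Item 1: TFAE bipartite graphs} $\Rightarrow$ \ref{Item 4: TFAE bipartite graphs}, using the same block decomposition, set $\mathbf{S} = \diag{\I{|V_1|}, -\I{|V_2|}}$. Then $\mathbf{S} \A \mathbf{S}^{-1} = -\A$ while $\mathbf{S} \D \mathbf{S}^{-1} = \D$, so $\mathbf{S} \LM(\Gr{G}) \mathbf{S}^{-1} = \Q(\Gr{G})$, which shows the two matrices are similar and hence cospectral. For \ref{Item 4: TFAE bipartite graphs} $\Rightarrow$ \ref{Item 1: TFAE bipartite graphs}, I would compare multiplicities of $0$: by Item~\ref{Item 2: Laplacian matrix of a graph} of Theorem~\ref{theorem: On the Laplacian matrix of a graph} this multiplicity in $\LM$ equals the number of components of $\Gr{G}$, while by Item~\ref{Item 3: signless Laplacian matrix of a graph} of Theorem~\ref{theorem: On the signless Laplacian matrix of a graph} this multiplicity in $\Q$ equals the number of bipartite components; equality then forces every component of $\Gr{G}$ to be bipartite.

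The equivalence \ref{Item 1: TFAE bipartite graphs} $\Leftrightarrow$ \ref{Item 5: TFAE bipartite graphs} follows in the same spirit. For \ref{Item 1: TFAE bipartite graphs} $\Rightarrow$ \ref{Item 5: TFAE bipartite graphs}, on each bipartite component without isolated vertices one has $\mathcal{L} = \I{} - \D^{-\frac12} \A \D^{-\frac12}$; the same signature matrix $\mathbf{S}$ on the two parts conjugates $\D^{-\frac12} \A \D^{-\frac12}$ to its negative, making the $\mathcal{L}$-spectrum symmetric around~$1$ and in particular pairing multiplicities of $0$ and $2$; isolated vertices contribute $0$ to both sides. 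For the converse, I would invoke Items~\ref{Item 2: normalized Laplacian matrix of a graph} and~\ref{Item 3: normalized Laplacian matrix of a graph} of Theorem~\ref{theorem: On the normalized Laplacian matrix of a graph}: multiplicity of $0$ counts components and multiplicity of $2$ counts bipartite components, so equality again forces all components to be bipartite. The main obstacle is the reverse implication \ref{Item 3: TFAE bipartite graphs} $\Rightarrow$ \ref{Item 1: TFAE bipartite graphs}, since a naïve block-structure argument goes only one way; the closed-walks identity circumvents this cleanly, while the converses of \ref{Item 4: TFAE bipartite graphs} and \ref{Item 5: TFAE bipartite graphs} are comfortable once one recognizes the right multiplicity characterizations of $0$ (and $2$) already established in the previous theorems.
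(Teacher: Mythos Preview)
The paper does not supply its own proof of this theorem; it is stated as a preliminary result with pointers to \cite{CvetkovicRS2010}. Your argument is the standard one and is correct in substance: the block form of $\A$ under a bipartition, the involution $(x,y)\mapsto(x,-y)$, the signature conjugation $\mathbf{S}\LM\mathbf{S}^{-1}=\Q$, and the multiplicity comparisons for the converses are exactly the textbook route.

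One point deserves care: your handling of isolated vertices in \ref{Item 1: TFAE bipartite graphs} $\Rightarrow$ \ref{Item 5: TFAE bipartite graphs}. You say ``isolated vertices contribute $0$ to both sides,'' but under the paper's convention (where $d^{-1/2}(v)=0$ for an isolated vertex $v$) an isolated vertex yields a zero row and column in $\mathcal{L}$, hence contributes an eigenvalue $0$ and \emph{not} an eigenvalue $2$. Thus isolated vertices increase the multiplicity of $0$ without increasing that of $2$; for instance, $\CoG{2}\cup\CoG{1}$ is bipartite but has $\mathcal{L}$-spectrum $\{0,0,2\}$, so the multiplicities of $0$ and $2$ do not match. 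This is more a wrinkle in the literal formulation of \ref{Item 5: TFAE bipartite graphs} than in your proof strategy: the cited result \cite[Corollary~7.7.4]{CvetkovicRS2010} is stated for graphs without isolated vertices, and under that hypothesis your signature-matrix symmetry of the $\mathcal{L}$-spectrum about $1$ goes through cleanly. You should either add that hypothesis explicitly or note the caveat.
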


\begin{remark}
\label{remark: on connected bipartite graphs}
Item~\ref{Item 3: TFAE bipartite graphs} of Theorem~\ref{theorem: equivalences for bipartite graphs} can be
strengthened if $\Gr{G}$ is a connected graph. In that case, $\Gr{G}$ is bipartite if and only if $\lambda_1 = -\lambda_n$
(see \cite[Theorem~3.2.4]{CvetkovicRS2010}).
\end{remark}

\begin{table}[hbt]
\centering
\begin{tabular}{|c|c|c|c|c|c|}
\hline
\textbf{Matrix} & \textbf{\# edges} & \textbf{bipartite} & \textbf{\# components} & \textbf{\# bipartite components} & \textbf{\# of closed walks} \\
\hline
$\A$ & Yes & Yes & No & No & Yes \\
\hline
$\LM$ & Yes & No & Yes & No & No \\
\hline
$\Q$ & Yes & No & No & Yes & No \\
\hline
${\bf{\mathcal{L}}}$ & No & Yes & Yes & Yes & No \\
\hline
\end{tabular}
\caption{Some properties of a finite, simple, and undirected graph that one can or cannot determine
by the $X$-spectrum for $X\in \{\A,\LM,\Q, {\bf{\mathcal{L}}} \}$} \label{table:properties_determined by the spectrum}
\end{table}
Table~\ref{table:properties_determined by the spectrum}, borrowed from \cite{Butler2014}, lists properties of a graph that
can or cannot be determined by the $X$-spectrum for $X\in \{\A, \LM, \Q, \bf{\mathcal{L}}\}$. From the $\A$-spectrum of a
graph $\Gr{G}$, one can determine the number of edges and the number of triangles in $\Gr{G}$
(by Eqs.~\eqref{eq: number of edges from A} and \eqref{eq: number of triangles from A}, respectively), and whether the graph
is bipartite or not (by Item~\ref{Item 3: TFAE bipartite graphs} of Theorem~\ref{theorem: equivalences for bipartite graphs}).
However, the $\A$ spectrum does not indicate the number of components (see Example~\ref{example: ANICS graphs with 5 vertices}).
From the $\LM$-spectrum of a graph $\Gr{G}$, one can determine the number of edges
(by Item~\ref{Item 3: Laplacian matrix of a graph} of Theorem~\ref{theorem: On the Laplacian matrix of a graph}),
the number of spanning trees (by Theorem~\ref{theorem: number of spanning trees}),
the number of components of $\Gr{G}$ (by Item~\ref{Item 2: Laplacian matrix of a graph} of
Theorem~\ref{theorem: On the Laplacian matrix of a graph}), but not the number of its triangles, and
whether the graph $\Gr{G}$ is bipartite. From the $\Q$-spectrum, one can determine whether the graph is bipartite, the number
of bipartite components, and the number of edges (respectively, by Items~\ref{Item 3: signless Laplacian matrix of a graph}
and~\ref{Item 4: signless Laplacian matrix of a graph} of Theorem~\ref{theorem: On the signless Laplacian matrix of a graph}),
but not the number of components of the graph, and whether the graph is bipartite (see Remark~\ref{remark: bipartiteness}).
From the ${\bf{\mathcal{L}}}$-spectrum, one can determine the number of components and the
number of bipartite components in $\Gr{G}$ (by Theorem~\ref{theorem: On the normalized Laplacian matrix of a graph}),
and whether the graph is bipartite (by Items~\ref{Item 1: TFAE bipartite graphs} and~\ref{Item 5: TFAE bipartite graphs} of
Theorem~\ref{theorem: equivalences for bipartite graphs}). The number of closed walks in $\Gr{G}$ is determined by
the $\A$-spectrum (by Corollary~\ref{corollary: Number of Closed Walks of a Given Length}), but not by the spectra with
respect to the other three matrices.

\begin{remark}
\label{remark: bipartiteness}
By Item~\ref{Item 2: signless Laplacian matrix of a graph} of Theorem~\ref{theorem: On the signless Laplacian matrix of a graph},
a connected graph is bipartite if and only if the least eigenvalue of its signless Laplacian matrix is equal to zero.
If the graph is disconnected and it has a bipartite component and a non-bipartite component, then the least eigenvalue of its
signless Laplacian matrix is equal to zero, although the graph is not bipartite.
According to Table~\ref{table:properties_determined by the spectrum}, the $\Q$-spectrum alone does not determine whether a
graph is bipartite. This is due to the fact that the $\Q$-spectrum does not provide information about the number of components in the
graph or whether the graph is connected.
It is worth noting that while neither the $\LM$-spectrum nor the $\Q$-spectrum independently determines whether a graph is bipartite,
the combination of these spectra does. Specifically, by Item~\ref{Item 4: TFAE bipartite graphs} of
Theorem~\ref{theorem: equivalences for bipartite graphs}, the combined knowledge of both spectra enables to establish this property.
\end{remark}

\section{Graphs determined by their spectra}
\label{section: DS graphs}

The spectral determination of graphs has long been a central topic in spectral graph theory. A major open question in this area is:
"Which graphs are determined by their spectrum (DS)?" This section begins our survey of both classical and recent results on spectral
graph determination. We explore the spectral characterization of various graph classes, methods for constructing or distinguishing
cospectral nonisomorphic graphs, and conditions under which a graph’s spectrum uniquely determines its structure. Additionally, we
present newly obtained proofs of existing results, offering further insights into this field.

\begin{definition}
Let $\Gr{G},\Gr{H}$ be two graphs. A mapping $\phi \colon \V{\Gr{G}} \rightarrow \V{\Gr{H}}$ is a
\emph{graph isomorphism} if
\begin{align}
\{u,v\} \in \E{\Gr{G}} \iff \bigl\{ \phi(u),\phi(v) \bigr\} \in \E{\Gr{H}}.
\end{align}
If there is an isomorphism between $\Gr{G}$ and $\Gr{H}$, we say that these graphs are \emph{isomorphic}.
\end{definition}

\begin{definition}
A \emph{permutation matrix} is a $\{0,1\}$--matrix in which each row and each column contains exactly one entry equal to $1$.
\end{definition}

\begin{remark}
In terms of the adjacency matrix of a graph, $\Gr{G}$ and $\Gr{H}$ are cospectral graphs if $\A(\Gr{G})$ and $\A(\Gr{H})$
are similar matrices, and $\Gr{G}$ and $\Gr{H}$ are isomorphic if the similarity of their adjacency matrices is through
a permutation matrix ${\bf{P}}$, i.e.
\begin{align}
A(\Gr{G}) = {\bf{P}} \, \A(\Gr{H}) \, {\bf{P}}^{-1}.
\end{align}
\end{remark}

\subsection{Graphs determined by their adjacency spectrum (DS graphs)}
\label{subsection: Graphs determined by their adjacency spectrum}

\begin{theorem} \cite{vanDamH03}
\label{theorem: van Dam and Haemers, 2003 - thm1}
All of the graphs with less than five vertices are DS.
\end{theorem}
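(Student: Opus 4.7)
The plan is an exhaustive check. Up to isomorphism there are $1$, $2$, $4$, and $11$ graphs on $1$, $2$, $3$, and $4$ vertices, respectively, so the list is short enough to dispose of by inspection. Accordingly, I would simply verify that within each order no two nonisomorphic graphs share an $\A$-spectrum.

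To minimize case work, I would first exploit the spectral invariants already in hand: by Corollary~\ref{corollary: number of edges and triangles in a graph}, the $\A$-spectrum determines the order $n$ (from its cardinality), the edge count $e=\tfrac12\sum_i\lambda_i^2$, and the triangle count $t=\tfrac16\sum_i\lambda_i^3$. For $n\le 3$ the pair $(n,e)$ already separates all seven graphs: on $3$ vertices the four graphs $\EmG{3},\ \CoG{2}\DU\EmG{1},\ \PathG{3},\ \CoG{3}$ have $e=0,1,2,3$, and the cases $n=1,2$ are trivial. This reduces the problem to the eleven graphs on four vertices.

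For $n=4$, I would tabulate the pair $(e,t)$ across all eleven graphs and identify the collisions. All triples $(n,e,t)$ turn out to be distinct except for two: $(e,t)=(2,0)$ is realized by $2\CoG{2}$ and $\PathG{3}\DU\EmG{1}$, and $(e,t)=(3,0)$ is realized by $\PathG{4}$ and $\SG{4}=\CoBG{1}{3}$. For the first collision, the spectra are $\{1,1,-1,-1\}$ and $\{\sqrt{2},0,0,-\sqrt{2}\}$. For the second, they are
\[
\bigl\{\tfrac12(1+\sqrt{5}),\ \tfrac12(\sqrt{5}-1),\ -\tfrac12(\sqrt{5}-1),\ -\tfrac12(1+\sqrt{5})\bigr\}
\quad\text{and}\quad \{\sqrt{3},0,0,-\sqrt{3}\}.
\]
In each case the spectra differ, finishing the case analysis. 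The only real obstacle is book-keeping: ensuring that the enumeration on four vertices is complete and that no further $(e,t)$-collision is overlooked. Once this is confirmed, the two collisions above are resolved by the direct spectral computations, and every graph on fewer than five vertices is $\A$-DS.
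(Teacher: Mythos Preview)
Your argument is correct. The paper does not supply its own proof of this statement; it merely records the result and cites \cite{vanDamH03}, so there is nothing to compare against beyond noting that an exhaustive check is exactly how such a finite claim is established. Your use of the invariants $(n,e,t)$ from Corollary~\ref{corollary: number of edges and triangles in a graph} to cut the case analysis down to two collisions on four vertices is efficient, and the enumeration of the eleven $4$-vertex graphs with their $(e,t)$ pairs is accurate: the only coincidences are indeed $2\CoG{2}$ versus $\PathG{3}\DU\CoG{1}$ at $(e,t)=(2,0)$ and $\PathG{4}$ versus $\CoBG{1}{3}$ at $(e,t)=(3,0)$, and in both cases the explicit spectra you list are correct and distinct.
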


\begin{example}
\label{example: ANICS graphs with 5 vertices}
The star graph $\SG{5}$ and a graph formed by the disjoint union of a length-4 cycle and an isolated vertex,
$\CG{4} \DU \CoG{1}$, have the same $\A$-spectrum $\{-2 , [0]^3 , 2\}$. They are, however, not isomorphic
since $\SG{5}$ is connected and $\CG{4} \DU \CoG{1}$ is disconnected (see Figure~\ref{fig:graphs with 5 vertices}).
\vspace*{-0.1cm}
\begin{figure}[hbt]
\centering
\includegraphics[width=8cm]{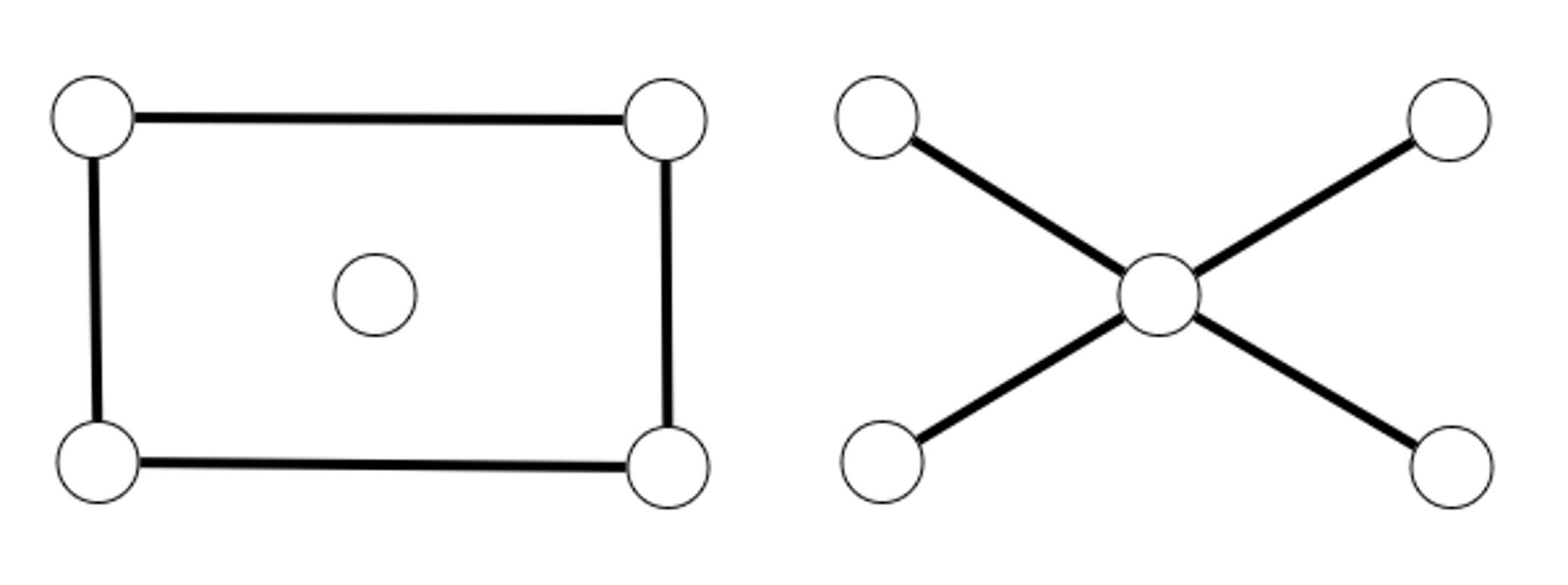}
\caption{The graphs $\SG{4} = \CoBG{1}{4}$ and $\CG{4} \DU \CoG{1}$ (i.e., a union of a 4-length cycle
and an isolated vertex) are cospectral and nonisomorphic graphs ($\A$-NICS graphs) on five vertices.
These two graphs therefore cannot be determined by their adjacency matrix.}
\label{fig:graphs with 5 vertices}
\end{figure}
It can be verified computationally that all the connected nonisomorphic graphs on five vertices can be
distinguished by their $\A$-spectrum (see \cite[Appendix~A1]{CvetkovicRS2010}).
\end{example}

\begin{theorem} \cite{vanDamH03}
\label{theorem: van Dam and Haemers, 2003 - thm2}
All the regular graphs with less than ten vertices are DS (and, as will be clarified later, also
$\mathcal{X}$-DS for every $\mathcal{X} \subseteq \{\A, \LM, \Q\}$).
\end{theorem}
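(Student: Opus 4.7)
The plan is to reduce the assertion to a finite, tractable enumeration of regular graphs of small order and to verify adjacency-cospectrality by inspection. First, it is enough to treat the adjacency spectrum: if $\Gr{G}$ is $d$-regular then $\LPG{\Gr{G}} = d\,\I{n} - \AG{\Gr{G}}$ and $\QG{\Gr{G}} = d\,\I{n} + \AG{\Gr{G}}$, so once regularity and the common degree $d$ are known, the $\LM$- and $\Q$-spectra are merely affine images of the $\A$-spectrum. By Theorem~\ref{theorem: graph regularity from A-spectrum}, both the regularity of a graph and the value of $d$ (namely $d=\Eigval{1}{\Gr{G}}$) are recoverable from $\sigma_{\A}(\Gr{G})$. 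Consequently, any graph $\A$-cospectral with a regular graph $\Gr{G}$ is itself regular with the same degree, and therefore $\mathcal{X}$-DS and $\A$-DS coincide on the class of regular graphs for every $\mathcal{X}\subseteq\{\A,\LM,\Q\}$. This reduces the theorem to: for every $n<10$ and every feasible $d$, distinct isomorphism classes of $d$-regular graphs on $n$ vertices have distinct $\A$-spectra.

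Second, I would perform the enumeration. Tables of regular graphs on up to nine vertices are classical; they can be generated by Brendan McKay's \texttt{geng}/\texttt{genreg} routines or retrieved from the catalogues in \cite[Appendix~A]{CvetkovicRS2010}. The number of isomorphism classes in each pair $(n,d)$ with $n\le 9$ is modest (often one, with only a handful of larger classes), so an exhaustive list is easily produced. For each class I would compute the characteristic polynomial of $\A$ and compare them within each $(n,d)$-block. Many potential collisions are ruled out by coarse $\A$-spectral invariants: the number of edges and triangles (Corollary~\ref{corollary: number of edges and triangles in a graph}), the number of closed walks of any fixed length (Corollary~\ref{corollary: Number of Closed Walks of a Given Length}), and bipartiteness (Theorem~\ref{theorem: equivalences for bipartite graphs}). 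Strongly regular graphs appearing on the list are pinned down by Theorem~\ref{theorem: eigenvalues of srg}: for instance, any $\srg{9}{4}{1}{2}$ (realised uniquely by the Paley graph on $9$ vertices) has its full spectrum determined by its parameters, so its only potential cospectral mates would themselves be $\srg{9}{4}{1}{2}$, reducing to known classification results for strongly regular graphs of this order.

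The main obstacle is not theoretical but rather ensuring \emph{completeness} of the enumeration: a single missed isomorphism class could in principle conceal a cospectral pair. I would mitigate this by cross-checking the produced graph counts against the established counting sequences for $d$-regular graphs of order at most $9$, so that the catalogue is certified before any characteristic polynomial is compared. Once completeness is secured, the remaining verification is a finite mechanical computation, and together with the reduction in the first paragraph this establishes both the $\A$-DS statement and the parenthetical $\mathcal{X}$-DS refinement.
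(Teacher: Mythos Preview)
Your approach is sound, but note that the paper does not supply its own proof of this theorem: it is stated with a citation to \cite{vanDamH03} and left as a quoted result, so there is no in-paper argument to compare against. Your reduction from $\mathcal{X}$-DS to $\A$-DS via Theorem~\ref{theorem: graph regularity from A-spectrum} is exactly the content of Theorem~\ref{theorem: regular DS graphs} (also cited from \cite{vanDamH03}), and the remaining finite enumeration of regular graphs on at most nine vertices, with characteristic polynomials compared within each $(n,d)$-block, is the standard way this fact is established. The only substantive caveat is the one you already flag: the argument is a computer-assisted case check, so its correctness rests on certifying the completeness of the catalogue rather than on any structural lemma.
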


\begin{example}
\label{example: NICS regular graphs on 10 vertices}
\cite{vanDamH03} The following two regular graphs in Figure \ref{fig:graphs with 10 vertices} are
$\{\A, \LM, \Q, \bf{\mathcal{L}}\}$-NICS.
\begin{figure}[hbt]
\centering
\includegraphics[width=12cm]{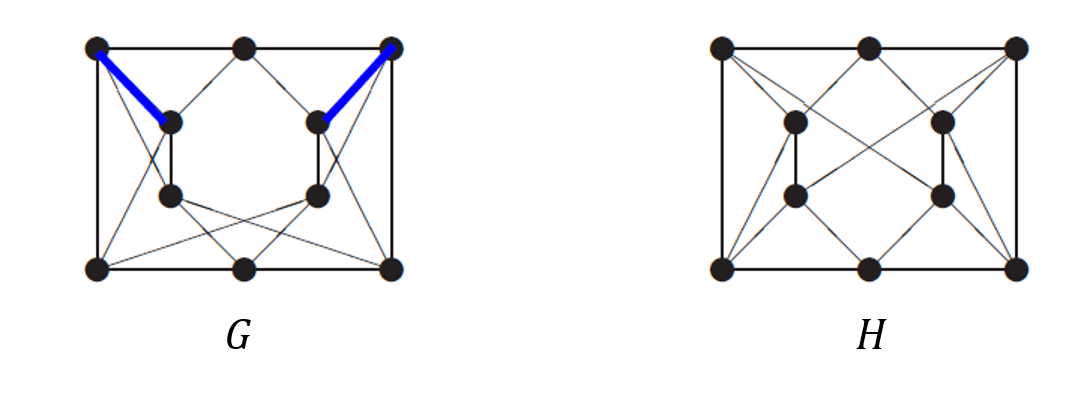}
\caption{$\{\A, \LM, \Q, \bf{\mathcal{L}}\}$-NICS regular graphs with $10$ vertices.
These cospectral graphs are nonisomorphic because each of the two blue edges in $\Gr{G}$ belongs
to three triangles, whereas no such an edge exists in $\Gr{H}$.}\label{fig:graphs with 10 vertices}
\end{figure}
The regular graphs $\Gr{G}$ and $\Gr{H}$ in Figure~\ref{fig:graphs with 10 vertices} can be verified to be cospectral
with the common characteristic polynomial $$P(x)= x^{10} - 20x^8 - 16x^7 + 110x^6 + 136x^5 - 180x^4 - 320x^3 + 9x^2 + 200x + 80.$$
These graphs are also nonisomorphic because each of the two blue edges in $\Gr{G}$ belongs
to three triangles, whereas no such an edge exists in $\Gr{H}$. Furthermore, it is shown in Example~4.18 of \cite{Sason2024}
that each pair of the regular NICS graphs on 10~vertices, denoted by $\{\Gr{G}, \Gr{H}\}$ and $\{\CGr{G}, \CGr{H}\}$,
exhibits distinct values of the Lov\'{a}sz $\vartheta$-functions, whereas the graphs $\Gr{G}$, $\CGr{G}$, $\Gr{H}$,
and $\CGr{H}$ share identical independence numbers~(3), clique numbers~(3), and chromatic numbers~(4).
Furthermore, based on these two pairs of graphs, it is constructively shown in Theorem~4.19 of \cite{Sason2024} that for
every even integer $n \geq 14$, there exist connected, irregular, cospectral, and nonisomorphic graphs on $n$ vertices,
being jointly cospectral with respect to their adjacency, Laplacian, signless Laplacian, and normalized Laplacian matrices,
while also sharing identical independence, clique, and chromatic numbers, but being distinguished by their Lov\'{a}sz
$\vartheta$-functions.
\end{example}

\begin{remark}
\label{remark: relations to Igal's paper 2023}
In continuation to Example~\ref{example: NICS regular graphs on 10 vertices}, it is worth noting that closed-form
expressions for the Lov\'{a}sz $\vartheta$-functions of regular graphs, which are edge-transitive or strongly regular,
were derived in \cite[Theorem~9]{Lovasz79_IT} and \cite[Proposition~1]{Sason23}, respectively. In particular,
it follows from \cite[Proposition~1]{Sason23} that strongly regular graphs with identical four parameters
$(n,d,\lambda,\mu)$ are cospectral and they have identical Lov\'{a}sz $\vartheta$-numbers, although they need not be
necessarily isomorphic. For such an explicit counterexample, the reader is referred to \cite[Remark~3]{Sason23}.
\end{remark}

We next introduce friendship graphs to address their possible determination by their spectra with respect to
several associated matrices.
\begin{definition}
\label{definition: friendship graph}
Let $p\in \naturals$. \emph{The friendship graph} $\FG{p}$, also known as the \emph{windmill graph}, is a graph
with $2p+1$ vertices, consisting of a single vertex (the central vertex) that is adjacent to all the other $2p$ vertices.
Furthermore, every pair of these $2p$ vertices shares exactly one common neighbor, namely the central vertex (see
Figure~\ref{fig:friendship graph F4}). This graph has $3p$ edges and $p$ triangles.
\end{definition}
\begin{figure}[H]
\centering
\includegraphics[width=3cm]{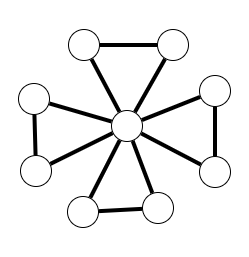}
\caption{The friendship (windmill) graph $\FG{4}$ has 9~vertices, 12 edges, and~4 triangles.}\label{fig:friendship graph F4}
\end{figure}
The term friendship graph in Definition~\ref{definition: friendship graph} originates from the
\emph{Friendship Theorem} \cite{Erdos1963}. This theorem states that if $\Gr{G}$ is a finite graph where
any two vertices share exactly one common neighbor, then there exists a vertex that is adjacent to all other
vertices. In this context, the adjacency of vertices in the graph can be interpreted socially as a representation
of friendship between the individuals represented by the vertices (assuming friendship is a mutual relationship).
For a nice exposition of the proof of the Friendship Theorem, the interested reader is referred to Chapter~44 of
\cite{AignerZ18}.

\begin{theorem}
\label{theorem: special classes of DS graphs}
The following graphs are DS:
\begin{enumerate}[1.]
\item \label{item 1: DS graphs}
All graphs with less than five vertices, and also all regular graphs with less than 10 vertices \cite{vanDamH03} (recall
Theorems~\ref{theorem: van Dam and Haemers, 2003 - thm1} and~\ref{theorem: van Dam and Haemers, 2003 - thm2}).
\item \label{item 2: DS graphs}
The graphs $\CoG{n}$, $\CG{n}$, $\PathG{n}$, $\CoBG{m}{m}$ and $\CGr{\CoG{n}}$ \cite{vanDamH03}.
\item \label{item 3: DS graphs}
The complement of the path graph $\CGr{\PathG{n}}$ \cite{DoobH02}.
\item \label{item 4: DS graphs}
The disjoint union of $k$ path graph with no isolated vertices, the disjoint union of $k$ complete graphs with no isolated
vertices, and the disjoint union of $k$ cycles (i.e., every 2-regular graph) \cite{vanDamH03}.
\item \label{item 5: DS graphs}
The complement graph of a DS regular graph \cite{CvetkovicRS2010}.
\item \label{item 6: DS graphs}
Every $(n-3)$-regular graph on $n$ vertices \cite{CvetkovicRS2010}.
\item \label{item 7: DS graphs}
The friendship graph $\FG{p}$ for $p \ne 16$ \cite{CioabaHVW2015}.
\item \label{item 8: DS graphs}
Sandglass graphs, which are obtained by appending a triangle to each of the pendant (i.e., degree-1) vertices of a path \cite{LuLYY09}.
\item \label{item 9: DS graphs}
If $\Gr{H}$ is a subgraph of a graph $\Gr{G}$, and $\Gr{G} \setminus \Gr{H}$ denotes the graph obtained from $\Gr{G}$
by deleting the edges of $\Gr{H}$, then also the following graphs are DS \cite{CamaraH14}:
\begin{itemize}
\item $\CoG{n} \setminus (\ell \CoG{2})$ and $\CoG{n} \setminus \CoG{m}$, where $m \leq n-2$,
\item $\CoG{n} \setminus \CoBG{\ell}{m}$,
\item $\CoG{n} \setminus \Gr{H}$, where $\Gr{H}$ has at most four edges.
\end{itemize}
\end{enumerate}
\end{theorem}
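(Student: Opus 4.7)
The statement aggregates nine DS results from disparate references, so my plan is to articulate a common spectral template and then flag where each item departs from it. The template exploits the invariants readable from $\sigma_{\A}(\Gr{G})$: by Corollary~\ref{corollary: number of edges and triangles in a graph} the order $n$, the edge count $e$, and the triangle count $t$ are determined; by Theorem~\ref{theorem: graph regularity from A-spectrum} regularity and the common degree $d=\Eigval{1}{\Gr{G}}$ are detected; and by Item~\ref{Item 3: TFAE bipartite graphs} of Theorem~\ref{theorem: equivalences for bipartite graphs} bipartiteness is detected. For any candidate $\Gr{G}$ on the list, I would first pin down these invariants explicitly and then argue that any hypothetical cospectral $\Gr{H}$ must be isomorphic to $\Gr{G}$.

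Items~\ref{item 1: DS graphs}--\ref{item 6: DS graphs} then reduce to invariant matching plus classical structural facts. For Item~\ref{item 2: DS graphs}: $\CoG{n}$ is the unique graph attaining $\Eigval{1}{\Gr{G}}=n-1$ on $n$ vertices; $\CGr{\CoG{n}}$ has the all-zero spectrum; $\PathG{n}$ and $\CG{n}$ are characterized by their classical cosine-based spectra; and $\CoBG{m}{m}$ is forced as the unique connected bipartite regular graph with $\Eigval{1}{\Gr{G}}=m$. Item~\ref{item 5: DS graphs} uses the identity $\AG{\CGr{\Gr{G}}}=\AllOne_{n,n}-\I{n}-\AG{\Gr{G}}$, valid when $\Gr{G}$ is regular, under which cospectrality is preserved by complementation, so a cospectral mate of $\CGr{\Gr{G}}$ would project back to a cospectral mate of $\Gr{G}$, contradicting the DS hypothesis. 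Item~\ref{item 6: DS graphs} follows as a corollary, since the complement is $2$-regular, hence a disjoint union of cycles, which is covered by Item~\ref{item 4: DS graphs}. Item~\ref{item 4: DS graphs} itself rests on Corollary~\ref{corollary: Number of Closed Walks of a Given Length}: the closed-walk counts $\sum_j \lambda_j^k$ together with the explicit spectra of $\PathG{k}$, $\CG{k}$, and $\CoG{k}$ recover the multiset of component sizes. Item~\ref{item 1: DS graphs} is a direct finite verification, and Item~\ref{item 3: DS graphs} combines the template with a careful connectivity analysis, following \cite{DoobH02}.

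The delicate items are \ref{item 7: DS graphs}--\ref{item 9: DS graphs}. For the friendship graph $\FG{p}$ (Item~\ref{item 7: DS graphs}), the $\A$-spectrum is computable in closed form (in terms of $\tfrac12(1\pm\sqrt{8p+1})$ together with $\pm 1$ of controlled multiplicities); given $n=2p+1$, $e=3p$, $t=p$, the degree sequence of a hypothetical cospectral mate is heavily constrained by higher trace identities, and the approach of \cite{CioabaHVW2015} rules out all alternatives except the single exception $p=16$, where an explicit cospectral mate exists. For the sandglass graphs (Item~\ref{item 8: DS graphs}) and the edge-deletion families in Item~\ref{item 9: DS graphs} (such as $\CoG{n}\setminus\CoBG{\ell}{m}$), the plan is to invoke Cauchy interlacing (Theorem~\ref{thm:interlacing}) to restrict the eigenvalue pattern inherited from the parent graph, and then enumerate the few residual combinatorial possibilities, following the case analyses of \cite{LuLYY09,CamaraH14}.

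The main obstacle is Item~\ref{item 7: DS graphs}: the $p=16$ exception demonstrates that no purely trace-based argument can suffice, so one needs the delicate multiplicity analysis of \cite{CioabaHVW2015} together with auxiliary algebraic identities to eliminate the generic non-isomorphic candidates. Items~\ref{item 8: DS graphs} and~\ref{item 9: DS graphs} are technically easier but still depend on case-by-case enumerations rather than a single uniform argument, which is why the overall proof of the theorem is necessarily a patchwork rather than a single clean derivation.
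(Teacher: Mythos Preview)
The paper does not prove this theorem at all: it is stated purely as a compilation of known results, with each item carrying a citation to the original source and no proof environment following the statement. So there is no ``paper's own proof'' to compare against; the authors treat Theorem~\ref{theorem: special classes of DS graphs} as a survey of the literature.

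Your proposal therefore goes well beyond what the paper does, and as a roadmap it is sensible: the template of reading off $(n,e,t)$, regularity, and bipartiteness from $\sigma_{\A}$ is exactly how most of these arguments begin, and your reduction of Item~\ref{item 6: DS graphs} to Items~\ref{item 4: DS graphs} and~\ref{item 5: DS graphs} via complementation is the standard one. A few of your one-line justifications are looser than they should be, though. Saying $\PathG{n}$ and $\CG{n}$ are ``characterized by their classical cosine-based spectra'' is not a proof that they are DS; one still has to exclude all other graphs sharing those spectra, and for $\PathG{n}$ in particular this requires the structural classification of graphs with $\Eigval{1}{\Gr{G}}<2$ (the Smith graphs), not just knowledge of the explicit eigenvalues. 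Similarly, for Item~\ref{item 4: DS graphs} the closed-walk counts alone do not obviously recover the multiset of component sizes for disjoint unions of paths or complete graphs; the actual arguments in \cite{vanDamH03} proceed by more careful interlacing and eigenvalue-range considerations. These are not fatal to your plan---you correctly flag that the delicate items (\ref{item 7: DS graphs}--\ref{item 9: DS graphs}) require the full machinery of the cited papers---but the ``easy'' items are slightly less automatic than your sketch suggests.
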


\subsection{Graphs determined by their spectra with respect to various matrices (X-DS graphs)}
\label{subsection: Graphs determined by their X-DS spectrum}

\noindent

In this section, we consider graphs that are determined by the spectra of various associated matrices beyond the adjacency matrix spectrum.

\begin{definition}
Let $\Gr{G} , \Gr{H}$ be two graphs and let $\mathcal{X} \subseteq \Gmats$.
\begin{enumerate}
\item $\Gr{G}$ and $\Gr{H}$ are said to be \emph{$\mathcal{X}$-cospectral} if they have the same $X$-spectrum, i.e.
$\sigma_X(\Gr{G}) = \sigma_X(\Gr{H})$.
\item
Nonisomorphic graphs $\Gr{G}$ and $\Gr{H}$ that are $\mathcal{X}$-cospectral are said to be \emph{$\mathcal{X}$-NICS},
where {\em NICS} is an abbreviation of {\em non-isomorphic and cospectral}.
\item A graph $\Gr{G}$ is said to be \emph{determined by its $\mathcal{X}$-spectrum
($\mathcal{X}$-DS)} if every graph that is $\mathcal{X}$-cospectral to $\Gr{G}$ is also isomorphic to $\Gr{G}$.
\end{enumerate}
\end{definition}

\begin{notation}
For a singleton $\mathcal{X} = \{ X \}$, we abbreviate $\{ X \} $-cospectral, $\{X\}$-DS and $\{X\}$-NICS by $X$-cospectral,
$X$-DS and $X$-NICS, respectively. For the adjacency matrix, we will abbreviate $\A$-DS by DS.
\end{notation}

\begin{remark}
\label{remark: X,Y cospectrality}
  Let $\mathcal{X} \subseteq \mathcal{Y} \subseteq \Gmats$. The following holds by definition:
\begin{itemize}
  \item If two graph $\Gr{G}, \Gr{H}$ are $\mathcal{Y}$-cospectral, then they are $\mathcal{X}$-cospectral.
  \item If a graph $\Gr{G}$ is $\mathcal{X}$-DS, then it is $\mathcal{Y}$-DS.
\end{itemize}
\end{remark}

\begin{definition}
\label{definition: generalized spectrum}
Let $\Gr{G}$ be a graph. The \emph{generalized spectrum} of $\Gr{G}$ is the $\{\A, \overline{\A}\}$-spectrum of $\Gr{G}$.
\end{definition}

The following result on the cospectrality of regular graphs can be readily verified.
\begin{proposition}
\label{proposition: regular graphs cospectrality}
Let $\Gr{G}$ and $\Gr{H}$ be regular graphs that are $\mathcal{X}$-cospectral for {\em some}
$\mathcal{X} \subseteq \{\A, \LM, \Q, \bf{\mathcal{L}}\}$. Then, $\Gr{G}$ and $\Gr{H}$ are
$\mathcal{Y}$-cospectral for {\em every}
$\mathcal{Y} \subseteq \{\A, \overline{\A}, \LM, \overline{\LM}, \Q, \overline{\Q}, {\bf{\mathcal{L}}}, \overline{{\bf{\mathcal{L}}}} \}$.
In particular, the cospectrality of regular graphs (and their complements) stays unaffected
by the chosen matrix among $\{\A, \LM, \Q, \bf{\mathcal{L}}\}$.
\end{proposition}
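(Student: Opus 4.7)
The plan is to exploit the fact that, for a $d$-regular graph on $n$ vertices, all eight matrices appearing in the statement are polynomials in $\A(\Gr{G})$, so they share a common orthonormal eigenbasis and each of their spectra is a deterministic function of $n$, $d$, and $\sigma_\A(\Gr{G})$. The proof thus reduces to verifying that the pair $(n,d)$ and the adjacency spectrum can be recovered from any single $X$-spectrum with $X\in\{\A,\LM,\Q,\mathcal{L}\}$, and that the four complementary spectra are in turn recovered from the $\A$-spectrum.

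The number of vertices is $n=\bigcard{\sigma_X(\Gr{G})}$. The degree $d$ is extracted as follows. For $X=\A$ it is the Perron eigenvalue $\Eigval{1}{\Gr{G}}$; for $X\in\{\LM,\Q\}$ one has $\tr(X)=\sum_{v}d(v)=nd$, so $d=\tr(X)/n$; and for $X=\mathcal{L}$ the identity $\mathcal{L}=\I{n}-\tfrac{1}{d}\A$ (valid when $d>0$) yields
\begin{align}
\sum_i(1-\delta_i)^2 = \frac{1}{d^2}\sum_i\lambda_i^2 = \frac{2\,\bigcard{\E{\Gr{G}}}}{d^2} = \frac{n}{d},
\end{align}
hence $d=n/\sum_i(1-\delta_i)^2$. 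The empty-graph case $d=0$ is recognized by $\sigma_\mathcal{L}=\{0^{[n]}\}$, using Item~\ref{Item 2: normalized Laplacian matrix of a graph} of Theorem~\ref{theorem: On the normalized Laplacian matrix of a graph}, and is handled directly. Once $n$ and $d$ are in hand, the affine identities $\LM=d\I{n}-\A$, $\Q=d\I{n}+\A$ and (for $d>0$) $\mathcal{L}=\I{n}-\tfrac{1}{d}\A$ provide explicit bijections between $\sigma_\A$ and each of $\sigma_\LM$, $\sigma_\Q$, $\sigma_\mathcal{L}$, so $X$-cospectrality for a single $X\in\{\A,\LM,\Q,\mathcal{L}\}$ forces $\Gr{G}$ and $\Gr{H}$ to agree in all four spectra.

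For the four complement matrices, note that since $\Gr{G}$ is $d$-regular the complement $\CGr{G}$ is $(n-1-d)$-regular; so once $\sigma_{\overline{\A}}(\Gr{G})=\sigma_\A(\CGr{G})$ is known, the argument of the previous paragraph applied to $\CGr{G}$ supplies $\sigma_{\overline{\LM}}(\Gr{G})$, $\sigma_{\overline{\Q}}(\Gr{G})$ and $\sigma_{\overline{\mathcal{L}}}(\Gr{G})$. To extract $\sigma_{\overline{\A}}(\Gr{G})$ from $\sigma_\A(\Gr{G})$, I choose an orthonormal $\A$-eigenbasis whose first vector is $\mathbf{1}_n/\sqrt{n}$: on $\mathbf{1}_n$ the matrix $\overline{\A}(\Gr{G})=\J{n}-\I{n}-\A(\Gr{G})$ acts as the scalar $n-1-d$, while on the orthogonal complement $\J{n}$ annihilates every vector and $\overline{\A}$ acts as $-\I{n}-\A$. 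Consequently, if $d$ appears with multiplicity $m$ in $\sigma_\A(\Gr{G})$, then $\sigma_{\overline{\A}}(\Gr{G})$ is obtained from $\sigma_\A(\Gr{G})$ by replacing one copy of $d$ by $n-1-d$, the remaining $m-1$ copies of $d$ by $-(1+d)$, and every other eigenvalue $\lambda$ by $-(1+\lambda)$. In particular this spectrum depends only on $\sigma_\A(\Gr{G})$, $n$ and $d$.

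The main obstacle is the $\mathcal{L}$-case: Table~\ref{table:properties_determined by the spectrum} records that the normalized Laplacian spectrum does not in general determine $\bigcard{\E{\Gr{G}}}$, so the regularity hypothesis enters essentially through the identity $d=n/\sum_i(1-\delta_i)^2$ derived above, and the boundary case $d=0$ has to be carved out by hand since $\mathcal{L}=\I{n}-\tfrac{1}{d}\A$ is invalid there. Beyond this technical point, the remainder of the argument is routine linear algebra carried out in the common eigenbasis of the eight matrices.
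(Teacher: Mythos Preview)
Your proof is correct and complete. The paper itself omits the argument entirely, stating only that the result ``can be readily verified,'' so there is no proof to compare against; your write-up supplies precisely the standard verification the paper gestures at, namely that for a $d$-regular graph the identities $\LM=d\I{n}-\A$, $\Q=d\I{n}+\A$, $\mathcal{L}=\I{n}-\tfrac{1}{d}\A$, and $\overline{\A}=\J{n}-\I{n}-\A$ make each of the eight spectra a deterministic function of $(n,d,\sigma_\A)$, and that $(n,d)$ itself is recoverable from any one of the four primary spectra. Your treatment of the two genuine edge cases---extracting $d$ from $\sigma_{\mathcal{L}}$ via $d=n/\sum_i(1-\delta_i)^2$ and carving out $d=0$ separately, and handling the multiplicity of the Perron eigenvalue $d$ when passing to $\sigma_{\overline{\A}}$ for disconnected regular graphs---goes somewhat beyond what ``readily verified'' suggests, and is done correctly.
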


\begin{definition}
\label{definition: DGS}
A graph $\Gr{G}$ is said to be \emph{determined by its generalized spectrum (DGS)} if it is uniquely determined by its
generalized spectrum. In other words, a graph $\Gr{G}$ is DGS if and only if every graph $\Gr{H}$ with the same
$\{\A, \overline{\A}\}$-spectrum as $\Gr{G}$ is necessarily isomorphic to $\Gr{G}$.
\end{definition}
If a graph is not DS, it may still be DGS, as additional spectral information is available. Conversely, every DS graph
is also DGS. For further insights into DGS graphs, including various characterizations, conjectures, and studies, we refer
the reader to \cite{WangXu06,Wang13,Wang17}.

\vspace*{0.2cm}
The continuation of this section characterizes graphs that are $X$-DS, where $X \in \{\LM, \Q, \mathcal{L}\}$, with pointers
to various studies. We first consider regular DS graphs.

\begin{theorem} \cite[Proposition~3]{vanDamH03}
\label{theorem: regular DS graphs}
For regular graphs, the properties of being DS, $\LM$-DS, and $\Q$-DS are equivalent.
\end{theorem}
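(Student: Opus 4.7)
The plan is to show that for a regular graph $\Gr{G}$, the three spectra under consideration determine one another, \emph{and} that any graph cospectral to $\Gr{G}$ with respect to one of these matrices is forced to be regular of the same degree. Once both are established, equivalence of $\A$-DS, $\LM$-DS, and $\Q$-DS follows immediately. By Proposition~\ref{proposition: regular graphs cospectrality}, if $\Gr{G}$ and $\Gr{H}$ are both regular and $X$-cospectral for some $X\in\{\A,\LM,\Q\}$, then they are $Y$-cospectral for every $Y\in\{\A,\LM,\Q\}$ (indeed because of the identities $\LM=d\I{n}-\A$ and $\Q=d\I{n}+\A$ valid for a $d$-regular graph on $n$ vertices). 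Hence the essential obstacle is to detect regularity, and recover the degree, purely from each spectrum.

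For the $\A$-spectrum, detection is immediate from Theorem~\ref{theorem: graph regularity from A-spectrum}: a graph on $n$ vertices is regular if and only if $\sum_i \lambda_i^2 = n\lambda_1$, which is a condition solely on the $\A$-spectrum, and in this case the common degree equals $\lambda_1$. For the $\LM$- and $\Q$-spectra, the plan is to extract the first two moments. The number of vertices $n$ is the cardinality of the multiset. The trace gives $\mathrm{tr}(\LM)=\mathrm{tr}(\Q)=\sum_i d(v_i)=2m$, where $m=\bigcard{\E{\Gr{G}}}$. For the second moment, expanding
\begin{equation*}
\LM^2 = \D^2 - \D\A - \A\D + \A^2, \qquad \Q^2 = \D^2 + \D\A + \A\D + \A^2,
\end{equation*}
and using $\mathrm{tr}(\D\A)=\mathrm{tr}(\A\D)=0$ (since the diagonal of $\A$ vanishes), together with $\mathrm{tr}(\A^2)=2m$, yields
\begin{equation*}
\mathrm{tr}(\LM^2) = \mathrm{tr}(\Q^2) = \sum_{i=1}^{n} d(v_i)^2 + 2m.
\end{equation*}
Thus the $\LM$- and $\Q$-spectra each determine $n$, $m$, and $\sum_i d(v_i)^2$. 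By the Cauchy--Schwarz inequality, $\sum_i d(v_i)^2 \ge (2m)^2/n$ with equality if and only if $\Gr{G}$ is regular; so regularity is detectable from either spectrum, and in the regular case the common degree is $d=2m/n$.

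With these two ingredients in hand, the equivalence follows by transfer. If $\Gr{G}$ is a regular DS graph and $\Gr{H}$ is $\LM$-cospectral to $\Gr{G}$, then $\Gr{H}$ is regular with the same $n$ and $d$ by the moment computation above; the relation $\LM=d\I{n}-\A$ converts the $\LM$-spectrum into the $\A$-spectrum, so $\Gr{G}$ and $\Gr{H}$ are $\A$-cospectral and hence isomorphic, proving $\Gr{G}$ is $\LM$-DS. Running the same argument in reverse, and with $\Q=d\I{n}+\A$ in place of $\LM$, gives all remaining implications. The only delicate step is the moment calculation that lets the $\LM$- and $\Q$-spectra witness regularity; everything else is bookkeeping through the affine relations between $\A$, $\LM$, and $\Q$ on a regular graph.
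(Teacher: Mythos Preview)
Your proof is correct and follows essentially the same approach the paper points to in Remark~\ref{remark: DS regular graphs are not necessarily DS w.r.t. normalized Laplacian}: the crux is that regularity (and the common degree) can be read off from each of the $\A$-, $\LM$-, and $\Q$-spectra, so no irregular graph can be $X$-cospectral to a regular one for $X\in\{\A,\LM,\Q\}$; once that is in hand, the affine relations $\LM=d\I{n}-\A$ and $\Q=d\I{n}+\A$ transfer cospectrality among the three matrices. Your moment computation for $\mathrm{tr}(\LM^2)=\mathrm{tr}(\Q^2)=\sum_i d(v_i)^2+2m$ together with Cauchy--Schwarz is exactly the mechanism behind \cite[Proposition~2]{vanDamH03} that the paper cites.
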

\begin{remark}
\label{remark: recurring approach}
To avoid any potential confusion, it is important to emphasize that in statements such as Theorem~\ref{theorem: regular DS graphs},
the only available information is the spectrum of the graph. There is no indication or prior knowledge that the spectrum corresponds
to a regular graph. In such cases, the regularity of the graph is not part of the revealed information and, therefore, cannot be used
to determine the graph. This recurring approach --- stating that $\Gr{G}$ is stated to be a graph satisfying certain properties (e.g., regularity,
strong regularity, etc.) and then examining whether the graph can be determined from its spectrum --- appears throughout this paper. It
should be understood that the only available information is the spectrum of the graph, and no additional properties of the graph beyond
its spectrum are disclosed.
\end{remark}

\begin{remark}
\label{remark: DS regular graphs are not necessarily DS w.r.t. normalized Laplacian}
The crux of the proof of Theorem~\ref{theorem: regular DS graphs} is that there are no two NICS
graphs, with respect to either $\A$, $\LM$, or $\Q$, where one graph is regular and the other is
irregular (see \cite[Proposition~2.2]{vanDamH03}).
This, however, does not extend to NICS graphs with respect to the normalized Laplacian matrix $\mathcal{L}$,
and regular DS graphs are not necessarily $\mathcal{L}$-DS. For instance, the cycle $\CG{4}$ and the bipartite complete
graph $\CoBG{1}{3}$ (i.e., $\SG{3}$) share the same $\mathcal{L}$-spectrum, which is given by $\{0, 1^2, 2\}$,
but these graphs are nonisomorphic (as $\CG{4}$ is regular, in contrast to $\CoBG{1}{3}$). It therefore follows
that the 2-regular graph $\CG{4}$ is {\em not} $\mathcal{L}$-DS, although it is DS (see Item~\ref{item 2: DS graphs}
of Theorem~\ref{theorem: special classes of DS graphs}).
More generally, it is conjectured in \cite{Butler2016} that $\CG{n}$ is $\mathcal{L}$-DS if
and only if $n>4$ and $4 \hspace*{-0.1cm} \not| \, n$.
\end{remark}

\begin{theorem}
\label{theorem: L-DS graphs}
The following graphs are $\LM$-DS:
\begin{enumerate}[1.]
\item $\PathG{n},\CG{n},\CoG{n},\CoBG{m}{m}$ and their complements \cite{vanDamH03}.
\item The disjoint union of $k$ paths, $\PathG{n_1} \DU \PathG{n_2} \DU \ldots \DU \PathG{n_k}$ each
having at least one edge \cite{vanDamH03}.
\item The complete bipartite graph $\CoBG{m}{n}$ with $m,n\geq2$ and $\frac{5}{3}n<m$ \cite{Boulet2009}.
\item \label{stars: L-DS}
The star graphs $\SG{n}$ with $n \neq 3$ \cite{OmidiT2007,LiuZG2008}.
\item Trees with a single vertex having a degree greater than~2 (referred to as starlike trees) \cite{OmidiT2007,LiuZG2008}.
\item The friendship graph $\FG{p}$ \cite{LiuZG2008}.
\item The path-friendship graphs, where a friendship graph and a starlike tree are joined by merging their vertices of degree greater than~2 \cite{OboudiAAB2021}.
\item The wheel graph $\Gr{W}_{n+1} \triangleq \CoG{1} \vee \CG{n}$ for $n \neq 7$ (otherwise, if $n=7$, then it is not $\LM$-DS) \cite{ZhangLY09}.
\item The join of a clique and an independent set on $n$ vertices, $\CoG{n-m} \vee \, \CGr{\CoG{m}}$, where $m \in \OneTo{n-1}$ \cite{DasL2016}.
\item Sandglass graphs (see also Item~\ref{item 8: DS graphs} in Theorem~\ref{theorem: special classes of DS graphs}) \cite{LuLYY09}.
\item The join graph $\Gr{G} \vee \CoG{m}$, for every $m \in \naturals$, where $\Gr{G}$ is a disconnected graph \cite{ZhouBu2012}.
\item The join graph $\Gr{G} \vee \CoG{m}$, for every $m \in \naturals$, where $\Gr{G}$ is an $\LM$-DS connected graph on $n$ vertices and $m$ edges
with $m \leq 2n-6$, $\CGr{G}$ is a connected graph, and either one of the following conditions holds \cite{ZhouBu2012}:
\begin{itemize}
\item $\Gr{G} \vee \CoG{1}$ is $\LM$-DS;
\item the maximum degree of $\Gr{G}$ is smaller than $\tfrac12 (n-2)$.
\end{itemize}
\item Specifically, the join graph $\Gr{G} \vee \CoG{m}$, for every $m \in \naturals$, where $\Gr{G}$ is an $\LM$-DS tree on $n \geq 5$ vertices
(since, the equality $m=n-1$ holds for a tree on $n$ vertices and $m$ edges) \cite{ZhouBu2012}.
\end{enumerate}
\end{theorem}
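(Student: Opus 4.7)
The plan is to handle each of the thirteen items by the following two-stage recipe: first extract, from the $\LM$-spectrum alone, a list of combinatorial invariants of any $\LM$-cospectral mate $\Gr{H}$ of $\Gr{G}$, and then use those invariants to force $\Gr{H}\cong\Gr{G}$. The universal invariants at our disposal are $n=\card{\V{\Gr{H}}}$ (the spectrum size), $\bigcard{\E{\Gr{H}}}=\tfrac12\sum_i\mu_i$ (half the trace of $\LM$), the number of connected components (the multiplicity of $0$ as an $\LM$-eigenvalue), and the number of spanning trees $\tfrac{1}{n}\prod_{i=2}^{n}\mu_i$ (Kirchhoff's matrix-tree theorem). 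To these one adds the moment identities $\sum_i\mu_i^k=\mathrm{tr}(\LM^k)$; the case $k=2$ yields $\sum_v d(v)^2=\sum_i\mu_i^2-2\bigcard{\E{\Gr{G}}}$, so the degree-sequence second moment is also read off, and similarly for higher $k$ one obtains weighted counts of short closed walks.

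With these in hand, the transparent items dispatch quickly. For $\PathG{n}$ and $\CG{n}$ the explicit spectra $\mu_k=2-2\cos(k\pi/n)$ and $\mu_k=2-2\cos(2\pi k/n)$ force $\bigcard{\E{\Gr{H}}}$, connectedness, and $\sum_v d(v)^2$ to values realized only by a path (respectively a $2$-regular connected graph), after which a direct structural argument identifies $\Gr{H}$. For $\CoG{n}$ and $\CoBG{m}{m}$ the $\LM$-spectra $\{0,[n]^{n-1}\}$ and $\{0,[m]^{2m-2},2m\}$ force full regularity, bipartiteness where needed, and the correct parameters. The disjoint-union items reduce to the connected cases: the multiplicity of $0$ gives the number of parts, and the remaining spectrum is partitioned among them using the fact that each connected piece has a uniquely recognisable sub-spectrum. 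The complement items follow from the standard Laplacian identity $\mu_i(\CGr{\Gr{G}})=n-\mu_{n+2-i}(\Gr{G})$ for $i\geq 2$, together with $\mu_1=0$, which transfers the $\LM$-DS property from $\Gr{G}$ to $\CGr{\Gr{G}}$.

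For stars, starlike trees, friendship and path-friendship graphs, wheels, sandglass graphs, and the join constructions with $\CoG{m}$, the common template is to recognise from the largest eigenvalue and the multiplicity profile a distinguished high-degree ``central'' vertex together with the number and type of appendages attached to it, then use $\bigcard{\E{\Gr{H}}}=n-1$ (for trees) or analogous size and girth constraints to fix the skeleton, and close with a direct isomorphism check. The join-type items ($\Gr{G}\vee\CoG{m}$) are reduced to the $\LM$-DS property of the underlying factor $\Gr{G}$ via the Laplacian join formula, which expresses $\sigma_{\LM}(\Gr{G}\vee\Gr{H})$ in terms of $\sigma_{\LM}(\Gr{G})$, $\sigma_{\LM}(\Gr{H})$, and the orders of the factors; the hypotheses that $\CGr{\Gr{G}}$ is connected and that the maximum degree of $\Gr{G}$ is less than $\tfrac12(n-2)$ (or that $\Gr{G}\vee\CoG{1}$ is $\LM$-DS) ensure that this reduction is well-posed and that no stray cospectral mate is introduced by the join operation.

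The main obstacle, uniform across the list, is the \emph{combinatorial step} that follows the spectral bookkeeping: once $n$, $\bigcard{\E{\Gr{H}}}$, the number of components, the number of spanning trees, the degree second-moment, and the degree of any canonical vertex are pinned down, each family still admits a small pool of potential cospectral competitors that must be excluded by tailored arguments, typically combining interlacing with the enumeration of short closed walks. The exceptions advertised in the statement, namely $n=7$ for the wheel graph and the threshold $\tfrac{5}{3}n<m$ for $\CoBG{m}{n}$, warn that these exclusions are genuinely delicate and that the final combinatorial step is where the real work lives; by contrast, the extraction of invariants from the $\LM$-spectrum is largely mechanical once the right moment and subgraph-count identities are lined up.
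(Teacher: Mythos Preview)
The paper does not prove this theorem at all: it is a survey statement, and each of the thirteen items is simply attributed to the cited reference (\cite{vanDamH03}, \cite{Boulet2009}, \cite{OmidiT2007}, etc.) with no argument given in the paper itself. So there is no ``paper's own proof'' against which to compare your proposal.

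Your outline correctly identifies the standard toolkit for Laplacian spectral determination---order, size, number of components, number of spanning trees, the degree-sequence moments via $\mathrm{tr}(\LM^k)$, and the complement identity $\mu_i(\CGr{\Gr{G}})=n-\mu_{n+2-i}(\Gr{G})$---and this is indeed the scaffolding used in most of the cited papers. However, what you have written is an outline, not a proof, and you say as much yourself: the ``combinatorial step'' that excludes the residual pool of candidates is, as you note, ``where the real work lives,'' and you do not carry it out for any item. Several of the items are genuinely delicate (the $n=7$ exception for the wheel, the $\tfrac{5}{3}n<m$ threshold for $\CoBG{m}{n}$, the specific hypotheses in Items~11--13 on $\Gr{G}\vee\CoG{m}$), and each required a dedicated paper to settle; a uniform two-stage recipe at this level of generality cannot substitute for those case-by-case arguments. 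If your goal is to reproduce the theorem, you would need to go through each cited source and reproduce its specific exclusion arguments; if your goal is merely to indicate why the results are plausible, your sketch is fine as motivation but should not be presented as a proof.
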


\begin{remark}
In general, a disjoint union of complete graphs is not determined by its Laplacian spectrum.
\end{remark}

\begin{theorem}
\label{theorem: Q-DS graphs}
The following graphs are $\Q$-DS:
\begin{enumerate}[1.]
\item The disjoint union of $k$ paths, $\PathG{n_1} \DU \PathG{n_2} \DU \ldots \DU \PathG{n_k}$ each
having at least one edge \cite{vanDamH03}.
\item The star graphs $\SG{n}$ with $n \geq 3$ \cite{BuZ2012b,OmidiV2010}.
\item Trees with a single vertex having a degree greater than~2 \cite{BuZ2012b,OmidiV2010}.
\item The friendship graph $\FG{k}$ \cite{WangBHB2010}.
\item The lollipop graphs, where a lollipop graph, denoted by $\mathrm{H}_{n,p}$ where $n,p \in \naturals$ and $p<n$,
is obtained by appending a cycle $\CG{p}$ to a pendant vertex of a path $\PathG{n-p}$ \cite{HamidzadeK2010,ZhangLZY09}.
\item $\Gr{G} \vee \CoG{1}$ where $\Gr{G}$ is a either a $1$-regular graph, an $(n-2)$-regular graph of order $n$
or a $2$-regular graph with at least $11$ vertices \cite{BuZ2012}.
\item If $n \geq 21$ and $0 \leq q \leq n-1$, then $\CoG{1} \vee (\PathG{q} \DU \, (n-q-1) \CoG{1})$ \cite{YeLS2025}.
\item If $n \geq 21$ and $3 \leq q \leq n-1$, then $\CoG{1} \vee (\CG{q} \DU \, (n-q-1) \CoG{1})$ is $\Q$-DS if and
only if $q \neq 3$ \cite{YeLS2025}.
\item The join of a clique and an independent set on $n$ vertices, $\CoG{n-m} \vee \, \CGr{\CoG{m}}$, where
$m \in \OneTo{n-1}$ and $m \neq 3$ \cite{DasL2016}.
\end{enumerate}
\end{theorem}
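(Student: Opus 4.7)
The plan is to treat the nine items uniformly by first extracting a core set of $\Q$-spectral invariants, and then using these as constraints on any hypothetical $\Q$-cospectral mate $\Gr{H}$ of the target graph $\Gr{G}$. From $\sigma_{\Q}(\Gr{G})$ we immediately know the order $n$, the number of edges (Item~\ref{Item 4: signless Laplacian matrix of a graph} of Theorem~\ref{theorem: On the signless Laplacian matrix of a graph}), the number of bipartite components (Item~\ref{Item 3: signless Laplacian matrix of a graph} of the same theorem), and $\mathrm{tr}(\Q^k)$ for every $k$. Since $\Q = \D + \A$ and $\D$ is diagonal, the identity $\mathrm{tr}(\Q^2) = \sum d(v)^2 + 2e$ gives the sum of squared degrees, and higher moments $\mathrm{tr}(\Q^k)$ expand combinatorially into weighted counts of closed vertex-degree-sensitive walks, which I would use (as in \cite{CvetkovicRS2010}) to count triangles, short closed walks, and related substructures.

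For the bipartite families (items~1--3 and the friendship/lollipop/join constructions that turn out to have enough bipartite substructure), I would first verify that the given $\Q$-spectrum forces $\Gr{H}$ to be bipartite. Specifically, if the multiplicity of $0$ in $\sigma_{\Q}(\Gr{G})$ equals the total number of connected components predicted by the invariants above, then every component of $\Gr{H}$ is bipartite by Item~\ref{Item 3: signless Laplacian matrix of a graph} of Theorem~\ref{theorem: On the signless Laplacian matrix of a graph}. Once $\Gr{H}$ is known to be bipartite, Item~\ref{Item 4: TFAE bipartite graphs} of Theorem~\ref{theorem: equivalences for bipartite graphs} yields $\sigma_{\LM}(\Gr{H}) = \sigma_{\Q}(\Gr{H}) = \sigma_{\Q}(\Gr{G}) = \sigma_{\LM}(\Gr{G})$, reducing items~1--3 to the corresponding entries of Theorem~\ref{theorem: L-DS graphs} about disjoint unions of paths, stars, and starlike trees. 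The mild discrepancy that $\SG{3}$ is $\Q$-DS but not listed under $\LM$-DS must be handled by a direct bipartiteness-enforcement argument rather than by appeal to the Laplacian side.

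For the genuinely non-bipartite families (the friendship graph $\FG{k}$, lollipops $\mathrm{H}_{n,p}$, and the join families in items~6--9), I would combine degree-sequence constraints with the join structure. The $\Q$-eigenvalues of a join $\Gr{G}_1 \vee \Gr{G}_2$ admit an explicit decomposition in terms of the $\Q$-spectra of $\Gr{G}_1,\Gr{G}_2$ and two extra eigenvalues coming from the quotient matrix of the equitable partition; I would use this to pin down the join structure of $\Gr{H}$ by showing that the largest $\Q$-eigenvalue together with its Perron vector forces a universal vertex (or a clique of universal vertices in item~9). For lollipops, the Cauchy interlacing Theorem~\ref{thm:interlacing} applied to the principal submatrix corresponding to the path part and the cycle part constrains the possible lengths of those two pieces, and the triangle count from $\mathrm{tr}(\Q^3)$ then forces the cycle length $p$ (triangles exist iff $p=3$).

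The main obstacle is item~8, the join $\CoG{1} \vee (\CG{q} \DU (n-q-1)\CoG{1})$, which is $\Q$-DS if and only if $q \neq 3$. This is precisely where the above general strategy must be refined: the spectral invariants alone do not separate the structure from a competing join in the case $q = 3$, because attaching a triangle through a universal vertex creates an accidental alignment in the quotient matrix. The delicate step, handled in \cite{YeLS2025}, is to carry out the degree-sequence enumeration under the constraints $n \geq 21$ and the known values of $\sum d(v)^2$ and triangle count, and then rule out all rival degree sequences by higher moments; I would follow this approach, isolating the exceptional $q=3$ case as the only one where the system of moment equations admits two combinatorially realizable solutions. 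A parallel but less delicate analysis handles item~7 (paths instead of cycles) uniformly for $0 \leq q \leq n-1$, and item~9 reduces to controlling how many universal vertices are forced by the spectral radius and its multiplicity.
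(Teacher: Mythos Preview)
The paper gives no proof of this theorem: it is a survey statement, and each of the nine items is attributed to an external reference with no argument reproduced. So there is nothing to compare your sketch against except the general shape of the literature you cite.

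That said, your outline is a strategy, not a proof, and it has a real gap already at the bipartite reduction for items~1--3. You write that ``if the multiplicity of $0$ in $\sigma_{\Q}(\Gr{G})$ equals the total number of connected components predicted by the invariants above, then every component of $\Gr{H}$ is bipartite.'' But the $\Q$-spectrum does \emph{not} determine the number of components (Table~\ref{table:properties_determined by the spectrum}); it only gives the number of \emph{bipartite} components. So the hypothesis of your implication is not something you can read off the invariants you listed. Concretely, if $\Gr{G}$ is a tree on $n$ vertices, a $\Q$-cospectral mate $\Gr{H}$ has $n-1$ edges and exactly one bipartite component, but the edge-count argument alone does not exclude $\Gr{H}$ being a disjoint union of a smaller tree and one or more odd-unicyclic graphs (each such component has as many edges as vertices, so the arithmetic balances). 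Ruling this out requires genuinely using the rest of the $\Q$-spectrum, not just the multiplicity of $0$ and the first two moments; the cited papers do this with family-specific arguments, and your reduction to Theorem~\ref{theorem: L-DS graphs} does not go through until bipartiteness of $\Gr{H}$ is established by other means.

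For the non-bipartite items~4--9 your description is essentially a list of tools (join quotient matrix, interlacing, moment equations) together with a pointer back to the cited references for the hard steps. That is accurate as a reading guide, but none of the actual work --- determining the degree sequence from $\mathrm{tr}(\Q^k)$, classifying all graphs realizing it, and eliminating rivals --- is carried out. In particular, your remark that ``triangles exist iff $p=3$'' for lollipops is false (a lollipop $\mathrm{H}_{n,p}$ with $p\geq 4$ has no triangles, but neither does $\mathrm{H}_{n,3}$ have more than one, and the triangle count alone does not determine $p$), so the interlacing-plus-triangle-count plan for item~5 would need substantial repair even before the case analysis begins.
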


Since the regular graphs $\CoG{n}$, $\CGr{\CoG{n}}$, $\CoBG{m}{m}$ and $\CG{n}$ are DS, they are also $\mathcal{X}$-DS
for every $\mathcal{X} \subseteq \{\A, \LM, \Q \}$ (see Theorem~\ref{theorem: regular DS graphs}). This, however, does
not apply to regular ${\bf{\mathcal{L}}}$-DS graphs (see Remark~\ref{remark: DS regular graphs are not necessarily DS w.r.t. normalized Laplacian}),
which are next addressed.

\begin{theorem}
\label{theorem: X-DS friendship graphs}
The following graphs are ${\bf{\mathcal{L}}}$-DS:
\begin{itemize}
\item $\CoG{n}$, for every $n \in \naturals$ \cite{ButlerH2016}.
\item The friendship graph $\FG{k}$, for $k \geq 2$ \cite[Corollary~1]{BermanCCLZ2018}.
\item More generally, $\mathrm{F}_{p,q} = \CoG{1} \vee (p \CoG{q})$ if $q \geq 2$, or $q=1$ and $p \geq 2$ \cite[Theorem~1]{BermanCCLZ2018}.
\end{itemize}
\end{theorem}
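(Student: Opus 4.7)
The plan is to treat all three items uniformly by viewing them as instances of $\mathrm{F}_{p,q}=\CoG{1}\vee(p\CoG{q})$, since $\CoG{n}$ is the case $p=1$, $q=n-1$ and $\FG{k}$ is the case $p=k$, $q=2$. The first step is to compute the $\mathcal{L}$-spectrum of $\mathrm{F}_{p,q}$ via its equitable partition into the central vertex $c$ and the $p$ copies of $\CoG{q}$: vectors supported on a single $\CoG{q}$ that sum to zero there contribute eigenvalue $\tfrac{q+1}{q}$ with total multiplicity $p(q-1)$; vectors that are constant on each $\CoG{q}$, vanish at $c$, and sum globally to zero contribute eigenvalue $\tfrac{1}{q}$ with multiplicity $p-1$; and the remaining two-dimensional quotient produces $0$ and $\tfrac{q+1}{q}$. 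Thus $\sigma_{\mathcal{L}}(\mathrm{F}_{p,q}) = \{0,[\tfrac{1}{q}]^{p-1},[\tfrac{q+1}{q}]^{p(q-1)+1}\}$, which specializes to $\{0,[\tfrac{n}{n-1}]^{n-1}\}$ for $\CoG{n}$ and to $\{0,[1]^{p-1},2\}$ for $\SG{p+1}$.

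For a graph $\Gr{H}$ that is $\mathcal{L}$-cospectral with $\CoG{n}$, Theorem~\ref{theorem: On the normalized Laplacian matrix of a graph} gives $|\V{\Gr{H}}|=n$, connectedness, and no isolated vertices. Since $\mathcal{L}(\Gr{H})$ has exactly two distinct eigenvalues, it satisfies $\mathcal{L}(\mathcal{L}-\tfrac{n}{n-1}\Identity_n)=\mathbf{0}$; reading off the $(u,v)$ entry for a non-adjacent pair $u\ne v$ yields $\sum_{w\in\Ng{u}\cap\Ng{v}}\frac{1}{d(w)\sqrt{d(u)d(v)}}=0$, so $\Ng{u}\cap\Ng{v}=\emptyset$. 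Connectedness then forces every pair of vertices to be adjacent, because a shortest path of length two would produce a common neighbor, and a shortest path of length at least three would contain an internal pair of vertices at distance two that share a common neighbor. Hence $\Gr{H}=\CoG{n}$.

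For the remaining two bullets the plan is analogous but uses the cubic minimal polynomial identity $\mathcal{L}(\mathcal{L}-\tfrac{1}{q}\Identity_n)(\mathcal{L}-\tfrac{q+1}{q}\Identity_n)=\mathbf{0}$. From the spectrum alone one extracts $|\V{\Gr{H}}|=pq+1$, connectedness, and no isolated vertices; for $q\geq 2$ the graph $\Gr{H}$ is non-bipartite (since $\tfrac{q+1}{q}<2$), whereas for $q=1$ the eigenvalue $2$ signals that $\Gr{H}$ is bipartite. I would then expand the cubic identity on the diagonal and combine the resulting identities with $\tr(\mathcal{L}(\Gr{H}))=pq+1$ and $\tr(\mathcal{L}^2(\Gr{H}))$, computed from the spectrum, to obtain constraints coupling $d(v)$ and $\sum_{u\sim v}\frac{1}{d(u)}$ at each vertex. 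The aim is to deduce that the degree multiset of $\Gr{H}$ equals $\{pq,q,\ldots,q\}$ (respectively $\{p,1,\ldots,1\}$ in the star case). Once the degree sequence is pinned down, the off-diagonal entries of the cubic identity, read between pairs of degree-$q$ vertices, force each such vertex to have exactly $q-1$ neighbors of degree $q$ and exactly one neighbor of degree $pq$, yielding a partition of $\V{\Gr{H}}\setminus\{c\}$ into $p$ disjoint copies of $\CoG{q}$ all joined to the central vertex $c$; thus $\Gr{H}=\mathrm{F}_{p,q}$.

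The main obstacle is the degree-identification step of the previous paragraph. Unlike in the Laplacian case, the traces of $\mathcal{L}^k$ are weighted rational functions of the degrees rather than symmetric polynomial functions of them, so one cannot simply read off the power sums $\sum d(v)^k$ from $\tr(\mathcal{L}^k)$. Isolating the single high-degree vertex $c$ and fixing the common degree $q$ of the remaining vertices therefore requires combining several weighted degree constraints simultaneously, and is the technical core of the argument.
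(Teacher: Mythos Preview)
The paper itself supplies no proof of this theorem; it only cites \cite{ButlerH2016} and \cite{BermanCCLZ2018}. Your argument for $\CoG{n}$ via the minimal polynomial $\mathcal{L}(\mathcal{L}-\tfrac{n}{n-1}\Identity_n)=\mathbf{0}$ is correct and complete, so for the first bullet you have in fact given a self-contained proof where the paper gives none.

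For the remaining two bullets your proposal is only a plan, and you explicitly flag the degree-identification step as undone; as written this is not a proof. More importantly, for $q=1$ your plan \emph{cannot} succeed in the stated generality. Your own computation gives $\sigma_{\mathcal{L}}(\SG{p+1}) = \{0,[1]^{p-1},2\}$, but every complete bipartite graph $\CoBG{a}{b}$ with $a+b=p+1$ has exactly this normalized Laplacian spectrum, so $\GFG{p}{1}=\SG{p+1}$ is \emph{not} $\mathcal{L}$-DS whenever $p\geq 3$ (e.g.\ $\SG{4}$ and $\CG{4}=\CoBG{2}{2}$ are $\mathcal{L}$-cospectral). The condition ``$q=1$ and $p\geq 2$'' in the third bullet of Theorem~\ref{theorem: X-DS friendship graphs} is therefore false as stated; the paper's own Theorem~\ref{theorem: Berman's generalized friendship graph} records the correct if-and-only-if version with $p=2$, so this is evidently a typo in Theorem~\ref{theorem: X-DS friendship graphs}. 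Your stated aim of forcing the degree multiset $\{p,1,\ldots,1\}$ from the $\mathcal{L}$-spectrum is thus impossible for $p\geq 3$, and this counterexample was visible directly from the spectrum you had already computed.
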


\noindent

\section{Special families of graphs}
\label{section: special families of graphs}
This section introduces special families of structured graphs and it states conditions for their unique
determination by their spectra.

\subsection{Stars and graphs of pyramids}
\label{subsection: Stars and graphs of pyramids}

\noindent

\begin{definition}
\label{definition: graphs of pyramids}
For every $k,n \in \naturals$ with $k<n$, define the graph  $T_{n,k}=\CoG{k} \vee \, \overline{\CoG{n-k}}$.
For $k=1$, the graph $T_{n,k}$ represents the \emph{star graph} $\SG{n}$. For $k=2$, it represents
a graph comprising $n-2$ triangles sharing a common edge, referred to as a \emph{crown}. For $n,k$ satisfying
$1<k<n$, the graphs $T_{n,k}$ are referred to as \emph{graphs of pyramids} \cite{KrupnikB2024}.
\end{definition}

\begin{theorem} \cite{KrupnikB2024}
\label{thm: KrupnikB2024 - pyramids are DS}
The graphs of pyramids are DS for every $1<k<n$.
\end{theorem}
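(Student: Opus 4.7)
The plan is to first compute the adjacency spectrum of $T_{n,k}$ explicitly and then use the characterization of complete multipartite graphs by the second-largest adjacency eigenvalue to show that any cospectral graph must coincide with $T_{n,k}$. Note that $T_{n,k}=\CoG{k}\vee\CGr{\CoG{n-k}}$ is the complete multipartite graph with $k$ singleton classes and one class of size $n-k$. The equitable two-cell partition into the clique part and the independent part has quotient matrix $\bigl(\begin{smallmatrix}k-1 & n-k\\ k & 0\end{smallmatrix}\bigr)$, whose eigenvalues $\lambda_{\pm}=\tfrac{1}{2}\bigl((k-1)\pm\sqrt{(k-1)^{2}+4k(n-k)}\bigr)$ are the nontrivial adjacency eigenvalues; eigenvectors summing to zero on each cell contribute $-1$ with multiplicity $k-1$ (from the $\CoG{k}$ cell) and $0$ with multiplicity $n-k-1$ (from the independent cell). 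Therefore $\sigma_{\A}(T_{n,k})=\{\lambda_{+},[0]^{n-k-1},[-1]^{k-1},\lambda_{-}\}$ and $\chi_{T_{n,k}}(x)=x^{n-k-1}(x+1)^{k-1}\bigl(x^{2}-(k-1)x-k(n-k)\bigr)$.

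Let $\Gr{H}$ be any graph $\A$-cospectral to $T_{n,k}$. First I reduce to the connected case. Since $\lambda_{+}$ is the unique largest eigenvalue of $T_{n,k}$ and its second-largest eigenvalue equals $0$, exactly one connected component of $\Gr{H}$ can have Perron root $\lambda_{+}$, and every other component has Perron root $\leq 0$ and therefore contains no edge. Writing $\Gr{H}=\Gr{H}_{1}\,\DU\,r\,\CoG{1}$ with $\Gr{H}_{1}$ connected on $n-r$ vertices, we obtain $\sigma_{\A}(\Gr{H}_{1})=\{\lambda_{+},[0]^{n-k-1-r},[-1]^{k-1},\lambda_{-}\}$ and in particular $\lambda_{2}(\Gr{H}_{1})\leq 0$. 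I then invoke the classical theorem of Smith --- a connected graph on at least two vertices satisfies $\lambda_{2}\leq 0$ if and only if it is a complete multipartite graph --- to conclude that $\Gr{H}_{1}=K_{m_{1},\ldots,m_{p}}$ for some part sizes $m_{1},\ldots,m_{p}$ summing to $n-r$.

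The remaining step is to pin down the partition. The characteristic polynomial of a complete multipartite graph factors as $\chi(x)=x^{(n-r)-p}\,g(x)$, where $g(x)=\prod_{i}(x+m_{i})-\sum_{j}m_{j}\prod_{i\neq j}(x+m_{i})$ is the characteristic polynomial of the quotient matrix. Matching the multiplicity of~$0$ forces $p=k+1$. To identify the number $s$ of singleton parts I analyse $g$ at $\mu=-1$: the ``rational'' eigenvectors of the quotient cannot correspond to $\mu=-1$ when $s\geq 1$, because the equation $\sum_{i}m_{i}/(m_{i}-1)=1$ has no solution with all $m_{i}\geq 2$ (each term is at least $2$); thus the $(-1)$-eigenvectors of the quotient are precisely the degenerate ones supported on singletons with vanishing sum, contributing multiplicity $s-1$. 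Matching this with the required multiplicity $k-1$ forces $s=k$, so $\Gr{H}_{1}=K_{1,\ldots,1,n-r-k}$. Substituting this partition into $g$ yields $g(x)=(x+1)^{k-1}\bigl(x^{2}-(k-1)x-k(n-r-k)\bigr)$, and comparing with the required factor $(x+1)^{k-1}\bigl(x^{2}-(k-1)x-k(n-k)\bigr)$ forces $r=0$. Hence $\Gr{H}=\Gr{H}_{1}=K_{1,\ldots,1,n-k}=T_{n,k}$.

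The main technical hurdle I expect is the precise multiplicity of $-1$ in an arbitrary complete multipartite graph, which requires a clean treatment of the degenerate eigenvectors of the quotient matrix and a careful argument that no ``main-equation'' eigenvalue coincides with $-1$. Once this is established, the remaining polynomial-matching steps that pin down $p$, $s$ and the last class size are essentially mechanical. The boundary case $n-k=1$ collapses to $T_{n,k}=\CoG{n}$, which is already known to be DS by Item~\ref{item 2: DS graphs} of Theorem~\ref{theorem: special classes of DS graphs}, so the argument need only be carried out under the assumption $n-k\geq 2$, where the non-singleton part is genuinely distinct from the singletons.
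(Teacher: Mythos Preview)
Your argument is correct and complete, with one minor wording slip: when you argue that the ``main equation'' $\sum_i m_i/(m_i-1)=1$ has no solution, you say each term is ``at least $2$,'' but in fact $m_i/(m_i-1)$ can be as small as $3/2$ (for $m_i=3$); the point you need is merely that each term exceeds~$1$, which suffices. (In fact the case $s\geq 1$ is even cleaner than you suggest: the eigenvalue equation $(m_i-1)v_i=S$ evaluated at any singleton forces $S=0$ directly, so every $(-1)$-eigenvector of the quotient is automatically supported on the singletons with zero sum, and no appeal to the main equation is needed there.) Your multiplicity count for $-1$ is therefore exactly $s-1$ when $s\geq 1$, and the rest of the matching goes through as you wrote.

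Your route is genuinely different from the one sketched in the paper (and carried out in the cited source). The paper's proof computes the spectrum of $T_{n,k}$, then uses closed-walk counts to pin down the number of edges and triangles of any cospectral mate, and finishes with Schur-complement identities and Cauchy interlacing to rule out competing structures. You instead exploit the observation that $T_{n,k}=K_{1,\ldots,1,n-k}$ is complete multipartite and invoke Smith's theorem (exactly one positive eigenvalue) to force any cospectral mate to be complete multipartite plus isolated vertices; the partition is then read off from the multiplicities of $0$ and $-1$ and a single quadratic comparison. Your approach is closer in spirit to the paper's own proof that Tur\'{a}n graphs are DS (Theorem~\ref{theorem: Turan's graph is DS}), and it bypasses both the Schur complement and interlacing entirely. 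What the paper's approach buys is a toolkit that also handles the star case $k=1$ (Theorem~\ref{thm: KrupnikB2024 - DS star graphs}) in the same framework; what yours buys is a shorter, more structural argument tailored to $k\geq 2$, where the $(-1)$-eigenspace of dimension $k-1\geq 1$ does the heavy lifting.
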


\begin{theorem} \cite{KrupnikB2024}
\label{thm: KrupnikB2024 - DS star graphs}
The star graph $\SG{n}$ is DS if and only if $n-1$ is prime.
\end{theorem}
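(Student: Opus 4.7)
The plan is to characterize every graph $\Gr{H}$ that is $\A$-cospectral with $\SG{n}$ and then check when $\Gr{H} \cong \SG{n}$ is forced. First I would compute $\sigma_{\A}(\SG{n}) = \{ -\sqrt{n-1}, [0]^{n-2}, \sqrt{n-1} \}$, the well-known adjacency spectrum of $\SG{n} = \CoBG{1}{n-1}$. Using Corollary~\ref{corollary: number of edges and triangles in a graph} together with Item~\ref{Item 3: TFAE bipartite graphs} of Theorem~\ref{theorem: equivalences for bipartite graphs}, any $\A$-cospectral graph $\Gr{H}$ therefore has $n$ vertices, exactly $n-1$ edges, no triangle, is bipartite, and satisfies $\mathrm{rank}\bigl(\A(\Gr{H})\bigr) = 2$.

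Next I would use the rank and bipartiteness to pin down the structure of $\Gr{H}$. Any bipartite connected component of $\Gr{H}$ that is not a single isolated vertex contributes at least $2$ to $\mathrm{rank}\bigl(\A(\Gr{H})\bigr)$, since its nonzero $\A$-eigenvalues come in pairs $\pm\lambda$ of equal multiplicity; hence $\Gr{H}$ decomposes as one nontrivial connected component together with $k \geq 0$ isolated vertices. Writing the adjacency matrix of the nontrivial component relative to its bipartition as $\begin{pmatrix} \mathbf{0}_{p,p} & B \\ B^{\mathrm{T}} & \mathbf{0}_{q,q} \end{pmatrix}$, where $B \in \{0,1\}^{p \times q}$ necessarily has no zero row or column, the rank-$2$ condition becomes $\mathrm{rank}(B) = 1$, so $B = a b^{\mathrm{T}}$ for some $\{0,1\}$-vectors $a \in \{0,1\}^{p}$ and $b \in \{0,1\}^{q}$. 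Having no zero row or column forces $a = \mathbf{1}_p$ and $b = \mathbf{1}_q$, so $B = \J{p,q}$ and the nontrivial component is $\CoBG{p}{q}$. Matching the largest eigenvalue gives $pq = n-1$, and matching vertex counts gives $p+q+k = n$, so $k = (p-1)(q-1) \geq 0$.

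Finally, $\Gr{H}$ is isomorphic to $\SG{n}$ exactly when $\{p,q\} = \{1, n-1\}$, and by the characterization above this is the only cospectral mate if and only if the equation $pq = n-1$ admits no solution with $p, q \geq 2$, i.e., if and only if $n-1$ is prime. Conversely, when $n-1 = pq$ with $p, q \geq 2$, the graph $\CoBG{p}{q} \DU (p-1)(q-1)\, \CoG{1}$ is $\A$-cospectral with $\SG{n}$ and is disconnected (as $(p-1)(q-1) \geq 1$), hence nonisomorphic to the connected graph $\SG{n}$. The only mildly technical step of the argument is the passage from ``bipartite graph with rank-$2$ adjacency matrix and no isolated vertices'' to ``complete bipartite'', which reduces to the fact that a $\{0,1\}$ rank-$1$ matrix whose rows and columns are all nonzero must be the all-ones matrix; everything else is spectrum bookkeeping.
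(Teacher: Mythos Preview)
Your proof is correct. It lands on the same structural characterization that the paper later establishes for general complete bipartite graphs in Theorem~\ref{thm:when CoBG is DS?}, namely that every $\A$-cospectral mate of $\CoBG{p}{q}$ has the form $\CoBG{a}{b}\DU k\CoG{1}$ with $ab=pq$, here specialized to $\SG{n}=\CoBG{1}{n-1}$. The route, however, differs from the one the paper attributes to \cite{KrupnikB2024}: there the star is treated as the $k=1$ member of the family $T_{n,k}=\CoG{k}\vee\CGr{\CoG{n-k}}$, and the argument goes through the spectrum of $T_{n,k}$, edge and triangle counts, Schur's determinant formula (Theorem~\ref{theorem: Schur complement}), and Cauchy interlacing (Theorem~\ref{thm:interlacing}). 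You instead exploit that $\SG{n}$ is bipartite: the symmetric spectrum forces any cospectral mate to be bipartite, and then the rank-$2$ condition plus the observation that a rank-$1$ $\{0,1\}$-matrix with no zero rows or columns is the all-ones matrix pins down the nontrivial component as $\CoBG{p}{q}$. This is more elementary and fully self-contained for the star case, but it does not extend to the pyramids $T_{n,k}$ with $k\ge 2$, which contain triangles and are not bipartite --- precisely the regime the Schur-complement/interlacing machinery of \cite{KrupnikB2024} is designed to handle. One minor quibble: your final equivalence ``$pq=n-1$ has no solution with $p,q\ge 2$ $\Leftrightarrow$ $n-1$ is prime'' tacitly assumes $n-1\ge 2$; the case $n=2$ (where $\SG{2}=\CoG{2}$ is trivially DS yet $n-1=1$ is not prime) is outside the intended scope of the statement.
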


To prove these theorems, a closed-form expression for the spectrum of $T_{n,k}$ is derived in \cite{KrupnikB2024}, which
also presents a generalized result. Subsequently,
using Theorem~\ref{thm: number of walks of a given length}, the number of edges and triangles in any graph cospectral with
$T_{n,k}$ are calculated. Finally, Schur's theorem (Theorem~\ref{theorem: Schur complement}) and Cauchy's interlacing theorem
(Theorem~\ref{thm:interlacing}) are applied in \cite{KrupnikB2024} to prove Theorems~\ref{thm: KrupnikB2024 - pyramids are DS}
and~\ref{thm: KrupnikB2024 - DS star graphs}.

\subsection{Complete bipartite graphs}
\label{subsection: Complete bipartite graphs}
By Theorem~\ref{thm: KrupnikB2024 - DS star graphs}, the star graph $\SG{n}=\CoBG{1}{n-1}$ is DS if and only if $n-1$ is prime.
By Theorem~\ref{theorem: special classes of DS graphs}, the regular complete bipartite graph $\CoBG{m}{m}$ is DS for every $m \in \naturals$.
Here, we generalize these results and provide a characterization for the DS property of $\CoBG{p}{q}$ for every $p,q\in \naturals$.

\begin{theorem} \cite{vanDamH03}
\label{thm:spectrum of CoBG}
The spectrum of the complete bipartite graph $\CoBG{p}{q}$ is $\bigl\{-\sqrt{pq}, [0]^{p+q-2}, \sqrt{pq} \bigr\}$.
\end{theorem}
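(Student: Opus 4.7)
The plan is to compute the spectrum directly by exhibiting eigenvectors, or alternatively by a rank-plus-trace argument. I will sketch both and indicate the cleanest path.

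First, I would write the adjacency matrix in block form
\begin{align}
\A(\CoBG{p}{q}) = \begin{pmatrix} \mathbf{0}_{p,p} & \J{p,q} \\ \J{q,p} & \mathbf{0}_{q,q} \end{pmatrix},
\end{align}
and observe that every column of $\A$ lies in the span of the two vectors
$\bigl(\mathbf{1}_p^{\mathrm{T}}, \mathbf{0}^{\mathrm{T}}\bigr)^{\mathrm{T}}$ and
$\bigl(\mathbf{0}^{\mathrm{T}}, \mathbf{1}_q^{\mathrm{T}}\bigr)^{\mathrm{T}}$. Hence $\mathrm{rank}(\A) \leq 2$, so $0$ is an eigenvalue of multiplicity at least $p+q-2$. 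Explicitly, any vector of the form $(\underline{x}^{\mathrm{T}}, \mathbf{0}^{\mathrm{T}})^{\mathrm{T}}$ with $\underline{x} \in \Reals^p$ and $\mathbf{1}_p^{\mathrm{T}} \underline{x} = 0$ lies in the kernel, contributing $p-1$ linearly independent eigenvectors for eigenvalue $0$; symmetrically, vectors $(\mathbf{0}^{\mathrm{T}}, \underline{y}^{\mathrm{T}})^{\mathrm{T}}$ with $\mathbf{1}_q^{\mathrm{T}} \underline{y} = 0$ contribute $q-1$ more. This accounts for $p+q-2$ zero eigenvalues.

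Next I would locate the remaining two eigenvalues. The cleanest way is to test the ansatz $\underline{v}_{\pm} = \bigl(\sqrt{q}\,\mathbf{1}_p^{\mathrm{T}},\, \pm \sqrt{p}\,\mathbf{1}_q^{\mathrm{T}}\bigr)^{\mathrm{T}}$. Using $\J{p,q} \mathbf{1}_q = q \mathbf{1}_p$ and $\J{q,p} \mathbf{1}_p = p \mathbf{1}_q$, a direct computation gives $\A \underline{v}_{\pm} = \pm \sqrt{pq}\, \underline{v}_{\pm}$, so the two remaining eigenvalues are $\pm \sqrt{pq}$.

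As an internal check (and an alternative route if one prefers not to guess eigenvectors), I would apply Corollary~\ref{corollary: number of edges and triangles in a graph}: since $\CoBG{p}{q}$ has $pq$ edges and no triangles, the two nonzero eigenvalues $\lambda, \lambda'$ satisfy $\lambda + \lambda' = \tr(\A) = 0$ and $\lambda^2 + {\lambda'}^2 = \tr(\A^2) = 2pq$, forcing $\{\lambda, \lambda'\} = \{\sqrt{pq}, -\sqrt{pq}\}$. There is no real obstacle here; the only thing to be careful about is confirming that the $p+q-2$ explicit eigenvectors in the kernel are indeed linearly independent, which follows immediately from the fact that they live in complementary block coordinates and each set is a codimension-one subspace of its respective $\Reals^p$ or $\Reals^q$.
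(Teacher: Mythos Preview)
Your proof is correct. The paper's argument is precisely your ``alternative route'': it observes that $\mathrm{rank}\,\A(\CoBG{p}{q})=2$ to obtain the $p+q-2$ zero eigenvalues, then uses $\sum \lambda_i = 0$ and $\sum \lambda_i^2 = 2pq$ (Corollary~\ref{corollary: number of edges and triangles in a graph}) to pin down the remaining pair as $\pm\sqrt{pq}$, without ever writing down eigenvectors. Your primary route via the explicit ansatz $\underline{v}_{\pm}$ is a mild variation that buys you the eigenvectors themselves, but the overall strategy is the same.
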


This theorem can be proved by Theorem~\ref{theorem: Schur complement}. An alternative simple proof is next presented.
\begin{proof}
The adjacency matrix of $\CoBG{p}{q}$ is given by
\begin{align}
\A(\CoBG{p}{q}) =
\begin{pmatrix}
\mathbf{0}_{p,p} & \J{p,q}\\
\J{q,p} & \mathbf{0}_{q,q}
\end{pmatrix} \in \Reals^{(p+q) \times (p+q)}
\end{align}
The rank of $\A(\CoBG{p}{q})$ is equal to 2, so the multiplicity of $0$ as an eigenvalue is $p+q-2$.
By Corollary~\ref{corollary: number of edges and triangles in a graph}, the two remaining eigenvalues
are given by $\pm \lambda$ for some $\lambda \in \Reals$, since the eigenvalues sum to zero. Furthermore,
\begin{align}
2\lambda^2 = \sum_{i=1}^{p+q} \lambda_i^2 = 2 \, \card{\E{\CoBG{p}{q}}} = 2pq,
\end{align}
so $\lambda = \sqrt{pq}$.
\end{proof}

For $p,q \in \mathbb{N}$, the arithmetic and geometric means of $p,q$ are, respectively, given by
$\AM{p,q}=\tfrac12 (p+q)$ and $\GM{p,q}= \sqrt{ pq}$. The AM-GM inequality states that for
every $p,q \in \naturals$, we have $\GM{p,q} \le \AM{p,q}$ with equality if and only if $p=q$.

\begin{definition}
\label{definition: AM minimizer}
Let $p,q \in \naturals$. The two-elements multiset $\{p,q\} $ is said to be an \emph{AM-minimizer} if
it attains the minimum arithmetic mean for their given geometric mean, i.e.,
\begin{align}
\label{eq: AM minimizer}
\AM{p,q} &= \min \Bigl\{\AM{a,b}: \; a,b \in \mathbb{N}, \, \GM{a,b}=\GM{p,q} \Bigr\} \\
\label{eq2: AM minimizer}
&= \min \Bigl\{\tfrac12 (a+b): \; a,b \in \mathbb{N}, \, ab=pq \Bigr\}.
\end{align}
\end{definition}

\begin{example}
\label{example: AM minimizer}
The following are AM-minimizers:
\begin{itemize}
\item $\{k,k\}$ for every $k\in \naturals $. By the AM-GM inequality, it is the only case where
$\GM{p,q} = \AM{p,q}$.
\item $\{p,q\}$ where $p,q$ are prime numbers. In this case, the following family of multisets
\begin{align}
\Bigl\{ \{a,b\} : \,  a,b \in \mathbb{N}, \; \GM{a,b}=\GM{p,q} \Bigr\}
\end{align}
only contains the two multisets $\{p,q\},\{pq,1\}$, and $p+q \leq pq < pq+1$ since $p,q \geq 2$.
\item $\{1,q\}$ where $q$ is a prime number.
\end{itemize}
\end{example}

\begin{theorem}
\label{thm:when CoBG is DS?}
The following holds for every $p,q \in \naturals$:
\begin{enumerate}
\item \label{thm:when CoBG is DS? - part1}
Let $\Gr{G}$ be a graph that is cospectral with $\CoBG{p}{q}$. Then, up to isomorphism, $G = \CoBG{a}{b} \cup \Gr{H}$
(i.e., $\Gr{G}$ is a disjoint union of the two graphs $\CoBG{a}{b}$ and $\Gr{H}$), where $\Gr{H}$ is an empty graph and
$a,b \in \naturals$ satisfy $\GM{a,b} = \GM{p,q}$.
\item \label{thm:when CoBG is DS? - part2}
The complete bipartite graph $\CoBG{p}{q}$ is DS if and only if $\{p,q\}$ is an AM-minimizer.
\end{enumerate}
\end{theorem}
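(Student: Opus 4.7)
The plan is to prove Part~\ref{thm:when CoBG is DS? - part1} by extracting structural information from the spectrum, and then to deduce Part~\ref{thm:when CoBG is DS? - part2} as a short corollary. By Theorem~\ref{thm:spectrum of CoBG}, the spectrum of $\CoBG{p}{q}$ is $\bigl\{-\sqrt{pq},\,[0]^{p+q-2},\,\sqrt{pq}\bigr\}$. From this, any graph $\Gr{G}$ cospectral with $\CoBG{p}{q}$ must have $p+q$ vertices, an adjacency matrix of rank~$2$, and (by the symmetry of the spectrum about zero, together with Item~\ref{Item 3: TFAE bipartite graphs} of Theorem~\ref{theorem: equivalences for bipartite graphs}) must be bipartite.

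First I would fix a bipartition of $\Gr{G}$ of sizes $m$ and $n = p+q-m$ and write
\begin{align}
\A(\Gr{G}) = \begin{pmatrix} \mathbf{0}_{m,m} & B \\ B^{\mathrm{T}} & \mathbf{0}_{n,n} \end{pmatrix}
\end{align}
with $B \in \{0,1\}^{m\times n}$. A direct kernel computation, since a vector $(x^{\mathrm{T}}, y^{\mathrm{T}})^{\mathrm{T}}$ is annihilated by $\A(\Gr{G})$ iff $By = 0$ and $B^{\mathrm{T}}x = 0$, yields $\mathrm{rank}\,\A(\Gr{G}) = 2\,\mathrm{rank}(B)$; hence $\mathrm{rank}(B) = 1$. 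Next I would classify $\{0,1\}$-valued rank-$1$ matrices: since any two nonzero rows of such a $B$ are positive scalar multiples of each other yet $\{0,1\}$-valued, they must be identical, forcing $B = u\,v^{\mathrm{T}}$ for some $u \in \{0,1\}^m$, $v \in \{0,1\}^n$ with $a$ and $b$ ones, respectively. After a vertex permutation, $\Gr{G}$ decomposes as $\CoBG{a}{b}$ (induced on the ``one'' coordinates) disjoint-unioned with an empty graph $\Gr{H}$ on the remaining $(p+q)-(a+b)$ vertices, and counting edges via Corollary~\ref{corollary: number of edges and triangles in a graph} gives $ab = \card{\E{\Gr{G}}} = pq$, i.e.\ $\GM{a,b}=\GM{p,q}$, completing Part~\ref{thm:when CoBG is DS? - part1}.

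For Part~\ref{thm:when CoBG is DS? - part2}, I would observe that every such cospectral mate satisfies $\GM{a,b}=\GM{p,q}$ together with $\AM{a,b} \leq \AM{p,q}$ (the isolated-vertex count $(p+q)-(a+b)$ is nonnegative), and is isomorphic to $\CoBG{p}{q}$ iff $\{a,b\} = \{p,q\}$ (equivalently, $\AM{a,b} = \AM{p,q}$, which together with equality of geometric means pins down $\{a,b\}$ by Vieta, and forces $\Gr{H}$ to have zero vertices). Thus $\CoBG{p}{q}$ is DS iff the constraints $\GM{a,b}=\GM{p,q}$ and $\AM{a,b}\leq \AM{p,q}$ force $\{a,b\} = \{p,q\}$, which is exactly the content of Definition~\ref{definition: AM minimizer}. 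Conversely, if $\{p,q\}$ is not AM-minimizing, any witness $\{a,b\}$ with $ab = pq$ and $a+b < p+q$ yields the explicit nonisomorphic cospectral mate $\CoBG{a}{b} \DU (p+q-a-b)\CoG{1}$.

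The main obstacle I anticipate is the structural passage from the algebraic statement ``$B$ has rank~$1$ over $\Reals$'' to the combinatorial conclusion that $B$ is, up to row and column permutation, a $\CoBG{a}{b}$-biadjacency block padded by zero rows and columns; every other step reduces to routine counting of vertices and edges, or to direct invocation of results already stated in the excerpt.
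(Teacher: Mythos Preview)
Your proof is correct. For Part~\ref{thm:when CoBG is DS? - part2} your argument is essentially the paper's. For Part~\ref{thm:when CoBG is DS? - part1} you take a genuinely different route. The paper appeals to Cauchy interlacing: since $\Gr{G}$ has only one positive eigenvalue, it can have at most one nonempty connected component (two such components would each contribute a positive eigenvalue to the spectrum of their union), and that bipartite component is then identified as complete bipartite. You instead work directly with the biadjacency matrix: from $\mathrm{rank}\,\A(\Gr{G})=2$ you read off $\mathrm{rank}(B)=1$, and the easy classification of $\{0,1\}$-valued rank-$1$ matrices as permuted all-ones blocks delivers the decomposition $\Gr{G}=\CoBG{a}{b}\DU\Gr{H}$ in one stroke. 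Your route is more self-contained---it avoids the interlacing theorem altogether---and it makes the ``the nonempty part is complete bipartite'' step fully explicit, whereas in the paper's version that final structural identification is asserted rather than spelled out.
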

\begin{remark}
\label{remark: complete bipartite graphs}
Item~\ref{thm:when CoBG is DS? - part2} of Theorem~\ref{thm:when CoBG is DS?} is equivalent to Corollary~3.1 of
\cite{MaRen2010}, for which an alternative proof is presented here.
\end{remark}

\begin{proof}
(Proof of Theorem~\ref{thm:when CoBG is DS?}):
\begin{enumerate}
\item Let $\Gr{G}$ be a graph cospectral with $\CoBG{p}{q}$. The number of edges in $\Gr{G}$ equals the number of edges
in $\CoBG{p}{q}$, which is $pq$. As $\CoBG{p}{q}$ is bipartite, so is $\Gr{G}$. Since $\A(\Gr{G})$ is of rank $2$,
and $\A(\PathG{3})$ has rank $3$, it follows from the Cauchy's Interlacing Theorem (Theorem~\ref{thm:interlacing})
that $\PathG{3}$ is not an induced subgraph of $\Gr{G}$. \newline
It is claimed that $\Gr{G}$ has a single nonempty connected component. Suppose to the contrary that $\Gr{G}$ has (at least)
two nonempty connected components $\Gr{H}_1,\Gr{H}_2$. For $i\in \{1,2\}$, since $\Gr{H}_i$ is a non-empty graph, $\A(\Gr{H}_i)$
has at least one eigenvalue $\lambda \ne 0$. Since $\Gr{G}$ is a simple graph, the sum of the eigenvalues of $\A(\Gr{H}_i)$
is $\trace{\A(\Gr{H}_i)}=0$, so $\Gr{H}_i$ has at least one positive eigenvalue. Thus, the induced subgraph $\Gr{H}_1 \cup \Gr{H}_2$
has at least two positive eigenvalues, while $\Gr{G}$ has only one positive eigenvalue, contradicting Cauchy's Interlacing Theorem. \\
Hence, $\Gr{G}$ can be decomposed as $\Gr{G} = \CoBG{a}{b} \cup \, \Gr{H}$ where $\Gr{H}$ is an empty graph. Since $\Gr{G}$
and $\CoBG{p}{q}$ have the same number of edges, $pq=ab$, so $\GM{p,q}=\GM{a,b}$.
\item First, we will show that if $\{p,q\}$ is not an AM-minimizer, then the graph $\CoBG{p}{q}$ is not $\A$-DS. This is done
by finding a nonisomorphic graph to $\CoBG{p}{q}$ that is $\A$-cospectral with it. By assumption, since $\{p,q\}$ is not an
AM-minimizer, there exist $a, b \in \naturals$ satisfying $\GM{a,b} = \GM{p,q}$ and $a + b < p+q$.
Define the graph $\Gr{G}=\CoBG{a}{b} \vee \, \overline{\CoG{r}}$ where $r=p+q-a-b$. Observe that $r \in \naturals$.
The $\A$-spectrum of both of these graphs is given by
\begin{align}
\sigma_{\A}(\Gr{G}) = \sigma_{\A}(\CoBG{p}{q}) = \bigl\{-\sqrt{pq},[0]^{pq-2},\sqrt{pq} \bigr\},
\end{align}
so these two graphs are nonisomorphic and cospectral, which means that $\Gr{G}$ is not $\A$-DS. \newline
We next prove that if $\{p,q\}$ is an AM-minimizer, then $\CoBG{p}{q}$ is $\A$-DS. Let $\Gr{G}$ be a
graph that is cospectral with $\CoBG{p}{q}$. From the first part of this theorem,
$\Gr{G}=\CoBG{a}{b} \cup \, \Gr{H}$ where $\GM{a,b} = \GM{p,q}$ and $\Gr{H}$ is an empty graph. Consequently, it follows that
$\AM{a,b}=\tfrac12(a+b) \leq \tfrac12(p+q) = \AM{p,q}$.
Since $\{p,q\}$ is assumed to be an AM-minimizer, it follows that $\AM{a,b} \ge \AM{p,q}$, and thus equality holds.
Both equalities $\GM{a,b} = \GM{p,q}$ and $\AM{a,b} = \AM{p,q}$ can be satisfied simultaneously if and only if
$\{ a , b \} = \{ p , q \}$, so $r=p+q-a-b=0$ and $\Gr{G}=\CoBG{p}{q}$.
\end{enumerate}
\end{proof}

\begin{corollary}
\label{cor: bipartite not DS}
Almost all of the complete bipartite graphs are not DS. More specifically, for every $n \in \naturals$, there exists
a single complete bipartite graph on $n$ vertices that is DS.
\end{corollary}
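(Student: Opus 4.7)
The plan is to derive both conclusions from Theorem~\ref{thm:when CoBG is DS?}, which reduces the DS property of $\CoBG{p}{q}$ to the arithmetic statement that $\{p,q\}$ is an AM-minimizer in the sense of Definition~\ref{definition: AM minimizer}. The argument splits naturally into an existence step and an ``almost all'' density step.

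For existence of a DS complete bipartite graph on $n$ vertices, I would nominate the balanced partition $(p,q) = (\lfloor n/2\rfloor,\lceil n/2\rceil)$, which automatically satisfies $p+q=n$, and verify that $\{p,q\}$ is an AM-minimizer by splitting on the parity of $n$. When $n=2k$, a direct AM--GM argument yields $a+b \geq 2\sqrt{ab} = 2k = p+q$ for every factorization $ab=k^2$, with equality only at $\{a,b\}=\{k,k\}$. The odd case $n=2k+1$, in which $(p,q)=(k,k+1)$, is the step I expect to be the main obstacle: since $\sqrt{k(k+1)}$ lies strictly between $k$ and $k+1$, AM--GM alone only gives $a+b \geq 2\sqrt{k(k+1)} < n$, and an integrality argument is required to close the gap. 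My plan is to use the fact that no integer lies strictly between $k$ and $k+1$: any factorization $ab=k(k+1)$ with $a \leq b$ must satisfy $a \leq k$, forcing $b = k(k+1)/a \geq k+1$, and hence $a+b \geq k+(k+1) = n$, with equality precisely when $\{a,b\} = \{k,k+1\}$. Applying Theorem~\ref{thm:when CoBG is DS?} to this AM-minimizer then exhibits the required DS complete bipartite graph on $n$ vertices.

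For the ``almost all'' assertion I would again invoke the AM-minimizer characterization, this time grouped by the edge count $N=pq$. Among the roughly $\tfrac12\tau(N)$ distinct factorizations of $N$ into two positive integer parts, only the most balanced one is an AM-minimizer, and hence only that single pair $(p,q)$ with $pq=N$ corresponds to a DS graph. Since $\tau(N)$ is unbounded along, e.g., highly composite $N$, the fraction of DS complete bipartite graphs among all of them tends to zero, which is the natural quantitative reading of ``almost all complete bipartite graphs are not DS''.
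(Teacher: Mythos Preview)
Your balanced-partition construction for the existence claim is correct, and the odd-$n$ step is the right idea, though the sentence ``$a \le k$ and $b \ge k+1$, hence $a+b \ge k+(k+1)$'' needs one more ingredient: from $a \le k$ and $b \ge k+1$ alone you only get $a+b \ge 1 + (k+1)$. What closes the gap is that $t \mapsto t + k(k+1)/t$ is strictly decreasing on $(0,\sqrt{k(k+1)}\,]$, so among integer divisors $a \le k$ the sum $a+b$ with $ab=k(k+1)$ is minimized at $a=k$, giving $a+b \ge 2k+1$.

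The substantive divergence from the paper is in how the ``single'' clause is read. You treat it as existence of \emph{at least one} DS complete bipartite graph on $n$ vertices and prove that. The paper instead groups complete bipartite graphs by their \emph{edge} count: it fixes the product $n = ab$ (equivalently $\GM{a,b}=\sqrt{n}$), observes that minimizing $\AM{a,b}$ subject to $ab=n$ is a convex problem with a unique minimizer, and concludes via Theorem~\ref{thm:when CoBG is DS?} that exactly one $\CoBG{a}{b}$ with $ab=n$ is DS while the remaining $O(\tau(n))$ are not. Your second paragraph, grouping by $N=pq$ and invoking $\tau(N)$, is precisely this argument and agrees with the paper's route to ``almost all''. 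Note, however, that the literal reading ``exactly one DS complete bipartite graph on $n$ \emph{vertices}'' is false --- for $n=6$, each of $\CoBG{1}{5}$, $\CoBG{2}{4}$, $\CoBG{3}{3}$ is DS by Theorem~\ref{thm:when CoBG is DS?} --- so the edge-count parameterization is really the one on which the uniqueness statement stands, and your existence argument, while correct, is proving a different (weaker) claim than the paper's uniqueness-per-product.
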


\begin{proof}
Let $n \in \naturals$. By the \emph{fundamental theorem of arithmetic}, there is a unique decomposition
$n = \prod_{i=1}^{k} p_i$ where $k\in \naturals$ and $\{p_i\}$ are prime numbers for every $1 \le i \le k$.
Consider the family of multisets
\begin{align}
\set{D} = \Bigl\{ \{a,b\} :  a,b \in \mathbb{N} , \GM{a,b}=\sqrt{n} \Bigr\}.
\end{align}
This family has $2^k$ members, since every prime factor $p_i$ of $n$ should be in the prime decomposition of $a$ or $b$.
Since the minimization of $\AM{a,b}$ under the equality constraint $\GM{a,b}=\sqrt{n}$ forms a convex optimization problem,
only one of the multisets in the family $\set{D}$ is an AM-minimizer.
Thus, if $n = \prod_{i=1}^{k} p_i$, then the number of complete bipartite graphs of $n$ vertices is $O(2^k)$, and
(by Item~\ref{thm:when CoBG is DS? - part2} of Theorem~\ref{thm:when CoBG is DS?}) only one of them is DS.
\end{proof}

\subsection{Tur\'{a}n graphs}
\label{subsection: Turan graphs}
The Tur\'{a}n graphs are a significant and well-studied class of graphs in extremal graph theory, forming an important family
of multipartite complete graphs. Tur\'{a}n graphs are particularly known for their role in Tur\'{a}n's theorem, which provides a
solution to the problem of finding the maximum number of edges in a graph that does not contain a complete subgraph of a given
order \cite{Turan1941}. Before delving into formal definitions, it is noted that the distinction of the Tur\'{a}n graphs as
multipartite complete graphs is that they are as balanced as possible, ensuring their vertex sets are divided into parts of
nearly equal size.

\begin{definition}
Let $n_1, \ldots, n_k$ be natural numbers. Define the \emph{complete $k$-partite graph}
\begin{align}
\CoG{n_1, \ldots, n_k}= \bigvee_{i=1}^{k}\overline{\CoG{n_i}}.
\end{align}
A graph is multipartite if it is $k$-partite for some $k \geq 2$.
\end{definition}

\begin{definition}
\label{definition: Turan graph}
Let $2 \le k \le n$. The \emph{Tur\'{a}n graph} $T(n,k)$
(not to be confused with the graph of pyramids $T_{n,k}$) is
formed by partitioning a set of $n$ vertices into $k$ subsets,
with sizes as equal as possible, and then every two vertices
are adjacent in that graph if and only if they belong to different subsets.
It is therefore expressed as the complete $k$-partite graph
$K_{n_1,\dots,n_k}$, where $|n_i-n_j| \leq 1$ for all $i,j \in \OneTo{k}$
with $i \neq j$. Let $q$ and $s$ be the quotient and remainder, respectively,
of dividing $n$ by $k$ (i.e., $n = qk+s$, $s \in \{0,1, \ldots, k-1\}$),
and let $n_1 \leq \ldots \leq n_k$. Then,
\begin{align}
\label{eq: n_i in Turan's graph}
n_i=
\begin{cases}
q, & 1\leq i \leq k-s,\\
q+1, & k-s+1 \leq i \leq k.
\end{cases}
\end{align}
By construction, the graph $T(n,k)$ has a clique of order  $k$ (any subset of vertices with
a single representative from each of the $k$ subsets is a clique of order  $k$), but it cannot
have a clique of order  $k+1$ (since vertices from the same subset are nonadjacent).
Note also that, by \eqref{eq: n_i in Turan's graph}, the Tur\'{a}n graph $T(n,k)$ is an
$(n-q)$-regular graph if and only if $n$ is divisible by $k$, and then $q = \frac{n}{k}$.
\end{definition}

\begin{definition}
Let $q,k \in \naturals$. Define the \emph{regular complete multipartite graph},
$\mathrm{K}_{q}^{k}: = \overset{k}{\underset{i=1}{\bigvee}} \overline{\CoG{q}}$, to be the $k$-partite
graph with $q$ vertices in each part. Observe that $\mathrm{K}_{q}^{k} = T(kq,k)$.
\end{definition}

Let $\Gr{G}$ be a simple graph on $n$ vertices that does not contain a clique of order greater than
a fixed number $k \in \naturals$. Tur\'{a}n investigated a fundamental problem in extremal graph theory of
determining the maximum number of edges that $\Gr{G}$ can have \cite{Turan1941}.
\begin{theorem}[Tur\'{a}n's Graph Theorem]
\label{theorem: Turan's theorem}
Let $\Gr{G}$ be a graph on $n$ vertices with a clique of order at most $k$ for some $k \in \naturals$.
Then,
\begin{align}
\card{\E{\Gr{G}}} &\leq \card{\E{T(n,k)}} \\
&= \biggl(1-\frac1k\biggr) \, \frac{n^2-s^2}{2} + \binom{s}{2}, \quad s \triangleq n - k \bigg\lfloor \frac{n}{k} \bigg\rfloor.
\end{align}
\end{theorem}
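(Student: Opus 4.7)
The plan is to apply Zykov's symmetrization argument. First, I would verify the closed-form expression for $\card{\E{T(n,k)}}$ using the identity $\card{\E{T(n,k)}} = \binom{n}{2} - \sum_{i=1}^{k} \binom{n_i}{2}$ with the part sizes given in \eqref{eq: n_i in Turan's graph} ($k-s$ parts of size $q$ and $s$ parts of size $q+1$). This is a routine algebraic simplification.

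For the inequality $\card{\E{\Gr{G}}} \le \card{\E{T(n,k)}}$, fix a $\CoG{k+1}$-free graph $\Gr{G}$ on $n$ vertices with the maximum number of edges. The main step is to show that non-adjacency defines an equivalence relation on $\V{\Gr{G}}$, so that $\Gr{G}$ is a complete multipartite graph $\CoG{n_1,\ldots,n_{k'}}$. Given any two non-adjacent vertices $u, v$ with $d(u) \ge d(v)$, I replace the neighborhood of $v$ by a copy of $\Ngb{u}$, keeping $u$ and $v$ themselves non-adjacent. The resulting graph remains free of $\CoG{k+1}$, because any clique containing the new $v$ corresponds to one containing $u$ in the original graph (as $u, v$ are never in a common clique), and the number of edges weakly increases by $d(u) - d(v) \ge 0$. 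Extremality forces equality, so iterating the operation yields a graph with the same edge count in which, for any vertex $v$, every vertex non-adjacent to $v$ shares the neighborhood $\Ngb{v}$. This makes non-adjacency transitive, and $\Gr{G}$ is complete multipartite with $k' \le k$ parts (since picking one vertex from each part produces a $\CoG{k'}$).

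It remains to show that among complete multipartite graphs on $n$ vertices with at most $k$ parts, the edge count is maximized by $T(n,k)$. Two elementary observations suffice. First, if $k' < k$, splitting any part of size at least~2 into two parts strictly increases the edge count; since $n \ge k$, this splitting can be carried out until $k' = k$. Second, if two part sizes satisfy $n_i - n_j \ge 2$, moving a single vertex from the larger part to the smaller one changes the edge count by $(n_i - 1) - n_j \ge 1 > 0$, so any optimum must have parts that are balanced to within one; this is precisely $T(n,k)$.

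The main obstacle I anticipate is the careful bookkeeping in the symmetrization step: verifying that the copy operation preserves $\CoG{k+1}$-freeness (which depends crucially on keeping $u \not\sim v$) and that repeated application terminates at a complete multipartite graph rather than oscillating. Once this is established, the balancing reduction is a short degree computation and the edge-count formula for $T(n,k)$ closes the argument.
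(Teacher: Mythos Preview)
The paper does not supply its own proof of Tur\'{a}n's theorem; it states the result and refers the reader to Chapter~41 of \cite{AignerZ18} for five classical proofs. Your Zykov symmetrization argument is one of those five, so your approach is entirely standard and appropriate here.

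One point to tighten: the single-vertex replacement you describe (copy $\Ngb{u}$ onto $v$ when $d(u)\ge d(v)$) does not by itself establish transitivity of non-adjacency. If $u\not\sim v$, $v\not\sim w$, $u\sim w$, and the middle vertex $v$ has $d(v)\ge d(u)$ and $d(v)\ge d(w)$, then neither single replacement strictly increases the edge count. The usual fix is to replace \emph{both} $u$ and $w$ by copies of $v$ simultaneously; the net edge change is $2d(v)-d(u)-d(w)+1\ge 1$ (the $+1$ coming from the edge $uw$), which contradicts extremality. Alternatively, you can run your single replacement recursively: pick a vertex $v_1$ of maximum degree, copy $\Ngb{v_1}$ onto every non-neighbor of $v_1$ to carve out the first part, then recurse on the induced subgraph on $\Ngb{v_1}$. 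Either variant closes the gap you flagged as ``careful bookkeeping,'' and the rest of your outline (balancing the part sizes, the edge-count formula) is correct.
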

For a nice exposition of five different proofs of Tur\'{a}n's Graph Theorem, the interested reader is referred
to Chapter~41 of \cite{AignerZ18}.

\begin{corollary}
\label{corollary:turan} Let $k \in \naturals$, and let $\Gr{G}$ be a graph
on $n$ vertices where $\omega(\Gr{G})\le k$ and $\card{\E{\Gr{G}}}=\card{\E{T(n,k)}}$.
Let $\Gr{G}_{1}$ be a graph obtained by adding an arbitrary edge to
$\Gr{G}$. Then $\omega(\Gr{G}_1)>k$.
\end{corollary}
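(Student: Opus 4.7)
The plan is to prove this as a direct contrapositive application of Tur\'{a}n's Graph Theorem (Theorem~\ref{theorem: Turan's theorem}), which has just been stated. No spectral or structural analysis beyond edge counting is needed.

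First, I would observe that since we are adding an edge to $\Gr{G}$ to form $\Gr{G}_1$, we have the equality
\begin{align}
\card{\E{\Gr{G}_1}} = \card{\E{\Gr{G}}} + 1 = \card{\E{T(n,k)}} + 1,
\end{align}
using the hypothesis $\card{\E{\Gr{G}}} = \card{\E{T(n,k)}}$. The underlying assumption here is that the added edge is genuinely new; if $\Gr{G}$ were already complete then $\omega(\Gr{G}) = n$, and the hypothesis $\omega(\Gr{G}) \leq k$ together with the requirement $2 \leq k \leq n$ from Definition~\ref{definition: Turan graph} forces $k = n$ and hence $\Gr{G}$ admits no additional edge, so the conclusion holds vacuously.

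Next, I would argue by contradiction: suppose that $\omega(\Gr{G}_1) \leq k$. Then $\Gr{G}_1$ is a graph on $n$ vertices whose clique number does not exceed $k$, so Theorem~\ref{theorem: Turan's theorem} applies and yields
\begin{align}
\card{\E{\Gr{G}_1}} \leq \card{\E{T(n,k)}}.
\end{align}
Combining this with the identity in the first step gives $\card{\E{T(n,k)}} + 1 \leq \card{\E{T(n,k)}}$, an obvious contradiction. Therefore $\omega(\Gr{G}_1) > k$, as claimed.

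There is no substantive obstacle in this proof: the whole content is the recognition that $\Gr{G}$ is edge-extremal for the class of $n$-vertex graphs with clique number at most $k$, so any graph obtained by adding an edge necessarily leaves that class. The only minor subtlety worth mentioning explicitly is the edge cases (namely $k = n$ and $\Gr{G}$ complete), which I would handle with a single sentence as above.
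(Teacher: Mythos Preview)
Your proof is correct and matches the paper's treatment: the corollary is stated immediately after Tur\'{a}n's Graph Theorem (Theorem~\ref{theorem: Turan's theorem}) without an explicit proof, so the intended argument is precisely the direct edge-count contradiction you give.
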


\subsubsection{The spectrum of the Tur\'{a}n graph}

\begin{theorem} \cite{EsserH1980}
\label{theorem: spectrum of multipartite graphs}
Let $k\in\naturals$, and let $n_1 \leq n_2 \leq \ldots \leq n_k$ be natural numbers.
Let $\Gr{G} = \CoG{n_1,n_2, \dots, n_k}$ be a complete multipartite graph on $n = n_1 + \ldots n_k$
vertices. Then,
\begin{itemize}
\item $\Gr{G}$ has one positive eigenvalue, i.e., $\lambda_1(\Gr{G}) > 0$ and $\lambda_2(\Gr{G})\le 0$.
\item $\Gr{G}$ has $0$ as an eigenvalue with multiplicity $n-k$.
\item $\Gr{G}$ has $k-1$ negative eigenvalues, and
\begin{align}
n_1 \leq -\lambda_{n-k+2}(\Gr{G}) \leq n_2 \leq -\lambda_{n-k+3}(\Gr{G}) \le n_3 \leq \ldots \leq n_{k-1} \leq -\lambda_{n}(\Gr{G}) \le n_{k}.
\end{align}
\end{itemize}
\end{theorem}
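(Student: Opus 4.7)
The plan is to first isolate the $(n-k)$-dimensional $0$-eigenspace of $\A(\Gr{G})$ by exploiting the block structure of the adjacency matrix, and then analyze the remaining $k$ eigenvalues by reducing to a small symmetric rank-one perturbation of a diagonal matrix and invoking a standard interlacing inequality.

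First, I would order the vertices so that the first $n_1$ belong to part $V_1$, the next $n_2$ to $V_2$, and so on. Then $\A(\Gr{G})$ has zero blocks on its block diagonal (of sizes $n_i \times n_i$) and all-ones blocks off the block diagonal. Any vector $\mathbf{v}\in\Reals^n$ whose entries sum to zero on each $V_i$ satisfies $\A(\Gr{G})\mathbf{v} = \mathbf{0}$ by a direct calculation, so $0$ is an eigenvalue with multiplicity at least $n-k$. Moreover, the rows of $\A(\Gr{G})$ associated with vertices in the same part coincide, so there are only $k$ distinct rows, namely $\mathbf{1}_n - \mathbf{1}_{V_j}$ for $j=1,\ldots,k$; a one-line argument shows that these $k$ rows are linearly independent, so $\mathrm{rank}\bigl(\A(\Gr{G})\bigr) = k$ and the multiplicity of $0$ is exactly $n-k$, establishing the second bullet.

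Because $\A(\Gr{G})$ is symmetric, the orthogonal complement of its kernel is also invariant; this complement is the $k$-dimensional subspace spanned by the part indicators $\{\mathbf{1}_{V_i}\}_{i=1}^{k}$, and it carries the remaining $k$ eigenvalues. In the basis $\{\mathbf{1}_{V_i}\}$, the restriction of $\A(\Gr{G})$ is represented by the quotient matrix $\mathbf{B} = \mathbf{1}_k \mathbf{n}^{\mathrm{T}} - \diag{n_1,\ldots,n_k}$, with $\mathbf{n} \triangleq (n_1,\ldots,n_k)^{\mathrm{T}}$. Although $\mathbf{B}$ is not itself symmetric, conjugation by $\diag{\sqrt{n_1},\ldots,\sqrt{n_k}}$ yields
\[
\widetilde{\mathbf{B}} \,=\, \mathbf{p}\mathbf{p}^{\mathrm{T}} - \diag{n_1,\ldots,n_k}, \qquad \mathbf{p} \triangleq (\sqrt{n_1},\ldots,\sqrt{n_k})^{\mathrm{T}},
\]
which is similar to $\mathbf{B}$, hence has the same spectrum, and is a rank-one positive semidefinite perturbation of the diagonal matrix $-\diag{n_1,\ldots,n_k}$ whose eigenvalues are precisely $-n_1,\ldots,-n_k$.

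The concluding step is to apply the standard interlacing inequality for rank-one positive semidefinite perturbations of symmetric matrices (a consequence of Weyl's inequality) to $\widetilde{\mathbf{B}}$. Writing $\beta_1 \le \ldots \le \beta_k$ for its eigenvalues and $\alpha_i = -n_{k+1-i}$ for the ascending eigenvalues of $-\diag{n_1,\ldots,n_k}$, the inequality gives $\alpha_i \le \beta_i \le \alpha_{i+1}$ for $i<k$ and $\alpha_k \le \beta_k$, which rewrites as $n_i \le -\beta_{k-i} \le n_{i+1}$ for $1 \le i \le k-1$; after identifying $\lambda_{n-k+i+1}(\Gr{G}) = \beta_{k-i}$ this is precisely the interlacing chain claimed in the third bullet. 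To complete the first bullet, I would observe that $\trace{\widetilde{\mathbf{B}}} = \|\mathbf{p}\|^{2} - \sum_i n_i = 0$, so the $\beta_i$ sum to zero, and the upper bound $\beta_{k-1}\le -n_1 < 0$ forces $\beta_1,\ldots,\beta_{k-1}$ all to be strictly negative, hence $\beta_k > 0$ is the unique positive eigenvalue. The main obstacle is the careful index translation between the ascending ordering used by the rank-one interlacing theorem and the descending ordering of the $\A$-eigenvalues used in the theorem's statement; once the quotient matrix has been symmetrized via conjugation by $\diag{\sqrt{n_i}}$, the remaining work is purely bookkeeping.
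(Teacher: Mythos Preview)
The paper does not supply its own proof of this theorem; it merely cites \cite{EsserH1980} and uses the statement downstream. So there is no in-paper argument to compare against. That said, your proof is correct and self-contained: the identification of the $(n-k)$-dimensional kernel, the passage to the $k\times k$ quotient matrix on the span of the part indicators, the symmetrization $\widetilde{\mathbf{B}}=\mathbf{p}\mathbf{p}^{\mathrm{T}}-\diag{n_1,\ldots,n_k}$, and the use of rank-one PSD perturbation interlacing (Weyl) all go through exactly as you describe, and the trace argument cleanly pins down that $\beta_k$ is the sole positive eigenvalue. Your index bookkeeping matches the statement once one observes that $\lambda_2(\Gr{G})=0$ when $n>k$ and $\lambda_2(\Gr{G})=\beta_{k-1}\le -n_1<0$ when $n=k$, so $\lambda_2\le 0$ in either case. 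This is essentially the approach one finds in the original Esser--Harary paper and in standard treatments via equitable partitions/quotient matrices; nothing is missing.
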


\begin{corollary}
\label{corollary:Kqk-spectrum}
The spectrum of the regular complete $k$-partite graph $\CoG{q, \ldots, q} \triangleq \CoG{q}^k$ is given by
\begin{align}
\sigma_{\A}(\CoG{q}^{k})=\Bigl\{ [-q]^{k-1}, [0]^{(q-1)k}, q(k-1) \Bigr\}.
\end{align}
\end{corollary}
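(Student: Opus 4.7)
The plan is to apply Theorem~\ref{theorem: spectrum of multipartite graphs} directly with all part sizes equal to $q$, which collapses the interlacing chain and forces every negative eigenvalue to equal $-q$. Setting $n_1 = n_2 = \ldots = n_k = q$ in the theorem, the graph $\CoG{q}^k$ has $n = qk$ vertices, $0$ as an eigenvalue with multiplicity $n-k = (q-1)k$, exactly $k-1$ negative eigenvalues, and a single positive eigenvalue $\lambda_1(\Gr{G})$.

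Next, I would pin down the negative eigenvalues. The interlacing inequalities from Theorem~\ref{theorem: spectrum of multipartite graphs} become
\begin{align}
q \leq -\lambda_{n-k+2}(\Gr{G}) \leq q \leq -\lambda_{n-k+3}(\Gr{G}) \leq q \leq \ldots \leq q \leq -\lambda_n(\Gr{G}) \leq q,
\end{align}
so each of the $k-1$ negative eigenvalues equals $-q$ (every inequality is forced to an equality by sandwiching between $q$ and $q$).

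Finally, I would determine the single positive eigenvalue. Two routes are available. First, observing from \eqref{eq: n_i in Turan's graph} (or directly from the construction) that $\CoG{q}^k$ is $q(k-1)$-regular, its largest adjacency eigenvalue equals its common degree $q(k-1)$. Alternatively, one can invoke $\sum_j \lambda_j = \mathrm{tr}(\A) = 0$ from \eqref{eq: trace of A is zero}, giving $\lambda_1(\Gr{G}) = (k-1)q$ after subtracting the contribution of the $k-1$ eigenvalues equal to $-q$ and the zeros. Either way, the full spectrum is $\bigl\{[-q]^{k-1}, [0]^{(q-1)k}, q(k-1)\bigr\}$, and a quick multiplicity check $(k-1) + (q-1)k + 1 = qk = n$ confirms nothing is missing. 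There is no serious obstacle here: the only subtlety is recognizing that the all-equal part sizes make the interlacing tight, which is what turns the theorem into an exact computation.
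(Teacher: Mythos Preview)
Your proof is correct and follows the same approach as the paper: both apply Theorem~\ref{theorem: spectrum of multipartite graphs} with $n_1=\ldots=n_k=q$. The paper's proof is a single sentence (``readily follows''), whereas you have spelled out the details it leaves implicit --- the collapse of the interlacing chain and the determination of the positive eigenvalue via regularity or the trace condition.
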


\begin{proof}
This readily follows from Theorem~\ref{theorem: spectrum of multipartite graphs} by setting $n_1 = \ldots = n_k = q$.
\end{proof}

\begin{lemma}
\label{lemma: Join-A-Spec} \cite{Butler2008} Let $\Gr{G}_{i}$
be $r_{i}$-regular graphs on $n_{i}$ vertices for $i\in \{1,2\}$, with the adjacency spectrum
$\sigma_{\A}(\Gr{G}_1)=(r_{1}=\mu_{1}\ge\mu_{2}\ge...\ge\mu_{n})$
and $\sigma_{A}(\Gr{G}_2) = (r_{2}=\nu_{1}\ge\nu_{2}\ge...\ge\nu_{n})$.
The $\A$-spectrum of $\Gr{G}_1\vee \Gr{G}_2$ is given by
\begin{align}
\sigma_{\A}(\Gr{G}_{1}\vee \Gr{G}_{2})=\{ \mu_{i} \}_{i=2}^{n_{1}} \cup \{ \nu_{i}\}_{i=2}^{n_{2}} \cup
\left\{ \frac{r_1+r_2 \pm\sqrt{(r_1-r_2)^{2}+4 n_1 n_2}}{2} \right\}.
\end{align}
\end{lemma}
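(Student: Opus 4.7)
The plan is to diagonalize the block adjacency matrix
\begin{align}
\A(\Gr{G}_1 \vee \Gr{G}_2) = \begin{pmatrix} \A(\Gr{G}_1) & \J{n_1,n_2} \\ \J{n_2,n_1} & \A(\Gr{G}_2) \end{pmatrix}
\end{align}
by exploiting the regularity of $\Gr{G}_1$ and $\Gr{G}_2$. Since each $\A(\Gr{G}_i)$ is symmetric, it admits an orthonormal eigenbasis, and because $\Gr{G}_i$ is $r_i$-regular, the vector $\mathbf{1}_{n_i}$ is an eigenvector with eigenvalue $r_i = \mu_1$ (respectively $r_2 = \nu_1$). All remaining eigenvectors, which correspond to $\mu_2, \dots, \mu_{n_1}$ and $\nu_2, \dots, \nu_{n_2}$, can then be chosen orthogonal to $\mathbf{1}_{n_i}$.

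First, I would produce $n_1 + n_2 - 2$ eigenvectors of the block matrix by lifting: for any eigenvector $v$ of $\A(\Gr{G}_1)$ with eigenvalue $\mu_i$ ($i \ge 2$) that is orthogonal to $\mathbf{1}_{n_1}$, the key observation is that $\J{n_2,n_1} v = \mathbf{1}_{n_2} (\mathbf{1}_{n_1}^{\mathrm{T}} v) = 0$. Hence $\binom{v}{\mathbf{0}}$ is an eigenvector of $\A(\Gr{G}_1 \vee \Gr{G}_2)$ with eigenvalue $\mu_i$. The same argument applied to eigenvectors $w$ of $\A(\Gr{G}_2)$ corresponding to $\nu_j$ ($j \ge 2$) produces eigenvectors $\binom{\mathbf{0}}{w}$ with eigenvalue $\nu_j$.

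Next, I would locate the two remaining eigenvalues by searching for eigenvectors of the form $\binom{a\, \mathbf{1}_{n_1}}{b\, \mathbf{1}_{n_2}}$, which is the orthogonal complement of the $(n_1 + n_2 - 2)$-dimensional subspace constructed above. A direct computation using $\A(\Gr{G}_i)\mathbf{1}_{n_i} = r_i \mathbf{1}_{n_i}$ and $\J{n_1,n_2} \mathbf{1}_{n_2} = n_2 \mathbf{1}_{n_1}$ (respectively $\J{n_2,n_1}\mathbf{1}_{n_1} = n_1 \mathbf{1}_{n_2}$) reduces the eigenvalue equation to the $2 \times 2$ problem
\begin{align}
\begin{pmatrix} r_1 & n_2 \\ n_1 & r_2 \end{pmatrix} \begin{pmatrix} a \\ b \end{pmatrix} = \lambda \begin{pmatrix} a \\ b \end{pmatrix},
\end{align}
whose characteristic polynomial $\lambda^2 - (r_1+r_2)\lambda + (r_1 r_2 - n_1 n_2) = 0$ yields exactly the two claimed eigenvalues $\tfrac{1}{2}\bigl(r_1 + r_2 \pm \sqrt{(r_1-r_2)^2 + 4 n_1 n_2}\bigr)$.

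Finally I would verify completeness: the $(n_1-1) + (n_2-1) + 2 = n_1 + n_2$ eigenvectors produced are linearly independent (in fact orthogonal), since the first two families lie in the orthogonal complement of $\mathrm{span}\{\binom{\mathbf{1}_{n_1}}{\mathbf{0}}, \binom{\mathbf{0}}{\mathbf{1}_{n_2}}\}$ within $\Reals^{n_1+n_2}$, while the last two eigenvectors live in that $2$-dimensional span. Hence we have exhausted the full spectrum. The only subtlety, and the main thing to be careful about, is the orthogonality argument that lets us drop the off-diagonal $\J{\cdot,\cdot}$ blocks when acting on the lifted eigenvectors; this is precisely where regularity of $\Gr{G}_1$ and $\Gr{G}_2$ is used, and the result would fail without it.
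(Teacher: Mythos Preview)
Your proof is correct and is the standard argument for this result. Note, however, that the paper does not actually supply a proof of this lemma: it is quoted as a known fact with a citation to \cite{Butler2008} and then used as a tool in the derivation of the Tur\'{a}n spectrum (Theorem~\ref{theorem: A-spectrum of Turan graph}). Your block-matrix diagonalization --- lifting eigenvectors orthogonal to $\mathbf{1}_{n_i}$ to kill the $\J{\cdot,\cdot}$ blocks, and then reducing to a $2\times 2$ system on $\mathrm{span}\bigl\{\binom{\mathbf{1}_{n_1}}{\mathbf{0}},\binom{\mathbf{0}}{\mathbf{1}_{n_2}}\bigr\}$ --- is exactly the argument one finds in the cited source and in textbook treatments, so there is nothing to compare.
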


\begin{theorem}
\label{theorem: A-spectrum of Turan graph}
Let $q,s\in \naturals$ such that $n=kq+s$ and $0 \le s \leq k-1.$ The following
holds with respect to the $\A$-spectrum of $T(n,k)$:
\begin{enumerate}
\item \label{item: irregular Turan graph}
If $1 \leq s \leq k-1$, then the $\A$-spectrum of the irregular Tur\'{a}n graph $T(n,k)$ is given by
\begin{align}
\sigma_{\A}(T(n,k))=& \biggl\{ [-q-1]^{s-1}, [-q]^{k-s-1}, [0]^{n-k} \biggr\} \nonumber \\
\label{eq: A-spectrum of irregular Turan graph}
& \cup \Biggl\{\tfrac12 \biggl[n-2q-1\pm \sqrt{\Bigl(n-2(q+1)s+1\Bigr)^2+4q(q+1)s(k-s)} \biggr] \Biggr\}.
\end{align}
\item \label{item: regular Turan graph}
If $s=0$, then $q = \frac{n}{k}$, and the $\A$-spectrum of the regular  Tur\'{a}n graph $T(n,k)$ is given by
\begin{align}
\label{eq: A-spectrum of regular Turan graph}
\sigma_{\A}(T(n,k))=& \Bigl\{ [-q]^{k-1}, [0]^{n-k}, (k-1)q \Bigr\}.
\end{align}
\end{enumerate}
\end{theorem}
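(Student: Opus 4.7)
The plan is to handle the two cases separately. The regular case (Item~\ref{item: regular Turan graph}) is immediate from an earlier corollary, while the irregular case (Item~\ref{item: irregular Turan graph}) reduces to a join of two regular complete multipartite graphs, to which Lemma~\ref{lemma: Join-A-Spec} applies.

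For Item~\ref{item: regular Turan graph} ($s=0$), we have $T(n,k) = \mathrm{K}_q^{k}$ with $q = n/k$, so \eqref{eq: A-spectrum of regular Turan graph} follows directly from Corollary~\ref{corollary:Kqk-spectrum} after rewriting $(q-1)k = n - k$.

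For Item~\ref{item: irregular Turan graph} ($1 \le s \le k-1$), the key structural observation is the decomposition
\begin{align}
T(n,k) = \mathrm{K}_q^{k-s} \, \vee \, \mathrm{K}_{q+1}^{s},
\end{align}
where the first factor assembles the $k-s$ parts of size $q$ and the second assembles the $s$ parts of size $q+1$; the join supplies precisely the edges between vertices in different factors demanded by the complete multipartite structure. Both factors are regular: $\mathrm{K}_q^{k-s}$ is $r_1$-regular on $n_1$ vertices with $r_1 = q(k-s-1)$ and $n_1 = q(k-s)$, while $\mathrm{K}_{q+1}^{s}$ is $r_2$-regular on $n_2$ vertices with $r_2 = (q+1)(s-1)$ and $n_2 = (q+1)s$. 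Applying Corollary~\ref{corollary:Kqk-spectrum} to each factor and then Lemma~\ref{lemma: Join-A-Spec} (which removes the leading eigenvalue from each summand and inserts the pair $\tfrac12 \bigl( r_1 + r_2 \pm \sqrt{(r_1 - r_2)^2 + 4 n_1 n_2} \, \bigr)$) yields the eigenvalues $[-q]^{k-s-1}$ and $[-(q+1)]^{s-1}$ together with a multiset of zeros whose total multiplicity simplifies to $(q-1)(k-s) + qs = qk - k + s = n-k$, matching \eqref{eq: A-spectrum of irregular Turan graph}.

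The remainder of the argument is elementary algebra. Short computations give $r_1 + r_2 = n - 2q - 1$, $r_1 - r_2 = n - 2(q+1)s + 1$, and $4 n_1 n_2 = 4q(q+1)s(k-s)$, and substituting these into the join formula of Lemma~\ref{lemma: Join-A-Spec} produces exactly the bracketed pair of eigenvalues in \eqref{eq: A-spectrum of irregular Turan graph}. The only mild subtlety, and the obstacle I anticipate, is in the boundary cases: when $s=1$ the factor $\mathrm{K}_{q+1}^{1} = \overline{\CoG{q+1}}$ is empty with $r_2 = 0$, and when $s = k-1$ the factor $\mathrm{K}_q^{1}$ is empty with $r_1 = 0$; in both situations the exponent $s-1$ or $k-s-1$ collapses to zero, the corresponding eigenvalue $-q-1$ or $-q$ contributes nothing, and the stated formulas remain valid under the convention that zero-multiplicity terms are simply omitted, so no separate case analysis is needed.
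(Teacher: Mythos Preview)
Your proof is correct and follows essentially the same route as the paper: the same decomposition $T(n,k)=\mathrm{K}_q^{k-s}\vee\mathrm{K}_{q+1}^{s}$, the same application of Corollary~\ref{corollary:Kqk-spectrum} to each factor, and the same invocation of Lemma~\ref{lemma: Join-A-Spec} together with the identities $r_1+r_2=n-2q-1$ and $r_1-r_2=n-2(q+1)s+1$. Your explicit remark on the boundary cases $s=1$ and $s=k-1$ is a nice addition that the paper leaves implicit.
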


\begin{proof}
Let $1 \leq s \leq k-1$, and we next derive the $\A$-spectrum of an irregular Tur\'{a}n graph $T(n,k)$ in
Item~\ref{item: irregular Turan graph} of this theorem (i.e., its spectrum if $n$ is not divisible by $k$ since $s \neq 0$).
By Corollary~\ref{corollary:Kqk-spectrum}, the spectra of the regular
graphs $\CoG{q}^{k-s}$ and $\CoG{q+1}^{s}$ is
\begin{align}
& \sigma_{\A}(\CoG{q}^{k-s})=\left\{ [-q]^{k-s-1}, [0]^{(q-1)(k-s)}, q(k-s-1) \right\},  \\
& \sigma_{\A}(\CoG{q+1}^{s})=\left\{ [-q-1]^{s-1}, [0]^{qs}, (q+1)(s-1) \right\}.
\end{align}
The $(k-s)$-partite graph $\CoG{q}^{k-s}$ is $r_1$-regular with $r_1=q(k-s-1)$, the
$s$-partite graph $\CoG{q+1}^{s}$ is $r_2$-regular with $r_2 = (q+1)(s-1)$, and
by Definition~\ref{definition: Turan graph}, we have $T(n,k) = \CoG{q}^{k-s} \vee \CoG{q+1}^{s}$.
Hence, by Lemma~\ref{lemma: Join-A-Spec}, the adjacency spectrum of $T(n,k)$ is given by
\begin{align}
\sigma_{\A}(T(n,k)) &= \sigma_{\A}(\CoG{q}^{k-s} \vee \CoG{q+1}^{s}) \nonumber \\
\label{eq0: 23.12.2024}
&=\set{S}_1 \cup \set{S}_2 \cup \set{S}_3,
\end{align}
where
\begin{align}
\label{eq1: 23.12.2024}
\set{S}_1 &= \Bigl\{ [-q]^{k-s-1}, [0]^{(q-1)(k-s)} \Bigr\}, \\
\label{eq2: 23.12.2024}
\set{S}_2 &= \Bigl\{ [-q-1]^{s-1}, [0]^{qs} \Bigr\}, \\
\set{S}_3 &= \biggl\{ \frac{r_1+r_2 \pm \sqrt{(r_1-r_2)^2 + 4 n_1 n_2}}{2} \biggr\} \nonumber \\
\label{eq3: 23.12.2024}
&= \Biggl\{\tfrac12 \biggl[n-2q-1\pm \sqrt{\Bigl(n-2(q+1)s+1\Bigr)^2+4q(q+1)s(k-s)} \biggr] \Biggr\},
\end{align}
where the last equality holds since, by the equality $n=kq+s$ and the above expressions of $r_1$
and $r_2$, it can be readily verified that $r_1+r_2 = n-2q-1$ and $r_1-r_2 = n-2(q+1)s+1$.
Finally, combining \eqref{eq0: 23.12.2024}--\eqref{eq3: 23.12.2024} gives the $\A$-spectrum
in \eqref{eq: A-spectrum of irregular Turan graph} of an irregular Tur\'{a}n graph $T(n,k)$.

We next prove Item~\ref{item: regular Turan graph} of this theorem, referring to a regular Tur\'{a}n graph $T(n,k)$
(i.e., $k|n$ or equivalently, $s=0$). In that case, we have $T(n,k)=\CoG{q}^{k}$ where $q = \frac{n}{k}$,
so the $\A$-spectrum in \eqref{eq: A-spectrum of regular Turan graph} holds by Corollary~\ref{corollary:Kqk-spectrum}.
\end{proof}

\begin{remark}
\label{remark: Turan - multiplicity of negative eigenvalues}
In light of Theorem~\ref{theorem: A-spectrum of Turan graph}, if $k \geq 2$, then the number of negative
eigenvalues (including multiplicities) of the adjacency matrix of the Tur\'{a}n graph $T(n,k)$ is $k-1$
if the graph is regular (i.e., if $k|n$), and it is $k-2$ otherwise (i.e., if the graph is irregular).
If $k=1$, which corresponds to an empty graph (having no edges), then all eigenvalues are zeros
(having no negative eigenvalues). Furthermore, the adjacency matrix of $T(n,k)$ always has a single positive
eigenvalue, which is of multiplicity~1 irrespectively of the values of $n$ and $k$. We rely on these properties
later in this paper (see Section~\ref{subsubsection: Turan graph is DS}).
\end{remark}

\begin{example}
\label{example: A-spectrum of Turan graph}
By Theorem~\ref{theorem: A-spectrum of Turan graph}, let us calculate the $\A$-spectrum of the
Tur\'{a}n graph $T(17,7)$, and verify it numerically with the SageMath software \cite{SageMath}. Having $n=17$
and $k=7$ gives $q=2$ and $s=3$, which by Theorem~\ref{theorem: A-spectrum of Turan graph} implies that
\begin{align}
\label{eq4: 23.12.2024}
\sigma_{\A}\bigl(T(17,7)\bigr)= \Bigl\{ [-3]^2, [-2]^3, [0]^{10}, \, 6(1+\sqrt{2}), \, 6(1-\sqrt{2}) \Bigr\}.
\end{align}
That has been numerically verified by programming in the SageMath software.
\end{example}

\subsubsection{Tur\'{a}n graphs are DS}
\label{subsubsection: Turan graph is DS}
The main result of this subsection establishes that all Tur\'{a}n graphs are determined by their $\A$-spectrum.
This result is equivalent to Theorem~3.3 in \cite{MaRen2010}, while also presenting an alternative proof that
offers additional insights.
\begin{theorem}
\label{theorem: Turan's graph is DS}
The Tur\'{a}n graph $T(n,k)$ is $\A$-DS.
\end{theorem}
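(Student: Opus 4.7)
The plan is to show that any graph $\Gr{G}$ cospectral with $T(n,k)$ must be isomorphic to it, arguing in four short steps. First, I would extract the invariants needed from the common spectrum. By Corollary~\ref{corollary: number of edges and triangles in a graph}, $\Gr{G}$ has $n$ vertices and $|\E{T(n,k)}|$ edges. By Theorem~\ref{theorem: spectrum of multipartite graphs} applied to the complete $k$-partite graph $T(n,k)$, the adjacency matrix of $T(n,k)$ has exactly one positive eigenvalue and admits $0$ with multiplicity $n-k$; by cospectrality, the same holds for $\Gr{G}$.

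Second, I would invoke the classical structural theorem of J.\,H.\,Smith: a simple graph has exactly one positive adjacency eigenvalue if and only if its non-isolated vertices induce a single complete multipartite graph. Thus, up to isomorphism, $\Gr{G} = \CoG{m_1, \ldots, m_\ell} \DU a \CoG{1}$ for some $\ell \geq 2$, part sizes $m_1 \leq \cdots \leq m_\ell$ with each $m_i \geq 1$, and $a \geq 0$ isolated vertices, satisfying $a + \sum_{i=1}^\ell m_i = n$.

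Third, I would pin down the number of parts via the nullity. By Theorem~\ref{theorem: spectrum of multipartite graphs} again, the complete multipartite factor $\CoG{m_1, \ldots, m_\ell}$ contributes $\sum_{i=1}^\ell m_i - \ell$ zero eigenvalues, and each isolated vertex contributes one more, so the multiplicity of $0$ in $\Gr{G}$ equals $n - \ell$. Matching with $n - k$ forces $\ell = k$, so $\Gr{G}$ is complete $k$-partite (possibly with isolated vertices appended) and, in particular, $\CoG{k+1}$-free.

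Finally, I would close by the uniqueness case of Tur\'{a}n's graph theorem (Theorem~\ref{theorem: Turan's theorem}): among all $n$-vertex $\CoG{k+1}$-free graphs, $T(n,k)$ is the unique maximizer of the edge count. Since $\Gr{G}$ is $\CoG{k+1}$-free and already attains $|\E{T(n,k)}|$ edges, the uniqueness clause forces $\Gr{G} \cong T(n,k)$ (so $a=0$ and the parts are as balanced as possible). The main obstacle is Smith's structural theorem used in Step~2; this is a classical result in algebraic graph theory, and once granted the remaining argument uses only tools already developed in the excerpt --- edge counting via Corollary~\ref{corollary: number of edges and triangles in a graph}, the zero-eigenvalue multiplicity from Theorem~\ref{theorem: spectrum of multipartite graphs}, and Tur\'{a}n's theorem in its extremal form.
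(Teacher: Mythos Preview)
Your proof is correct and follows a genuinely different route from the paper's. Both arguments invoke Smith's theorem (the paper's Theorem~\ref{theorem:Smith_theorem_1}) to reduce to a complete multipartite graph with possible isolated vertices, but the strategies then diverge. You determine the number of parts $\ell=k$ in one stroke from the multiplicity of~$0$ (since the nullity of $\Gr{G}$ is $n-\ell$), and you finish by appealing to the uniqueness case of Tur\'{a}n's theorem. The paper never uses the nullity in this way; instead it first uses Cauchy interlacing (Lemma~\ref{lemma:Tnk_DS_lemma_1}) to show $\Gr{G}$ is $\CoG{k+1}$-free, argues separately that no isolated vertices exist (Lemma~\ref{lemma:Tnk_DS_lemma_2}), pins down $\ell=k$ via an edge-adding argument (Lemma~\ref{lemma:Tnk_DS_lemma_3}), and then re-derives the balance of part sizes by hand (Lemmata~\ref{lemma:Tnk_DS_lemma_4}--\ref{lemma:Tnk_DS_lemma_5} plus a small linear system), essentially reproving Tur\'{a}n's uniqueness for this situation rather than citing it. Your argument is shorter and cleaner; the paper's is more self-contained, since Theorem~\ref{theorem: Turan's theorem} as stated there gives only the inequality and not the uniqueness clause on which your final step relies.
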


In order to prove Theorem~\ref{theorem: Turan's graph is DS}, we first introduce an auxiliary result from
\cite{Smith1970}, followed by several other lemmata.
\begin{theorem} \cite[Theorem~1]{Smith1970}
\label{theorem:Smith_theorem_1}
Let $\Gr{G}$ be a graph. Then, the following statements are equivalent:
\begin{itemize}
\item $\Gr{G}$ has exactly one positive eigenvalue.
\item $\Gr{G}=\Gr{H} \cup \CGr{\CoG{m}}$ for some $m$, where $\Gr{H}$ is a nonempty
complete multipartite graph. In other words, the non-isolated vertices of $\Gr{G}$
form a complete multipartite graph.
\end{itemize}
\end{theorem}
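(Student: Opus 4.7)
The plan is to prove the two implications separately. The forward direction is a short application of Theorem~\ref{theorem: spectrum of multipartite graphs} (Esser-Harary), while the reverse direction reduces, via Cauchy's interlacing (Theorem~\ref{thm:interlacing}), to a case analysis on a forbidden induced subgraph.

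For the forward direction, assume $\Gr{G} = \Gr{H} \DU \CGr{\CoG{m}}$ with $\Gr{H}$ a nonempty complete multipartite graph. Then $\A(\Gr{G})$ is block-diagonal with blocks $\A(\Gr{H})$ and $\mathbf{0}_{m,m}$, so $\sigma_{\A}(\Gr{G})$ consists of $\sigma_{\A}(\Gr{H})$ together with $m$ additional zeros. Theorem~\ref{theorem: spectrum of multipartite graphs} then guarantees that $\Gr{H}$, and hence $\Gr{G}$, has exactly one positive eigenvalue.

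For the reverse direction, assume $\Gr{G}$ has exactly one positive eigenvalue, let $m$ be the number of isolated vertices of $\Gr{G}$, and let $\Gr{H}$ be the subgraph induced by the non-isolated vertices. Since isolated vertices contribute only zeros to the adjacency spectrum, $\Gr{H}$ also has exactly one positive eigenvalue. It suffices to show that $\Gr{H}$ is complete multipartite. I use the standard fact that a graph is complete multipartite if and only if its complement is a disjoint union of cliques, equivalently if and only if it contains no induced copy of $\CGr{\PathG{3}} = \PathG{2} \DU \CoG{1}$ (an edge plus a nonadjacent vertex). Suppose to the contrary that vertices $a, b, c \in \V{\Gr{H}}$ induce such a configuration, with $\{a,b\} \in \E{\Gr{H}}$ and $c$ nonadjacent to both. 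Since $\Gr{H}$ has no isolated vertices, $c$ has some neighbor $d \in \V{\Gr{H}}$, and necessarily $d \notin \{a,b\}$.

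According to whether $\{a,d\}$ and $\{b,d\}$ are edges, the induced subgraph on $\{a,b,c,d\}$ is one of three $4$-vertex graphs: $2\CoG{2}$ (neither edge present), $\PathG{4}$ (exactly one present), or the paw graph (both present, a triangle on $\{a,b,d\}$ with pendant $c$). Their adjacency spectra are $\{-1,-1,1,1\}$; $\bigl\{-\tfrac12(1+\sqrt{5}),\, -\tfrac12(\sqrt{5}-1),\, \tfrac12(\sqrt{5}-1),\, \tfrac12(1+\sqrt{5})\bigr\}$; and the roots of $(x+1)(x^{3} - x^{2} - 3x + 1)$, whose cubic factor has positive roots in $(0,1)$ and $(2,3)$, respectively. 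Each of these subgraphs has two positive eigenvalues, so Cauchy's interlacing forces $\Gr{H}$ to have at least two positive eigenvalues, contradicting the hypothesis. The main obstacle is executing this case analysis cleanly, since the induced $\PathG{2} \DU \CoG{1}$ on $\{a,b,c\}$ alone carries only one positive eigenvalue and is therefore not by itself a spectral obstruction; it is the guaranteed extra neighbor $d$, produced by the no-isolated-vertices reduction, that supplies the fourth vertex needed to force the two positive eigenvalues.
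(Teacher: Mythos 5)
The paper does not actually prove this statement --- it is quoted directly from Smith's 1970 paper as an auxiliary result, so there is no in-paper argument to compare yours against. Judged on its own, your proof is correct and complete in both directions, and it follows what is essentially the classical route: the easy direction via the Esser--Harary description of spectra of complete multipartite graphs, and the hard direction via the characterization of complete multipartite graphs as the graphs with no induced $\CoG{2} \DU \CoG{1}$, combined with Cauchy interlacing against the three possible four-vertex extensions ($2\CoG{2}$, $\PathG{4}$, and the paw), each of which you correctly verify to have two positive eigenvalues (the paw's characteristic polynomial $x^4-4x^2-2x+1=(x+1)(x^3-x^2-3x+1)$ checks out, with cubic roots in $(-2,-1)$, $(0,1)$, and $(2,3)$). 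Your closing remark about why the extra neighbor $d$ is genuinely needed --- since $\CoG{2}\DU\CoG{1}$ by itself has only one positive eigenvalue and is not a spectral obstruction --- is exactly the right observation. The only point worth adding explicitly is that $\Gr{H}$ is nonempty: since $\Gr{G}$ has a positive eigenvalue it has at least one edge, so the set of non-isolated vertices induces a graph with an edge, which is what the statement's ``nonempty complete multipartite'' requires (and guarantees at least two parts). This is a one-line remark, not a gap.
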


\begin{proof}[Proof of Theorem~\ref{theorem: Turan's graph is DS}]
Let $\Gr{G}$ be a graph that is $\A$-cospectral with $T(n,k)$. Denote $n=qk+s$
for $s,q\in \naturals$ such that $0\le s<k$.
\begin{lemma}
\label{lemma:Tnk_DS_lemma_1}
The graph $\Gr{G}$ doesn't have a clique of order $k+1$.
\end{lemma}

\begin{proof}
Suppose to the contrary that the graph $\Gr{G}$ has a clique of order $k+1$, which means that
$\CoG{k+1}$ is an induced subgraph of $\Gr{G}$. The complete graph $\CoG{k+1}$ has
$k$ negative eigenvalues ($-1$ with a multiplicity of $k$). On the other hand, $\Gr{G}$ has
at most $k-1$ negative eigenvalues, $n-k$ zero eigenvalues, and exactly one positive eigenvalue;
indeed, this follows from Theorem~\ref{theorem: A-spectrum of Turan graph} (see
Remark~\ref{remark: Turan - multiplicity of negative eigenvalues}), and since $\Gr{G}$
and $T(n,k)$ are $\A$-cospectral graphs. Hence, by Cauchy's Interlacing Theorem,
every induced subgraph of $\Gr{G}$ on $k+1$ vertices has at most $k-1$ negative
eigenvalues (i.e., those eigenvalues interlaced between the negative and zero eigenvalues of $\Gr{G}$
that are placed at distance $k+1$ apart in a sorted list of the eigenvalues of $\Gr{G}$ in decreasing order).
This contradicts our assumption on the existence of a clique of $k+1$ vertices because of the $k$ negative
eigenvalues of $\CoG{k+1}$.
\end{proof}

\begin{lemma}
\label{lemma:Tnk_DS_lemma_2}
The graph $\Gr{G}$ is a complete multipartite graph.
\end{lemma}

\begin{proof}
Since $\Gr{G}$ has exactly one positive eigenvalue, which is of multiplicity one, we get
from Theorem~\ref{theorem:Smith_theorem_1} that $\Gr{G} = \Gr{H} \cup \CGr{\CoG{\ell}}$
for some $\ell \in \naturals$, where $\Gr{H}$ is a nonempty multipartite graph. We next
show that $\ell=0$. Suppose to the contrary that $\ell \geq 1$, and let $v$ be an isolated
vertex of $\CGr{\CoG{\ell}}$. Since $\Gr{H}$ is a nonempty graph, there exists a vertex
$u\in \V{\Gr{H}}$. Let $\Gr{G}_1$ be the graph obtained from $\Gr{G}$ by adding the
single edge $\{v,u\}$. By Lemma~\ref{lemma:Tnk_DS_lemma_1}, $\Gr{G}$ does not have a clique
of order $k+1$. Hence, $\Gr{G}_1$ does not have a clique of order $k+1$ either, contradicting
Corollary~\ref{corollary:turan}. Hence, $\Gr{G}=\Gr{H}$.
\end{proof}

\begin{lemma}
\label{lemma:Tnk_DS_lemma_3}
The graph $\Gr{G}$ is a complete $k$-partite graph.
\end{lemma}

\begin{proof}
By Lemma \ref{lemma:Tnk_DS_lemma_2}, $\Gr{G}$ is a complete multipartite graph. Let
$r$ be the number of partite subsets in the vertex set $\V{\Gr{G}}$. We show that $r=k$,
which then gives that $\Gr{G}$ is a complete $k$-partite graph. By
Lemma~\ref{lemma:Tnk_DS_lemma_1}, $\Gr{G}$ doesn't have a clique of order $k+1$.
Hence, $r\le k$. Suppose to the contrary that $r<k$. Since $\Gr{G}$ is a complete
$r$-partite graph, the largest order of a clique in $\Gr{G}$ is $r$.
Let $\Gr{G}_{1}$ be a graph obtained from $\Gr{G}$ by adding an edge between
two vertices within the same partite subset. The graph $\Gr{G}_1$ becomes an $(r+1)$-partite
graph. Consequently, the maximum order of a clique in $\Gr{G}_1$ is at most $r+1 \leq k$.
The graph $\Gr{G}_1$ has exactly one more edge than $\Gr{G}$. Since $\Gr{G}$ is
$\A$-cospectral to $T(n,k)$, it has the same number of edges as in $T(n,k)$.
Hence, $\Gr{G}_1$ contains more edges than $T(n,k)$, while also lacking a clique of order $k+1$.
This contradicts Corollary~\ref{corollary:turan}, so we conclude that $r=k$.
\end{proof}

Let $n_1, n_2, \dots , n_k \in \naturals$ be the number of vertices in each partite subset
of the complete $k$-partite graph $\Gr{G}$, i.e., $\Gr{G} = \CoG{n_1, n_2, \dots , n_k}$.
Then, the next two lemmata subsequently hold.
\begin{lemma}
\label{lemma:Tnk_DS_lemma_4}
For all $i \in \OneTo{k}$, $n_i \le q+1$ .
\end{lemma}

\begin{proof}
Suppose to the contrary that there exists a partite subset in the complete $k$-partite graph
$\Gr{G}$ with more than $q+1$ vertices. Let $\set{P}_1$ be such a partite subset, and suppose
without loss of generality that $n_1= \card{\set{P}_1} \geq q+1$. By the pigeonhole principle, there
exists a partite subset of $\Gr{G}$ with at most $q$ vertices (since $\sum_{i \in \OneTo{k}} n_i = n = kq+s$,
where $0\leq s \leq k-1$). Let $\set{P}_2$ be such a partite subset of $\Gr{G}$, and suppose
without loss of generality that $n_2 = \card{\set{P}_2} \leq q$. Let $\Gr{G}_{1}$
be the graph obtained from $\Gr{G}$ by removing a vertex $v \in \set{P}_1$, adding
a new vertex $u$ to $\set{P}_2$, and connecting $u$ to all the vertices outside its partite subset.
The new graph $\Gr{G}_1$ is also $k$-partite, so it does not contain a clique of order greater than $k$.
Furthermore, by construction, $\Gr{G}_{1}$ has more edges than $\Gr{G}$, so
\begin{align}
\label{eq1: 24.12.2024}
\bigcard{\E{\Gr{G}_1}} > \bigcard{\E{\Gr{G}}} = \bigcard{\E{T(n,k)}}.
\end{align}
Hence, $\Gr{G}_{1}$ is a graph with no clique of order greater than $k$,
and it has more edges than $T(n,k)$. That contradicts Theorem~\ref{theorem: Turan's theorem},
so $\{n_i\}_{i=1}^k$ cannot include any element that is larger than $q+1$.
\end{proof}

\begin{lemma}
\label{lemma:Tnk_DS_lemma_5}
For all $i \in \OneTo{k}$, $n_i \geq q$.
\end{lemma}

\begin{proof}
The proof of this lemma is analogous to the proof of Lemma~\ref{lemma:Tnk_DS_lemma_4}.
Suppose to the contrary that there exists a partite subset of $\Gr{G}$ with less than $q$
vertices. Let $\set{P}_1$ be such a partite subset, so $p_1 \triangleq \bigcard{\set{P}_1}<q$.
By the pigeonhole principle, there exists a partition with at least $q+1$ vertices.
Let $\set{P}_2$ be such a partite subset set, whose number of vertices is denoted by
$p_{2} \triangleq \card{\set{P}_2} \geq q+1$.
Let $\Gr{G}_{1}$ be the graph obtained by removing a vertex $v \in \set{P}_2$,
adding a new vertex $u$ to $\set{P}_1$, and connecting the vertex $u$ to all
the vertices outside its partite subset. $\Gr{G}_{1}$ is $k$-partite, so it does
not contain a clique of order greater than $k$, and $\Gr{G}_{1}$ has more edges
than $\Gr{G}$ so \eqref{eq1: 24.12.2024} holds.
Hence, $\Gr{G}_1$ is a graph with no clique of order greater than $k$,
and it has more edges than $T(n,k)$. That contradicts Theorem~\ref{theorem: Turan's theorem},
so $\{n_i\}_{i=1}^k$ cannot include any element that is smaller than $q$.
\end{proof}

By Lemmata~\ref{lemma:Tnk_DS_lemma_4} and~\ref{lemma:Tnk_DS_lemma_5}, we conclude that
$n_i \in \{q,q+1\} $ for every $1 \le i \le k-1$. Let $\alpha$ be the number of partite
subsets of $q$ vertices and $\beta$ be the number of partite subsets of $q+1$ vertices.
Since $\Gr{G}$ has $n$ vertices, where $\sum n_{i}=n$, it follows that $q\alpha + (q+1)\beta = n$.
Moreover, $\Gr{G}$ is $k$-partite, so it follows that $\alpha+\beta=k$. This gives the linear
system of equations
\begin{align}
\begin{pmatrix}q & q+1\\
1 & 1
\end{pmatrix}\begin{pmatrix}\alpha\\
\beta
\end{pmatrix}=\begin{pmatrix}n\\
k
\end{pmatrix},
\end{align}
which has the single solution
\begin{align}
\alpha = k-s, \quad \beta=n-qk=s.
\end{align}
Hence, $\Gr{G} = T(n,k)$, which completes the proof of Theorem~\ref{theorem: Turan's graph is DS}.
\end{proof}

\begin{remark}
\label{remark: on the alternative proof by Ma and Ren}
The proof of Theorem~\ref{theorem: Turan's graph is DS} is an alternative proof of Theorem~3.3 in \cite{MaRen2010}.
While both proofs rely on Theorem~\ref{theorem:Smith_theorem_1}, which is Theorem~1 of \cite{Smith1970}, our proof
relies on the adjacency spectral characterization in Theorem~\ref{theorem: A-spectrum of Turan graph}, noteworthy
in its own right, and further builds upon a sequence of results presented in Lemmata~\ref{lemma:Tnk_DS_lemma_1}--\ref{lemma:Tnk_DS_lemma_5}.
On the other hand, the proof of Theorem~3.3 in \cite{MaRen2010} relies on Theorem~\ref{theorem:Smith_theorem_1},
but then deviates substantially from our proof (see Lemmata~2.4 and~2.5 in \cite{MaRen2010} and Theorem~3.1 in \cite{MaRen2010},
serving to prove Theorem~\ref{theorem: Turan's graph is DS}).
\end{remark}

\subsection{Line graphs}
\label{subsection: Line graphs}

Among the various studied transformations on graphs, the line graphs of graphs are one of the most studied transformations
\cite{BeinekeB21}. We first introduce their definition, and then address the spectral graph determination properties.
\begin{definition}
\label{definition: line graph}
The {\em line graph} of a graph $\Gr{G}$, denoted by $\ell(\Gr{G})$, is a graph whose vertices
are the edges in $\Gr{G}$, and two vertices are adjacent in $\ell(\Gr{G})$ if the corresponding
edges are incident in $\Gr{G}$.
\end{definition}
A notable spectral property of line graphs is that all the eigenvalues of their adjacency matrix
are greater than or equal to~$-2$ (see, e.g., \cite[Theorem~4.6]{BeinekeB21}). For the determination
of all graphs whose spectrum is bounded from below by $-2$, the interested reader is referred to
\cite[Section~4.5]{BeinekeB21}.

The following theorem characterizes some families of line graphs that are DS.
\begin{theorem}
\label{theorem: DS line graphs}
The following line graphs are DS:
\begin{enumerate}[1]
\item \label{Item 1: DS line graphs}
The line graph of the complete graph $\CoG{k}$, where $k \geq 4$ and $k \neq 8$ (see
\cite[Theorem~4.1.7]{CvetkovicRS2010}),
\item \label{Item 2: DS line graphs}
The line graph of the complete bipartite graph $\CoBG{k}{k}$, where $k \geq 2$ and
$k \neq 4$ (see \cite[Theorem~4.1.8]{CvetkovicRS2010}),
\item \label{Item 3: DS line graphs}
The line graph $\ell(\CGr{\CG{6}})$ (see \cite[Proposition~4.1.5]{CvetkovicRS2010}),
\item \label{Item 4: DS line graphs}
The line graph of the complete bipartite graph $\CoBG{m}{n}$, where $m+n \geq 19$ and
$\{m,n\} \neq \{2s^2+s, 2s^2-s\}$ with $s \in \naturals$ (see
\cite[Proposition~4.1.18]{CvetkovicRS2010}).
\end{enumerate}
\end{theorem}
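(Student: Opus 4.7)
The plan is to exploit the well-known fact that every line graph $\ell(\Gr{G})$ has smallest adjacency eigenvalue at least $-2$, together with the Cameron--Goethals--Seidel--Shult classification of connected graphs with this property: such a graph is either a generalized line graph or one of finitely many exceptions stemming from the $E_8$ root system, all on at most $36$ vertices. The argument for all four parts then follows a common two-step template: first extract enough structural information from the spectrum to force a candidate cospectral mate to be a line graph (or fall into a small, enumerable list of exceptions), and second rule out every candidate different from $\ell(\Gr{G})$ itself.

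The first step I would carry out is to compute the $\A$-spectrum of each line graph explicitly using the standard identity relating $\sigma_{\A}(\ell(\Gr{G}))$ to $\sigma_{\A}(\Gr{G})$ for a $d$-regular $\Gr{G}$: one gains a factor $\lambda + d - 2$ for each $\lambda \in \sigma_{\A}(\Gr{G})$ plus copies of $-2$ with multiplicity $\bigcard{\E{\Gr{G}}}-\bigcard{\V{\Gr{G}}}$. This immediately shows that $\ell(\CoG{k}) = T(k)$ is strongly regular with parameters $\srg{k(k-1)/2}{2(k-2)}{k-2}{4}$, and $\ell(\CoBG{k}{k}) = L(k,k)$ is strongly regular with parameters $\srg{k^2}{2(k-1)}{k-2}{2}$. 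The analogous spectra for $\ell(\CoBG{m}{n})$ and $\ell(\CGr{\CG{6}})$ are computed similarly. From Theorem~\ref{theorem: graph regularity from A-spectrum}, any cospectral mate $\Gr{H}$ must be regular with the same degree, and by Corollary~\ref{corollary: number of edges and triangles in a graph} it has the same number of edges and triangles.

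Next I would invoke the Cameron--Goethals--Seidel--Shult classification on such a cospectral mate $\Gr{H}$: its smallest eigenvalue is $\geq -2$, so it is a generalized line graph $\ell(\Gr{G}')$ for some $\Gr{G}'$, or it is one of the finitely many root-system exceptions. For parts~\ref{Item 1: DS line graphs}--\ref{Item 3: DS line graphs}, one then uses Item~\ref{Item 5: eigenvalues of srg} of Theorem~\ref{theorem: eigenvalues of srg} to conclude that $\Gr{H}$ is strongly regular with the same parameters as $T(k)$, $L(k,k)$, or $\ell(\CGr{\CG{6}})$, and appeals to the uniqueness of SRGs with those parameters: the standard references show uniqueness of $T(k)$ for $k \geq 4$, $k \neq 8$ (the $k=8$ failure being the three Chang graphs recalled in Remark~\ref{remark: NICS SRGs}), uniqueness of $L(k,k)$ for $k \geq 2$, $k \neq 4$ (the $k=4$ failure being the Shrikhande graph), and uniqueness for $\ell(\CGr{\CG{6}})$. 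For part~\ref{Item 4: DS line graphs}, where $\ell(\CoBG{m}{n})$ is no longer regular unless $m=n$, I would work directly with the fact that the spectrum determines the degree sequence (via the traces of $\A^2$ and $\A^3$), forcing $\Gr{H}$ to be a generalized line graph $\ell(\Gr{G}')$, and then use a Whitney-type recovery argument to recover $\Gr{G}' \cong \CoBG{m}{n}$; the excluded pair $\{m,n\} = \{2s^2+s, 2s^2-s\}$ is precisely the locus where a different bipartite graph yields the same lattice-like spectrum.

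The main obstacle is the third step, namely the case-by-case exclusion of exceptional cospectral mates. Within the regular setting, verifying that the Chang graphs, the Shrikhande graph, and the specific $\{2s^2+s, 2s^2-s\}$-family are the \emph{only} obstructions requires a careful enumeration guided by the root-system classification, and for part~\ref{Item 4: DS line graphs} one must control the irregular generalized line graphs more delicately, which is why the bound $m+n \geq 19$ appears: below it, additional sporadic cospectral pairs can occur and must be inspected by hand. I expect this enumeration, rather than the spectral bookkeeping, to be the crux of the work.
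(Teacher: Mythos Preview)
The paper does not supply a proof of this theorem: it is a survey result, stated with pointers to \cite{CvetkovicRS2010} for each item, so there is no in-paper argument to compare against beyond those citations. Your overall strategy --- smallest eigenvalue $\geq -2$, the Cameron--Goethals--Seidel--Shult classification, then uniqueness of the relevant strongly regular graphs --- is indeed the classical route for Items~\ref{Item 1: DS line graphs} and~\ref{Item 2: DS line graphs}, and your identification of the Chang and Shrikhande obstructions is on target.

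There are, however, two concrete errors in the later items. For Item~\ref{Item 3: DS line graphs}, the graph $\CGr{\CG{6}}$ is the triangular prism $\CoG{3} \, \square \, \CoG{2}$, a $3$-regular graph on $6$ vertices with $\A$-spectrum $\{3,1,0^2,(-2)^2\}$; consequently $\ell(\CGr{\CG{6}})$ has $\A$-spectrum $\{4,2,1^2,(-1)^2,(-2)^3\}$ with five distinct eigenvalues, so it is \emph{not} strongly regular and cannot be handled by an SRG-uniqueness argument. For Item~\ref{Item 4: DS line graphs}, your claim that $\ell(\CoBG{m}{n})$ is irregular when $m \neq n$ is false: every edge of $\CoBG{m}{n}$ meets exactly $(m-1)+(n-1)$ other edges, so $\ell(\CoBG{m}{n})$ is $(m+n-2)$-regular for all $m,n$. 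Hence the ``degree-sequence from traces'' step is both unnecessary and (as stated) unjustified; the actual analysis proceeds within the class of regular graphs with least eigenvalue $\geq -2$, and the excluded parameters $\{2s^2+s,\,2s^2-s\}$ arise not from a different complete bipartite graph but from a cospectral exceptional graph in the $E_8$-root-system family. The bound $m+n \geq 19$ is there to push the number of vertices of $\ell(\CoBG{m}{n})$ beyond the range where such exceptional graphs can occur (apart from the parametric family just mentioned).
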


\begin{remark}
\label{remark: triangular graphs}
In regard to Item~\ref{Item 1: DS line graphs} of Theorem~\ref{theorem: DS line graphs},
the line graphs of complete graphs are referred to as triangular graphs. These
are strongly regular graphs with the parameters $\srg{\tfrac12 k(k-1)}{2(k-2)}{k-2}{4}$,
where $k \geq 4$. For $k=8$, the corresponding triangular graph is cospectral and nonisomorphic
to the three Chang graphs (see Remark~\ref{remark: NICS SRGs}), which are strongly regular graphs
$\srg{28}{12}{6}{4}$.
\end{remark}

We next prove the following result in regard to the Petersen graph, which appears in \cite[Problem~4.3]{CvetkovicRS2010}
and \cite[Section~10.3]{BrouwerM22} (without a proof).
\begin{corollary}
\label{corollary: Petersen graph is DS}
The Petersen graph is DS.
\end{corollary}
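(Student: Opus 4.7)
The plan is to exhibit the Petersen graph as the complement of a regular DS graph that has already been identified in the survey, and then invoke the preservation of the DS property under complementation for regular graphs.

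First, I would use the classical identification of the Petersen graph with $\CGr{\ell(\CoG{5})}$, the complement of the line graph of $\CoG{5}$ (the triangular graph on $\binom{5}{2}=10$ vertices). In the Kneser realization of the Petersen graph, vertices are the $2$-subsets of $\{1,\ldots,5\}$ and two vertices are adjacent if and only if the corresponding subsets are disjoint; on the other hand, two vertices of $\ell(\CoG{5})$, viewed as $2$-subsets of $\{1,\ldots,5\}$, are adjacent if and only if those subsets share exactly one element. These adjacency rules are exact complements on the same vertex set, so the two graphs are complementary. Consequently, the Petersen graph equals $\CGr{\ell(\CoG{5})}$.

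Next, I would apply Item~\ref{Item 1: DS line graphs} of Theorem~\ref{theorem: DS line graphs} with $k=5$. Since $5\geq 4$ and $5\neq 8$, the triangular graph $\ell(\CoG{5})$ is DS. Moreover, $\ell(\CoG{5})$ is regular: every edge of $\CoG{5}$ is incident to exactly $2(5-2)=6$ other edges, so $\ell(\CoG{5})$ is $6$-regular (in fact strongly regular with parameters $\srg{10}{6}{3}{4}$, although this level of detail is not needed for the argument).

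Finally, I would invoke Item~\ref{item 5: DS graphs} of Theorem~\ref{theorem: special classes of DS graphs}, which states that the complement of a DS regular graph is DS. Applying this to $\ell(\CoG{5})$ yields that $\CGr{\ell(\CoG{5})}$, namely the Petersen graph, is DS. No explicit spectral calculation is required for the proof itself; the only nontrivial step is the identification of the Petersen graph with $\CGr{\ell(\CoG{5})}$, and this is where I expect the main (very mild) obstacle to lie, since the rest reduces to a direct appeal to results already recorded in the survey.
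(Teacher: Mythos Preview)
Your proposal is correct and follows essentially the same approach as the paper: identify the Petersen graph with $\CGr{\ell(\CoG{5})}$, invoke Item~\ref{Item 1: DS line graphs} of Theorem~\ref{theorem: DS line graphs} to conclude that $\ell(\CoG{5})$ is DS, observe that $\ell(\CoG{5})$ is $6$-regular, and then apply Item~\ref{item 5: DS graphs} of Theorem~\ref{theorem: special classes of DS graphs}. Your justification of the isomorphism via the Kneser realization is slightly more explicit than the paper's, but the argument is otherwise identical.
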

\begin{proof}
The Petersen graph is known to be isomorphic to the complement of the line graph of the complete graph $\CoG{5}$ (i.e., it
is isomorphic to $\CGr{\ell(\CoG{5})}$. By Item~\ref{Item 1: DS line graphs} of Theorem~\ref{theorem: DS line graphs}, the line
graph $\ell(\CoG{5})$ is DS. It is also a 6-regular graph (as the line graph of a $d$-regular graph is $(2d-2)$-regular, and
$\CoG{5}$ is a 4-regular graph). Consequently, by Item~\ref{item 5: DS graphs} of Theorem~\ref{theorem: special classes of DS graphs},
the complement of $\ell(\CoG{5})$ is also DS.
\end{proof}

The following definition and theorem provide further elaboration on Item~\ref{Item 2: DS line graphs}
of Theorem~\ref{theorem: DS line graphs}.
\begin{definition} \cite[Section~1.1.8]{BrouwerM22}
\label{definition: Lattice graphs}
The {\em Hamming graph} $\mathrm{H}(2,q)$, where $q \geq 2$, has the vertex set $\OneTo{q} \times \OneTo{q}$, and
any two vertices are adjacent if and only if they differ in one coordinate (i.e., their Hamming distance
is equal to~1). These are also referred to {\em lattice graphs}, and denoted by $\mathrm{L}_2(q)$. The
Lattice graph $\mathrm{L}_2(q)$, where $q \geq 2$, is also the line graph of the complete bipartite
graph $\CoBG{q}{q}$, and it is a strongly regular graph with parameters $\srg{q^2}{2(q-1)}{q-2}{2}$.
\end{definition}

\begin{theorem} \cite{Shrikhande59}
\label{theorem: DS Lattice graphs}
The lattice graph $\mathrm{L}_2(q)$ is a strongly regular DS graph for all $q \neq 4$. For $q=4$, the graph $\mathrm{L}_2(4)$
is not DS since it is cospectral and nonisomorphic to the Shrikhande graph, which are nonisomorphic strongly regular graphs
with the common parameters $\srg{16}{6}{4}{2}$.
\end{theorem}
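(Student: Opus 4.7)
The plan splits along the two assertions. For $q \neq 4$, the identification $\mathrm{L}_2(q) = \ell(\CoBG{q}{q})$ built into Definition~\ref{definition: Lattice graphs}, together with Item~\ref{Item 2: DS line graphs} of Theorem~\ref{theorem: DS line graphs} (which covers precisely the range $k \geq 2$, $k \neq 4$), gives the DS conclusion immediately. I would dispose of this half in a single sentence.

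For $q = 4$, the strategy is to exhibit the Shrikhande graph as a Cayley graph $\Gr{S}$ on $\integers_4 \times \integers_4$ with connection set $S = \{\pm(1,0), \, \pm(0,1), \, \pm(1,1)\}$, and then show (i) that $\Gr{S}$ belongs to $\srg{16}{6}{4}{2}$, and (ii) that $\Gr{S}$ is not isomorphic to $\mathrm{L}_2(4)$. For (i), the six-element set $S$ gives 6-regularity on $16$ vertices, and the translation invariance of a Cayley graph reduces the verification of the strong regularity parameters to counting, at the identity, common neighbors of one vertex in $S$ (yielding $\lambda = 4$) and of one vertex outside $\{0\} \cup S$ (yielding $\mu = 2$). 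Once (i) is in place, Corollary~\ref{corollary: cospectral SRGs} automatically delivers $\A$-cospectrality of $\Gr{S}$ with $\mathrm{L}_2(4)$, since both lie in $\srg{16}{6}{4}{2}$.

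For (ii), my approach is to compare the subgraph induced on the neighborhood of a vertex. In $\mathrm{L}_2(4) = \ell(\CoBG{4}{4})$, the six neighbors of $(i,j)$ split into the three other vertices in its row and the three other vertices in its column; each triple induces $\CoG{3}$ and no edges run between the two triples, so the neighborhood subgraph is $2 \, \CoG{3}$. In $\Gr{S}$, a direct walk through the differences inside $S$ shows that the six neighbors of the identity $(0,0)$ form the single 6-cycle
\begin{align*}
(1,0) \,\text{—}\, (1,1) \,\text{—}\, (0,1) \,\text{—}\, (3,0) \,\text{—}\, (3,3) \,\text{—}\, (0,3) \,\text{—}\, (1,0),
\end{align*}
so the neighborhood subgraph is $\CG{6}$. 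Since $\CG{6}$ is triangle-free while $2 \, \CoG{3}$ contains triangles, the two local graphs are nonisomorphic, so no global isomorphism $\mathrm{L}_2(4) \to \Gr{S}$ can exist.

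The main obstacle is the $q=4$ case, and specifically the verification that the Cayley construction actually produces the parameter vector $(16,6,4,2)$ rather than something else; this is a finite but easy-to-slip combinatorial count, slightly streamlined by the Cayley structure. Once this is done, the neighborhood comparison $\CG{6}$ versus $2 \, \CoG{3}$ is immediate and completes the nonisomorphism part.
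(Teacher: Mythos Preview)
The paper does not supply its own proof of this theorem; it is stated with attribution to \cite{Shrikhande59}, and the surrounding text explicitly frames Definition~\ref{definition: Lattice graphs} and Theorem~\ref{theorem: DS Lattice graphs} as ``further elaboration on Item~\ref{Item 2: DS line graphs} of Theorem~\ref{theorem: DS line graphs}.'' So for the $q\neq 4$ half your one-line reduction to Theorem~\ref{theorem: DS line graphs} is exactly the connection the paper intends, and there is nothing further to compare.

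For the $q=4$ half your overall strategy is sound and standard, but your parameter count is wrong, and in fact the theorem statement itself carries a typo: the common parameter vector is $\srg{16}{6}{2}{2}$, not $\srg{16}{6}{4}{2}$. (Definition~\ref{definition: Lattice graphs} gives $\lambda=q-2=2$ for $q=4$, and Example~\ref{example: NICS graphs by Seidel switching} later in the paper records the Shrikhande graph as $\srg{16}{6}{2}{2}$.) If you actually carry out your proposed count of common neighbors of $(0,0)$ and $(1,0)$ in the Cayley graph on $\integers_4\times\integers_4$ with $S=\{\pm(1,0),\pm(0,1),\pm(1,1)\}$, you will find exactly two such vertices, namely $(1,1)$ and $(0,3)$, so $\lambda=2$. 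Your own neighborhood computation already confirms this: you correctly show that the six neighbors of the identity induce $\CG{6}$, a $2$-regular graph, which forces every neighbor of $(0,0)$ to have precisely two common neighbors with it --- i.e., $\lambda=2$. The analogous observation for $\mathrm{L}_2(4)$ (neighborhood $2\,\CoG{3}$, also $2$-regular) gives the same $\lambda$. So the internal inconsistency between your asserted ``$\lambda=4$'' and your local-structure calculation should have flagged the slip.

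Once the parameters are corrected to $(16,6,2,2)$, everything in your plan goes through unchanged: Corollary~\ref{corollary: cospectral SRGs} gives cospectrality, and the contrast $\CG{6}$ versus $2\,\CoG{3}$ (one triangle-free, the other not) cleanly establishes nonisomorphism.
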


The following result provides an interesting connection between $\A$ and $\Q$-cospectralities of graphs.
\begin{theorem} \cite[Proposition~7.8.5]{CvetkovicRS2010}
If two graphs are $\Q$-cospectral, then their line graphs are $\A$-cospectral.
\end{theorem}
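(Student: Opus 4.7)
The plan is to exploit the factorization $\Q(\Gr{G})=\mathbf{B}\mathbf{B}^{\mathrm{T}}$ given in Item~\ref{Item 1: signless Laplacian matrix of a graph} of Theorem~\ref{theorem: On the signless Laplacian matrix of a graph}, together with the companion product $\mathbf{B}^{\mathrm{T}}\mathbf{B}$, which I will identify with a translate of the adjacency matrix of the line graph. The driving linear-algebra fact is that for any real rectangular matrix $\mathbf{B}\in\Reals^{n\times m}$, the two Gram matrices $\mathbf{B}\mathbf{B}^{\mathrm{T}}$ and $\mathbf{B}^{\mathrm{T}}\mathbf{B}$ share the same multiset of nonzero eigenvalues (with multiplicities).

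First I would fix graphs $\Gr{G}_1$ and $\Gr{G}_2$ that are $\Q$-cospectral, and extract the basic invariants forced by this assumption: they have the same number $n$ of vertices (the order of the matrix $\Q$), and, by Item~\ref{Item 4: signless Laplacian matrix of a graph} of Theorem~\ref{theorem: On the signless Laplacian matrix of a graph}, the same number $m$ of edges. Let $\mathbf{B}_i=\mathbf{B}(\Gr{G}_i)\in\Reals^{n\times m}$ denote the incidence matrices, so that $\Q(\Gr{G}_i)=\mathbf{B}_i\mathbf{B}_i^{\mathrm{T}}$.

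Next I would compute $\mathbf{B}_i^{\mathrm{T}}\mathbf{B}_i$ entrywise: the $(j,k)$ entry counts vertices incident to both edges $e_j$ and $e_k$ of $\Gr{G}_i$, which equals $2$ if $j=k$, equals $1$ if $e_j$ and $e_k$ share a vertex (so the corresponding vertices of $\ell(\Gr{G}_i)$ are adjacent), and equals $0$ otherwise. Hence
\begin{align}
\mathbf{B}_i^{\mathrm{T}}\mathbf{B}_i = 2\I{m} + \A\bigl(\ell(\Gr{G}_i)\bigr), \qquad i\in\{1,2\}.
\end{align}

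Finally I would assemble the pieces. By $\Q$-cospectrality, $\mathbf{B}_1\mathbf{B}_1^{\mathrm{T}}$ and $\mathbf{B}_2\mathbf{B}_2^{\mathrm{T}}$ have identical spectra; by the Gram-swap fact, $\mathbf{B}_1^{\mathrm{T}}\mathbf{B}_1$ and $\mathbf{B}_2^{\mathrm{T}}\mathbf{B}_2$ then agree on all nonzero eigenvalues with their multiplicities. These two matrices are both $m\times m$ (with the same $m$), so the multiplicity of $0$ — being $m$ minus the common rank of $\mathbf{B}_i\mathbf{B}_i^{\mathrm{T}}$ and $\mathbf{B}_i^{\mathrm{T}}\mathbf{B}_i$ — also agrees. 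Therefore $\mathbf{B}_1^{\mathrm{T}}\mathbf{B}_1$ and $\mathbf{B}_2^{\mathrm{T}}\mathbf{B}_2$ are cospectral, and subtracting $2\I{m}$ shifts every eigenvalue by the same amount, so $\A(\ell(\Gr{G}_1))$ and $\A(\ell(\Gr{G}_2))$ are cospectral as well. The only subtlety worth flagging is the bookkeeping of the zero eigenvalue: one must invoke equality of the number of edges to ensure both Gram matrices $\mathbf{B}_i^{\mathrm{T}}\mathbf{B}_i$ have the same size, so that nonzero-eigenvalue equality promotes to full spectral equality.
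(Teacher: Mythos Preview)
Your proof is correct and follows the standard argument. The paper itself does not supply a proof of this theorem; it merely cites \cite[Proposition~7.8.5]{CvetkovicRS2010}, where the same incidence-matrix approach is used: one writes $\Q(\Gr{G})=\mathbf{B}\mathbf{B}^{\mathrm{T}}$ and $\mathbf{B}^{\mathrm{T}}\mathbf{B}=2\I{m}+\A(\ell(\Gr{G}))$, and then invokes the fact that $\mathbf{B}\mathbf{B}^{\mathrm{T}}$ and $\mathbf{B}^{\mathrm{T}}\mathbf{B}$ share their nonzero eigenvalues. Your explicit attention to the zero-eigenvalue bookkeeping via the equality $\bigcard{\E{\Gr{G}_1}}=\bigcard{\E{\Gr{G}_2}}$ is exactly the point that makes the argument airtight.
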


\subsection{Nice graphs}
\label{subsection: Nice graphs}

The family referred to as "nice graphs" has been recently introduced in \cite{KovalK2024}.
\begin{definition}
\label{definition: nice graphs}
\noindent
\begin{itemize}
\item A graph $\Gr{G}$ is \emph{sunlike} if it is connected, and can be obtained from a cycle
$\mathrm{C}$ by adding some vertices and connecting each of them to some vertex in $\mathrm{C}$.
\item Let $l,k\in \naturals$. A sunlike graph $\Gr{G}$ is \emph{$(l,k)$-nice} if it can be obtained by a cycle $\CG{\ell}$ and
\begin{itemize}
\item There is a single vertex $v_1 \in \V{\CG{\ell}}$ of degree $3$.
\item There are $k$ vertices $u_1, \ldots, u_k \in \V{\CG{\ell}}$ of degree $4$. Let $\set{U} = \{u_1, \ldots, u_k\}$.
\item By starting a walk on $\CG{\ell}$ from $v_1$ at some orientation, then after 4 or 6 steps we get to a vertex $u_1 \in \set{U}$.
Then, after another $4$ or $6$ steps from $u_1$ we get to $u_2 \in \set{U}$, and so on until we get to the vertex $u_k \in \set{U}$.
\end{itemize}
\end{itemize}
\end{definition}

\begin{theorem} \cite{KovalK2024}
\label{theorem: Koval and Kwan, 2024}
Let $l,k \in \naturals$ such that $l \equiv 2 \, (\text{mod } 4)$. Let $\Gr{G}$ be an $(l,k)$-nice graph.
If the order of $\Gr{G}$ is a prime number greater than some $n_0 \in \naturals$, then the line graph
$\ell(\Gr{G})$ is DS.
\end{theorem}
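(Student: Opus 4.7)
The plan is to show that if $\Gr{H}$ is any graph cospectral with $\ell(\Gr{G})$, then $\Gr{H} \cong \ell(\Gr{G})$, and then invoke Whitney's theorem. First I would use the fact that every eigenvalue of the adjacency matrix of a line graph is at least $-2$; in particular, the smallest eigenvalue of $\A(\Gr{H})$ is at least $-2$. By the Cameron--Goethals--Seidel--Shult classification of graphs whose smallest eigenvalue is at least $-2$, the graph $\Gr{H}$ is either a generalized line graph or belongs to a finite family of exceptional graphs on at most $36$ vertices. For $n_0$ large enough, the number of vertices of $\ell(\Gr{G})$ equals $\bigcard{\E{\Gr{G}}} = l + k + $ (pendant edges), which exceeds $36$, ruling out the exceptional case. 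A proper generalized line graph contributes additional multiplicity of $-2$ as an eigenvalue; computing the multiplicity of $-2$ in $\sigma_{\A}(\ell(\Gr{G}))$ from the specified $(l,k)$-nice structure should match only a genuine line graph, so $\Gr{H} = \ell(\Gr{G}')$ for some graph $\Gr{G}'$.

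Next I would extract spectral invariants of $\Gr{G}'$. Cospectrality of $\ell(\Gr{G}')$ and $\ell(\Gr{G})$ yields $\bigcard{\E{\Gr{G}'}} = \bigcard{\E{\Gr{G}}}$ (from equal orders of the line graphs), the equality $\sum_v \binom{d_v(\Gr{G}')}{2} = \sum_v \binom{d_v(\Gr{G})}{2}$ (from equal edge counts in the line graphs, by Corollary~\ref{corollary: number of edges and triangles in a graph}), and, via the trace of higher powers, equalities of the number of closed walks in $\ell(\Gr{G}')$ of length $3, 4, \ldots$, which translate into combinatorial identities for triangles, cherries, and longer local configurations in $\Gr{G}'$. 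These equalities pin down the degree sequence of $\Gr{G}'$: exactly one vertex of degree $3$, exactly $k$ vertices of degree $4$, the remaining cycle-type vertices of degree $2$, and the same total number of degree-$1$ pendants as in $\Gr{G}$. Connectedness of $\Gr{G}'$ follows because $\ell(\Gr{G})$ is connected (the cycle forces this) and the line graph of a disconnected graph is disconnected.

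The heart of the argument is then to show that $\Gr{G}'$ is necessarily an $(l,k)$-nice graph isomorphic to $\Gr{G}$. With the degree sequence fixed, one argues that the cycle in $\Gr{G}'$ must have the same length $l$ (using that $\Gr{G}'$ contains a unique cycle, whose length is controlled by spectral moments counting closed walks restricted to cycle edges). What remains is to determine the placement of the single degree-$3$ vertex and the $k$ degree-$4$ vertices along the cycle, subject to the rule that consecutive special vertices are at cyclic distance $4$ or $6$. Here the arithmetic hypotheses enter decisively: the primality of $\bigcard{\V{\Gr{G}}} = l + 1 + 2k$ forces the linear diophantine relations arising from the closed-walk counts around the cycle to admit essentially a unique solution, while the congruence $l \equiv 2 \pmod 4$ rules out symmetric alternative placements by a parity argument on short even closed walks.

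Once $\Gr{G}' \cong \Gr{G}$ is established, Whitney's theorem applies: connected graphs on more than four vertices with isomorphic line graphs are themselves isomorphic (the only exception being the pair $\{\CoG{3}, \SG{3}\}$, which is excluded by the degree sequence derived above). Hence $\ell(\Gr{G}') \cong \ell(\Gr{G})$, i.e., $\Gr{H} \cong \ell(\Gr{G})$, establishing that $\ell(\Gr{G})$ is DS. The main obstacle I anticipate is the third paragraph: converting spectral moment identities into sufficiently rigid combinatorial constraints on the positions of the special vertices along the cycle, and then using the primality and mod-$4$ hypotheses in the precise way required to eliminate every alternative placement. This is the step where the hypotheses of the theorem do genuine work, and I expect it will involve a careful analysis of small even closed walks through the attached pendants, combined with a number-theoretic argument that fails without the primality assumption.
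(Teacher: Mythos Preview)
The paper does not contain a proof of this theorem. It is a survey paper, and Theorem~\ref{theorem: Koval and Kwan, 2024} is merely quoted from \cite{KovalK2024} with no accompanying argument; the surrounding text only summarizes the significance of the result (the exponential lower bound on the number of DS graphs) and refers the reader to \cite{KovalK2024} for details. There is therefore no ``paper's own proof'' to compare your attempt against.

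As for the sketch itself: the opening moves are the standard ones and are indeed how \cite{KovalK2024} begins---invoking the $\lambda_{\min}\ge -2$ bound and the Cameron--Goethals--Seidel--Shult classification to force any cospectral mate to be a (generalized) line graph, then reducing to a comparison of root graphs. But your third paragraph, which you rightly flag as the crux, is not yet a proof. Spectral moments of $\ell(\Gr{G})$ do not straightforwardly ``pin down the degree sequence of $\Gr{G}'$'' or the cycle length, and the assertion that primality of $\bigcard{\V{\Gr{G}}}$ forces uniqueness of the placement via ``linear diophantine relations'' is a hope rather than an argument. In \cite{KovalK2024} the actual work at this stage is substantial and does not proceed via closed-walk counts in the way you suggest; the primality and the $l\equiv 2\pmod 4$ hypotheses enter through a delicate analysis specific to their construction. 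Your outline captures the right skeleton but leaves the load-bearing step entirely unspecified.
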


A more general class of graphs is introduced in \cite{KovalK2024}, where it is shown that for every sufficiently
large $n \in \naturals$, the number of nonisomorphic $n$-vertex DS graphs is at least $e^{cn}$ for some positive
constant $c$ (see \cite[Theorem~1.4]{KovalK2024}). This recent result represents a significant advancement in the
study of Haemers' conjecture because the earlier lower bounds on the number of nonisomorphic $n$-vertex DS graphs
were all of the form of $e^{c \sqrt{n}}$, for some positive constant $c$. As noted in \cite{KovalK2024}, the first
form of such a lower bound was derived by van Dam and Haemers \cite[Proposition~6]{vanDamH09}, who proved that a
graph $\Gr{G}$ is DS if every connected component of $\Gr{G}$ is a complete subgraph, leading to a lower bound that
is approximately of the form $e^{c \sqrt{n}}$ with $c = \sqrt{\tfrac{2}{3}} \, \pi$.
Therefore, the transition to a lower bound in \cite{KovalK2024} that scales exponentially with $n$, rather than with
$\sqrt{n}$, is both remarkable and noteworthy.

\subsection{Friendship graphs and their generalization}
\label{subsection: The friendship graphs}

The next theorem considers whether friendship graphs (see Definition~\ref{definition: friendship graph})
can be uniquely determined by the spectra of four of their associated matrices.

\begin{theorem}
\label{theorem: friendship graphs are X-DS}
The friendship graph $\FG{p}$ satisfies the following properties:
It is DS if and only if $p \ne 16$ (i.e., the friendship graph is DS unless
it has 16~triangles) \cite{CioabaHVW2015}, $\LM$-DS \cite{LinSM2010},
$\Q$-DS \cite{WangBHB2010}, and ${\bf{\mathcal{L}}}$-DS \cite{BermanCCLZ2018}.
\end{theorem}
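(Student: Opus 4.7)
The plan is to treat the four statements in a uniform way by exploiting the join decomposition $\FG{p} = \CoG{1} \vee (p\,\CoG{2})$, deriving closed-form spectra for each of the four associated matrices, and then, for any graph $\Gr{H}$ cospectral to $\FG{p}$ under the relevant matrix, extracting enough structural information from the spectrum alone to force $\Gr{H} \cong \FG{p}$ (with the sporadic exception at $p=16$ in the adjacency case).

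For the $\A$-spectrum I would apply Lemma~\ref{lemma: Join-A-Spec} to $\CoG{1} \vee (p\,\CoG{2})$, obtaining the eigenvalues $\pm 1$ with explicit multiplicities $p-1$ and $p$ together with the two roots of a quadratic that encode the dominating central vertex. Given an $\A$-cospectral $\Gr{H}$, Corollary~\ref{corollary: number of edges and triangles in a graph} yields $3p$ edges and $p$ triangles in $\Gr{H}$, and Item~\ref{Item 3: TFAE bipartite graphs} of Theorem~\ref{theorem: equivalences for bipartite graphs} rules out bipartiteness. The decisive step is to show that $\Gr{H}$ contains a vertex adjacent to all others; once such a dominating vertex is produced, the residual spectrum of the subgraph obtained by deleting it forces the remainder to be $p\,\CoG{2}$, so that $\Gr{H} \cong \FG{p}$. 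For $p=16$ this dominating-vertex argument fails, and one must exhibit the explicit cospectral mate constructed by Cioab\u{a} et al.\ to witness the failure.

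For the Laplacian case I would apply the corresponding join formula to obtain $\sigma_{\LM}(\FG{p}) = \{0, [1]^{p-1}, [3]^p, 2p+1\}$. A cospectral $\Gr{H}$ inherits $2p+1$ vertices, $3p$ edges (Item~\ref{Item 3: Laplacian matrix of a graph} of Theorem~\ref{theorem: On the Laplacian matrix of a graph}), connectedness from the simple multiplicity of $0$ (Item~\ref{Item 2: Laplacian matrix of a graph}), and the same spanning-tree count via Theorem~\ref{theorem: number of spanning trees}. The largest eigenvalue being equal to the order of $\Gr{H}$ forces a dominating vertex; removing it leaves a graph of order $2p$ with Laplacian spectrum $\{[0]^p, [2]^p\}$, which is rigidly $p\,\CoG{2}$. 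For the signless Laplacian a parallel computation yields $\sigma_{\Q}(\FG{p})$, and the proof proceeds analogously using Items~\ref{Item 3: signless Laplacian matrix of a graph} and~\ref{Item 4: signless Laplacian matrix of a graph} of Theorem~\ref{theorem: On the signless Laplacian matrix of a graph} together with the identities $\mathrm{tr}(\Q) = \sum_v d(v)$ and $\mathrm{tr}(\Q^2) = \sum_v d(v)^2 + 2\bigcard{\E{\Gr{H}}}$, which pin down the degree sequence to one vertex of degree $2p$ and $2p$ vertices of degree~$2$. For the normalized Laplacian I would follow Berman et al., working inside the broader family $\mathrm{F}_{p,q} = \CoG{1} \vee (p\,\CoG{q})$ and specializing to $q=2$; Items~\ref{Item 2: normalized Laplacian matrix of a graph}--\ref{Item 4: normalized Laplacian matrix of a graph} of Theorem~\ref{theorem: On the normalized Laplacian matrix of a graph} recover the number of components, the absence of bipartite components, and the absence of isolated vertices, and the remaining reconstruction proceeds via careful analysis of the multiplicities of the ${\bf{\mathcal{L}}}$-eigenvalues arising from the twin leaves of $\FG{p}$.

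The recurring obstacle is the structural reconstruction of $\Gr{H}$ from spectral invariants, and it takes a different form for each of the four matrices: for $\A$ it is the production of a dominating vertex, with genuine failure at $p=16$ requiring an explicit cospectral mate; for $\LM$ and $\Q$ it is the passage from moment data on the degree sequence to a rigid combinatorial decomposition; and for ${\bf{\mathcal{L}}}$, where the edge count is not directly recoverable from the spectrum, it is the need to extract structural information purely from eigenvalue multiplicities. I expect the adjacency case to be the most delicate, both because of the sporadic counterexample at $p=16$ and because the dominating-vertex argument must be tight enough to break at precisely that value of $p$.
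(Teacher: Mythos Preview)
The paper does not prove this theorem. It is stated as a compilation of results from the four cited references, and the only surrounding text is the observation $\FG{p}=\CoG{1}\vee(p\,\CoG{2})$, which then segues into the generalized friendship graphs. There is therefore no argument in the paper to compare your proposal against.

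Your outline is nonetheless broadly consonant with how the cited papers proceed. One sharpening worth flagging in the Laplacian case: the equality $\mu_{\max}(\Gr{H})=\card{\V{\Gr{H}}}$ by itself only forces $\Gr{H}$ to be a nontrivial join (equivalently, $\CGr{H}$ disconnected), not the existence of a dominating vertex. The dominating vertex is then forced by the presence of the eigenvalue~$1$ in the spectrum, since in any join $\Gr{H}_1\vee\Gr{H}_2$ every Laplacian eigenvalue other than $0$ and $n$ is at least $\min\bigl(\card{\V{\Gr{H}_1}},\card{\V{\Gr{H}_2}}\bigr)$. You should also be aware that the adjacency case in \cite{CioabaHVW2015} is substantially more intricate than ``produce a dominating vertex'': the argument there classifies all graphs whose adjacency eigenvalues lie in $\{\pm 1\}$ with at most two exceptions, and the $p=16$ mate arises from that classification rather than from a local breakdown of a dominating-vertex lemma.
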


The friendship graph $\FG{p}$, where $p \in \naturals$, can be expressed in the
form $\FG{p}=\CoG{1} \vee (p\CoG{2})$ (see Figure~\ref{fig:friendship graph F4}).
The last observation follows from a property of a generalized friendship graph, which is defined as follows.
\begin{definition}
Let $p,q\in \naturals$. The \emph{generalized friendship graph} is given by
$\GFG{p}{q} = \CoG{1} \vee (p \CoG{q})$. Note that $\GFG{p}{2} = \FG{p}$.
\end{definition}

The following theorem addresses the conditions under which generalized friendship graphs can be uniquely
determined by the spectra of their normalized Laplacian matrix.
\begin{theorem}
\label{theorem: Berman's generalized friendship graph}
The generalized friendship graph $\GFG{p}{q}$ is ${\bf{\mathcal{L}}}$-DS if and only if $q \ge 2$, or $q=1$
and $p=2$ \cite{BermanCCLZ2018}.
\end{theorem}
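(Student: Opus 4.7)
The plan is to split the proof into the ``only if'' and ``if'' directions. For the ``only if'' direction, I would first note that $\GFG{p}{1} = \CoG{1} \vee \, \CGr{\CoG{p}} = \SG{p+1} = \CoBG{1}{p}$, and verify (by a short computation generalizing the $\CoBG{2}{3}$ example in the preliminaries) that every complete bipartite graph $\CoBG{a}{b}$ has $\mathcal{L}$-spectrum $\{0, [1]^{a+b-2}, 2\}$. Hence for $p \geq 3$ the graphs $\CoBG{1}{p} = \SG{p+1}$ and $\CoBG{2}{p-1}$ share the $\mathcal{L}$-spectrum $\{0, [1]^{p-1}, 2\}$ but differ in their degree sequences, showing that $\GFG{p}{1}$ is not $\mathcal{L}$-DS in that range. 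The remaining small case $q = 1$, $p = 2$ gives $\PathG{3}$, whose $\mathcal{L}$-spectrum $\{0, 1, 2\}$ is unique among the four graphs on three vertices by direct enumeration, confirming the claim in that case.

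For the ``if'' direction with $q \geq 2$, I will first compute $\sigma_{\mathcal{L}}(\GFG{p}{q})$ in closed form. I will exploit the symmetry of $\GFG{p}{q}$ (a central vertex $v_0$ of degree $pq$, and $p$ identical petals, each a clique $\CoG{q}$ with vertices of degree $q$) and decompose $\Reals^{pq+1}$ as the direct sum of (i) the $p(q-1)$-dimensional subspace of vectors vanishing at $v_0$ and summing to zero within each petal, on which $\mathcal{L}$ acts as $(1 + \tfrac{1}{q}) I$, and (ii) its orthogonal complement, a $(p+1)$-dimensional subspace spanned by the normalized indicators of $v_0$ and of each petal. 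Writing $\mathcal{L}$ in an orthonormal basis of the latter subspace yields a symmetric matrix whose eigenvalues can be computed explicitly as $0$, $\tfrac{1}{q}$ (with multiplicity $p-1$), and $1 + \tfrac{1}{q}$ (simple). Combining,
\begin{equation}
\sigma_{\mathcal{L}}\bigl(\GFG{p}{q}\bigr) \; = \; \Bigl\{0, \, \bigl[\tfrac{1}{q}\bigr]^{p-1}, \, \bigl[1 + \tfrac{1}{q}\bigr]^{pq-p+1}\Bigr\}.
\end{equation}

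Now let $\Gr{H}$ be a graph $\mathcal{L}$-cospectral with $\GFG{p}{q}$. From Theorem~\ref{theorem: On the normalized Laplacian matrix of a graph} I will extract that $\bigcard{\V{\Gr{H}}} = pq + 1$, that $\Gr{H}$ is connected (the multiplicity of $0$ is $1$), that $\Gr{H}$ is not bipartite (its largest eigenvalue $1 + \tfrac{1}{q}$ is strictly less than $2$ for $q \geq 2$), and that $\Gr{H}$ has no isolated vertex (since $\sum_i \lambda_i = pq + 1$ equals the order). A further cospectral invariant is
\begin{equation}
\operatorname{tr}\bigl(\mathcal{L}(\Gr{H})^2\bigr) \; = \; (pq+1) \, + \, 2 \sum_{\{u,v\}\in \E{\Gr{H}}} \frac{1}{d(u) \, d(v)},
\end{equation}
which restricts the joint distribution of edge-endpoint degrees, while Cauchy interlacing (Theorem~\ref{thm:interlacing}) applied to principal submatrices of $\mathcal{L}(\Gr{H})$ confines the spectra of induced subgraphs to lie between the three distinct eigenvalues of $\mathcal{L}(\Gr{H})$.

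The final step, which I anticipate as the principal obstacle, is to reconstruct $\Gr{H}$ up to isomorphism from these data. My plan is first to identify a universal vertex $v_0 \in \V{\Gr{H}}$ of degree $pq$, by combining the extremal value $1 + \tfrac{1}{q}$ of the largest eigenvalue with the $\operatorname{tr}(\mathcal{L}^2)$ identity above, and then to analyze the induced subgraph $\Gr{H} - v_0$ and show, using the multiplicity $p - 1$ of $\tfrac{1}{q}$, that it decomposes into exactly $p$ connected components, each forced to be $\CoG{q}$ by the multiplicity $pq - p + 1$ of the large eigenvalue. The truly delicate part is excluding ``mixed'' alternatives---graphs on $pq + 1$ vertices with a universal vertex whose non-central induced subgraph consists of $p$ components of varied orders that together reproduce the same spectrum. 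Ruling these out requires the careful extremal combinatorial analysis carried out in \cite{BermanCCLZ2018}, which combines the spectral invariants above with degree-sequence constraints and interlacing inequalities.
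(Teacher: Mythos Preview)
The paper does not supply its own proof of this theorem; it is stated with a citation to \cite{BermanCCLZ2018} and no argument is given in the text. Your proposal therefore goes considerably further than the paper does: your ``only if'' argument (that $\GFG{p}{1} = \CoBG{1}{p}$ is $\mathcal{L}$-cospectral with $\CoBG{2}{p-1}$ for $p \geq 3$, since every $\CoBG{a}{b}$ has $\mathcal{L}$-spectrum $\{0,[1]^{a+b-2},2\}$) is correct and complete, and your closed-form computation of $\sigma_{\mathcal{L}}(\GFG{p}{q})$ via the petal-symmetric decomposition of $\Reals^{pq+1}$ is also correct.

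For the ``if'' direction you correctly identify the hard step---reconstructing $\Gr{H}$ from the three-point spectrum---and explicitly defer to \cite{BermanCCLZ2018} for the decisive extremal analysis, so your proposal is really an outline plus a pointer rather than a self-contained proof. One caution on a detail: your appeal to Cauchy interlacing for ``spectra of induced subgraphs'' is not quite right as phrased, because a principal submatrix of $\mathcal{L}(\Gr{H})$ is \emph{not} the normalized Laplacian of the corresponding induced subgraph (the vertex degrees change upon deletion). Interlacing still applies to principal submatrices of $\mathcal{L}(\Gr{H})$ as abstract symmetric matrices, but translating those eigenvalue bounds into structural information about induced subgraphs of $\Gr{H}$ requires an additional argument. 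This does not derail the overall plan, since you are in any case relying on the cited source for the combinatorial reconstruction.
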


\begin{corollary}
\label{corollary: friendship graph is DS w.r.t. normalized Laplacian}
The friendship graph $\FG{p}$ is ${\bf{\mathcal{L}}}$-DS \cite{BermanCCLZ2018}.
\end{corollary}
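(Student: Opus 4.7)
The plan is to derive the corollary as a direct specialization of the preceding Theorem on generalized friendship graphs. First I would verify the definitional identification $\FG{p} \cong \GFG{p}{2}$: by the Definition of the friendship graph, $\FG{p}$ consists of a single central vertex adjacent to $2p$ noncentral vertices, paired into $p$ triangles, so each pair of noncentral vertices sharing a triangle forms a $\CoG{2}$ with the central vertex as their unique common neighbor. This is exactly the structure $\CoG{1} \vee (p\, \CoG{2})$, which matches the definition $\GFG{p}{2} = \CoG{1} \vee (p\,\CoG{2})$. An edge count confirms this: both graphs have $3p$ edges and $p$ triangles.

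With this identification in hand, I would apply Theorem on the generalized friendship graph with $q = 2$. Since $q = 2 \geq 2$, the hypothesis of that theorem is met for every $p \in \naturals$, so $\GFG{p}{2}$ is ${\bf{\mathcal{L}}}$-DS. Combining with the isomorphism $\FG{p} \cong \GFG{p}{2}$ and the fact that ${\bf{\mathcal{L}}}$-DS is preserved under graph isomorphism (as the normalized Laplacian spectrum is an isomorphism invariant), one concludes that $\FG{p}$ is ${\bf{\mathcal{L}}}$-DS.

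Since the corollary is a direct instantiation of the preceding theorem, there is essentially no obstacle in this derivation beyond the identification $\FG{p} \cong \GFG{p}{2}$. All of the substantive spectral work, which involves analyzing how the normalized Laplacian spectrum constrains the structure of a cospectral mate and ruling out non-isomorphic candidates, has already been carried out in Theorem on $\GFG{p}{q}$ (and the underlying reference). Thus the proof reduces to a single sentence invoking that theorem with $q=2$.
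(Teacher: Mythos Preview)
Your proposal is correct and matches the paper's approach exactly: the corollary is stated immediately after the theorem on $\GFG{p}{q}$ with no separate proof, relying on the identification $\FG{p} = \GFG{p}{2}$ (noted explicitly in the paper) and the case $q=2\geq 2$ of that theorem.
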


\subsection{Strongly regular graphs}
\label{subsection: strongly regular graphs}

Strongly regular graphs with an identical vector of parameters $(n,d,\lambda,\mu)$ are cospectral, but may not be isomorphic (see,
Corollary~\ref{corollary: cospectral SRGs}, Remark~\ref{remark: NICS SRGs}, and Theorem~\ref{theorem: DS Lattice graphs}).
For that reason, strongly regular graphs are not necessarily determined by their spectrum. There are, however, infinite
families of strongly regular DS graphs:
\begin{theorem} \cite[Proposition~14.5.1]{BrouwerH2011}
\label{theorem: DS SRG}
If $n \neq 8$, $m \neq 4$, $a \geq 2$, and $\ell \geq 2$, then the disjoint union of identical complete graphs $a \CoG{\ell}$,
the line graph of a complete graph $\ell(\CoG{n})$, and the line graph of a complete bipartite graph with partite sets of equal
size $\ell(\CoBG{m}{m})$, as well as their complements, are strongly regular DS graphs.
\end{theorem}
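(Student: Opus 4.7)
The plan is to handle the three families separately, in each case first confirming strong regularity, then invoking an already-established DS result, and finally using the fact that the complement of a regular DS graph is DS to cover the complements in one stroke.

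For the disjoint union $a\CoG{\ell}$ with $a\ge 2$ and $\ell\ge 2$, I would point to Item~\ref{Item 6: eigenvalues of srg} of Theorem~\ref{theorem: eigenvalues of srg} to identify it as the strongly regular graph $\srg{a\ell}{\ell-1}{\ell-2}{0}$. Since $\ell \geq 2$ guarantees that there are no isolated vertices, Item~\ref{item 4: DS graphs} of Theorem~\ref{theorem: special classes of DS graphs} (the DS property of disjoint unions of complete graphs without isolated vertices) yields that $a\CoG{\ell}$ is DS.

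For $\ell(\CoG{n})$ with $n\ge 4$, I would invoke Remark~\ref{remark: triangular graphs} to note that this is the triangular graph, strongly regular with parameters $\srg{\tfrac12 n(n-1)}{2(n-2)}{n-2}{4}$; DS then follows from Item~\ref{Item 1: DS line graphs} of Theorem~\ref{theorem: DS line graphs} precisely under the hypothesis $n\ne 8$ (which rules out the Chang counterexamples). For $\ell(\CoBG{m}{m})$ with $m\ge 2$, Definition~\ref{definition: Lattice graphs} identifies the graph as the lattice graph $\mathrm{L}_2(m)$, strongly regular with parameters $\srg{m^2}{2(m-1)}{m-2}{2}$, and Theorem~\ref{theorem: DS Lattice graphs} (equivalently Item~\ref{Item 2: DS line graphs} of Theorem~\ref{theorem: DS line graphs}) gives DS under the hypothesis $m\ne 4$ (which rules out the Shrikhande counterexample).

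The complements are handled uniformly: each of the three graphs above is regular (as an SRG), and each has been shown to be DS, so Item~\ref{item 5: DS graphs} of Theorem~\ref{theorem: special classes of DS graphs} immediately implies that $\overline{a\CoG{\ell}}$, $\overline{\ell(\CoG{n})}$, and $\overline{\ell(\CoBG{m}{m})}$ are also DS. Finally, one can note for completeness that the complement of an SRG with parameters $(n,d,\lambda,\mu)$ is again an SRG (with parameters $(n,n-d-1,n-2d+\mu-2,n-2d+\lambda)$), so the complements remain inside the class of strongly regular graphs.

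Because every ingredient is already present in the excerpt, there is no substantive obstacle; the task is essentially one of bookkeeping. The only delicate point is the matching of the excluded cases ($n=8$ and $m=4$) with the exceptional cospectral families (Chang graphs for the triangular case, Shrikhande graph for the lattice case), which is exactly what the hypotheses $n\ne 8$ and $m\ne 4$ are designed to avoid; and the mild degeneracy checks that $a\ge 2$ prevents the trivial single-component case and $\ell\ge 2$ prevents the empty graph $a\CoG{1}=\overline{\CoG{a}}$ from interfering with the "no isolated vertices" hypothesis used in Item~\ref{item 4: DS graphs} of Theorem~\ref{theorem: special classes of DS graphs}.
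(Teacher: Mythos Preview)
Your proof is correct. The paper itself does not supply a proof of this theorem; it is stated with a bare citation to \cite[Proposition~14.5.1]{BrouwerH2011} and nothing further. What you have done is assemble a self-contained argument from results already present elsewhere in the paper (Theorems~\ref{theorem: eigenvalues of srg}, \ref{theorem: special classes of DS graphs}, \ref{theorem: DS line graphs}, \ref{theorem: DS Lattice graphs}, Remark~\ref{remark: triangular graphs}, and Definition~\ref{definition: Lattice graphs}), which is more than the paper offers. The bookkeeping is accurate: the parameter restrictions $n\neq 8$ and $m\neq 4$ match exactly the exclusions in Theorem~\ref{theorem: DS line graphs}, the condition $\ell\geq 2$ is what makes Item~\ref{item 4: DS graphs} of Theorem~\ref{theorem: special classes of DS graphs} applicable, and Item~\ref{item 5: DS graphs} of that same theorem handles all three complements at once since each graph is regular. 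One small point you might add for completeness is that the theorem as stated gives no lower bound on $n$, yet Item~\ref{Item 1: DS line graphs} of Theorem~\ref{theorem: DS line graphs} and Remark~\ref{remark: triangular graphs} both require $n\geq 4$; this is an implicit assumption in the original statement (for $n\leq 3$ the line graph $\ell(\CoG{n})$ is complete or trivial and hence not strongly regular by convention).
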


We next show that, although connected strongly regular graphs are not generally DS, the property of strong regularity,
as well as the four parameters that characterize strongly regular graphs can be determined by the spectrum of their adjacency
matrix.

\begin{theorem}
\label{theorem: on A-spectrum and SRGs}
Let $\Gr{G}$ be a connected strongly regular graph. Then, its strong regularity, the vector of parameters $(n,d,\lambda,\mu)$,
Lov\'{a}sz $\vartheta$-function $\vartheta(\Gr{G})$, number of edges and triangles, girth, and diameter can be all determined
by its $\A$-spectrum.
\end{theorem}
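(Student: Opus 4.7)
The plan is to show that every quantity listed in the conclusion can be recovered from the $\A$-spectrum alone. Following the convention of Remark~\ref{remark: recurring approach}, we must actually prove that any graph $\Gr{H}$ cospectral to $\Gr{G}$ inherits the stated properties. The argument proceeds in three stages: first, establish that $\Gr{H}$ is a connected strongly regular graph with the same parameter vector $(n,d,\lambda,\mu)$ as $\Gr{G}$; second, obtain $\vartheta(\Gr{G})$, the edge count, and the triangle count from this parameter vector together with standard trace identities; third, derive the girth from $(\lambda,\mu)$ and show that the diameter of any connected SRG is $2$.

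For the first stage, let $\Gr{H}$ share the $\A$-spectrum of $\Gr{G}$. Since $\Gr{G}$ is $d$-regular with $d=\Eigval{1}{\Gr{G}}$, the criterion $\sum_i \Eigval{i}{\Gr{G}}^2 = n\,\Eigval{1}{\Gr{G}}$ of Theorem~\ref{theorem: graph regularity from A-spectrum} is satisfied by the shared multiset of eigenvalues, so $\Gr{H}$ is likewise $d$-regular. By Item~\ref{Item 5: eigenvalues of srg} of Theorem~\ref{theorem: eigenvalues of srg} applied to $\Gr{G}$, the spectrum consists of exactly three distinct eigenvalues $d>p_1>p_2$ with $d$ of multiplicity one; both facts are visible in the multiset of eigenvalues and therefore transfer to $\Gr{H}$. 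For a regular graph the multiplicity of the largest eigenvalue equals the number of connected components, so $\Gr{H}$ is connected, and applying Item~\ref{Item 5: eigenvalues of srg} in the converse direction shows that $\Gr{H}$ is strongly regular. Combining $p_1+p_2=\lambda-\mu$ and $p_1 p_2=-(d-\mu)$ read off from Eq.~\eqref{eigs-SRG} yields
\begin{align}
\mu = d + p_1 p_2, \qquad \lambda = d + p_1 + p_2 + p_1 p_2,
\end{align}
so the full parameter vector $(n,d,\lambda,\mu)$ is spectrum-determined.

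For the remaining items, Remark~\ref{remark: relations to Igal's paper 2023} asserts that the Lov\'{a}sz $\vartheta$-function of a strongly regular graph depends only on $(n,d,\lambda,\mu)$, so the first stage delivers $\vartheta(\Gr{G})$ as well; the numbers of edges and of triangles follow from the trace identities \eqref{eq: number of edges from A} and \eqref{eq: number of triangles from A}. For the diameter, every connected SRG satisfies $\mu\geq 1$, hence every pair of non-adjacent vertices shares a common neighbor and lies at distance two, so the diameter equals $2$. For the girth one splits on $(\lambda,\mu)$: if $\lambda\geq 1$ then adjacent vertices share a neighbor and the girth is $3$; if $\lambda=0$ and $\mu\geq 2$ then no triangles exist but every non-adjacent pair yields a $4$-cycle, so the girth is $4$; and if $\lambda=0$ and $\mu=1$ (the Moore-graph regime) then neither $3$- nor $4$-cycles can occur and the shortest cycle has length $5$. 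In each branch the girth is a function of $(\lambda,\mu)$, hence spectrum-determined. The main obstacle is conceptual rather than computational: one must refrain from using any hypothesis on $\Gr{G}$ beyond its spectrum when analyzing a cospectral mate $\Gr{H}$. The crux is therefore the first stage, where the strong regularity of $\Gr{H}$ is established via Item~\ref{Item 5: eigenvalues of srg} of Theorem~\ref{theorem: eigenvalues of srg}; everything that follows reduces to a trace identity or a short case analysis on the recovered parameters.
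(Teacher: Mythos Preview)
Your proof is correct and follows essentially the same route as the paper: establish regularity via Theorem~\ref{theorem: graph regularity from A-spectrum}, deduce strong regularity from Item~\ref{Item 5: eigenvalues of srg} of Theorem~\ref{theorem: eigenvalues of srg}, recover $(\lambda,\mu)$ from the Vieta relations $p_1+p_2=\lambda-\mu$ and $p_1p_2=\mu-d$, then read off $\vartheta$, edge and triangle counts, diameter, and girth from the parameters. Your version is in fact slightly more careful than the paper's in one respect: you explicitly argue that any cospectral mate $\Gr{H}$ is connected (via the multiplicity of the top eigenvalue in a regular graph), whereas the paper's proof applies Item~\ref{Item 5: eigenvalues of srg} without pausing to justify that the connectivity hypothesis holds for $\Gr{H}$.
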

\begin{proof}
The order of a graph $n$ is determined by the $\A$-spectrum, being the number of eigenvalues (including multiplicities).
By Theorem~\ref{theorem: graph regularity from A-spectrum}, the regularity of a graph is determined by its $\A$-spectrum.
By Item~\ref{Item 5: eigenvalues of srg} of Theorem~\ref{theorem: eigenvalues of srg}, a connected regular graph is
strongly regular if and only if it has three distinct eigenvalues. Hence, the strong regularity property of $\Gr{G}$
is determined by its $\A$-spectrum. For such a connected regular, the largest eigenvalue is simple, $\lambda_1=d$, and
the other two distinct eigenvalues of the adjacency matrix of $\Gr{G}$ are given by $\lambda_2$ and $\lambda_n$ with
$\lambda_n<\lambda_2$.
We next show that the number of common neighbors of any pair of adjacent vertices ($\lambda$), and the number of common
neighbors of any pair of nonadjacent vertices ($\mu$) in $\Gr{G}$ are, respectively, given by
\begin{align}
\label{eq1:23.09.23}
& \lambda = \lambda_1 + (1+\lambda_2)(1+\lambda_n) - 1, \\
\label{eq2:23.09.23}
& \mu = \lambda_1 + \lambda_2 \lambda_n,
\end{align}
so, these parameters are explicitly expressed in terms of the adjacency spectrum of the strongly regular graph. Indeed, by
Theorem~\ref{theorem: eigenvalues of srg}, the second-largest and least eigenvalues of the adjacency matrix of $\Gr{G}$ are given by
\begin{align}
\label{eq1:21.01.25}
\begin{dcases}
\lambda_2 = \tfrac12 \Bigl( \lambda -  \mu + \sqrt{(\lambda-\mu)^2 + 4(d-\mu)} \Bigr), \\[0.1cm]
\lambda_n = \tfrac12 \Bigl( \lambda -  \mu - \sqrt{(\lambda-\mu)^2 + 4(d-\mu)} \Bigr),
\end{dcases}
\end{align}
from which it follows that (noting that $d = \lambda_1$)
\begin{align}
\label{eq2:21.01.25}
\begin{dcases}
\lambda_2 + \lambda_n = \lambda - \mu, \\
\lambda_2 \lambda_n = \mu - \lambda_1.
\end{dcases}
\end{align}
This gives \eqref{eq1:23.09.23} and \eqref{eq2:23.09.23} from, respectively, the second equality in \eqref{eq2:21.01.25}
and by adding the two equalities in \eqref{eq2:21.01.25}.

The Lov\'{a}sz $\vartheta$-function of a strongly regular graph is given by (see \cite[Proposition~1]{Sason23})
\begin{align}
\label{eq: Lovasz SRG}
\vartheta(\Gr{G}) = -\frac{n \lambda_n}{d - \lambda_n},
\end{align}
so $\vartheta(\Gr{G})$ is determined by its $\A$-spectrum since the strong regularity property of $\Gr{G}$ was first determined.
The number of edges of $\Gr{G}$ of a $d$-regular graph is given by $\tfrac12 nd$, and the number of triangles of the strongly regular
graph $\Gr{G}$ is given by $\tfrac16 nd \lambda$, so they are both determined once the four parameters of the strongly regular graphs are
revealed. The diameter of a connected strongly regular graph is equal to~2 (note that complete graphs are excluded from the family of
strongly regular graphs). Finally, the girth of the strongly regular graph $\Gr{G}$ is determined as follows \cite{CameronL2010}:
\begin{enumerate}
\item If $\lambda > 0$, then the girth of $\Gr{G}$ is equal to~3;
\item If $\lambda=0$ and $\mu \geq 2$, then the girth of $\Gr{G}$ is equal to~4;
\item If $\lambda=0$ and $\mu=1$, then the girth of $\Gr{G}$ is equal to~5.
\end{enumerate}
\end{proof}

\begin{remark}
\label{remark: connected SRG}
A strongly regular graph is connected if and only if $\mu > 0$.
\end{remark}

By \cite[Proposition~2]{vanDamH03}, no pair of $\A$-cospectral graphs exists where one graph is regular, and
the other is not. The following result extends this observation to strong regularity.
\begin{corollary}
\label{corollary: cospectral graphs - one is SRG}
There are no two $\A$-cospectral connected graphs where one is strongly regular and the other is not.
\end{corollary}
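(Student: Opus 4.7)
The plan is to chain together three results already established earlier in the paper to force any connected cospectral mate of a connected strongly regular graph to itself be strongly regular. Let $\Gr{G}$ be a connected strongly regular graph and let $\Gr{H}$ be a connected graph that is $\A$-cospectral with $\Gr{G}$; the goal is to conclude that $\Gr{H}$ is also strongly regular, which contradicts the hypothesis that one graph in a cospectral pair is not.

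First I would use the fact that strong regularity entails regularity (by Definition~\ref{definition: strongly regular graphs}), together with Theorem~\ref{theorem: graph regularity from A-spectrum}, which shows that regularity is fully determined by the $\A$-spectrum. Since $\sigma_{\A}(\Gr{H}) = \sigma_{\A}(\Gr{G})$, the graph $\Gr{H}$ must be regular as well, of the same degree as $\Gr{G}$.

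Next I would invoke Items~\ref{Item 1: eigenvalues of srg} and~\ref{Item 2: eigenvalues of srg} of Theorem~\ref{theorem: eigenvalues of srg}: a connected strongly regular graph has exactly three distinct eigenvalues, with the largest being simple. Because $\Gr{H}$ shares the same multiset of eigenvalues as $\Gr{G}$, it too has exactly three distinct eigenvalues with the same multiplicities; in particular, its largest eigenvalue is simple (this also follows independently from Perron--Frobenius applied to the connected graph $\Gr{H}$, so connectedness of $\Gr{H}$ is used here in an essential way).

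The conclusion then follows by applying Item~\ref{Item 5: eigenvalues of srg} of Theorem~\ref{theorem: eigenvalues of srg}, which characterizes strongly regular graphs among connected regular graphs precisely by the property of having exactly three distinct eigenvalues with simple top. Since $\Gr{H}$ is connected by assumption, regular by the first step, and has exactly three distinct eigenvalues with a simple largest one by the second step, $\Gr{H}$ is strongly regular. There is essentially no obstacle: the entire argument is a clean three-line spectral-characterization sandwich. The only subtlety worth flagging is that because $\Gr{G}$ is by definition neither complete nor empty (so degenerate two- or one-eigenvalue cases are excluded), the ``exactly three distinct eigenvalues'' count transfers unambiguously from $\Gr{G}$ to $\Gr{H}$.
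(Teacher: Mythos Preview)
Your proof is correct and follows essentially the same route as the paper. The paper's one-line proof simply cites Theorem~\ref{theorem: on A-spectrum and SRGs} (which it had just proved), whose relevant part establishes exactly your chain: regularity is an $\A$-spectral invariant (Theorem~\ref{theorem: graph regularity from A-spectrum}), a connected strongly regular graph has exactly three distinct eigenvalues with simple top, and Item~\ref{Item 5: eigenvalues of srg} of Theorem~\ref{theorem: eigenvalues of srg} then forces strong regularity on any connected regular cospectral mate. You have simply unpacked that argument explicitly rather than citing the packaged theorem.
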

\begin{proof}
For a connected strongly regular graph, the strong regularity is determined by the $\A$-spectrum.
\end{proof}

Another corollary that follows from Theorem~\ref{theorem: on A-spectrum and SRGs} applies to strongly regular DS graphs.
\begin{corollary}
\label{corollary: SRG DS graphs}
Let $\Gr{G}$ be a connected strongly regular graph such that there is no other nonisomorphic strongly regular graph
with an identical vector of parameters $(n, d, \lambda, \mu)$. Then, $\Gr{G}$ is a DS graph.
\end{corollary}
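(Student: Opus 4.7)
The plan is to show that any graph $\Gr{H}$ that is $\A$-cospectral with $\Gr{G}$ must itself be a connected strongly regular graph with the same parameters $(n,d,\lambda,\mu)$, and then invoke the uniqueness hypothesis to conclude $\Gr{H} \cong \Gr{G}$.

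First, I would record that since $\Gr{G}$ is a connected strongly regular graph, Items~\ref{Item 2: eigenvalues of srg} and~\ref{Item 5: eigenvalues of srg} of Theorem~\ref{theorem: eigenvalues of srg} imply that its $\A$-spectrum consists of exactly three distinct eigenvalues, with the largest, $\lambda_1 = d$, having multiplicity one. Letting $\Gr{H}$ be any graph $\A$-cospectral with $\Gr{G}$, the identity $\sum_i \lambda_i^2 = n \lambda_1$ passes from $\Gr{G}$ to $\Gr{H}$, so by Theorem~\ref{theorem: graph regularity from A-spectrum} the graph $\Gr{H}$ is also $d$-regular, and it has the same three distinct eigenvalues with the same multiplicities.

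The next step is to show that $\Gr{H}$ is connected, which is the main subtlety since connectedness is not in general detectable from the $\A$-spectrum. Here I would use that $\Gr{H}$ is regular with every connected component being $d$-regular (the restriction of a regular graph to any component is regular of the same degree), and that each component contributes the eigenvalue~$d$ with multiplicity at least one to $\sigma_{\A}(\Gr{H})$. Since the multiplicity of $d$ in $\sigma_{\A}(\Gr{H}) = \sigma_{\A}(\Gr{G})$ is exactly one, $\Gr{H}$ must be connected. (Alternatively, one could argue via Item~\ref{Item 6: eigenvalues of srg} of Theorem~\ref{theorem: eigenvalues of srg} that a disconnected regular graph cospectral with a strongly regular one would have to be a disjoint union of complete graphs, with only two distinct eigenvalues, contradicting the three distinct eigenvalues of $\Gr{G}$.)

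Having established that $\Gr{H}$ is connected, regular, and has exactly three distinct eigenvalues with the largest being simple, Item~\ref{Item 5: eigenvalues of srg} of Theorem~\ref{theorem: eigenvalues of srg} gives that $\Gr{H}$ is strongly regular. Applying Theorem~\ref{theorem: on A-spectrum and SRGs} to $\Gr{H}$ (or directly formulas~\eqref{eq1:23.09.23}--\eqref{eq2:23.09.23}, which express $\lambda$ and $\mu$ as explicit functions of $\lambda_1, \lambda_2, \lambda_n$), the parameter vector of $\Gr{H}$ is determined by its $\A$-spectrum, hence equals $(n,d,\lambda,\mu)$. By the hypothesis of the corollary, no nonisomorphic strongly regular graph shares these parameters with $\Gr{G}$, so $\Gr{H} \cong \Gr{G}$, proving that $\Gr{G}$ is DS. The main obstacle, as noted, is handling the potential disconnectedness of $\Gr{H}$, but the simplicity of the largest eigenvalue~$d$ in $\sigma_{\A}(\Gr{G})$ resolves it immediately.
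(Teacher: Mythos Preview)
Your proof is correct and follows essentially the same approach as the paper, which presents the corollary as an immediate consequence of Theorem~\ref{theorem: on A-spectrum and SRGs} without an explicit proof. Your treatment is more detailed, and in particular you explicitly address the connectedness of the cospectral mate $\Gr{H}$ via the simplicity of the eigenvalue~$d$---a point the paper leaves implicit but which is indeed needed, since Theorem~\ref{theorem: on A-spectrum and SRGs} and Corollary~\ref{corollary: cospectral graphs - one is SRG} are stated for connected graphs.
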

Corollary~\ref{corollary: SRG DS graphs} naturally raises the following question.
\begin{question}
\label{question: DS SRG}
Which connected strongly regular graphs are determined by their vector of parameters $(n, d, \lambda, \mu)$?
\end{question}
A partial answer to Question~\ref{question: DS SRG} is provided below.

By Corollary~\ref{corollary: SRG DS graphs}, for connected strongly regular graphs, there exists an equivalence between
their spectral determination (due to their regularity and in light of Theorem~\ref{theorem: regular DS graphs}, based on
the spectrum of their adjacency, Laplacian, or signless Laplacian matrices) and the uniqueness of these graphs for the given
parameter vector $(n, d, \lambda, \mu)$.

The study of the number of nonisomorphic strongly regular graphs corresponding to a given set of parameters has been extensively
explored. For example, by Theorem~\ref{theorem: DS Lattice graphs}, there is a unique (up to isomorphism) strongly regular graph
of the form $\srg{q^2}{2(q-1)}{q-2}{2}$ for any given $q \geq 2$ with $q \neq 4$. Specifically, this implies the uniqueness of
$\srg{36}{10}{4}{2}$ (setting $q=6$). On the other hand, a computer search by McKay and Spence established that there are
$32,548$ strongly regular graphs of the form $\srg{36}{15}{6}{6}$, so none of them is DS (by Corollary~\ref{corollary: SRG DS graphs}).

Further results on the uniqueness or non-uniqueness of strongly regular graphs with a given parameter vector $(n, d, \lambda, \mu)$
can be found in \cite{Cameron2003} and the references therein. Infinite families of strongly regular DS graphs are presented in
Theorem~\ref{theorem: DS SRG}. Some known sporadic strongly regular DS graphs are listed in \cite[Table~2]{vanDamH03}, with an
update in \cite[Table~1]{vanDamH09}). The uniqueness of further strongly regular graphs with given parameter vectors, making them
therefore DS graphs, was established, e.g., in \cite{Brouwer83,BrouwerH92,StevanovicM2009,Coolsaet2006,Mesner56}.

A combination of \cite[Table~1]{vanDamH09} and Corollary~\ref{corollary: Petersen graph is DS} implies that, apart from
complete graphs on fewer than three vertices and all complete bipartite regular graphs (which are known to be DS, as stated in
Item~\ref{item 2: DS graphs} of Theorem~\ref{theorem: special classes of DS graphs}), also all the seven currently known triangle-free
strongly regular graphs (see \cite{StruikB2010}) are DS. These include:
\begin{itemize}
\item The Pentagon graph $\CG{5}$ that is $\srg{5}{2}{0}{1}$ (by Item~\ref{item 2: DS graphs} of Theorem~\ref{theorem: special classes of DS graphs},
and see \cite[Section~10.1]{BrouwerM22}),
\item The Petersen graph $\srg{10}{3}{0}{1}$ (by Corollary~\ref{corollary: Petersen graph is DS}, and see \cite[Section~10.3]{BrouwerM22}),
\item Clebsch graph $\srg{16}{5}{0}{2}$ (see \cite[Section~10.7]{BrouwerM22}),
\item Hoffman-Singleton srg $\srg{50}{7}{0}{1}$  (see \cite[Section~10.19]{BrouwerM22}),
\item Gewirtz graph $\srg{56}{10}{0}{2}$ (see \cite[Section~10.20]{BrouwerM22}),
\item Mesner ($\mathrm{M}_{22}$) graph $\srg{77}{16}{0}{4}$ (see \cite[Section~10.27]{BrouwerM22} and \cite{Brouwer83}),
\item Higman-Sims graph $\srg{100}{22}{0}{6}$ (see \cite[Section~10.31]{BrouwerM22} and \cite{Mesner56}).
\end{itemize}

An up-to-date list of strongly regular DS graphs --- strongly regular graphs that are uniquely determined by their parameter
vectors --- as well as the number of strongly regular NICS graphs for given parameter vectors, is available on Brouwer's website
\cite{Brouwer}. An exclamation mark placed to the left of a parameter vector $(n,d,\lambda,\mu)$, without a preceding number, indicates
a strongly regular DS graph. In contrast, an exclamation mark preceded by a natural number greater than~1 specifies the number
of strongly regular NICS graphs with the corresponding parameter vector. For example, as shown in \cite{Brouwer}, strongly regular
graphs with the parameter vectors $(13,6,2,3)$, $(15,6,1,3)$, $(17,8,3,4)$, and $(21,10,3,6)$, among others, are DS graphs. On
the other hand, according to \cite{Brouwer}, there are 15 strongly regular NICS graphs with the parameter vector $(25,12,5,6)$,
10 strongly regular NICS graphs with the parameter vector $(26,10,3,4)$, and so forth.

To conclude, as strongly regular NICS graphs are not DS, $\LM$-DS, or $\Q$-DS, we were recently informed of ongoing research by
Cioaba \textit{et al.} \cite{CioabaGJM25}, which investigates the spectral properties of higher-order Laplacian matrices associated
with these graphs. This research demonstrates that the spectra of these new matrices can distinguish some of the strongly regular
NICS graphs. However, in other cases, strongly regular NICS graphs remain indistinguishable even with the spectra of these
higher-order Laplacian matrices.

\section{Graph operations for the construction of cospectral graphs}
\label{section: graph operations}

This section presents such graph operations, focusing on unitary and binary transformations that enable the systematic
construction of cospectral graphs. These operations are designed to preserve the spectral properties of the original
graph while potentially altering its structure, thereby producing non-isomorphic graphs with identical eigenvalues.
By employing these techniques, one can generate diverse examples of cospectral graphs, offering valuable tools for
investigating the limitations of spectral characterization and exploring the boundaries between graphs that are or
are not determined by their spectrum, which then relates the scope of the present section to Section~\ref{section: special families of graphs}
that deals with graphs or graph families that are determined by their spectrum.

\subsection{Coalescence}
\label{subsection: Coalescence}

\noindent

A construction of cospectral trees has been offered in \cite{Schwenk1973}, implying that almost all trees are not DS.

\begin{definition}
\label{definition: Coalescence}
Let $\Gr{G}_1 = (V_1,E_1), \Gr{G}_2=(V_2,E_2)$ be two graphs with $n_1$, $n_2$ vertices, respectively.
Let $v_1 \in V_1$ and $v_2 \in V_2$ be an arbitrary choice of vertices in both graphs.
The \emph{coalescence of $\Gr{G}_1$ and $\Gr{G}_2$ with respect to $v_1$ and $v_2$} is the graph with
$n_1 + n_2 -1$ vertices, obtained by the union of $\Gr{G}_1$ and $\Gr{G}_2$ where $v_1$ and $v_2$ are
identified as the same vertex in the united graph.
\end{definition}

\begin{theorem}
\label{theorem: Coalescence}
Let $\Gr{G}_1 = (V_1,E_1), \Gr{G}_2=(V_2,E_2)$ be two cospectral graphs, and let $v_1 \in V_1$ and $v_2 \in V_2$
be an arbitrary choice of vertices in both graphs.
Let $\Gr{H}_1$ and $\Gr{H}_2$ be the subgraphs of $\Gr{G}_1$ and $\Gr{G}_2$ that are induced by $V_1 \setminus \{v_1\}$
and $V_2 \setminus \{v_2\}$, respectively.
Let $\Gamma$ be a graph and $u\in V(\Gamma)$. If $\Gr{H}_1$ and $\Gr{H}_2$ are cospectral, then the coalescence of
$\Gr{G}_1$ and $\Gamma$ with respect to $v_1$ and $u$ is cospectral to the coalescence of $\Gr{G}_2$ and $\Gamma$
with respect to $v_2$ and $u$.
\end{theorem}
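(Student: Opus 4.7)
The plan is to derive Schwenk's classical coalescence identity for the adjacency characteristic polynomial via the Schur complement (Theorem~\ref{theorem: Schur complement}), and then to conclude the theorem by a one-line substitution.

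Denote by $\phi_{\Gr{F}}(x) = \det(x\I - \A(\Gr{F}))$ the adjacency characteristic polynomial of a graph $\Gr{F}$, and write $\Gr{G} \diamond_{v,u} \Gamma$ for the coalescence of $\Gr{G}$ and $\Gamma$ that identifies $v \in \V{\Gr{G}}$ with $u \in \V{\Gamma}$. Order the vertices of $\Gr{G} \diamond_{v,u} \Gamma$ by placing $\V{\Gr{G}} \setminus \{v\}$ and $\V{\Gamma} \setminus \{u\}$ first, and the identified vertex $w$ last. Since coalescence adds no edges between the two outer blocks, the matrix $x\I - \A(\Gr{G} \diamond_{v,u} \Gamma)$ takes the block form
\begin{align*}
\begin{pmatrix} D(x) & f \\ f^{\mathrm{T}} & x \end{pmatrix}, \qquad D(x) = \begin{pmatrix} x\I - \A(\Gr{G}-v) & \mathbf{0} \\ \mathbf{0} & x\I - \A(\Gamma - u) \end{pmatrix}, \qquad f = \begin{pmatrix} -\mathbf{b} \\ -\mathbf{c} \end{pmatrix},
\end{align*}
where $\mathbf{b}$ and $\mathbf{c}$ are the neighborhood indicator vectors of $v$ in $\Gr{G}-v$ and of $u$ in $\Gamma-u$, respectively.

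Applying Theorem~\ref{theorem: Schur complement} with the $1 \times 1$ pivot $x$, and applying it analogously to the matrices $x\I - \A(\Gr{G})$ and $x\I - \A(\Gamma)$ (each split with their own $\mathbf{b}$- or $\mathbf{c}$-bordered $1 \times 1$ corner), yields the three identities
\begin{align*}
\phi_{\Gr{G}}(x) &= x \, \phi_{\Gr{G}-v}(x) - Q_{\Gr{G}}(x), \\
\phi_{\Gamma}(x) &= x \, \phi_{\Gamma-u}(x) - Q_{\Gamma}(x), \\
\phi_{\Gr{G} \diamond_{v,u} \Gamma}(x) &= x \, \phi_{\Gr{G}-v}(x) \, \phi_{\Gamma-u}(x) - \phi_{\Gamma-u}(x) \, Q_{\Gr{G}}(x) - \phi_{\Gr{G}-v}(x) \, Q_{\Gamma}(x),
\end{align*}
where $Q_{\Gr{G}}(x) = \mathbf{b}^{\mathrm{T}} \mathrm{adj}(x\I - \A(\Gr{G}-v)) \mathbf{b}$ and $Q_{\Gamma}(x)$ is defined analogously. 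Eliminating $Q_{\Gr{G}}$ and $Q_{\Gamma}$ between these equations produces Schwenk's coalescence identity
\begin{align*}
\phi_{\Gr{G} \diamond_{v,u} \Gamma}(x) = \phi_{\Gr{G}}(x) \, \phi_{\Gamma - u}(x) + \phi_{\Gr{G} - v}(x) \, \phi_{\Gamma}(x) - x \, \phi_{\Gr{G} - v}(x) \, \phi_{\Gamma - u}(x).
\end{align*}

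Once this identity is available, the theorem follows immediately. By hypothesis, $\phi_{\Gr{G}_1}(x) = \phi_{\Gr{G}_2}(x)$ and $\phi_{\Gr{G}_1 - v_1}(x) = \phi_{\Gr{H}_1}(x) = \phi_{\Gr{H}_2}(x) = \phi_{\Gr{G}_2 - v_2}(x)$, while $\phi_{\Gamma}(x)$ and $\phi_{\Gamma-u}(x)$ do not depend on $i \in \{1,2\}$. Plugging these equalities into Schwenk's identity applied to each coalescence $\Gr{G}_i \diamond_{v_i, u} \Gamma$ shows that $\phi_{\Gr{G}_1 \diamond_{v_1, u} \Gamma}(x) = \phi_{\Gr{G}_2 \diamond_{v_2, u} \Gamma}(x)$, so the two coalesced graphs are cospectral. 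The main obstacle is the derivation of Schwenk's identity itself: the two Schur-complement computations and the elimination of the quadratic forms $Q_{\Gr{G}}, Q_{\Gamma}$ are routine but must be executed carefully. A minor technicality is that the Schur complement requires the pivot $x$ to be nonzero; since both sides of each displayed equation are polynomials in $x$, the identities extend from $\Reals \setminus \{0\}$ to all $x$ by continuity.
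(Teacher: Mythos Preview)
The paper states Theorem~\ref{theorem: Coalescence} without proof, treating it as a classical result attributed implicitly to Schwenk~\cite{Schwenk1973}; there is no argument in the paper to compare against. Your proof is correct and self-contained: the block decomposition of $x\I - \A$ for the coalescence is right, the three Schur-complement identities hold (note that the displayed formulas correspond most transparently to pivoting on the large block $D(x)$ rather than on the scalar $x$, since $\det D \cdot f^{\mathrm{T}} D^{-1} f = f^{\mathrm{T}}\mathrm{adj}(D)f$ is exactly your $Q$-term; either pivot works after the matrix-determinant lemma, so this is purely cosmetic), the elimination step recovers Schwenk's identity exactly, and the final substitution is immediate. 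Your handling of the $x=0$ technicality via polynomial continuity is the standard fix and is fine.
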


Combinatorial arguments that rely on the coalescence operation on graphs lead to a striking asymptotic result in
\cite{Schwenk1973}, stating that the fraction of $n$-vertex trees with cospectral and nonisomorphic mates, which
are also trees, approaches one as $n$ tends to infinity. Consequently, the fraction of the $n$-vertex nonisomorphic
trees that are determined by their spectrum (DS) approaches zero as $n$ tends to infinity. In other words, this means
that almost all trees are not DS (with respect to their adjacency matrix) \cite{Schwenk1973}.

\subsection{Seidel switching}
\label{subsection: Seidel switching}

\noindent

Seidel switching is one of the well-known methods for the construction of cospectral graphs.

\begin{definition}
\label{definition: Seidel switching}
Let $\Gr{G}$ be a graph, and let $\set{U} \subseteq \V{\Gr{G}}$. Constructing a graph $\Gr{G}_{\set{U}}$ by preserving
all the edges in $\Gr{G}$ between vertices within $\set{U}$, as well as all edges in $\Gr{G}$ between vertices within
the complement set $\cset{U} = \V{\Gr{G}} \setminus \set{U}$, while modifying adjacency and nonadjacency between any
two vertices where one is in $\set{U}$ and the other is in $\cset{U}$, is referred to (up to isomorphism) as
\emph{Seidel switching} of $\Gr{G}$ with respect to $\set{U}$.
\end{definition}
By Definition~\ref{definition: Seidel switching}, the Seidel switching of $\Gr{G}$ with respect to $\set{U}$ is equivalent
to its Seidel switching with respect to $\cset{U}$.
Let $\A(\Gr{G})$ and $\A(\Gr{G}_{\set{U}})$ be the adjacency matrices of a graph $\Gr{G}$ and its Seidel switching
$\Gr{G}_{\set{U}}$, and let $\A_{\set{U}}$ and $\A_{\cset{U}}$ be the matrices of $\A(\Gr{G})$ that, respectively,
refer to the adjacency matrices of the subgraphs of $\Gr{G}$ induced by $\set{U}$ and $\cset{U}$. Then, for some
$\mathbf{B} \in \{0,1\}^{\card{\cset{U}} \times \card{\set{U}}}$, we get
\begin{align}
\A(\Gr{G})=
\begin{pmatrix}
\A_{\set{U}} & \mathbf{B}^{\mathrm{T}}\\
\mathbf{B} & \A_{\cset{U}}
\end{pmatrix},
\end{align}
and by Definition~\ref{definition: Seidel switching},
\begin{align}
\A(\Gr{G}_{\set{U}})=
\begin{pmatrix}
\A_{\set{U}} & \overline{\mathbf{B}}^{\mathrm{T}}\\
\overline{\mathbf{B}} & \A_{\cset{U}}
\end{pmatrix},
\end{align}
where $\overline{\mathbf{B}}$ is obtained from $\mathbf{B}$ by interchanging zeros and ones. If $\Gr{G}$
is a regular graph, the following necessary and sufficient condition for $\Gr{G}_{\set{U}}$ to be a regular
graph of the same degree of its vertices.
\begin{theorem} \cite[Proposition~1.1.7]{CvetkovicRS2010}
\label{theorem: d-regular graphs in Seidel switching}
Let $\Gr{G}$ be a $d$-regular graph on $n$ vertices. Then, $\Gr{G}_{\set{U}}$ is also $d$-regular if and
only if $\set{U}$ induces a regular subgraph of degree $k$, where $\card{\set{U}} = n - 2(d-k)$.
\end{theorem}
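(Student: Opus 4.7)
The plan is a direct vertex-by-vertex degree count in the switched graph, using Definition~\ref{definition: Seidel switching} together with the $d$-regularity of $\Gr{G}$. Set $s = \card{\set{U}}$, and for any vertex $v \in \V{\Gr{G}}$ let $r(v)$ denote its number of neighbors lying in $\set{U}$ under $\Gr{G}$; by $d$-regularity, $v$ has $d - r(v)$ neighbors in $\cset{U}$. Write $d'(v)$ for the degree of $v$ in $\Gr{G}_{\set{U}}$.

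First I would derive the degree formula for a vertex $v \in \set{U}$. Seidel switching preserves the edges internal to $\set{U}$, so $v$ retains $r(v)$ neighbors in $\set{U}$; across the partition, adjacencies are complemented, so the new number of neighbors of $v$ in $\cset{U}$ equals $(n-s) - (d - r(v))$. Summing gives
\begin{equation*}
  d'(v) \;=\; 2\,r(v) + (n-s) - d, \qquad v \in \set{U}.
\end{equation*}
An entirely analogous computation for $w \in \cset{U}$ yields $d'(w) = 2\,r_{\cset{U}}(w) + s - d$, where $r_{\cset{U}}(w)$ counts the neighbors of $w$ inside $\cset{U}$ in $\Gr{G}$.

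For the necessity direction, imposing $d'(v) = d$ for every $v \in \set{U}$ forces $r(v) = d - (n-s)/2$, a value independent of $v$. Denoting this common value by $k$ yields exactly that $\set{U}$ induces a $k$-regular subgraph, and rearranging gives $\card{\set{U}} = n - 2(d-k)$ as claimed (and forces $n-s$ to be even). For the sufficiency direction, substituting $r(v) = k$ and $s = n - 2(d-k)$ back into the formula gives $d'(v) = 2k + 2(d-k) - d = d$ for every $v \in \set{U}$, so vertices in $\set{U}$ receive the correct degree from the stated hypothesis alone.

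The only real subtlety is the corresponding identity on $\cset{U}$: the formula for $w \in \cset{U}$ requires $r_{\cset{U}}(w) = d - s/2$ uniformly, i.e.\ that $\cset{U}$ induces a regular subgraph of matching degree $d - s/2 = 2d - k - n/2$. This is where I expect the one nontrivial step of the proof to sit, since in general regularity of $\set{U}$ together with the size relation does not automatically force regularity of $\cset{U}$. The standard way to reconcile this with the asymmetric-looking statement is to invoke the fundamental symmetry $\Gr{G}_{\set{U}} = \Gr{G}_{\cset{U}}$, which is immediate from Definition~\ref{definition: Seidel switching} because the same bipartite adjacency set between $\set{U}$ and $\cset{U}$ gets complemented either way. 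The necessity direction, applied with the roles of $\set{U}$ and $\cset{U}$ interchanged, then supplies the matching regularity of $\cset{U}$ and closes the equivalence. The remainder of the argument is bookkeeping around the single identity $d'(v) = 2r(v) + (n-s) - d$.
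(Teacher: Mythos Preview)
The paper does not supply a proof here; the result is quoted from \cite[Proposition~1.1.7]{CvetkovicRS2010} without argument, so there is no in-paper proof to compare against.

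Your degree identity $d'(v) = 2r(v) + (n-s) - d$ for $v \in \set{U}$, and its companion on $\cset{U}$, are correct, and the necessity direction follows from them exactly as you write.

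The gap you flag in the sufficiency direction is real, but your proposed fix is circular: invoking $\Gr{G}_{\set{U}} = \Gr{G}_{\cset{U}}$ and then ``applying the necessity direction with the roles of $\set{U}$ and $\cset{U}$ interchanged'' presupposes that $\Gr{G}_{\set{U}}$ is already $d$-regular, which is precisely what you are trying to establish. In fact the sufficiency direction, as literally stated, is \emph{false}. Take $\Gr{G} = \CG{6}$ (so $n=6$, $d=2$) with vertices $1,\ldots,6$ labelled cyclically, and let $\set{U} = \{1,3\}$. Then $\set{U}$ induces a $0$-regular subgraph and $\card{\set{U}} = 2 = n - 2(d-0)$, so the stated hypothesis holds; yet in $\Gr{G}_{\set{U}}$ vertex~$2$ has degree~$0$ and vertex~$5$ has degree~$4$. (Choosing $\set{U} = \{1,4\}$ instead does give a $2$-regular switch, precisely because in that case $\cset{U}$ also happens to induce a regular subgraph.)

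The correct biconditional needs the additional hypothesis that $\cset{U}$ also induces a regular subgraph; once that is assumed, your two degree formulas finish both directions immediately. So your instinct that ``regularity of $\set{U}$ together with the size relation does not automatically force regularity of $\cset{U}$'' is exactly right, and cannot be repaired by symmetry alone: the statement as transcribed is missing this second condition.
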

The next result shows the relevance of Seidel switching for the construction of regular and cospectral graphs.
\begin{theorem} \cite[Proposition~1.1.8]{CvetkovicRS2010}
\label{theorem: Seidel switching}
Let $\Gr{G}$ be a $d$-regular graph, $\set{U} \subseteq \V{\Gr{G}}$, and let $\Gr{G}_{\set{U}}$ be obtained
from $\Gr{G}$ by Seidel switching. If $\Gr{G}_{\set{U}}$ is also a $d$-regular graph, then $\Gr{G}$ and
$\Gr{G}_{\set{U}}$ are cospectral (and due to their regularity, they are $\mathcal{X}$-cospectral for
every $\mathcal{X} \in \{\A, \LM, \Q, \bf{\mathcal{L}}\}$).
\end{theorem}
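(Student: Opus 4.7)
The plan is to pass through the \emph{Seidel adjacency matrix} $\mathbf{S}(\Gr{G}) = \J{n} - \I{n} - 2\A(\Gr{G})$, which interacts naturally with Seidel switching, and then to exploit $d$-regularity to transfer the conclusion to the ordinary adjacency spectrum. Using the block decomposition of $\A(\Gr{G})$ displayed just before the theorem, together with the observation $\overline{\mathbf{B}} = \mathbf{J} - \mathbf{B}$, a short computation shows that the two diagonal blocks of $\mathbf{S}(\Gr{G})$ and $\mathbf{S}(\Gr{G}_{\set{U}})$ coincide, while the two off-diagonal blocks differ by an overall sign. With the signed diagonal matrix
\begin{equation*}
\mathbf{D}_{\set{U}} = \begin{pmatrix} \I{\card{\set{U}}} & \mathbf{0} \\ \mathbf{0} & -\I{\card{\cset{U}}} \end{pmatrix},
\end{equation*}
this identity rewrites as $\mathbf{S}(\Gr{G}_{\set{U}}) = \mathbf{D}_{\set{U}} \, \mathbf{S}(\Gr{G}) \, \mathbf{D}_{\set{U}}$, and since $\mathbf{D}_{\set{U}}^2 = \I{n}$, the two Seidel matrices are similar and hence cospectral. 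This step uses no regularity hypothesis.

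The regularity assumption enters only when converting Seidel cospectrality back into adjacency cospectrality. Since $\Gr{G}$ is $d$-regular, the vector $\mathbf{1}_n$ is an eigenvector of $\A(\Gr{G})$ with eigenvalue $d$, and therefore an eigenvector of $\mathbf{S}(\Gr{G})$ with eigenvalue $n-1-2d$; the assumption that $\Gr{G}_{\set{U}}$ is also $d$-regular gives the analogous statement for $\mathbf{S}(\Gr{G}_{\set{U}})$. The orthogonal complement $\mathbf{1}_n^{\perp}$ is invariant under each of $\J{n}$, $\I{n}$, $\A(\Gr{G})$, and $\A(\Gr{G}_{\set{U}})$, and on it $\J{n}$ acts as zero, so on $\mathbf{1}_n^{\perp}$ one has $\mathbf{S} = -\I{n} - 2\A$ for each of the two graphs.

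Since the two Seidel matrices are cospectral and both pin the eigenvalue $n-1-2d$ to the common eigenvector $\mathbf{1}_n$, their restrictions to $\mathbf{1}_n^{\perp}$ share the same multiset of $n-1$ remaining eigenvalues; via the affine bijection $\nu \mapsto -\tfrac{1+\nu}{2}$, these translate one-to-one into the adjacency eigenvalues of $\Gr{G}$ and $\Gr{G}_{\set{U}}$ on $\mathbf{1}_n^{\perp}$. Appending the common eigenvalue $d$ associated with $\mathbf{1}_n$ yields the full $\A$-spectra, which therefore coincide. The $\mathcal{X}$-cospectrality claimed for every $\mathcal{X} \subseteq \{\A, \LM, \Q, {\bf{\mathcal{L}}}\}$ then follows at once from Proposition~\ref{proposition: regular graphs cospectrality}. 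The only nontrivial technical point is the block identity $\mathbf{S}(\Gr{G}_{\set{U}}) = \mathbf{D}_{\set{U}} \mathbf{S}(\Gr{G}) \mathbf{D}_{\set{U}}$, but once the rewriting $\overline{\mathbf{B}} = \mathbf{J} - \mathbf{B}$ is made, it reduces to a routine block calculation.
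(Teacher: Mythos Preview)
The paper does not supply its own proof of this theorem; it is stated with a citation to \cite[Proposition~1.1.8]{CvetkovicRS2010} and followed only by remarks. Your argument is correct and is in fact the standard one found in that reference: Seidel switching conjugates the Seidel matrix $\mathbf{S}=\J{n}-\I{n}-2\A$ by the signed diagonal matrix $\mathbf{D}_{\set{U}}$, giving Seidel cospectrality unconditionally, and $d$-regularity of both graphs then lets you peel off the common $\mathbf{1}_n$-eigenvalue $n-1-2d$ and translate the remaining Seidel eigenvalues on $\mathbf{1}_n^{\perp}$ back to adjacency eigenvalues via $\nu\mapsto -\tfrac{1+\nu}{2}$. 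The final appeal to Proposition~\ref{proposition: regular graphs cospectrality} is exactly how the paper packages the extension to $\LM,\Q,{\bf{\mathcal{L}}}$.
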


\begin{remark}
\label{remark: Seidel switching}
Theorem~\ref{theorem: Seidel switching} provides a method for finding cospectral regular graphs. These graphs
may be, however, also isomorphic. If the graphs are nonisomorphic, then it gives a pair of NICS graphs.
\end{remark}

\begin{remark}
\label{remark: limitation of Seidel switching}
A regular graph $\Gr{G}$ on $n$ vertices cannot be switched into another regular graph if $n$ is odd (see
\cite[Corollary~4.1.10]{CvetkovicRS2010}), which means that the conditions in Theorem~\ref{theorem: Seidel switching}
cannot be satisfied for any regular graph of an odd order.
\end{remark}

\begin{remark}
\label{remark: equivalence relation of Seidel switching}
Seidel switching determines an equivalence relation on graphs. This follows from the fact that switching
with respect to a subset $\set{U} \in \V{\Gr{G}}$, and then with respect to a subset $\set{V} \in \V{\Gr{G}}$,
is the same as switching with respect to $(\set{V} \setminus \set{U}) \cup (\set{U} \setminus \set{V})$ (see
\cite[p.~18]{CvetkovicRS2010}).
\end{remark}

\begin{example}
\label{example: NICS graphs by Seidel switching}
The Shrikhande graph can be obtained through Seidel switching applied to the line graph $\ell(\CoBG{4}{4})$ with respect to
four independent vertices of the latter (see \cite[Example~1.2.4]{CvetkovicRS2010}). Both are 6-regular graphs (hence, they
are cospectral graphs by Theorem~\ref{theorem: Seidel switching}). Moreover, the former graph is a strongly regular graph
$\srg{16}{6}{2}{2}$, whereas the line graph $\ell(\CoBG{4}{4})$ is not. Consequently, these are nonisomorphic and cospectral
(NICS) 6-regular graphs on 16 vertices.
\end{example}

\subsection{The Godsil and McKay method}
\label{subsection: Godsil and McKay method}

\noindent

Another construction of cospectral pairs of graphs was offered by Godsil and McKay in \cite{GodsilM1982}.

\begin{theorem}
\label{theorem: Godsil and McKay method}
Let $\Gr{G}$ be a graph with an adjacency matrix of the form
\begin{align}
\A(\Gr{G})= \begin{pmatrix}
{\mathbf{B}}  & {\mathbf{N}} \\
{\mathbf{N}}^{\mathrm{T}} & {\mathbf{C}}
\end{pmatrix}
\end{align}
where the sum of each column in ${\mathbf{N}} \in \{0,1\}^{b \times c}$ is either $0, b$ or $\frac{b}{2}$.
Let $\widehat{{\mathbf{N}}}$ be the matrix obtained by replacing each column $\underline{c}$ in ${\mathbf{N}}$
whose sum of elements is $\frac{b}{2}$ with its complement $\mathbf{1}_n - \underline{c}$. Then, the modified
graph $\widehat{\Gr{G}}$ whose adjacency matrix is given by
\begin{align}
\A(\widehat{\Gr{G}})
= \begin{pmatrix}
{\mathbf{B}}  & \widehat{\mathbf{N}}  \\
{\widehat{\mathbf{N}}}^{\mathrm{T}} & {\mathbf{C}}
\end{pmatrix}
\end{align}
is cospectral with $\Gr{G}$.
\end{theorem}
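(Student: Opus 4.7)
The plan is to exhibit an explicit orthogonal similarity between $\A(\Gr{G})$ and $\A(\widehat{\Gr{G}})$, from which cospectrality follows immediately. The guiding intuition is that the switching operation should be implementable by a block-diagonal orthogonal matrix acting nontrivially only on the first $b$ coordinates, chosen so that its action fixes columns of $\mathbf{N}$ whose sums are $0$ or $b$ and replaces a column of sum $b/2$ by its complement.

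First, I would define the $b \times b$ matrix $\mathbf{P} = \frac{2}{b}\J{b,b} - \I{b}$ and verify, using $\J{b,b}^2 = b\,\J{b,b}$, that $\mathbf{P}$ is both symmetric and orthogonal, hence $\mathbf{P}^{\mathrm{T}}\mathbf{P} = \I{b}$. Then set
\begin{align}
\mathbf{Q} = \begin{pmatrix} \mathbf{P} & \mathbf{0}_{b,c} \\ \mathbf{0}_{c,b} & \I{c} \end{pmatrix},
\end{align}
which is orthogonal. A direct block computation gives
\begin{align}
\mathbf{Q}^{\mathrm{T}} \A(\Gr{G}) \, \mathbf{Q} = \begin{pmatrix} \mathbf{P} \mathbf{B} \mathbf{P} & \mathbf{P} \mathbf{N} \\ \mathbf{N}^{\mathrm{T}} \mathbf{P} & \mathbf{C} \end{pmatrix},
\end{align}
so the theorem reduces to the two block identities $\mathbf{P}\mathbf{N} = \widehat{\mathbf{N}}$ and $\mathbf{P}\mathbf{B}\mathbf{P} = \mathbf{B}$.

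For the off-diagonal block I would argue column by column: for each column $\underline{x}$ of $\mathbf{N}$, one has $\mathbf{P}\underline{x} = \tfrac{2}{b}(\mathbf{1}_b^{\mathrm{T}}\underline{x})\,\mathbf{1}_b - \underline{x}$. Substituting the three admissible column sums shows that $\mathbf{P}\underline{x} = \underline{x}$ when the sum is $0$ or $b$, while $\mathbf{P}\underline{x} = \mathbf{1}_b - \underline{x}$ when the sum is $b/2$, matching exactly the definition of $\widehat{\mathbf{N}}$. Transposing, and using that $\mathbf{P}$ is symmetric, yields $\mathbf{N}^{\mathrm{T}}\mathbf{P} = \widehat{\mathbf{N}}^{\mathrm{T}}$ for free.

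The main obstacle is the identity $\mathbf{P}\mathbf{B}\mathbf{P} = \mathbf{B}$. It is equivalent to $\mathbf{B}$ commuting with $\mathbf{P}$, and hence with $\J{b,b}$, which is genuine content and is not automatic from the column-sum hypothesis on $\mathbf{N}$ alone. The natural sufficient condition --- implicit in the Godsil-McKay setup and needed here --- is that the subgraph induced on the first $b$ vertices be regular, i.e.\ $\mathbf{B}\mathbf{1}_b = k\mathbf{1}_b$ for some $k$; then $\mathbf{B}\J{b,b} = k\J{b,b} = \J{b,b}\mathbf{B}$, so $\mathbf{B}$ commutes with $\mathbf{P}$, giving $\mathbf{P}\mathbf{B}\mathbf{P} = \mathbf{B}\mathbf{P}^{2} = \mathbf{B}$. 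Assembling the three block identities yields $\A(\widehat{\Gr{G}}) = \mathbf{Q}^{\mathrm{T}}\A(\Gr{G})\,\mathbf{Q}$, an orthogonal similarity, so $\A(\Gr{G})$ and $\A(\widehat{\Gr{G}})$ share the same multiset of eigenvalues and the graphs are cospectral.
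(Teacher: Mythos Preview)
The paper does not actually supply a proof of this theorem; it merely states the result and cites Godsil and McKay's original paper \cite{GodsilM1982}, so there is no in-paper argument to compare against. Your approach---conjugating by the block-diagonal orthogonal matrix built from $\mathbf{P}=\tfrac{2}{b}\J{b,b}-\I{b}$---is exactly the standard proof of Godsil--McKay switching, and your computations are correct. (A small remark: in the column-sum-$0$ case one has $\mathbf{P}\underline{x}=-\underline{x}$, which equals $\underline{x}$ only because a $\{0,1\}$-vector with zero sum is the zero vector; this is implicit in what you wrote but worth making explicit.)

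More importantly, you have put your finger on a genuine omission in the theorem \emph{as stated in the paper}: the hypothesis that the subgraph induced on the first $b$ vertices is regular (equivalently, that $\mathbf{B}\mathbf{1}_b=k\mathbf{1}_b$ for some $k$) is essential for $\mathbf{P}\mathbf{B}\mathbf{P}=\mathbf{B}$, and without it the conclusion can fail. The original Godsil--McKay theorem includes this regularity assumption, and the paper's formulation appears to have dropped it inadvertently. Your proof is therefore correct for the intended (correctly stated) theorem, and your identification of the missing hypothesis is on point.
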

Two examples of pairs of NICS graphs are presented in Section~1.8.3 of \cite{BrouwerH2011}.

\subsection{Graphs resulting from the duplication and corona graphs}
\label{subsection: Graphs resulting from the duplication and corona graphs}

\begin{definition} \cite{SampathkumarW1980}
\label{definition: duplication graph}
Let $\Gr{G}$ be a graph with a vertex set $\V{\Gr{G}}=\{v_1, \ldots, v_n\}$, and
consider a copy of $\Gr{G}$ with a vertex set $\mathrm{\Gr{G}} = \{u_1, \ldots, u_n\}$,
where $u_i$ is a duplicate of the vertex $v_i$. For each $i \in \OneTo{n}$, connect the
vertex $u_i$ to all the neighbors of $v_i$ in $\Gr{G}$, and then delete all edges in
$\Gr{G}$. Similarly, for each $i \in \OneTo{n}$, connect the vertex $v_i$ to all the
neighbors of $u_i$ in the copied graph, and then delete all edges in the copied graph.
The resulting graph, which has $2n$ vertices is called the \emph{duplication graph}
of $\Gr{G}$, and is denoted by $\DuplicationGraph{\Gr{G}}$ (see Figure~\ref{fig:DG(C5)}).
\end{definition}
\begin{figure}[hbt]
\centering
\includegraphics[width=0.30\textwidth]{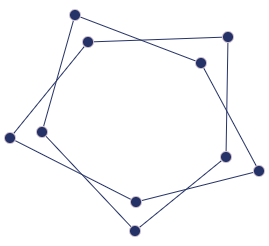}
\caption{\label{fig:DG(C5)} The duplication graph $\DuplicationGraph{\CG{5}}$ (see
Definition~\ref{definition: duplication graph}).}
\end{figure}

\begin{definition} \cite{FruchtH1970}
\label{definition: corona graph}
Let $\Gr{G}_1$ and $\Gr{G}_2$ be graphs on disjoint vertex sets of $n_1$
and $n_2$ vertices, and with $m_{1}$ and $m_{2}$ edges, respectively. The \emph{corona}
of $\Gr{G}_1$ and $\Gr{G}_2$, denoted by $\Corona{\Gr{G}_1}{\Gr{G}_2}$, is a graph
on $n_1+n_1 n_2$ vertices obtained by taking one copy of $\Gr{G}_1$
and $n_1$ copies of $\Gr{G}_2$, and then connecting, for each $i \in \OneTo{n_1}$,
the $i$-th vertex of $\Gr{G}_1$ to each vertex in the $i$-th copy of $\Gr{G}_2$
(see Figure~\ref{fig:C4 CORONA 2K1}).
\begin{figure}[hbt]
\begin{centering}
\includegraphics[width=0.30\textwidth]{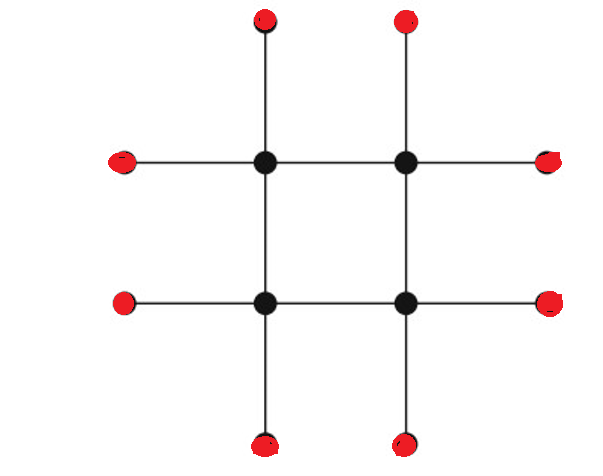}
\par\end{centering}
\caption{\label{fig:C4 CORONA 2K1} The corona graph $\Corona{\CG{4}}{(2\CoG{1})}$
(see Definition~\ref{definition: corona graph}) consists of a single copy of $\CG{4}$
(represented by the black vertices) and four copies of $2\CoG{1}$ (represented by the red vertices).}
\end{figure}
\end{definition}

\begin{definition} \cite{HouS2010}
\label{definition: edge corona graph}
The \emph{edge corona} of $\Gr{G}_1$ and $\Gr{G}_2$, denoted by
$\EdgeCorona{\Gr{G}_1}{\Gr{G}_2}$, is defined as the graph obtained by taking
one copy of $\Gr{G}_1$ and $m_1 = \bigcard{\E{\Gr{G}_1}}$ copies of $\Gr{G}_2$,
and then connecting, for each $j \in \OneTo{m_1}$, the two end-vertices of the
$j$-th edge of $\Gr{G}_1$ to every vertex in the $j$-th copy of $\Gr{G}_2$ (see
Figure~\ref{fig:C4 edge corona P3}).
\begin{figure}[hbt]
\begin{centering}
\includegraphics[width=0.30\textwidth]{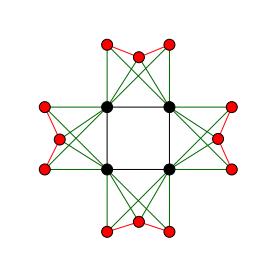}
\par\end{centering}
\caption{\label{fig:C4 edge corona P3}
The edge-corona graph $\EdgeCorona{\CG{4}}{\PathG{3}}$ (see
Definition~\ref{definition: edge corona graph}.}
\end{figure}
\end{definition}

\begin{definition}
\label{definition: duplication corona graph}
Let $\Gr{G}_1$ and $\Gr{G}_2$ be graphs with disjoint vertex sets of $n_1$
and $n_2$ vertices, respectively. Let $Du(\Gr{G}_1)$ be the duplication
graph of $\Gr{G}_1$ with vertex set $\V{\Gr{G}_1} \cup \mathrm{U}(\Gr{G}_1)$, where
$\V{\Gr{G}_1}=\{v_{1},\ldots,v_{n_1}\}$ and with the duplicated vertex set
$\mathrm{U}(\Gr{G}_1)=\{u_{1},\ldots,u_{n_{1}}\}$ (see Definition~\ref{definition: duplication graph}).
The \emph{duplication corona graph}, denoted by $\DuplicationCorona{\Gr{G}_1}{\Gr{G}_2}$,
is the graph obtained from $\DuplicationGraph{\Gr{G}_1}$ and $n_1$ copies of $\Gr{G}_2$
by connecting, for each $i \in \OneTo{n_1}$, the vertex $v_{i} \in \V{\Gr{G}_1}$ of
the graph $\DuplicationGraph{\Gr{G}_1}$ to every vertex in the $i$-th copy of $\Gr{G}_2$
(see Figure~\ref{fig:D.C}).
\begin{figure}[hbt]
\begin{centering}
\includegraphics[width=0.30\textwidth]{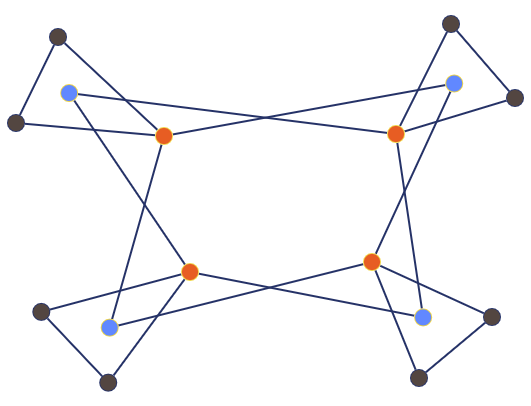}
\par\end{centering}
\caption{\label{fig:D.C} The duplication corona graph $\DuplicationCorona{\CG{4}}{\CoG{2}}$
(see Definition~\ref{definition: duplication corona graph}).}
\end{figure}
\end{definition}

\begin{definition}
\label{definition: duplication neighborhood corona}
Let $\Gr{G}_1$ and $\Gr{G}_2$ be graphs with disjoint vertex sets of $n_1$
and $n_2$ vertices, respectively. Let $\DuplicationGraph{\Gr{G}_1}$ be the duplication
graph of $\Gr{G}_1$ with the vertex set $\mathrm{V}(\Gr{G}_1)\cup \mathrm{U}(\Gr{G}_1)$, where
$\mathrm{V}(\Gr{G}_1)=\{v_1, \ldots, v_{n_1}\}$ and the duplicated vertex set
$\mathrm{U}(\Gr{G}_1)=\{u_1,\ldots,u_{n_1}\}$ (see Definition~\ref{definition: duplication graph}).
The \emph{duplication neighborhood corona}, denoted by $\Gr{G}_1 \boxbar \Gr{G}_2$,
is the graph obtained from $\DuplicationGraph{\Gr{G}_1}$ and $n_1$ copies of $\Gr{G}_2$
by connecting the neighbors of the vertex $v_i \in \V{\Gr{G}_1}$ of $\DuplicationGraph{\Gr{G}_1}$
to every vertex in the $i$-th copy of $\Gr{G}_2$ for $i \in \OneTo{n_1}$ (see Figure~\ref{fig:D.N.C}).
\begin{figure}[hbt]
\begin{centering}
\includegraphics[width=0.30\textwidth]{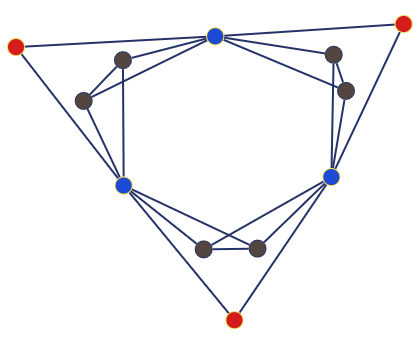}
\par\end{centering}
\caption{\label{fig:D.N.C} The duplication neighborhood corona $\CoG{3} \boxbar \CoG{2}$
(see Definition~\ref{definition: duplication neighborhood corona}).}
\end{figure}
\end{definition}

\begin{definition}
\label{definition: duplication edge corona}
Let $\Gr{G}_1$ and $\Gr{G}_2$ be graphs with disjoint vertex sets of $n_1$
and $n_2$ vertices, respectively. Let $\DuplicationGraph{\Gr{G}_1}$ be the
duplication graph of $\Gr{G}_1$ with vertex set $\V{\Gr{G_1}} \cup
\mathrm{U}(\Gr{G}_1)$, where $\V{\Gr{G}_1}=\{v_1, \ldots, v_{n_1}\}$ is the
vertex set of $\Gr{G}_1$ and $\mathrm{U}(\Gr{G}_1)=\{u_1, \ldots, u_{n_{1}}\}$
is the duplicated vertex set.
The \emph{duplication edge corona}, denoted by $\DuplicationEdgeCorona{\Gr{G}_1}{\Gr{G}_2}$,
is the graph obtained from $\DuplicationGraph{\Gr{G}_1}$ and $\bigcard{\E{\Gr{G}_1}}$
copies of $\Gr{G}_2$ by connecting each of the two vertices $v_i, v_j \in \V{\Gr{G}_1}$
of $\DuplicationGraph{\Gr{G}_1}$ to every vertex in the $k$-th copy of $\Gr{G}_2$
whenever $\{v_i, v_j\}=e_k \in \E{\Gr{G}_1}$ (see Figure~\ref{fig:D.E.C}).
\begin{figure}[hbt]
\begin{centering}
\includegraphics[width=0.30\textwidth]{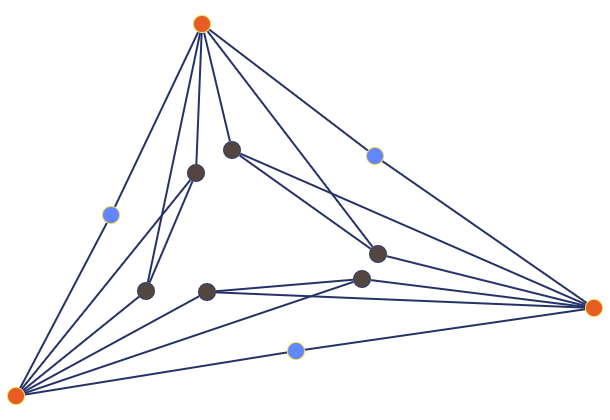}
\par\end{centering}
\centering{} \caption{\label{fig:D.E.C}
The duplication edge corona $\DuplicationEdgeCorona{\CoG{3}}{\CoG{2}}$ (see
Definition~\ref{definition: duplication edge corona}).}
\end{figure}
\end{definition}

\begin{definition}
\label{definition: closed neighborhood corona}
Consider two graphs $\Gr{G}_1$ and $\Gr{G}_2$ with $n_{1}$ and $n_{2}$
vertices and, respectively. The \emph{closed neighborhood corona} of
$\Gr{G}_1$ and $\Gr{G}_2$, denoted by $\ClosedNeighborhoodCorona{\Gr{G}_1}{\Gr{G}_2}$,
is a new graph obtained by creating $n_{1}$ copies of $\Gr{G}_2$. Each
vertex of the $i^{th}$ copy of $\Gr{G}_2$ is then connected to the
$i^{th}$ vertex and neighborhood of the $i^{th}$ vertex of $\Gr{G}_1$
(see Figure~\ref{fig:CNCP}).
\end{definition}

\begin{figure}[hbt]
\begin{centering}
\includegraphics[width=0.30\textwidth]{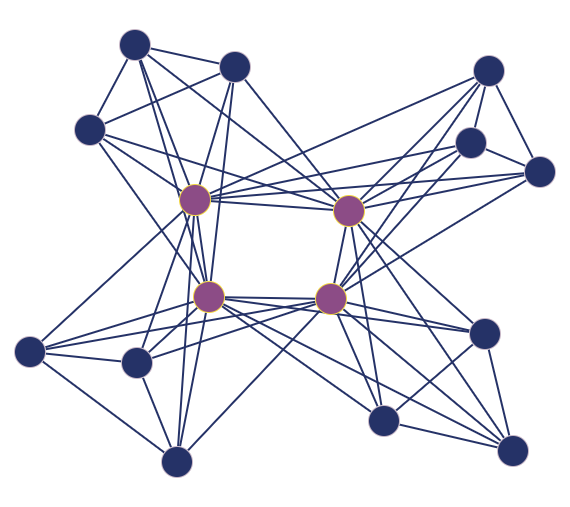}
\par\end{centering}
\caption{\label{fig:CNCP} \centering{The closed neighborhood corona
of the 4-length cycle $\CG{4}$ and the triangle $\CoG{3}$, denoted by
$\ClosedNeighborhoodCorona{\CG{4}}{\CoG{3}}$ (see
Definition~\ref{definition: closed neighborhood corona}).}}
\end{figure}

\begin{theorem}  \cite{AdigaRS2018}
\label{theorem: NICS graphs - 1}
Let $\Gr{G}_1,\Gr{H}_1$ be $r_1$-regular, cospectral graphs, and let
$\Gr{G}_2$ and $H_{2}$ be $r_2$-regular, cospectral, and nonisomorphic (NICS)
graphs. Then, the following holds:
\begin{itemize}
\item The duplication corona graphs $\DuplicationCorona{\Gr{G}_1}{\Gr{G}_2}$ and
$\DuplicationCorona{\Gr{H}_1}{\Gr{H}_2}$ are $\{ \A, \LM, \Q \}$-NICS irregular graphs.
\item The duplication neighborhood corona $\Gr{G}_1\boxbar \Gr{G}_2$ and
$\Gr{H}_1\boxbar H_{2}$ are $\{ \A, \LM, \Q \}$-NICS irregular graphs.
\item The duplication edge corona $\DuplicationEdgeCorona{\Gr{G}_1}{\Gr{G}_2}$ and
$\DuplicationEdgeCorona{\Gr{H}_1}{\Gr{H}_2}$ are $\{ \A, \LM, \Q \}$-NICS irregular graphs.
\end{itemize}
\end{theorem}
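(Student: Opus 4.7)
The three parts of the theorem share a common architecture: each compound graph $\Gr{G}_1 \star \Gr{G}_2$, for $\star \in \{\boxminus, \boxbar, \boxplus\}$, is built on the duplication backbone $\DuplicationGraph{\Gr{G}_1}$ with vertex set $\V{\Gr{G}_1} \sqcup \mathrm{U}(\Gr{G}_1)$, to which either $n_1$ or $\bigcard{\E{\Gr{G}_1}}$ copies of $\Gr{G}_2$ are attached by the prescribed rule. The plan is (i)~to invoke closed-form spectral formulas for the $\A$-, $\LM$-, and $\Q$-spectra of each compound graph that depend only on the integers $n_1, n_2, r_1, r_2$ and on $\sigma_{\A}(\Gr{G}_1), \sigma_{\A}(\Gr{G}_2)$; (ii)~to deduce $\{\A,\LM,\Q\}$-cospectrality of the three stated pairs by symbolic substitution into those formulas; and (iii)~to verify irregularity and nonisomorphism directly from the combinatorics.

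For (i), the key mechanism is standard for regular-block constructions: writing each of $\A, \LM, \Q$ in block form relative to the partition $\V{\Gr{G}_1} \sqcup \mathrm{U}(\Gr{G}_1) \sqcup (\text{attached copies})$, the $r_2$-regularity of $\Gr{G}_2$ lets every eigenvector of $\A(\Gr{G}_2)$ orthogonal to $\mathbf{1}_{n_2}$ lift (with appropriate diagonal shift for $\LM$ and $\Q$) to an eigenvector of the compound matrix, contributing copies of $\sigma_{\A}(\Gr{G}_2)\setminus\{r_2\}$ with combinatorially determined multiplicities. The orthogonal complement reduces to a quotient matrix on the $\Gr{G}_1$-backbone, which can be further block-diagonalized using the bipartite structure of $\DuplicationGraph{\Gr{G}_1}$ together with the $r_1$-regularity of $\Gr{G}_1$, producing eigenvalues as explicit algebraic functions of $\sigma_{\A}(\Gr{G}_1)$. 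This program was carried out in~\cite{AdigaRS2018}, whose formulas for $\boxminus$, $\boxbar$, and $\boxplus$ can be invoked directly. Step~(ii) then follows because the right-hand sides depend only on data that, by hypothesis, are shared between the pairs $(\Gr{G}_1, \Gr{G}_2)$ and $(\Gr{H}_1, \Gr{H}_2)$.

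For (iii), irregularity is immediate from a degree count: in $\DuplicationCorona{\Gr{G}_1}{\Gr{G}_2}$ the degrees are $r_1 + n_2$ on $\V{\Gr{G}_1}$, $r_1$ on $\mathrm{U}(\Gr{G}_1)$, and $r_2 + 1$ on each attached copy, so the two backbone classes already differ (since $n_2 \geq 1$); parallel three-value counts handle $\boxbar$ and $\boxplus$. For nonisomorphism, the attached gadgets are recoverable from $\Gr{G}_1 \star \Gr{G}_2$ as the connected components of the induced subgraph on the complement of the backbone $\V{\Gr{G}_1} \sqcup \mathrm{U}(\Gr{G}_1)$, and each such component is isomorphic to $\Gr{G}_2$. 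Since $\Gr{G}_2 \not\cong \Gr{H}_2$, any hypothetical isomorphism $\Gr{G}_1 \star \Gr{G}_2 \to \Gr{H}_1 \star \Gr{H}_2$ would induce an isomorphism between a disjoint union of copies of $\Gr{G}_2$ and one of copies of $\Gr{H}_2$, which is impossible. In the generic case the backbone is identifiable invariantly from the three distinct degree values above; in the degenerate case, one instead characterizes backbone vertices as those whose removal severs the graph into prescribed $\Gr{G}_2$-components, a property invariant under isomorphism.

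The main obstacle is step~(i): correctly setting up the block decomposition for each of $\A, \LM, \Q$ and tracking how multiplicities propagate through the quotient matrix, especially because the $\LM$- and $\Q$-spectra depend on the inhomogeneous vertex-degree diagonal across the three vertex classes. Once the three formulas are in hand, both~(ii) and~(iii) reduce to short verifications that can be written out uniformly for all three operations.
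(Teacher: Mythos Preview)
The paper does not supply its own proof of this theorem; it is stated with attribution to~\cite{AdigaRS2018} and no argument is given. Your proposal---to invoke the closed-form $\A$-, $\LM$-, and $\Q$-spectral formulas from~\cite{AdigaRS2018} (which depend only on $n_1,n_2,r_1,r_2,\sigma_{\A}(\Gr{G}_1),\sigma_{\A}(\Gr{G}_2)$), read off cospectrality, and then check irregularity and nonisomorphism by direct degree and structural inspection---is exactly the mechanism behind the cited result, so your approach coincides with what the paper relies on.
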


\begin{question}
\label{question: NICS graphs - 1}
Are the irregular graphs in Theorem~\ref{theorem: NICS graphs - 1} also cospectral with
respect to the normalized Laplacian matrix?
\end{question}

\begin{theorem}
\label{theorem: NICS graphs - 2}
\cite{SonarS2024} Let $\Gr{G}_1$ and $\Gr{G}_2$ be cospectral regular graphs, and let
$\Gr{H}$ be an arbitrary graph. Then, the following holds:
\begin{itemize}
\item The closed neighborhood corona $\ClosedNeighborhoodCorona{\Gr{G}_1}{\Gr{H}}$ and
$\ClosedNeighborhoodCorona{\Gr{G}_2}{\Gr{H}}$ are $\{ \A, \LM, \Q \}$-NICS irregular graphs.
\item The closed neighborhood corona $\ClosedNeighborhoodCorona{\Gr{H}}{\Gr{G}_{1}}$
and $\ClosedNeighborhoodCorona{\Gr{H}}{\Gr{G}_{2}}$ are $\{ \A, \LM, \Q \}$-NICS irregular graphs.
\end{itemize}
\end{theorem}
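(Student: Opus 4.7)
The plan is, for each of the two closed-neighborhood coronas and each of the matrices $\A,\LM,\Q$, to write it as a $2\times 2$ block matrix and compute its characteristic polynomial by Schur-complement reduction (Theorem~\ref{theorem: Schur complement}), showing that the resulting polynomial depends on $\Gr{G}_i$ only through $\sigma_{\A}(\Gr{G}_i)$ and the common degree $r$. Since cospectral regular graphs share $r$ and $\sigma_\A$, cospectrality of the coronas follows at once; by Proposition~\ref{proposition: regular graphs cospectrality} the $\LM$- and $\Q$-spectra of $\Gr{G}_1,\Gr{G}_2$ also agree, but the cleanest route is to redo the block computation separately for each of the three matrices.

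For $\ClosedNeighborhoodCorona{\Gr{G}_i}{\Gr{H}}$, with $n=\bigcard{\V{\Gr{G}_i}}$ and $m=\bigcard{\V{\Gr{H}}}$, the natural ordering yields
\begin{align}
\A\bigl(\ClosedNeighborhoodCorona{\Gr{G}_i}{\Gr{H}}\bigr)=
\begin{pmatrix}
\A(\Gr{G}_i) & (\A(\Gr{G}_i)+\I{n})\otimes \mathbf{1}_m^{\mathrm{T}} \\
(\A(\Gr{G}_i)+\I{n})\otimes \mathbf{1}_m & \I{n}\otimes \A(\Gr{H})
\end{pmatrix},
\end{align}
reflecting that each $v\in\V{\Gr{G}_i}$ is joined to every vertex of the copies of $\Gr{H}$ indexed by $N[v]$. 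Taking the Schur complement with respect to the $(2,2)$-block and using the Kronecker identity $(A\otimes B)(C\otimes D)=(AC)\otimes(BD)$, the off-diagonal contributions collapse into $\phi(x)\,(\A(\Gr{G}_i)+\I{n})^2$, where the scalar $\phi(x):=\mathbf{1}_m^{\mathrm{T}}(xI_m-\A(\Gr{H}))^{-1}\mathbf{1}_m$ depends only on $\Gr{H}$. The Schur complement is then a polynomial in $\A(\Gr{G}_i)$ alone, so its determinant factors over $\sigma_{\A}(\Gr{G}_i)$. For $\LM$ and $\Q$ the analogous block forms are obtained by adding the diagonal degree matrix, which on the $(1,1)$-block becomes the scalar $r+m(r+1)$ times $\I{n}$ (here the regularity of $\Gr{G}_i$ is used), and on the $(2,2)$-block becomes $\I{n}\otimes(\D(\Gr{H})\pm(r+1)\I{m})$; the same Schur-complement template then produces characteristic polynomials determined entirely by $r$ and $\sigma_{\A}(\Gr{G}_i)$.

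For the second construction $\ClosedNeighborhoodCorona{\Gr{H}}{\Gr{G}_i}$, the analogous block form has $\A(\Gr{H})$ in the $(1,1)$-slot and $\I{n_H}\otimes \A(\Gr{G}_i)$ in the $(2,2)$-slot, with $n_H=\bigcard{\V{\Gr{H}}}$. I would now apply the Schur complement with respect to the $(1,1)$-block; the cross-term simplifies to a matrix of the form $\alpha(x)\otimes \J{n,n}$, where the $n_H\times n_H$ matrix $\alpha(x)$ depends only on $\Gr{H}$. Decomposing $\Reals^{n_H}\otimes \Reals^{n}$ as $(\Reals^{n_H}\otimes \mathrm{span}(\mathbf{1}_n)) \oplus (\Reals^{n_H}\otimes \mathbf{1}_n^{\perp})$, the correction vanishes on the second summand, while on the first summand the $r$-regularity of $\Gr{G}_i$ makes $\A(\Gr{G}_i)$ act by the scalar $r$. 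The determinant thus factors as $\bigl[\det(xI_n-\A(\Gr{G}_i))/(x-r)\bigr]^{n_H}$ times an expression depending only on $\Gr{H}$ and on $r$, giving $\A$-cospectrality; the $\LM$ and $\Q$ variants go through with the same template, with the pair $(\A(\Gr{G}_i),r)$ replaced by $(\LM(\Gr{G}_i),0)$ or $(\Q(\Gr{G}_i),2r)$ respectively.

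Nonisomorphism, which is implicit in the NICS conclusion, follows because in either construction the base graph can be structurally recovered as the induced subgraph on the unique vertex class carrying the base-type degree, so any isomorphism between the two coronas would restrict to one between $\Gr{G}_1$ and $\Gr{G}_2$. Irregularity of the coronas is immediate from the degree formulas, since the base-class degree differs from the copy-class degrees outside degenerate parameter values. The main obstacle I foresee is the clean bookkeeping of Kronecker products and dimensions in the Schur-complement steps, and in the second construction the careful separation of the $\mathbf{1}_n$-eigenspace from its orthogonal complement, which is precisely where the regularity of $\Gr{G}_i$ is essential; once these identities are written out, the rest of the proof is routine verification.
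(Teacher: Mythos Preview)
The paper does not give its own proof of this theorem; it is stated with a citation to \cite{SonarS2024} and is immediately followed by Question~\ref{question: NICS graphs - 2}. So there is no in-paper argument to compare against. Your Schur-complement strategy is the standard one for spectra of corona-type products and is very likely what \cite{SonarS2024} does as well: your block form for $\A\bigl(\ClosedNeighborhoodCorona{\Gr{G}_i}{\Gr{H}}\bigr)$ is correct, and the reduction to a scalar function $\phi(x)=\mathbf{1}_m^{\mathrm{T}}(xI_m-\A(\Gr{H}))^{-1}\mathbf{1}_m$ times $(\A(\Gr{G}_i)+\I{n})^2$ goes through exactly as you describe. For the second bullet, it is in fact cleaner to take the Schur complement with respect to the $(2,2)$-block rather than the $(1,1)$-block: since $\Gr{G}_i$ is $r$-regular, $\mathbf{1}_n^{\mathrm{T}}(x\I{n}-\A(\Gr{G}_i))^{-1}\mathbf{1}_n=\tfrac{n}{x-r}$ directly, so the Schur complement becomes $(x\I{n_H}-\A(\Gr{H}))-\tfrac{n}{x-r}(\A(\Gr{H})+\I{n_H})^2$, which depends only on $\Gr{H}$, $n$, and $r$; the remaining factor $\det(x\I{n}-\A(\Gr{G}_i))^{n_H}$ carries the spectral dependence on $\Gr{G}_i$. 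This avoids the eigenspace decomposition you outline, though your route also works.

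There is one genuine slip in your nonisomorphism argument. For the second bullet, the base graph in both $\ClosedNeighborhoodCorona{\Gr{H}}{\Gr{G}_1}$ and $\ClosedNeighborhoodCorona{\Gr{H}}{\Gr{G}_2}$ is the \emph{same} graph $\Gr{H}$, so recovering the base does not distinguish them. What you need instead is to identify the copy-vertex class (again by degrees) and observe that the induced subgraph on that class is $n_H$ disjoint copies of $\Gr{G}_i$, from which $\Gr{G}_i$ can be recovered; an isomorphism of the coronas would then force $\Gr{G}_1\cong\Gr{G}_2$. Also, your hedge ``outside degenerate parameter values'' for irregularity should be tightened: the only way the two degree classes coincide is when $r=0$, which forces $\Gr{G}_1\cong\Gr{G}_2$ and so is excluded by the implicit nonisomorphism hypothesis.
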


\begin{question}
\label{question: NICS graphs - 2}
Are the irregular graphs in Theorem~\ref{theorem: NICS graphs - 2} also cospectral with respect
to the normalized Laplacian matrix?
\end{question}

\subsection{Graphs constructions based on the subdivision and bipartite incidence graphs}
\label{subsection: Graphs constructions based on the subdivision and bipartite incidence graphs}
\begin{definition} \cite{CvetkovicRS2010}
\label{definition: subdivision graph}
Let $\Gr{G}$ be a graph. The \emph{subdivision graph} of $\Gr{G}$, denoted by $\SubdivisionGraph{\Gr{G}}$,
is obtained from $\Gr{G}$ by inserting a new vertex into every edge of $\Gr{G}$.
Subdivision is the process of adding a new vertex along an edge, effectively splitting the edge into two
edges connected in series through the new vertex (see Figure~\ref{fig:subdivition}).
\begin{figure}[hbt]
\centering{}\includegraphics[width=0.30\textwidth]{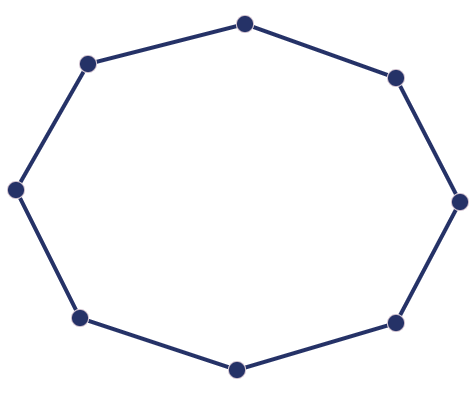}
\caption{\label{fig:subdivition} \centering{The subdivision graph of a $4$-length cycle, denoted by
$\SubdivisionGraph{\CG{4}}$, which is an $8$-length cycle $\CG{8}$ (see Definition~\ref{definition: subdivision graph}).}}
\end{figure}
\end{definition}

\begin{definition}  \cite{PisankiS13}
\label{definition: bipartite incidence graph}
Let $\Gr{G}$ be a graph. The \emph{bipartite incidence graph} of $\Gr{G}$, denoted by $\BipartiteIncidenceGraph{\Gr{G}}$,
is a bipartite graph constructed as follows: For each edge $e \in \Gr{G}$, a new vertex $u_{e}$ is added to
the vertex set of $\Gr{G}$. The vertex $u_e$ is then made adjacent to both endpoints of $e$ in $\Gr{G}$
(see Figure~\ref{fig:R(C4)-1}).
\begin{figure}[H]
\begin{centering}
\includegraphics[width=0.30\textwidth]{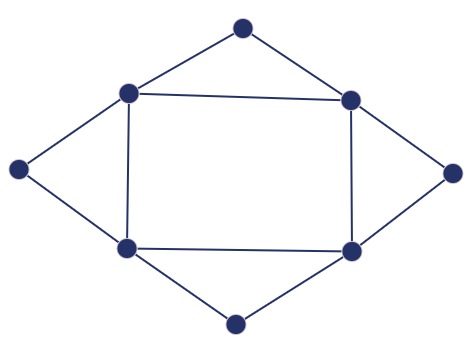}
\par\end{centering}
\centering{}\caption{\label{fig:R(C4)-1} \centering{The bipartite incidence graph of a length-4 cycle $\CG{4}$,
denoted by $\BipartiteIncidenceGraph{\CG{4}}$ (see Definition~\ref{definition: bipartite incidence graph}).}}
\end{figure}
\end{definition}

Let $\Gr{G}_1$ and $\Gr{G}_2$ be two vertex-disjoint graphs with $n_1$ and $n_2$ vertices, and $m_1$ and $m_2$ edges,
respectively. Four binary operations on these graphs are defined as follows.
\begin{definition}
\label{definition: subdivision-vertex-bipartite-vertex join}
The \emph{subdivision-vertex-bipartite-vertex join} of graphs $\Gr{G}_1$ and $\Gr{G}_2$, denoted $\SVBVJ{\Gr{G}_1}{\Gr{G}_2}$,
is the graph obtained from $\SubdivisionGraph{\Gr{G}_1}$ and $\BipartiteIncidenceGraph{\Gr{G}_2}$
by connecting each vertex of $\V{\Gr{G}_1}$ (that is the subset of the original vertices in the vertex set of $\SubdivisionGraph{\Gr{G}_1}$)
with every vertex of $\V{\Gr{G}_2}$ (that is the subset of the original vertices in the vertex set of $\BipartiteIncidenceGraph{\Gr{G}_2}$).
\end{definition}
By Definitions~\ref{definition: subdivision graph}, \ref{definition: bipartite incidence graph}, and~\ref{definition: subdivision-vertex-bipartite-vertex join},
it follows that the graph $\SVBVJ{\Gr{G}_1}{\Gr{G}_2}$ has $n_1+n_2+m_1+m_2$ vertices and $n_1 n_2+2m_1+3m_2$ edges.
Figure~\ref{fig:subdivision-vertex-bipartite-vertex join} displays the graph $\SVBVJ{\CG{4}}{\PathG{3}}$.
\begin{figure}[hbt]
\begin{centering}
\includegraphics[width=0.40\textwidth]{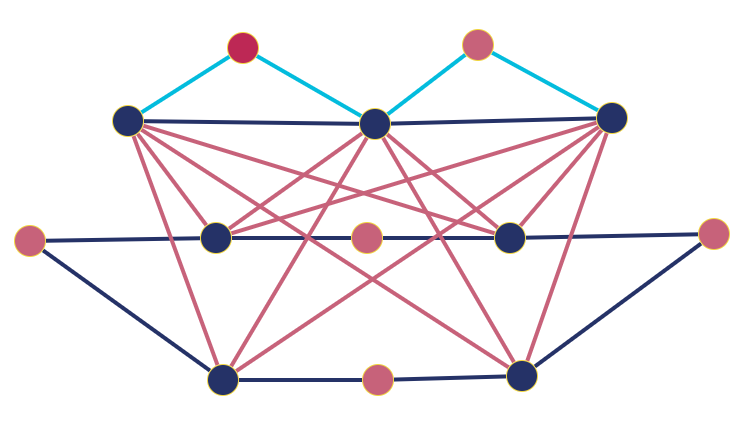}
\par\end{centering}
\caption{\label{fig:subdivision-vertex-bipartite-vertex join} The graph $\SVBVJ{\CG{4}}{\PathG{3}}$ (see Definition~\ref{definition: subdivision-vertex-bipartite-vertex join}).
The black vertices represent the vertices of the length-4 cycle $\CG{4}$ and the vertices of the path $\PathG{3}$. The additional vertices in the
subdivision graph $\SubdivisionGraph{\CG{4}}$ are the four red vertices located at the bottom of this figure (as shown in Figure~\ref{fig:subdivition}).
Similarly, the additional vertices in the bipartite incidence graph of the path $\PathG{3}$ are the two red vertices located at the top of this figure.
The edges of the graph include the following: edges connecting each black vertex of $\CG{4}$ to each black vertex of $\PathG{3}$, edges between the black
and red vertices at the bottom of the figure (that correspond to the subdivision of $\CG{4}$), and the four (light blue) edges connecting the two top red
vertices to the top black vertices of the figure (that correspond to the bipartite incidence graph of $\PathG{3}$).}
\end{figure}

\begin{definition}
\label{definition: subdivision-edge-bipartite-edge join}
The \emph{subdivision-edge-bipartite-edge join} of $\Gr{G}_1$ and $\Gr{G}_2$, denoted by $\SEBEJ{\Gr{G}_1}{\Gr{G}_2}$,
is the graph obtained from $\SubdivisionGraph{\Gr{G}_1}$ and $\BipartiteIncidenceGraph{\Gr{G}_2}$
by connecting each vertex of $\V{\SubdivisionGraph{\Gr{G}_1}} \setminus \V{\Gr{G}_1}$ (that is the subset of the added vertices in the
vertex set of $\SubdivisionGraph{\Gr{G}_1}$) with every vertex of $\V{\BipartiteIncidenceGraph{\Gr{G}_2}} \setminus \V{\Gr{G}_2}$
(that is the subset of the added vertices in the vertex set of $\BipartiteIncidenceGraph{\Gr{G}_2}$).
\end{definition}
By Definitions~\ref{definition: subdivision graph}, \ref{definition: bipartite incidence graph}, and~\ref{definition: subdivision-edge-bipartite-edge join},
it follows that the graph $\SEBEJ{\Gr{G}_1}{\Gr{G}_2}$ has $n_1+n_2+m_1+m_2$ vertices and $m_1 m_2 + 2m_1 + 3m_2$ edges.
Figure~\ref{fig:subdivision-edge-R-edge join} displays the graph $\SEBEJ{\CG{4}}{\PathG{3}}$.
\begin{figure}[hbt]
\begin{centering}
\includegraphics[width=0.40\textwidth]{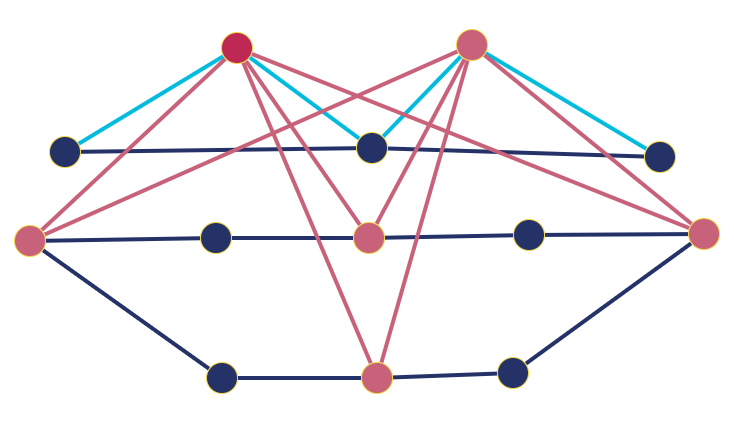}
\par\end{centering}
\caption{\label{fig:subdivision-edge-R-edge join} The graph $\SEBEJ{\CG{4}}{\PathG{3}}$ (see Definition~\ref{definition: subdivision-edge-bipartite-edge join}).
In comparison to Figure~\ref{fig:subdivision-vertex-bipartite-vertex join}, edges connecting each black vertex of $\CG{4}$ to each black vertex of $\PathG{3}$
are deleted and replaced in this figure by edges connecting each red vertex of $\CG{4}$ to each red vertex of $\PathG{3}$.}
\end{figure}

\begin{definition}
\label{definition: subdivision-edge-bipartite-vertex join}
The \emph{subdivision-edge-bipartite-vertex join} of $\Gr{G}_1$ and $\Gr{G}_2$, denoted by $\SEBVJ{\Gr{G}_1}{\Gr{G}_2}$,
is the graph obtained from $\SubdivisionGraph{\Gr{G}_1}$ and $\BipartiteIncidenceGraph{\Gr{G}_2}$ by connecting each vertex
of $\V{\SubdivisionGraph{\Gr{G}_1}} \setminus \V{\Gr{G}_1}$ (that is the subset of the added vertices in the vertex set of
$\SubdivisionGraph{\Gr{G}_1}$) with every vertex of $\V{\Gr{G}_2}$ (that is the subset of the original vertices in the vertex
set of $\BipartiteIncidenceGraph{\Gr{G}_2}$).
\end{definition}
By Definitions~\ref{definition: subdivision graph}, \ref{definition: bipartite incidence graph}, and~\ref{definition: subdivision-edge-bipartite-vertex join},
it follows that the graph $\SEBVJ{\Gr{G}_1}{\Gr{G}_2}$ has $n_1+n_2+m_1+m_2$ vertices and $m_1 n_2 + 2m_1 + 3m_2$ edges.
Figure~\ref{fig:subdivision-edge-R-vertex join} displays the graph $\SEBVJ{\CG{4}}{\PathG{3}}$.
\begin{figure}[hbt]
\begin{centering}
\includegraphics[width=0.40\textwidth]{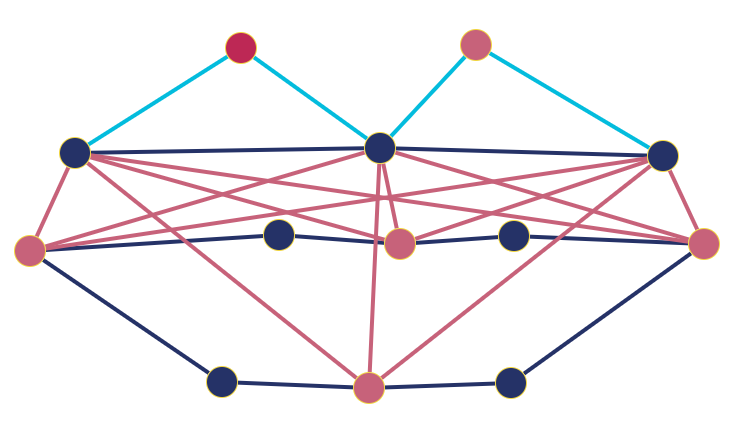}
\par\end{centering}
\caption{\label{fig:subdivision-edge-R-vertex join} The graph $\SEBVJ{\CG{4}}{\PathG{3}}$ (see Definition~\ref{definition: subdivision-edge-bipartite-vertex join}).
In comparison to Figure~\ref{fig:subdivision-vertex-bipartite-vertex join}, edges connecting each black vertex of $\CG{4}$ to each black vertex of $\PathG{3}$
are deleted and replaced in this figure by edges connecting each red vertex of $\CG{4}$ to each black vertex of $\PathG{3}$.}
\end{figure}

\begin{definition}
\label{definition: subdivision-vertex-bipartite-edge join}
The \emph{subdivision-vertex-bipartite-edge join} of $\Gr{G}_1$ and $\Gr{G}_2$, denoted by $\SVBEJ{\Gr{G}_1}{\Gr{G}_2}$,
is the graph obtained from $\SubdivisionGraph{\Gr{G}_1}$ and $\BipartiteIncidenceGraph{\Gr{G}_2}$ by connecting each vertex
of $\V{\Gr{G}_1}$ (that is the subset of the original vertices in the vertex set of $\SubdivisionGraph{\Gr{G}_1}$) with
every vertex of $\V{\BipartiteIncidenceGraph{\Gr{G}_2}} \setminus \V{\Gr{G}_2}$ (that is the subset of the added vertices in the vertex
set of $\BipartiteIncidenceGraph{\Gr{G}_2}$).
\end{definition}
By Definitions~\ref{definition: subdivision graph}, \ref{definition: bipartite incidence graph}, and~\ref{definition: subdivision-vertex-bipartite-edge join},
it follows that the graph $\SVBEJ{\Gr{G}_1}{\Gr{G}_2}$ has $n_1+n_2+m_1+m_2$ vertices and $n_1 m_2 + 2m_1 + 3m_2$ edges.
Figure~\ref{fig:subdivision-vertex-R-edge join} displays the graph $\SVBEJ{\CG{4}}{\PathG{3}}$.
\begin{figure}[hbt]
\begin{centering}
\includegraphics[width=0.40\textwidth]{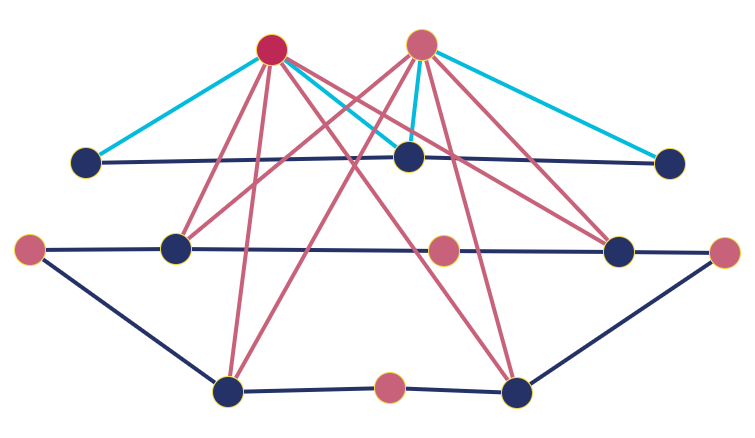}
\par\end{centering}
\caption{\label{fig:subdivision-vertex-R-edge join} The graph $\SVBEJ{\CG{4}}{\PathG{3}}$ (see Definition~\ref{definition: subdivision-vertex-bipartite-edge join}).
In comparison to Figure~\ref{fig:subdivision-vertex-bipartite-vertex join}, edges connecting each black vertex of $\CG{4}$ to each black vertex of $\PathG{3}$
are deleted and replaced in this figure by edges connecting each black vertex of $\CG{4}$ to each red vertex of $\PathG{3}$.}
\end{figure}

We next present the main result of this subsection, which motivates the four binary graph operations introduced in
Definitions~\ref{definition: subdivision-vertex-bipartite-vertex join}--\ref{definition: subdivision-vertex-bipartite-edge join}.
\begin{theorem} \cite{DasP2013}
\label{theorem: NICS - DasP2013} Let $\Gr{G}_i$ , $\Gr{H}_i$, where $i \in \{1,2\}$,
be regular graphs, where $\Gr{G}_1$ need not be different from $\Gr{H}_1$.
If $\Gr{G}_1$ and $\Gr{H}_1$ are $\A$-cospectral, and $\Gr{G}_2$ and $\Gr{H}_2$
are $\A$-cospectral, then
\begin{itemize}
\item $\SVBVJ{\Gr{G}_1}{\Gr{G}_2}$ and $\SVBVJ{\Gr{H}_1}{\Gr{H}_2}$
are irregular $\{ \A, \LM, {\bf{\mathcal{L}}} \} $-NICS graphs.
\item $\SEBEJ{\Gr{G}_1}{\Gr{G}_2}$ and $\SEBEJ{\Gr{H}_1}{\Gr{H}_2}$
are irregular $\{ \A, \LM, {\bf{\mathcal{L}}} \} $-NICS graphs.
\item $\SEBVJ{\Gr{G}_1}{\Gr{G}_2}$ and $\SEBVJ{\Gr{H}_1}{\Gr{H}_2}$
are irregular $\{ \A, \LM, {\bf{\mathcal{L}}} \} $-NICS graphs.
\item $\SVBEJ{\Gr{G}_1}{\Gr{G}_2}$ and $\SVBEJ{\Gr{H}_1}{\Gr{H}_2}$
are irregular $\{ \A, \LM, {\bf{\mathcal{L}}} \} $-NICS graphs.
\end{itemize}
\end{theorem}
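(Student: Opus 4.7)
The plan is to compute closed-form expressions for the characteristic polynomials of the four constructed graphs with respect to each of the matrices $\A$, $\LM$, and $\mathcal{L}$, and to show that each such polynomial is a function solely of the $\A$-spectra of $\Gr{G}_1$ and $\Gr{G}_2$ together with their common degrees (which are themselves encoded in the spectra, by Theorem~\ref{theorem: graph regularity from A-spectrum}). Once this is done, the $\A$-cospectrality of $\Gr{G}_i$ and $\Gr{H}_i$ for $i\in\{1,2\}$ immediately yields $\{\A,\LM,\mathcal{L}\}$-cospectrality of the paired constructions.

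Concretely, I would first write $\A$ of each construction as a $4\times 4$ (or $5\times 5$) block matrix whose blocks are expressible through $\A(\Gr{G}_1)$, $\A(\Gr{G}_2)$, the incidence matrices ${\mathbf{B}}_1 = {\mathbf{B}}(\Gr{G}_1)$ and ${\mathbf{B}}_2 = {\mathbf{B}}(\Gr{G}_2)$ (see Definition~\ref{definition: incidence matrix}), together with all-ones blocks coming from the various joins in Definitions~\ref{definition: subdivision-vertex-bipartite-vertex join}--\ref{definition: subdivision-vertex-bipartite-edge join}. The crucial algebraic inputs for the reduction are the identities
\begin{align}
{\mathbf{B}}_i {\mathbf{B}}_i^{\mathrm{T}} = \A(\Gr{G}_i) + d_i \, \I{n_i}, \qquad
{\mathbf{B}}_i^{\mathrm{T}} {\mathbf{B}}_i = \A(\ell(\Gr{G}_i)) + 2 \, \I{m_i},
\end{align}
which hold for every $d_i$-regular graph $\Gr{G}_i$; the first follows from Item~\ref{Item 1: signless Laplacian matrix of a graph} of Theorem~\ref{theorem: On the signless Laplacian matrix of a graph}, and the second is standard. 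Because the all-ones blocks arising from the joins share $\mathbf{1}$ as a common eigenvector (which is an eigenvector of $\A(\Gr{G}_i)$ for the eigenvalue $d_i$ by regularity), one can decompose the characteristic polynomial by splitting the space into the $\mathbf{1}$-invariant subspace and its orthogonal complement. On the orthogonal complement the all-ones blocks act as zero, so the polynomial factors, and then I would apply Schur's determinantal formula (Theorem~\ref{theorem: Schur complement}) to evaluate the resulting block determinants entirely in terms of the eigenvalues of $\A(\Gr{G}_1)$, $\A(\Gr{G}_2)$, and the degrees $d_1,d_2,n_1,n_2,m_1,m_2$ (the latter four being invariants of the $\A$-spectrum by Corollary~\ref{corollary: number of edges and triangles in a graph} and regularity).

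For the Laplacian $\LM$ and the normalized Laplacian $\mathcal{L}$, I would repeat the same block-matrix and Schur-complement computation, noting that the additional diagonal degree matrix $\D$ of each construction is already determined, entry-wise, by $d_1, d_2, n_1, n_2, m_1, m_2$ (the degrees of every vertex in $\SubdivisionGraph{\Gr{G}_i}$ and $\BipartiteIncidenceGraph{\Gr{G}_i}$, and of every new join-added adjacency, are simple expressions in these parameters). Hence $\D$ is the same for the $\Gr{G}$-construction and the $\Gr{H}$-construction, and the same factoring argument on $\mathbf{1}$-invariant subspaces delivers $\LM$- and $\mathcal{L}$-cospectrality. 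Irregularity is immediate from the construction: the vertices in $\V{\Gr{G}_1}$, the subdivision vertices of $\SubdivisionGraph{\Gr{G}_1}$, the vertices in $\V{\Gr{G}_2}$, and the incidence vertices of $\BipartiteIncidenceGraph{\Gr{G}_2}$ acquire different degrees under any of the four joins whenever $n_1,n_2,m_1,m_2 \geq 1$ and the regular degrees $d_1,d_2$ are small, so a routine degree count rules out regularity.

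Nonisomorphism of the resulting pair is the remaining ingredient and the subtlest point: one must ensure that at least one of the pairs $\{\Gr{G}_i,\Gr{H}_i\}$ is genuinely nonisomorphic (the "need not be different" clause merely permits the trivial choice for one of the pairs). Given that, I would argue nonisomorphism structurally by observing that each of the four constructions contains an identifiable induced copy of $\Gr{G}_i$ or its subdivision, so an isomorphism between $\SVBVJ{\Gr{G}_1}{\Gr{G}_2}$ and $\SVBVJ{\Gr{H}_1}{\Gr{H}_2}$ (and similarly for the other three operations) would induce an isomorphism between the constituent graphs, contradicting the NICS assumption on the chosen pair. The main obstacle I anticipate is the careful bookkeeping of the Schur-complement factorizations: making sure that, for each of the four joins and each of the three matrices, the characteristic polynomial is cleanly expressed as a polynomial in the eigenvalues $\lambda_j(\Gr{G}_1), \lambda_k(\Gr{G}_2)$ and the scalar invariants $d_1,d_2,n_1,n_2,m_1,m_2$, with no residual dependence on the specific incidence matrices ${\mathbf{B}}_1, {\mathbf{B}}_2$ beyond their spectra --- which is precisely what the identity ${\mathbf{B}}_i {\mathbf{B}}_i^{\mathrm{T}} = \A(\Gr{G}_i) + d_i \I{n_i}$, combined with the $\mathbf{1}$-invariant decomposition, guarantees.
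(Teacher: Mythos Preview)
The paper does not prove this theorem; it is stated as a cited result from \cite{DasP2013} in the survey portion of Section~\ref{section: graph operations}. Your proposed approach---writing the relevant matrix of each construction as a block matrix in $\A(\Gr{G}_i)$, the incidence matrices ${\mathbf{B}}_i$, and all-ones blocks, then decomposing along the $\mathbf{1}$-invariant subspace and applying Schur's formula (Theorem~\ref{theorem: Schur complement}) to express the characteristic polynomial purely in terms of the $\A$-eigenvalues of $\Gr{G}_1,\Gr{G}_2$ and the scalar invariants $d_i,n_i,m_i$---is exactly the method of the cited reference, so your plan is correct and matches the original source.

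One caution on the nonisomorphism step: your claim that an isomorphism of the compound graphs ``would induce an isomorphism between the constituent graphs'' needs the four vertex classes (original vertices of $\Gr{G}_1$, subdivision vertices, original vertices of $\Gr{G}_2$, incidence vertices) to be canonically recoverable from the compound graph. This typically follows from degree considerations, but can fail for coincidental parameter values (e.g., when two classes happen to have the same degree). The cited paper, like most in this area, treats the cospectrality computation as the substantive content and takes nonisomorphism as inherited from the NICS input pair without a detailed structural argument; your outline is in line with that convention, but be aware that a fully rigorous nonisomorphism proof would require checking that the vertex-class partition is isomorphism-invariant.
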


In light of Theorem~\ref{theorem: NICS - DasP2013}, the following questions naturally arises.
\begin{question}
\label{question: NICS - DasP2013}
Are the graphs in Theorem~\ref{theorem: NICS - DasP2013} also cospectral with respect to the signless Laplacian matrix (i.e., $\Q$-cospectral)?
\end{question}

\subsection{Connected irregular NICS graphs}
\label{subsection: Connected irregular NICS graphs}

\noindent

The (joint) cospectrality of regular graphs with respect to their adjacency, Laplacian, signless Laplacian, and
normalized Laplacian matrices can be asserted by verifying their cospectrality with respect to only one of these matrices.
In other words, regular graphs are $X$-cospectral for some $X \in \{\A, \LM, \Q, {\bf{\mathcal{L}}} \}$
if and only if they are cospectral with respect to all these matrices (see Proposition~\ref{proposition: regular graphs cospectrality}).
For irregular graphs, this does not hold in general.
Following \cite{Butler2010}, it is natural to ask the following question:
\begin{question}
\label{question: Butler2010}
Are there pairs of irregular graphs that are $\{\A,\LM,\Q,{\bf{\mathcal{L}}}\}$-NICS,
i.e., $X$-NICS with respect to every $X \in \{\A, \LM, \Q, {\bf{\mathcal{L}}}\}$?
\end{question}
This question remained open until two coauthors of this paper recently proposed a method for constructing pairs
of irregular graphs that are $X$-cospectral with respect to every $X \in \{\A, \LM, \Q, {\bf{\mathcal{L}}}\}$,
providing explicit constructions \cite{HamudB24}. Building on that work, another coauthor of this paper demonstrated
in \cite{Sason2024} that for every even integer $n \geq 14$, there exist two connected, irregular
$\{\A,\LM,\Q,{\bf{\mathcal{L}}}\}$-NICS graphs on $n$ vertices with identical independence, clique, and chromatic numbers,
yet distinct Lov{\'a}sz $\vartheta$-functions. We now present the preliminary definitions required to outline
the relevant results in \cite{HamudB24} and \cite{Sason2024}, and the construction of such cospectral irregular
$X$-NICS graphs for all $X \in \{\A, \LM, \Q, {\bf{\mathcal{L}}}\}$.

\begin{definition}[Neighbors splitting join of graphs]
\label{definition: neighbors splitting join of graphs}
\cite{LuMZ2023} Let $\Gr{G}$ and $\Gr{H}$ be graphs with disjoint vertex sets, and let
$\V{\Gr{G}} = \{v_1, \ldots, v_n\}$. The {\em neighbors splitting (NS) join}
of $\Gr{G}$ and $\Gr{H}$ is obtained by adding vertices $v'_1, \ldots, v'_n$
to the vertex set of $\Gr{G} \vee \Gr{H}$ and connecting $v'_i$ to $v_j$ if
and only if $\{v_i, v_j\} \in \E{\Gr{G}}$.
The NS join of $\Gr{G}$ and $\Gr{H}$ is denoted by $\Gr{G} \NS \Gr{H}$.
\end{definition}

\begin{definition}[Nonneighbors splitting join of graphs \cite{Hamud23,HamudB24}]
\label{definition: nonneighbors splitting join of graphs}
Let $\Gr{G}$ and $\Gr{H}$ be graphs with disjoint vertex sets, and let
$\V{\Gr{G}} = \{v_1, \ldots, v_n\}$. The {\em nonneighbors splitting (NNS) join}
of $\Gr{G}$ and $\Gr{H}$ is obtained by adding vertices $v'_1, \ldots, v'_n$
to the vertex set of $\Gr{G} \vee \Gr{H}$ and connecting $v'_i$ to $v_j$, with
$i \neq j$, if and only if $\{v_i, v_j\} \not\in \E{\Gr{G}}$.
The NNS join of $\Gr{G}$ and $\Gr{H}$ is denoted by $\Gr{G} \NNS \Gr{H}$.
\end{definition}

\begin{remark}
In general, $\Gr{G} \NS \Gr{H} \not\cong \Gr{H} \NS \Gr{G}$ and
$\Gr{G} \NNS \Gr{H} \not\cong \Gr{H} \NNS \Gr{G}$ (unless $\Gr{G} \cong \Gr{H}$).
\end{remark}

\begin{example}[NS and NNS join of graphs \cite{HamudB24}]
\label{example: NS and NNS Join of Graphs}
Figure~\ref{fig:BermanH23-Fig. 2.2} shows the NS and NNS
joins of the path graphs $\PathG{4}$ and $\PathG{2}$, denoted by $\PathG{4} \NS \PathG{2}$
and $\PathG{4} \NNS \PathG{2}$, respectively.
\vspace*{-0.1cm}
\begin{figure}[H]
\centering
\includegraphics[width=11cm]{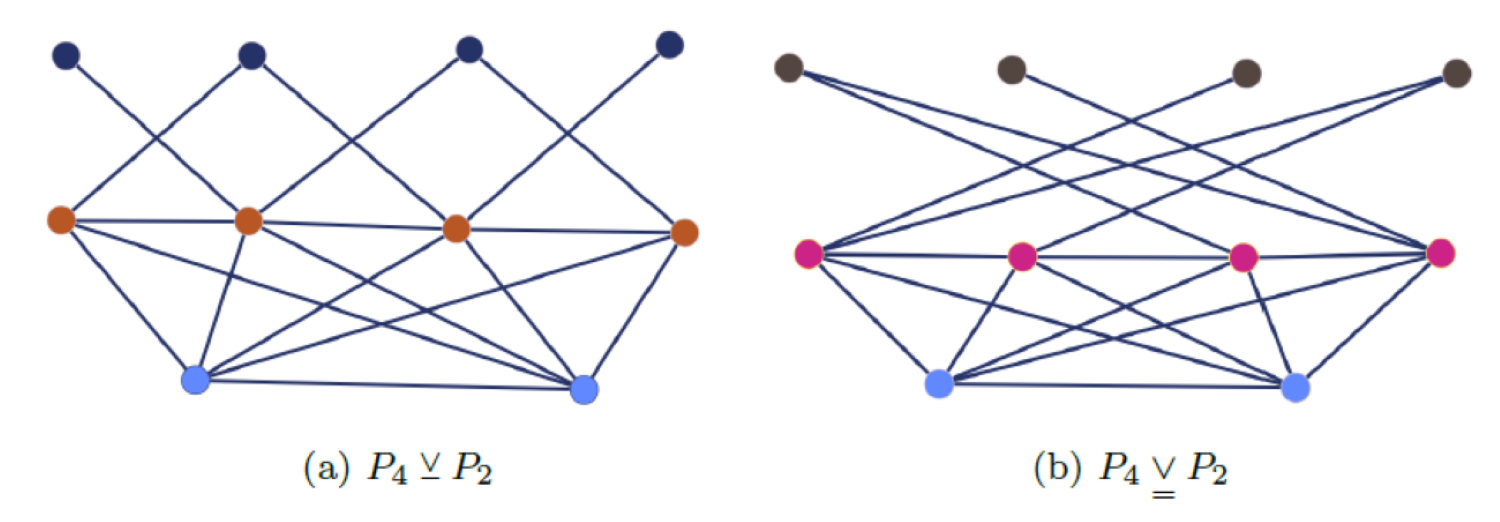}
\caption{The neighbors splitting (NS)
and nonneighbors splitting (NNS) joins of the path graphs $\PathG{4}$ and $\PathG{2}$
are depicted, respectively, on the left and right-hand sides of this figure.
The NS and NNS joins of graphs are, respectively, denoted by $\PathG{4} \NS \PathG{2}$
and $\PathG{4} \NNS \PathG{2}$ \cite{HamudB24}.}\label{fig:BermanH23-Fig. 2.2}
\end{figure}
\end{example}

\begin{theorem}[Irregular $\{\A, \LM, \Q, \bf{\mathcal{L}}\}$-NICS graphs]
\label{theorem: Berman and Hamud, 2023}
Let $\Gr{G}_1$ and $\Gr{H}_1$ be regular and cospectral graphs, and let $\Gr{G}_2$ and $\Gr{H}_2$
be regular, nonisomorphic, and cospectral (NICS) graphs. Then, the following statements hold:
\begin{enumerate}[(1)]
\item  \label{Item 1: Berman and Hamud, 2023}
The NS-join graphs $\Gr{G}_1 \NS \Gr{G}_2$ and $\Gr{H}_1 \NS \Gr{H}_2$
are irregular $\{\A, \LM, \Q, \bf{\mathcal{L}}\}$-NICS graphs.
\item  \label{Item 2: Berman and Hamud, 2023}
The NNS join graphs $\Gr{G}_1 \NNS \Gr{G}_2$ and $\Gr{H}_1 \NNS \Gr{H}_2$
are irregular $\{\A, \LM, \Q, \bf{\mathcal{L}}\}$-NICS graphs.
\end{enumerate}
\end{theorem}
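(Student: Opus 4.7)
The plan is to reduce the spectrum of each join to a universal function of the spectra and parameters of its two constituents, and then to invoke cospectrality of the constituents. For each $X \in \{\A, \LM, \Q, \mathcal{L}\}$, I would express $X(\Gr{G}_1 \NS \Gr{G}_2)$ and $X(\Gr{G}_1 \NNS \Gr{G}_2)$ in $3 \times 3$ block form indexed by $V(\Gr{G}_1)$, the split copy $V'(\Gr{G}_1) = \{v'_1, \ldots, v'_{n_1}\}$, and $V(\Gr{G}_2)$. In the NS case the $V(\Gr{G}_1) \leftrightarrow V'(\Gr{G}_1)$ off-diagonal block equals $\A(\Gr{G}_1)$, while in the NNS case it equals $\J{n_1, n_1} - \I{n_1} - \A(\Gr{G}_1)$; the $V(\Gr{G}_1) \leftrightarrow V(\Gr{G}_2)$ block is $\J{n_1, n_2}$ in both constructions, and no edges connect $V'(\Gr{G}_1)$ with $V(\Gr{G}_2)$. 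Because $\Gr{G}_i$ is $r_i$-regular, the degree matrix $\D$ is block-diagonal with scalar blocks, so the same decomposition applies uniformly to $\A, \LM, \Q$, and (after per-block scalar rescaling by $\D^{-1/2}$) to $\mathcal{L}$.

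Next I would decompose $\Reals^{2n_1 + n_2}$ as the orthogonal sum of three ``$\mathbf{1}^\perp$ per block'' subspaces and the three-dimensional span of the block indicator vectors; regularity of $\Gr{G}_1$ and $\Gr{G}_2$ ensures that any $\A(\Gr{G}_i)$-eigenvector orthogonal to $\mathbf{1}$ is simultaneously an eigenvector of $\LM(\Gr{G}_i)$, $\Q(\Gr{G}_i)$, and $\mathcal{L}(\Gr{G}_i)$, with explicitly related eigenvalues. For any such eigenvector $\underline{x}$ of $\A(\Gr{G}_1)$ with eigenvalue $\lambda$, the ansatz $(a\underline{x}, b\underline{x}, \mathbf{0})$ annihilates the $\J{n_1, n_2}$ cross terms and reduces the eigenvalue equation for $X$ to a $2 \times 2$ linear system in $(a, b)$ whose coefficients depend only on $\lambda$ and $(n_1, n_2, r_1, r_2)$. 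Similarly, $(\mathbf{0}, \mathbf{0}, \underline{y})$ with $\underline{y} \perp \mathbf{1}_{n_2}$ lifts each eigenvector of $\A(\Gr{G}_2)$ to an eigenvector of the join with a closed-form eigenvalue, and the remaining three eigenvalues arise as the spectrum of a $3 \times 3$ quotient matrix on the span of the block indicators, with entries depending solely on $(n_1, n_2, r_1, r_2)$. Since two regular $\A$-cospectral graphs share orders and regularity degrees, and by Proposition~\ref{proposition: regular graphs cospectrality} $\A$-cospectrality of regular graphs coincides with $X$-cospectrality for every $X \in \{\A, \LM, \Q, \mathcal{L}\}$, this shows that $(\Gr{G}_1 \NS \Gr{G}_2, \Gr{H}_1 \NS \Gr{H}_2)$ and $(\Gr{G}_1 \NNS \Gr{G}_2, \Gr{H}_1 \NNS \Gr{H}_2)$ are $X$-cospectral for all four choices of $X$.

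For irregularity, the three blocks carry degrees $(2r_1 + n_2,\, r_1,\, r_2 + n_1)$ in the NS join and $(n_1 - 1 + n_2,\, n_1 - 1 - r_1,\, r_2 + n_1)$ in the NNS join; equality across all three is impossible for nonempty graphs (it would force $r_1 + n_2 = 0$), so each join is irregular. For nonisomorphism, the three blocks are distinguished by intrinsic structural properties: the vertices of $V'(\Gr{G}_1)$ form an independent set with no neighbors in $V(\Gr{G}_2)$, while the induced subgraph on $V(\Gr{G}_2)$ equals $\Gr{G}_2$. Any isomorphism between the two joins therefore restricts to an isomorphism $\Gr{G}_2 \cong \Gr{H}_2$, contradicting the NICS hypothesis. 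The principal obstacle I anticipate is careful bookkeeping for $\mathcal{L}$, whose entries depend on inverse square roots of degrees; the $3 \times 3$ block structure is preserved because degrees are constant per block, but the induced scalar factors on the off-diagonal blocks modify the $2 \times 2$ systems and the $3 \times 3$ quotient matrix, requiring case-by-case computation. The structural conclusion---that all coefficients depend only on $\lambda$ and the four parameters $(n_1, n_2, r_1, r_2)$---is however unaffected, which is all that is needed to transfer cospectrality from the constituents to the joins.
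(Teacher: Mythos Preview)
Your proposal is essentially correct and follows a genuinely different route from the one the paper points to. The paper does not prove the theorem in-text; it cites \cite{Hamud23,HamudB24} and states that the argument there ``relies heavily on the notion of the Schur complement'' (Theorem~\ref{theorem: Schur complement}), i.e., one computes the characteristic polynomial of the $3\times 3$ block matrix via the determinant identity $\det \mathbf{M}=\det(\mathbf{M/D})\det \mathbf{D}$ and shows the resulting polynomial depends only on the $\A$-characteristic polynomials of $\Gr{G}_1,\Gr{G}_2$ together with $(n_1,n_2,r_1,r_2)$. Your approach instead exploits the equitable-partition structure forced by regularity: you build eigenvectors of the join directly from eigenvectors of the constituents via the $(a\underline{x},b\underline{x},\mathbf{0})$ ansatz and a $3\times 3$ quotient on the block indicators. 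The Schur-complement route yields closed-form factorizations of the four characteristic polynomials in one stroke and is more mechanical; your eigenvector route is more constructive (it exhibits the eigenvectors, not just the eigenvalues) and makes the transfer of cospectrality from constituents to joins conceptually transparent, at the cost of having to redo the $2\times 2$ and $3\times 3$ computations separately for each of $\A,\LM,\Q,\mathcal{L}$.

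One point to tighten: your nonisomorphism argument asserts that any isomorphism between the two joins restricts to an isomorphism $\Gr{G}_2\cong\Gr{H}_2$, but the justification (``the three blocks are distinguished by intrinsic structural properties'') is not quite complete as stated. The three block-degrees $2r_1+n_2$, $r_1$, $r_2+n_1$ need not be pairwise distinct, so you cannot always separate the blocks by degree alone; you should spell out a degree-plus-neighbourhood argument (for instance, $V'(\Gr{G}_1)$ is the unique block whose vertices have no neighbours outside $V(\Gr{G}_1)$, and once $V'(\Gr{G}_1)$ is identified, $V(\Gr{G}_2)$ is the set of remaining vertices with no neighbour in $V'(\Gr{G}_1)$, provided $r_1\ge 1$) and treat the degenerate case $r_1=0$ separately. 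This is routine to fill in and does not affect the overall strategy.
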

The proof of Theorem~\ref{theorem: Berman and Hamud, 2023} is provided in \cite{Hamud23,HamudB24}, and it relies
heavily on the notion of the Schur complement (see Theorem~\ref{theorem: Schur complement}).
The interested reader is referred to the recently published paper \cite{HamudB24} for further details.

Relying on Theorem~\ref{theorem: Berman and Hamud, 2023}, the following result is stated and proved in \cite{Sason2024}.
\begin{theorem}[On irregular NICS graphs]
\label{theorem: Sason - existence of NICS graphs with distinct theta functions}
For every even integer $n \geq 14$, there exist two connected, irregular $\{\A, \LM, \Q, \bf{\mathcal{L}}\}$-NICS graphs
on $n$ vertices with identical independence, clique, and chromatic numbers, yet their Lov\'{a}sz $\vartheta$-functions
are distinct.
\end{theorem}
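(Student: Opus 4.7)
The plan is to bootstrap from the 10-vertex pair $\Gr{G}, \Gr{H}$ of Example~\ref{example: NICS regular graphs on 10 vertices}, which are $4$-regular, cospectral, nonisomorphic, and share $\indnum{\Gr{G}} = \indnum{\Gr{H}} = 3$, $\clnum{\Gr{G}} = \clnum{\Gr{H}} = 3$, $\chrnum{\Gr{G}} = \chrnum{\Gr{H}} = 4$, yet satisfy $\vartheta(\Gr{G}) \neq \vartheta(\Gr{H})$. For any even $n \geq 14$, I would set $n_1 \triangleq (n-10)/2 \geq 2$, fix a connected regular graph $\Gr{R}$ on $n_1$ vertices with no isolated vertex (e.g., $\CoG{2}$ if $n_1 = 2$, and the cycle $\CG{n_1}$ otherwise), and define the candidate pair
\begin{equation*}
\Gr{G}' \triangleq \Gr{R} \NS \Gr{G}, \qquad \Gr{H}' \triangleq \Gr{R} \NS \Gr{H},
\end{equation*}
each on $2n_1 + 10 = n$ vertices. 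Applying Item~(1) of Theorem~\ref{theorem: Berman and Hamud, 2023} with $\Gr{G}_1 = \Gr{H}_1 = \Gr{R}$ (trivially cospectral) and $\Gr{G}_2 = \Gr{G}, \Gr{H}_2 = \Gr{H}$ immediately delivers that $\Gr{G}'$ and $\Gr{H}'$ are irregular $\{\A, \LM, \Q, {\bf{\mathcal{L}}}\}$-NICS graphs. Connectedness holds because $\Gr{R} \vee \Gr{G}$ (resp.\ $\Gr{R} \vee \Gr{H}$) is a spanning subgraph and every duplicate $v_i'$ is joined to the nonempty $\Gr{R}$-neighborhood of $v_i$.

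Second, I would verify that the combinatorial invariants $\indnum{\cdot}, \clnum{\cdot}, \chrnum{\cdot}$ agree on $\Gr{G}'$ and $\Gr{H}'$. The duplicates form an independent set attached only to the $\Gr{R}$-side, so the NS-join inherits the ``join'' behaviour for cliques and colorings, namely $\clnum{\Gr{R} \NS \Gr{G}_2} = \clnum{\Gr{R}} + \clnum{\Gr{G}_2}$ and $\chrnum{\Gr{R} \NS \Gr{G}_2} = \chrnum{\Gr{R}} + \chrnum{\Gr{G}_2}$, while $\indnum{\Gr{R} \NS \Gr{G}_2}$ admits a closed-form expression in terms of $\Gr{R}$ together with $\indnum{\Gr{G}_2}$ and $\bigcard{\V{\Gr{G}_2}}$ alone (the maximum is attained either when all duplicates plus a maximum independent set of $\Gr{G}_2$ are selected, or when no $\Gr{G}_2$-vertex is used). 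Since $\Gr{G}$ and $\Gr{H}$ agree on every parameter that enters these formulas, the three invariants coincide on $\Gr{G}'$ and $\Gr{H}'$.

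The final and most delicate step is establishing $\vartheta(\Gr{G}') \neq \vartheta(\Gr{H}')$. The plan is to analyze the SDP in \eqref{eq: SDP problem - Lovasz theta-function} for a generic NS-join $\Gr{R} \NS \Gr{G}_2$, exploiting the three-block structure of the adjacency matrix arising from originals, duplicates, and $\Gr{G}_2$-vertices, and to reduce the optimum to a smaller optimization in which $\vartheta(\Gr{G}_2)$ enters as a parameter through a strictly monotone function determined by the fixed graph $\Gr{R}$; then $\vartheta(\Gr{G}) \neq \vartheta(\Gr{H})$ transfers directly to the desired separation. I expect this last step to be the principal obstacle, since the duplicate vertices introduce nontrivial coupling between the $\Gr{R}$-side and $\Gr{G}_2$-side variables of the SDP, so extracting a clean parametric dependence on $\vartheta(\Gr{G}_2)$ will likely require either an explicit orthonormal representation of $\Gr{G}'$ built from optimal ones for $\Gr{R}$ and $\Gr{G}_2$ together with a matching dual SDP certificate, or a direct computation of $\vartheta(\Gr{G}') \neq \vartheta(\Gr{H}')$ for the base case $n_1 = 2$ combined with a monotonicity or reduction argument extending the separation to all $n_1 \geq 2$.
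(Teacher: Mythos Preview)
Your construction is exactly what the paper points to: take the $10$-vertex $4$-regular NICS pair $(\Gr{G},\Gr{H})$ of Example~\ref{example: NICS regular graphs on 10 vertices}, plug it as the second argument into the NS-join of Theorem~\ref{theorem: Berman and Hamud, 2023} against a fixed connected regular graph $\Gr{R}$ on $n_1=(n-10)/2$ vertices, and read off irregular $\{\A,\LM,\Q,{\bf{\mathcal{L}}}\}$-NICS pairs of each even order $n\ge 14$. The paper does not reproduce the argument here; it records that the proof in \cite{Sason2024} is constructive and rests on precisely this combination, so at the level of strategy you are aligned with it. Your handling of connectedness, and of the clique and chromatic numbers via the join-like behaviour of the NS construction, is correct; your description of the independence number (a maximum of a quantity depending only on $\Gr{R}$ and the quantity $n_1+\indnum{\Gr{G}_2}$) is also right.

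The genuine gap is the $\vartheta$-separation, which you yourself flag as ``the principal obstacle'' and then leave as a plan. Everything prior is supplied by Theorem~\ref{theorem: Berman and Hamud, 2023} and routine bookkeeping; the sole new content of the theorem is $\vartheta(\Gr{R}\NS\Gr{G})\neq\vartheta(\Gr{R}\NS\Gr{H})$, and for this you offer only two speculative routes. The hoped-for reduction---that $\vartheta(\Gr{R}\NS\Gr{G}_2)$ depends on $\Gr{G}_2$ only through $\vartheta(\Gr{G}_2)$, and strictly monotonically---is not obvious: the duplicate vertices are nonadjacent to every vertex of $\Gr{G}_2$, forcing all duplicate vectors orthogonal to all $\Gr{G}_2$-vectors in any orthonormal representation, and it is not a priori clear that this coupling collapses to a single scalar parameter. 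One natural target is the identity $\vartheta(\Gr{R}\NS\Gr{G}_2)=n_1+\vartheta(\Gr{G}_2)$; the lower bound is free (the induced subgraph on duplicates and $\V{\Gr{G}_2}$ is $\CGr{\CoG{n_1}}\DU\Gr{G}_2$, and $\vartheta$ is monotone on induced subgraphs and additive on disjoint unions), but you still owe the matching upper bound via an explicit representation or a feasible dual SDP matrix. Until that step---or some substitute yielding the same conclusion---is actually carried out, what you have is a correct outline of the construction together with an unresolved claim at its core, not a proof.
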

That result is in fact strengthened in Section~4.2 of \cite{Sason2024}, and the interested reader is referred to that paper
for further details. The proof of Theorem~\ref{theorem: Sason - existence of NICS graphs with distinct theta functions} is
also constructive, providing explicit such graphs.

\section{Open questions and outlook}
\label{section: summary and outlook}

We conclude this paper by highlighting some of the most significant open questions in the fascinating area of research related to
cospectral nonisomorphic graphs and graphs determined by their spectrum, leaving them as topics for further future study.

\subsection{Haemers' conjecture}
\label{subsection: Haemers' conjecture}

Haemers' conjecture \cite{Haemers2016,Haemers2024}, a prominent topic in spectral graph theory, posits that almost all graphs
are uniquely determined by their adjacency spectrum. This conjecture suggests that for large graphs, the probability of having two
non-isomorphic graphs, on a large number of vertices, sharing the same adjacency spectrum is negligible. The conjecture has inspired
extensive research, including studies on specific graph families, cospectrality, and algebraic graph invariants, contributing to deeper
insights into the relationship between graph structure and eigenvalues. Haemers' conjecture is stated formally as follows.

\begin{definition}
\label{definition: Haemers' conjecture}
For $n \in \mathbb{N}$, let $I(n)$ to be the numbers of distinct graphs on $n$ vertices, up to isomorphism.
For $\mathcal{X} \subseteq \Gmats$, let $\alpha_{\mathcal{X}}(n)$ be the number of $\mathcal{X}$-DS graphs
on $n$ vertices, up to isomorphism, and (for the sake of simplicity of notation) let $\alpha(n) \triangleq \alpha_{\{ A \}}(n)$
\end{definition}

\begin{conjecture}
\label{conjecture: Haemers' conjecture} \cite{Haemers2016}
For sufficiently large $n$, almost all of the graphs are DS, i.e.,
\begin{align}
\label{eq: Heamers' conjecture}
\lim_{n \rightarrow \infty} \frac{\alpha(n)}{I(n)} = 1.
\end{align}
\end{conjecture}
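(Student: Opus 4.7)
The plan is to approach Haemers' conjecture via the refinement through \emph{generalized spectra} (DGS), combined with a probabilistic analysis on the Erd\H{o}s--R\'enyi random graph $\mathcal{G}(n,1/2)$. The strategy rests on the observation, formalized in Definition~\ref{definition: DGS} and the Wang--Xu framework, that every DS graph is DGS, so it suffices (if one can bridge the gap) to establish the limit for the DGS count $\beta(n)$, i.e., $\lim_{n\to\infty} \beta(n)/I(n) = 1$, and then show that almost every DGS graph is actually DS. The main leverage for DGS is that it admits a clean algebraic criterion in terms of the \emph{walk matrix} $W(\Gr{G}) = \bigl[\mathbf{1}_n,\; \A \mathbf{1}_n,\; \A^2 \mathbf{1}_n,\; \ldots,\; \A^{n-1} \mathbf{1}_n\bigr]$, which is far more accessible than the spectrum itself.

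First, I would set up the reduction: sample $\Gr{G} \sim \mathcal{G}(n, 1/2)$ and analyze the random integer matrix $W(\Gr{G})$. The goal at this step is to show that with probability tending to $1$, the $2$-adic valuation of $\det W(\Gr{G})$ equals exactly $\bigl\lfloor n/2 \bigr\rfloor$, and $\det W / 2^{\lfloor n/2 \rfloor}$ is odd and squarefree when reduced modulo small primes. By a theorem of Wang, this suffices for $\Gr{G}$ to be DGS. The tools here are anti-concentration for sums of independent Bernoulli variables (to rule out small singular minors modulo $p$), a Littlewood--Offord style argument applied to the columns of $W$, and an inductive row-reduction to control the $2$-adic valuation. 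This mirrors the approach of Wang (building on \cite{Wang13,Wang17}), and partial results of this form are known; the challenge is extending them uniformly to capture an asymptotically full-measure event rather than merely a positive-density event.

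Second, I would tackle the DGS $\Rightarrow$ DS bridge. Here one would try to show that the set of graphs that are DGS but not DS has density zero. An $\{\A, \overline{\A}\}$-cospectral pair that fails to be $\A$-cospectral must exhibit a specific matching of adjacency and complement spectra that is not realized by $\A$ alone; via Schur complements (Theorem~\ref{theorem: Schur complement}) and Cauchy interlacing (Theorem~\ref{thm:interlacing}), such pairs can be controlled by counting coincidences among subgraph principal eigenvalues, which should be rare for random $\Gr{G}$. Alternatively, one can invoke switching-type constructions (Godsil--McKay, Seidel, or the NS/NNS joins of Section~\ref{subsection: Connected irregular NICS graphs}) in reverse, arguing that the structural preconditions for any such construction fail almost surely in $\mathcal{G}(n,1/2)$.

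The hard part, and the reason this conjecture remains wide open, is the second step, together with the fragility of the first. The currently known lower bound on $\alpha(n)$ due to Koval and Kwan is only $e^{\Omega(n)}$, whereas $I(n) = 2^{\binom{n}{2}}(1+o(1))/n!$ grows doubly exponentially; closing this gap requires qualitatively new tools. The fundamental obstruction is that while random graphs are expected to be highly rigid spectrally, every local structural coincidence (a pair of vertices with identical neighborhoods, a small symmetric induced subgraph, a repeated eigenvalue of high multiplicity in a subgraph) is a potential source of a cospectral mate, and controlling all such obstructions \emph{simultaneously} with failure probability $o(1)$ appears to require sharper anti-concentration results for integer random matrices than are currently available. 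Realistically, I would expect this proposal to yield a strengthening of the known exponential lower bound --- perhaps reaching $2^{\Omega(n \log n)}$ by exploiting the walk-matrix arithmetic more carefully --- but the full conjecture seems out of reach without a genuinely new idea that couples the spectral rigidity of $\mathcal{G}(n,1/2)$ to its combinatorial asymmetry (Erd\H{o}s--R\'enyi, trivial automorphism group almost surely).
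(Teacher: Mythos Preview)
The statement you were asked to prove is an \emph{open conjecture}; the paper does not prove it and explicitly states that ``a complete proof in its full generality remains elusive.'' There is therefore no paper proof to compare your proposal against. You recognize this yourself in your final paragraph, where you acknowledge that ``this conjecture remains wide open'' and that ``the full conjecture seems out of reach without a genuinely new idea.'' So what you have written is not a proof but an honest research outline that correctly identifies its own fatal gaps.

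On the substance of your outline: the DGS reduction via the walk matrix and Wang's arithmetic criterion is a reasonable line of attack and is indeed the framework behind the best partial results cited in the paper. However, your second step --- bridging DGS to DS --- is not merely hard but conceptually backwards: every DS graph is DGS, not the other way around, so showing $\beta(n)/I(n) \to 1$ would be a \emph{weaker} statement than the conjecture, not a reduction of it. You would need DGS $\Rightarrow$ DS almost surely, and there is no known mechanism for this; the switching constructions you mention produce cospectral mates, they do not rule them out, and ``running them in reverse'' is not a well-defined operation. Your candid assessment that the approach would at best improve the exponential lower bound is accurate and is the right place to have stopped.
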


Several results lend support to this conjecture \cite{Haemers2016, KovalK2024, WangW2024}, but a complete proof in its full generality remains elusive.
By \cite{GodsilR2001}, the number of graphs with $n$ vertices up to isomorphism is
\begin{align}
I(n) = \bigl( 1 - \epsilon(n) \bigr) \, \frac{2^{n(n-1)/2}}{n!},
\end{align}
where $\underset{n \rightarrow \infty}{\lim} \epsilon(n) = 0$.
By Stirling's approximation for $n!$ and straightforward algebra, $I(n)$ can be verified to be given by
\begin{align}
I(n) = 2^{\frac{n(n-1)}{2} \; \bigl(1 - \epsilon(n)\bigr)}.
\end{align}

It is shown in \cite{KovalK2024} that the number of DS graphs on $n$ vertices is at least $e^{cn}$ for some constant $c>0$ (see also the
discussion in Section~\ref{subsection: Nice graphs}).
In light of Remark~\ref{remark: X,Y cospectrality}, Conjecture~\ref{conjecture: Haemers' conjecture} leads to the following question.
\begin{question}
\label{question: generalized Haemers' conjecture}
For what minimal subset $\mathcal{X} \subset \Gmats$ (if any) does the limit
\begin{align}
\label{eq: generalized Haemers' conjecture}
\lim_{n\rightarrow \infty} \frac{\alpha_{\mathcal{X}}(n)}{I(n)} = 1
\end{align}
hold?
\end{question}

Some computer results somewhat support Haemers' Conjecture. Approximately $80\%$ of the graphs with $12$ vertices are DS \cite{BrouwerS2009}.
A new class of graphs is defined in \cite{WangW2024}, offering an algorithmic method for finding all the $\{\A,\overline{\A}\}$-cospectral mates
of a graph $\Gr{G}$ in this class  (i.e., graphs that are $\{\A,\overline{\A}\}$-cospectral with $\Gr{G}$).
Using this algorithm, they found that out of $10,000$ graphs with $50$ vertices, chosen uniformly at random from this class, at least $99.5\%$
of them were $\{\A, \overline{\A}\}$-DS.

\subsection{DS properties of structured graphs}
\label{subsection: DS properties of structured graphs}

This paper explores various structures of graph families determined by their spectrum with respect to one or more of their associated matrices,
as well as the related problem of constructing pairs of nonisomorphic graphs that are cospectral with respect to some or all of these matrices.
Several questions are interspersed throughout the paper (see Questions~\ref{question: DS SRG}, \ref{question: NICS graphs - 1}, \ref{question: NICS graphs - 2},
\ref{question: NICS - DasP2013}, and~\ref{question: Butler2010}), which remain open for further study.

In addition to serving as a survey on spectral graph determination, this paper suggests a new alternative proof to Theorem~3.3 in \cite{MaRen2010},
asserting that all Tur\'{a}n graphs are determined by their adjacency spectrum (see Theorem~\ref{theorem: Turan's graph is DS}).
This proof is based on a derivation of the adjacency spectrum of the family of Tur\'{a}n graphs (see Theorem~\ref{theorem: A-spectrum of Turan graph}).
Since these graphs are generally bi-regular multipartite graphs (i.e., their vertices have only two possible degrees, which in this case are consecutive
integers), it does not necessarily imply that Tur\'{a}n graphs are determined by the spectrum of some other associated matrices, such as the Laplacian,
signless Laplacian, or normalized Laplacian matrices. Determining whether this holds is an open question.

The distance matrix of a connected graph is the symmetric matrix whose columns and rows are indexed by the graph vertices, and its entries are
equal to the pairwise distances between the corresponding vertices. The distance spectrum is the multiset of eigenvalues of the distance matrix,
and its characterization has been a subject of fruitful research (see \cite{AouchicheH14,HuiqiuJJY21} for comprehensive surveys on the distance
spectra of graphs, and \cite{Banerjee23,HogbenR22} on the spectra of graphs with respect to variants of the distance matrix). Nonisomorphic graphs
may share an identical adjacency spectrum, as well as an identical distance spectrum, and therefore be graphs that are not determined by either
their adjacency or distance spectrum.
This holds, e.g., for the Shrikhande graph and the line graph of the complete bipartite graph $\CoBG{4}{4}$, which are nonisomorphic strongly
regular graphs with the identical parameters $(16,6,2,2)$, sharing an identical $\A$-spectrum given by $\{6, [2]^6, [-2]^6\}$ and an identical
distance spectrum given by $\{24, [0]^9, [-4]^6\}$. There exist, however, graphs that are determined by their distance spectrum but are not determined
by their adjacency spectrum. In this context, \cite{JinZ2014} proves that complete multipartite graphs are uniquely determined by their distance
spectra (this result confirms a conjecture proposed in \cite{LinHW2013} and extends the special case established there for complete bipartite graphs).
A Tur\'{a}n graph is, in particular, determined by its distance spectrum \cite{JinZ2014}, and by its adjacency spectra (see
Theorem~\ref{theorem: Turan's graph is DS} here). On the other hand, while complete multipartite graphs are not generally determined by their
adjacency spectra (e.g., for complete bipartite graphs, see Theorem~\ref{thm:when CoBG is DS?}), they are necessarily determined by their distance
spectra \cite{JinZ2014}. Another family of graphs that are determined by their distance spectrum but are not determined by their adjacency spectrum,
named $d$-cube graphs, is provided in \cite{KoolenHI2016,KoolenHI2016b}. These graphs have their vertices represented by binary $n$-length tuples,
where any two vertices are adjacent if and only if their corresponding binary $n$-tuples differ in one coordinate; it is also shown that these graphs
have exactly three distinct distance eigenvalues. The question of which multipartite graphs, or graphs in general, are determined by their distance spectra remains open.

Another newly obtained proof presented in this paper refers to a necessary and sufficient condition in \cite{MaRen2010} for complete bipartite graphs
to be $\A$-DS (see Theorem~\ref{thm:when CoBG is DS?} and Remark~\ref{remark: complete bipartite graphs} here). These graphs are also bi-regular, and
the two possible vertex degrees can differ by more than one.
Both of these newly obtained proofs, discussed in Section~\ref{section: special families of graphs}, provide insights into the broader question of which
(structured) multipartite graphs are determined by their adjacency spectrum or, more generally, by the spectra of some of their associated matrices.

Even if Haemers' conjecture is eventually proved in its full generality, it remains surprising when a new family of structured graphs is shown to be DS
(or $X$-DS, more generally). This is because for certain structured graphs, such as strongly regular graphs and trees, their spectra often fail to uniquely
determine them \cite{vanDamH03,vanDamH09}. This stark contrast between the fact that almost all random graphs of high order are likely to be DS and the
existence of interesting structured graphs that are not DS has significant implications.

In addition to the questions posed earlier in this paper, we raise the following additional concrete question:
\begin{question}
{\em By Theorem~\ref{theorem: Berman's generalized friendship graph}, the family of generalized friendship graphs $F_{p,q}$ is ${\bf{\mathcal{L}}}$-DS.
Are these graphs also DS with respect to their other associated matrices?}
\end{question}

We speculate that the DS property of a graph correlates, to some extent, with the number of symmetries that the graph possesses, and we hypothesize
that the size of the automorphism group of a graph can partially indicate whether it is DS.

A justification for this claim is that the automorphism group of a graph reflects its symmetries, which can influence the eigenvalues of its adjacency
matrix. Highly symmetric graphs (i.e., those with large automorphism groups) often exhibit eigenvalue multiplicities and patterns that are shared by other
nonisomorphic graphs, making such graphs less likely to be DS. Conversely, graphs with trivial automorphism groups are typically less symmetric and may
have eigenvalues that uniquely determine their structure, increasing the likelihood that they are DS. As noted in \cite{GodsilR2001}, almost all graphs
have trivial automorphism groups. This observation aligns with the conjecture that most graphs are DS, as the absence of symmetry reduces the likelihood
of two nonisomorphic graphs sharing the same spectrum.

It is noted, however, that the DS property of graphs is not solely dictated by their automorphism groups. Specifically, a graph with a large
automorphism group can still be DS if its eigenvalues uniquely encode its structure (see Section~\ref{subsection: strongly regular graphs}).
In contrast to these DS graphs, other graphs with trivial automorphism groups are not guaranteed to be DS; in such cases, the spectrum might
not capture enough structural information to uniquely determine the graph. Typically, graphs with a small number of distinct eigenvalues
seem to be, in general, the hardest graphs to distinguish by their spectrum. As noted in \cite{vanDamO11}, it seems that most graphs with
few eigenvalues (e.g., most of the strongly regular graphs) are not determined by their spectrum, which served as one of the motivations of
the work in \cite{vanDamO11} in studying graphs whose normalized Laplacian has three eigenvalues.

To conclude, the size of the automorphism group of a graph can provide some indication of whether it is DS, but it is not a definitive criterion. While
large automorphism groups often correlate with the graph not being DS due to shared eigenvalues among nonisomorphic graphs, this is not an absolute rule.
Therefore, the claim should be understood as a general observation that requires qualification to account for exceptions.

\vspace*{-0.3cm}

\end{document}